\definecolor{FTChampagnePink}{HTML}{F2DFCE}
\definecolor{FTOldLace}{HTML}{FFF1E0}
\definecolor{FTFloralWhite}{HTML}{FFF9F5}
\definecolor{FTMetallicSeaweed}{HTML}{0D7680}
\definecolor{FTMaroon}{HTML}{8F223A}
\definecolor{BBGreen}{HTML}{18453B}
\newcommand{\globalcolor}[1]{%
  \color{#1}\global\let\default@color\current@color
}
\DeclareMathOperator{\iden}{\mathrm{id}} 
\DeclareMathOperator{\dm}{\mathrm{d}}
\DeclareMathOperator{\Z}{\mathbb{Z}}
\DeclareMathOperator{\R}{\mathbb{R}}
\DeclareMathOperator{\RP}{\mathbb{R}P}
\DeclareMathOperator{\X}{\mathbb{X}}
\DeclareMathOperator{\Sbb}{\mathbb{S}}
\DeclareMathOperator{\mathstar}{St}
\DeclareMathOperator{\SCpx}{\text{\textbf{SCpx}}}
\DeclareMathOperator{\TopS}{\text{\textbf{Top}}}
\DeclareMathOperator{\Grpd}{\text{\textbf{Grpd}}}
\DeclareMathOperator{\AbGrp}{\text{\textbf{Ab}}}
\newcommand{\ChGrpd}[2]{\fun{\mathcal{C}_{#1}}{#2}}
\newcommand{\HGrpd}[2]{\fun{\mathcal{H}_{#1}}{#2}}
\newcommand{\inv}[1]{#1^{-1}}
\newcommand{\fibre}[2]{\inv{#1}{\left({#2}\right)}}
\newcommand{\openball}[2]{B_{#1}{\left({#2}\right)}}
\newcommand{\rbracket}[1]{\mathopen{}\left({#1}\right)\mathclose{}}
\newcommand{\fun}[2]{{#1}\rbracket{#2}}
\newcommand{\sett}[1]{\left\{ {#1} \right\}}
\newcommand{\settt}[2]{\left\{{#1} \ \mid \ {#2}\right\}}
\newcommand{\dmet}[2]{\fun{\dm}{{#1},{#2}}}
\newcommand{\fungroup}[1]{{\pi_1} ({#1})}
\newcommand{\fungroupbased}[2]{\fungroup{{#1}, {#2}}}
\newcommand{\decktransf}[1]{\mathrm{Aut}({#1})}
\DeclareMathOperator{\imag}{im}
\newcommand{\cU}{\mathcal{U}}
\newcommand{\cV}{\mathcal{V}}
\newcommand{\cN}{\mathcal{N}}
\newcommand{\cE}{\mathcal{E}}
\newcommand{\cI}{\mathcal{I}}
\newcommand{\sN}{\mathsf{N}}
\newcommand{\nerve}[1]{\sN\rbracket{#1}}
\newcommand{\conv}[1]{\mathrm{conv}(#1)}
\newcommand{\homol}[2]{\fun{H_{#1}}{#2}}
\newcommand{\abel}[1]{{#1}^{\mathsf{ab}}}
\newcommand{\Egroup}[1]{\fun{e}{{#1}}}
\newcommand{\Egpd}[1]{\cE {{#1}}}
\newcommand{\Fgpd}[1]{\Pi {{#1}}}
\newcommand{\opstar}[1]{\mathstar\rbracket{#1}}
\newcommand{\pr}{\mathrm{pr}}
\newcommand{\grpd}[1]{\mathsf{#1}}
\newcommand{\conf}[2]{\fun{\mathrm{Conf}_{#1}}{#2}}
\newcommand{\uconf}[2]{\fun{\mathrm{UConf}_{#1}}{#2}}
\newcommand{\qtextq}[1]{\quad\text{#1}\quad}
\theoremstyle{plain}
  \newtheorem{theorem}{Theorem}[section]
  \newtheorem{corollary}[theorem]{Corollary}
  \newtheorem{lemma}[theorem]{Lemma}
  \newtheorem{proposition}[theorem]{Proposition}
\theoremstyle{definition}
  \newtheorem{definition}[theorem]{Definition}
  \newtheorem{ex}[theorem]{Example}
  \newtheorem{remark}[theorem]{Remark}
  \newenvironment{example}{\begin{ex}}{\end{ex}}
\title{Inferring Ambient Cycles of Point Samples on Manifolds with Universal Coverings}
\author{Ka Man Yim}
\date{January 2025}
\begin{document}

\maketitle
\begin{abstract}
    A central objective of topological data analysis is to identify topologically significant features in data represented as a finite point cloud. We consider the setting where the ambient space of the point sample is a compact Riemannian manifold. Given a simplicial complex constructed on the point set, we can relate the first homology of the complex with that of the ambient manifold by matching edges in the complex with minimising geodesics between points.  Provided the universal covering of the manifold is known, we give a constructive method for identifying whether a given edge loop (or representative first homology cycle) on the complex corresponds to a non-trivial loop (or first homology class) of the ambient manifold. We show that metric data on the point cloud and its fibre in the covering suffices for the construction, and formalise our approach in the framework of groupoids and monodromy of coverings. 
\end{abstract}
 \tableofcontents 
\section{Introduction}
A recurring theme in topological data analysis is to analyse the `shape' of a subset $A \subseteq X$ in relation to the geometry and topology of the ambient space. Out of the many variations on tackling this problem, we focus on analysing whether $X$ inherits the non-trivial loops and cycles of the ambient space. For example, we wish to infer the induced homomomorphism on the first homology group by the inclusion of $A$ into $M$. If a cycle in $A$ maps onto a non-trivial cycle in $X$, we call those the \emph{ambient cycles} of $A$ in $X$.

Here, we restrict ourselves to the setting where $M$ is a compact Riemannian manifold, and, the shape in question we want to analyse is based on a finite point cloud in $M$. For example, this shape can be the $\epsilon$-thickening of the point cloud. We consider this task in the setting where the manifold is equipped with its \emph{universal covering}. The universal covering is often accessible for model manifolds, such as tori, projective spaces, and compact surfaces with uniform curvature. We show how data inferred from the universal covering of the ambient manifold allows us to infer a covering of the simplicial complex, induced by an embedding of the complex. We show how such data can be integrated to infer the homomorphism on the fundamental group and first homology group induced by the embedding. We first set out the theoretical framework of this approach, which is based on a groupoids description of covering spaces. After setting out the abstract framework, we describe the practical pipeline for processing point clouds on such manifolds, where metric information of the manifolds are leveraged to make the inference.

\paragraph{Coverings and Groupoids}
The theory of covering spaces relates the fundamental group of a space $X$ with a symmetry on another space $\tilde{X}$. An exposition of the theory of coverings is given in~\Cref{sec:covering}. We summarise the key objects of interest in this introduction. In a class of coverings called \emph{regular coverings} (or $\Gamma$-coverings), the base space $X$ is the orbit space of a $\Gamma$-action on $\tilde{X}$, and the quotient $p: \tilde{X} \twoheadrightarrow X$ induces a short exact sequence of fundamental groups
\begin{equation*}
    \begin{tikzcd}
        0 & \fungroup{\tilde{X}, \tilde{x}} & \fungroup{X, x} & \Gamma & 0 
    \arrow["\fungroup{p}",hook, from=1-2, to=1-3]
	\arrow[from=1-4, to=1-5]
	\arrow["{\mu}", two heads, from=1-3, to=1-4]
	\arrow[from=1-1, to=1-2]
    \end{tikzcd}.
\end{equation*}
The homomorphism $\mu$, which we call the \emph{monodromy homomorphism}, relates loops in $X$ with the $\Gamma$-action on $\fibre{p}{x}$ via the homotopy lifting property of coverings. Monodromy classifies $\Gamma$-coverings up to isomorphism. The exactness of the sequence implies loops in $\pi_1(X)$ can be categorised into those that descend from the covering space $\tilde{X}$ (i.e. in the image of $\pi_1(p)$), and those that are generated due to the quotient by the group action (i.e. with non-trivial monodromy).  In the case where $\tilde{X}$ is simply connected, the covering is the \emph{universal covering} of the base space $X$. Since the sequence above is  exactness, the monodromy $\mu$ is an isomorphism between $\fungroup{X}$ and $\Gamma$. If we have a continuous map $f: X \to Y$, the universal covering of $q: \tilde{Y} \twoheadrightarrow Y$ induces a $\Gamma$-covering $p: \tilde{X} \twoheadrightarrow X$, where $G \cong \pi_1(Y)$. The monodromy $\mu_X: \fungroup{X}\to \Gamma$ of the induced covering on $X$ in fact recovers the induced homomorphism $\fungroup{f}: \fungroup{X} \to \fungroup{Y}$ up to isomorphism.

From the point of view of our application to simplicial complexes embedded in an ambient manifold, if we know the universal covering of the ambient manifold, then a recovery of the induced covering on the complex suffices to infer $\fungroup{f}$. In our setting, the induced covering of the complex is encoded as a \emph{transition homomorphism} (here viewing $p$ as a principal $\Gamma$-bundle). We thus focus on how monodromy of a covering can be inferred from its {transition homomorphism}. Given a good cover $\cU$ of $X$, a transition homomorphism prescribes how a $\Gamma$-covering $p: \tilde{X} \twoheadrightarrow X$ can be reconstructed by gluing together fibres $\fibre{p}{U}$ of $U \in \cU$; this consists of associating elements $g_{ij} \in \Gamma$ to pairs of cover elements $U_i$ and $U_j$ with non-empty intersection. In other words, it is an attachment of elements of $\Gamma$ to edges in the \emph{nerve} $\nerve{\cU}$ of the open cover. If the open cover is finite (which can be made so whenever $X$ is compact), then the transition homomorphism is a finite description of the $\Gamma$-covering. We employ the formalism of \emph{groupoids}\footnote{We recall a groupoid is a (small) category where all morphisms are invertible, and a groupoid homomorphism is simply a functor between groupoids. See~\Cref{sec:groupoid}.} to relate the transition homomorphism with monodromy. In more precise terms, the transition homomorphism is a groupoid homomorphism $t: \Egpd{\nerve{\cU}} \to \Gamma$ on the \emph{edge groupoid} $\Egpd{\nerve{\cU}}$ of the nerve. In the way which the nerve of a good cover $\nerve{\cU}$ is a discrete combinatorial topological representation of $X$, the edge groupoid is a discrete representation of the \emph{fundamental groupoid}\footnote{While the fundamental group describes the algebra of homotopy classes of loops based at a single point, the fundamental groupoid describes the algebra of homotopy classes of paths between any pair of points in $X$.} $\Fgpd{X}$ of the space. There is in fact an analogue of homotopy equivalence that relates the two groupoids. We show in~\Cref{ssec:transition} how monodromy and the transition homomorphism encode equivalent data. This equivalence is summarised by the following commutative diagram:
 \begin{equation*}
    \begin{tikzcd}
	\Egpd{\nerve{\cU}} && \Fgpd{X} \\
	& \Gamma
	\arrow["\simeq", curve={height=-6pt}, from=1-1, to=1-3]
	\arrow["t"', from=1-1, to=2-2]
	\arrow["S", curve={height=-6pt}, from=1-3, to=1-1]
	\arrow["{\mu}", from=1-3, to=2-2]
\end{tikzcd}.
\end{equation*}
Here the monodromy homomorphism $\mu$ here is promoted from a group homomorphism on the fundamental group to a groupoid homomorphism on the fundamental groupoid. This equivalence underpins the practical applications or our methods, as it allows us to summarise monodromy on compact spaces using finite data expressed in the transition homomorphism $t$ on the nerve of the covering $\cU$. Returning to the context of simplicial complexes embedded in an ambient space $f: |K| \to X$, we consider the transition homomorphism $t$ of the induced covering on $|K|$ by $f$; because the monodromy $\mu$ of the induced covering recovers the induced homomorphism of the fundamental groups $\fungroup{f}: \fungroup{|K|} \to \fungroup{X}$, we can deduce $\fungroup{f}$ by constructing the transition homomorphism $t$ of the induced covering. 

We also discuss the homology of groupoids. In particular, taking the first homology of the fundamental groupoid recovers the first singular homology of the underlying space. Likewise, the first homology of an edge groupoid of a simplicial complex is isomorphic to its first simplicial homology. Moreover, these relationships are natural isomorphisms. This allows us to translate groupoid homomorphisms between fundamental groupoids, edge groupoids, and groups, to homomorphisms between homology. This is especially important as we translate data from the transition homomorphisms $t$ of induced coverings to recover homomorphisms $\homol{1}{X} \to \homol{1}{Y}$. 

\paragraph{Applications to Topological Data Analysis on Manifolds}
We focus on the geometric setting where we have a continuous map $f: X \to M$, where $M$ is a complete Riemannian manifold, and $p$ is the universal Riemannian covering --- that is, $p$ is also a local isometry. 

Like most methods in standard topological data analysis, we assume the input data is a finite set of points $\X = \sett{x_1, \ldots, x_N}$ in $M$. We first consider $X = \X^\epsilon$, the $\epsilon$-thickening of $\X$, and $f$ being the inclusion of $\X^\epsilon$ into $M$. Here we require $\epsilon$ to be less than the geodesic convexity radius of $M$ to facilitate a good cover of $\X^\epsilon$. We show how we can infer the transition homomorphism $t$ of the induced covering on $X$, and thus infer the induced homomorphism of $f$ on the fundamental group or first homology. The required information to infer $t$ consists simply of the distances between points in $\X$, and the distances between points in their fibres $\fibre{p}{x} \subset \tilde{M}$. 

In a sparse sampling case where distances between points may exceed the geodesic convexity radius, we consider another approach to simulating the shape of the point cloud rather than considering its $\epsilon$-thickening. Here we consider a graph $G$ on the vertex set $\X$, and consider a map $f: |G| \to M$ that sends edges to minimising geodesics on $M$. For example, $G$ can be an $\epsilon$-neighbourhood graph or a $k$-nn graph on $\X$. We show that minimising geodesics can be uniquely assigned to pairs of points in $\X$ for generic configurations of $\X$. In this construction, the same metric data in the case for $\X^\epsilon$ suffices to infer the transition homomorphism $t: \Egpd{G} \to \Gamma$. Using $t$, we follow the same recipe to find the induced homomorphisms $\fungroup{f}$ and $\homol{1}{f}$.

Finally, as an application, we use this computational framework to empirically analyse a geometric signature of measures on manifolds called \emph{principal persistence measures}~\cite{Gomez2024CurvatureDiagrams}. The one-dimensional principal persistence measure is a geometric summary of the distribution of four point configurations sampled from the measure. In particular, it encodes the geometry of cycles that can be formed on four point configurations by minimising geodesic interpolations. We show how we can decompose the support of the principal persistence measure of an empirical sample on a manifold by the homology class of cycles to help interpret the geometric and topological features of the measure detected by its principal persistence measure. 

\paragraph{Related Work}
Within topological data analysis, the relationship between covering spaces, monodromy, and the homology of subcomplexes in such spaces was first explored in~\cite{Onus2022QuantifyingComplexes}. The focus of that study was on detecting loops and one-cycles of complexes in $\Z^n$ periodic spaces that are generated by the periodic symmetry. In~\cite{Onus2023ComputingWindows}  the authors extend their work to higher homology groups for complexes in $\Z$-periodic spaces. Here we only consider one-dimensional homology, but our approach generalises to complexes in universal covering spaces with group action $\Gamma$ not necessarily $\Z^n$.

\section{Groupoids}\label{sec:groupoid}
In this section, we discuss the algebraic and categorical framework that describes homotopy classes of paths on a topological space $X$ -- the \emph{fundamental groupoid} $\Fgpd{X}$. The fundamental groupoid generalises the fundamental group to describe homotopy classes of paths between any two fixed end points, in addition to loops.

By framing our problem in terms of groupoids, we reap some conceptual benefits. First, the groupoid formalism provides a natural framework to relate paths to simplicial approximations of paths. If a space $X$ admits a good cover $\cU$, then we can relate the fundamental groupoid to the \emph{edge groupoid} $\Egpd{\nerve{\cU}}$ of the nerve of the cover via a pair of groupoid homomorphism $R: \Egpd{\nerve{\cU}} \leftrightarrows \Fgpd{X} : S$. In a similar fashion to the fundamental groupoid, the edge groupoid describes equivalence classes of edge paths on a simplicial complex, related by a combinatorial notion of homotopy. In \Cref{prop:equivalence_groupoids} we describe how the pair of groupoid homomorphism $R$ and $S$ describe an equivalence (of categories) between the fundamental and edge groupoids. Regarding $\cE$ and $\Pi$ as functors from simplicial complexes to groupoids, we also show that $R$ is a natural isomorphism between those two functors in \Cref{prop:gr_functorial}.

In \Cref{ssec:homology_groupoid} we review basic facts about the homology of groupoids, and how it relates to the homology of spaces.  \emph{Groupoid homology} $\mathcal{H}_\ast: \Grpd  \to \AbGrp$ is a functor from the category of groupoids to abelian groups. In particular, the groupoid homology of a group is equivalent to its group homology. Furthermore, we recall the first group homology is the abelianisation of a group $\HGrpd{1}{\Gamma} \cong \abel{\Gamma}$. Focussing on the fundamental groupoid $\Fgpd{X}$, we consider  $\HGrpd{1}{\Fgpd{X}}$ , which we view as a functor $\TopS \xrightarrow{\Pi} \Grpd \xrightarrow{\mathcal{H}_1} \AbGrp$. There is a natural isomorphism $\mathscr{s}: H_1 \Rightarrow \mathcal{H} \circ \Pi$~\Cref{prop:Hfgpd_func}. As such, a groupoid homomorphism $\mu: \Fgpd{X} \to \Gamma$ induces a homomorphism on singular homology $\mathscr{s}\circ \HGrpd{1}{\mu}: \homol{1}{X} \to \abel{\Gamma}$. We also derive an analogous natural isomorphism between the first homology of edge groupoids and simplicial homology in \Cref{prop:hegrpd_func}.

The benefit of this framework emerges in the subsequent section about covering spaces. We defer detailed discussion on this to \Cref{sec:covering}, but give a brief account to motivate our study of groupoids. The fundamental groupoid provides a clean framework to describe concepts such as homotopy path-lifting, and coverings induced by continuous maps. In particular, as we focus on $\Gamma$-coverings (coverings generated by quotients of regular group actions), the homotopy lifting of paths are described by a groupoid homomorphism $\mu : \Fgpd{X} \to \Gamma$, which we call the \emph{monodromy homomorphism}. In particular, if the underlying space admits a good cover, we show that $\mu$ can be recovered from a discrete summary. Using the homomorphisms $R$ and $S$, we can pass between $\mu$ and a groupoid homomorphism $t: \Egpd{\nerve{\cU}} \to \Gamma$ on the edge groupoid of the nerve of the cover. 

We first lay out a few basic definitions and properties of groupoids. We refer to the reader to~\cite{Riehl2017CategoryContext,Brown2006TopologyGroupoids,Cohen1989CombinatorialApproach} for a detailed exposition of groupoids and category theory. A \emph{groupoid} $\grpd{C}$ is a small category where all morphisms are isomorphisms. For two objects in $a,b \in \grpd{C}$, we let $\grpd{C}(a,b)$ denote the set of morphisms from $a$ to $b$; morphisms $f \in \grpd{G}(a,b)$ and $g \in \grpd{G}(b,c)$, they compose to form a morphism $fg \in \grpd{C}(a,c)$; and there is an identity morphism $\iden_a \in \grpd{C}(a,a)$ for any $a$. The set of morphisms $\grpd{C}_a := \grpd{C}(a,a)$ form a group which we call the \emph{vertex group} of $\grpd{G}$ at $a$. 

\begin{ex}
    A groupoid with only one object is a group. 
\end{ex}

\begin{ex}\label{ex:simplicial_grpd}
    A {simplicial groupoid} $\Delta^n$ is a groupoid with $n+1$ objects; and between any two objects $i$ and $j$ there is exactly one morphism $\Delta^n(i,j) = \{(ij)\}$. This implies $(ij)(jk)= (ik)$ for all $i,j,k \in \{0, \ldots, n\}$.
\end{ex}

The category of groupoids $\Grpd$ is the category where the objects are groupoids and the morphisms are functors $F: \grpd{C} \to \grpd{D}$ between two groupoids as categories. We call morphisms in $\Grpd$ groupoid homomorphisms, A \emph{homotopy} of groupoid morphisms $F,G: X \to Y$ is a natural isomorphism $\theta: F \Rightarrow G$. Two groupoids are (homotopy) \emph{equivalent} if there is an equivalence of categories $F: \grpd{C} \rightleftarrows \grpd{D}: G$. In other words, there are natural isomorphisms $\iden_{\grpd{C}} \Rightarrow GF$ and $FG \Rightarrow \iden_{\grpd{D}}$.  We say $F: \grpd{C} \to \grpd{D}$ defines an \emph{equivalence of groupoids} if there exists a dual $G: \grpd{D} \to \grpd{C}$ such that $F$ and $G$ define an equivalence. As the nomenclature would suggest, an equivalence of categories is an equivalence relation~\cite[Lemma 1.5.5]{Riehl2017CategoryContext}.

\begin{ex}
    If the groupoid $\grpd{C}$ has only one object (i.e. it is a group), then functors $F$ and $G$ defining an equivalence of categories implies $GF$ is an inner automorphism $g \mapsto hg\inv{h}$ for some $h \in \grpd{C}(a,a)$. 
\end{ex}

Given a groupoid homomorphism $F: \grpd{C}\to \grpd{D}$, we can determine whether it defines an equivalence without explicitly constructing its dual $G$: a functor $F$ is an equivalence of categories iff $F$ full and faithful (the map $\grpd{C}(a,b) \to \grpd{D}(F(a), F(b))$ is a surjective and injective), and essentially surjective (every $d \in \grpd{D}$ is isomorphic to some $F(c)$) (\cite[Thm 1.5.9]{Riehl2017CategoryContext}). Since the condition of essentially surjective is satisfied by definition on connected groupoids, $F: \grpd{C} \to \grpd{D}$ is an equivalence of groupoids iff $F$ is full and faithful.

\subsection{The Fundamental Groupoid and the Edge Groupoid} \label{ssec:fungroupoid}
The fundamental groupoid generalises the fundamental group to a groupoid. Rather than considering homotopy classes of loops based on a single base point of a space $X$, the fundamental groupoid $\Fgpd{X}$ records the set of homotopy classes of paths between any pair of points in the space. For example, the vertex group $\Fgpd{X}(x,x)$ is the fundamental group $\fungroup{X,x}$. We refer the reader to~\cite[\S 6]{Brown2006TopologyGroupoids} for an introductory account to the topic.

\begin{definition} For a topological space $X$, the \emph{fundamental groupoid} $\Fgpd{X}$ is a groupoid where the objects are the points in $X$, and the set of morphisms $\Fgpd{X}(x,y)$ from $x$ to $y$ are the set of homotopy classes of paths from $x$ to $y$ in $X$. 
\end{definition}

Composing morphisms $[\gamma_1] \in \Fgpd{X}(x,y)$ and $[\gamma_2] \in \Fgpd{X}(y,z)$ of the fundamental groupoid corresponds to concatenating homotopy classes of paths $[\gamma_1] \ast [\gamma_2] = [\gamma_1 \ast \gamma_2] \in \Fgpd{X}(x,z)$. The fundamental group $\fungroup{X,x} = \Fgpd{X}(x,x)$ is the group of morphisms from $x$ to $x$ in the fundamental groupoid; that is, the set of homotopy classes of loops based at $x$.

Indeed, the fundamental groupoid can be regarded as a functor $\Pi: \TopS \to \Grpd$. If $f: X \to Y$ is a continuous map, then we have an induced homomorphism of the fundamental groupoids $\Fgpd{f}: \Fgpd{X} \to \Fgpd{Y}$: each homotopy class of paths $[\gamma] \in \Fgpd{X}(x_0,x_1)$  is sent to a homotopy class of paths $[f\gamma] \in \Fgpd{Y}(f(x_0),f(x_1))$. A homotopy $F: X \times I \to Y$ between maps $f = F(\cdot,0)$ and $g = F(\cdot, 1)$ induces a groupoid homotopy $\Pi_1(F): \Pi_1(f) \Rightarrow \Pi_1(g)$ between the induced groupoid morphisms. Naturally, a homotopy equivalence $f: X  \rightleftarrows  Y: g$ induces an equivalence between the fundamental  groupoids $\Fgpd{f}: \Fgpd{X} \rightleftarrows  Y: \Fgpd{g}$~\cite[6.5.10]{Brown2006TopologyGroupoids}.

 The combinatorial analogue of the fundamental groupoid is the \emph{edge groupoid} $\Egpd{K}$ of a simplicial complex $K$. We introduce the concept following the presentation of~\cite{Cohen1989CombinatorialApproach}. We first define the corresponding notions of paths and homotopy on $K$.
 \begin{definition}
     An edge path $\eta: I_m \to K$ on a simplicial complex $K$ is a simplicial map from the path graph $I_m$ with $m$ edges to $K$. An edge loop based at $u$ is an edge path where $\eta(0) = \eta(m) = u$.
 \end{definition}
In other words, an edge path is prescribed by a finite sequence of vertices $v_0 v_1 \cdots v_m$ where each successive pair of vertices $v_i v_{i+1}$ span a simplex in $K$ (i.e. either $v_i = v_{i+1}$, or $v_i v_{i+1}$ is an edge). We now describe a combinatorial analogue of homotopy between paths rel end points. Roughly speaking, two edge paths are homotopy equivalent if one can translate one to another over two-simplices of the complex. This is formally described in the following elementary operation between two edge paths. 

\begin{definition}
    An edge path $\eta: I_m \to K$ is an elementary contraction of a path  $\eta': I_{m+1} \to K$,  there is some $0 \leq i < m$, such that 
    \begin{itemize}
        \item $\eta(j) = \eta'(j)$ for $j \leq i$;
        \item $\eta(j) = \eta(j+1)$ for $j \geq i+1$; and
        \item $\eta'(i)\eta'(i+1)\eta'(i+2)$ spans a simplex in $K$.
    \end{itemize}
    Two edge paths $\eta$ and $\eta'$ are edge homotopic if there is a finite sequence of edge paths $\eta = \eta_0,\ldots, \eta_k = \eta'$ such that adjacent edge paths $\eta_i$ and $\eta_{i+1}$ in the sequence are related by an elementary contraction: either $\eta_i$ is an elementary contraction of $\eta_{i+1}$ or vice versa.
\end{definition}

Edge homotopy is an equivalence relation on the set of edge paths on $K$. Like homotopy classes of paths on a topological space, homotopy classes of edge paths concatenate to form homotopy classes of edge paths. The homotopy class of the constant path (i.e. a sequence consisting of one vertex) functions as the identity, as concatenating an edge path with the constant path can be reduced to the edge path via an elementary contraction. Reversing the order of an edge path is equivalent to taking the inverse of the edge path: if we concatenate an edge path with one where the vertices are sequenced in reverse, elementary contractions can reduce it to the constant path. 

Thus, taking vertices of $K$ as objects and homotopy classes of edge paths between vertices as the set of morphisms, we can check that this gives us a groupoid. 
\begin{definition} The \emph{edge groupoid} $\Egpd{K}$ of a simplicial complex $K$ is a groupoid whose objects are the set of vertices of $K$, and the set of morphisms $\Egpd{K}(v,w)$ from $v$ to $w$ is the set of homotopy classes of edge paths. 
\end{definition}

We call the vertex group $\Egroup{K,u}$ the \emph{edge group} at $u$. Like the fundamental groupoid for the category of topological spaces, the edge groupoid is a functor $\cE: \SCpx \to \Grpd$ from the category of simplicial complexes to groupoids. If $f: L \to K$ is a simplicial map, the we have an induced homomorphism of groupoids $\Egpd{f}: \Egpd{L} \to \Egpd{K}$, since simplicial maps satisfy \cref{eq:cocycle_egpd}. We refer the reader to~\cite[\S 5.2]{Cohen1989CombinatorialApproach} and~\cite[\S 6.4]{Armstrong2013BasicTopology} for details.

Because the set of morphisms of $\Egpd{K}$ are equivalence classes of edge paths, any morphism of $\Egpd{K}(u_0, u_m)$ can be expressed as a composition of the edge classes $[u_0 u_1] \cdots [u_{m-1} u_m]$. It follows that any homomorphism of groupoids $F: \Egpd{K} \to \grpd{C}$ can be fully expressed if we know $F$ on its edges, as
\begin{equation*}
     F([u_0 \cdots u_m]) = F([u_0 u_1]) \cdots F([u_{m-1} u_m]).
\end{equation*}
The following lemma states that any groupoid homomorphism $F$ from $\Egpd{K}$ can be defined by specifying its image on the edges, provided $F$ satisfies the \emph{cocycle condition} \cref{eq:cocycle_egpd} on two-simplices of $K$ so that it is well-defined. 

\begin{lemma}\label{lem:cocycle_egpd}
Let $K$ be a simplicial complex and $\iota: K_1 \hookrightarrow K$ be the inclusion of its one-skeleton into $K$. Suppose we have a groupoid homomorphism $F_1: \Egpd{K_1} \to \grpd{C}$. Then there is a groupoid homomorphism $F: \Egpd{K} \to \grpd{C}$ such that $F_1 = F \circ \Egpd{\iota}$, iff
    \begin{align}
    F_1(uv)F_1(vw)F_1(wu) = \iden_{\grpd{C}_{F_1(u)}}. \label{eq:cocycle_egpd}
    \end{align}
\begin{proof}
     Suppose we have $F: \Egpd{K} \to \grpd{C}$. For $uvw$ spanning a simplex in $K$, since $[uv]  [vw]  [wu] = [uvw] = [u]$ is edge-homotopic to the constant path by elementary collapses, $F$ being a functor implies $ F_1(uv)F_1(vw)F_1(wu) = F([uv])F([vw])F([wu])  = F([uv]  [vw]  [wu] ) = F([u]) = \iden $.

    Conversely, suppose \cref{eq:cocycle_egpd} are satisfied. We now verify that edge-homotopic paths from $x$ and $y$ are sent to the same morphism in  $\grpd{C}(F_1(x), F_1(y))$. Let the sequence of edge paths $a_0, a_1, \ldots a_n$ be an edge homotopy. In other words, for adjacent edge paths in the sequence, $a_i$ and $a_{i+1}$, either $a_i$ contracts onto $a_{i+1}$ or vice versa. In other words, we can write $a_i = b\cdot uvw \cdot c$ and $a_{i+1} = b\cdot uw \cdot c$, or vice versa the collapse goes the other way. Applying $F_1$ to the edge paths, and the identities in \cref{eq:cocycle_egpd}, we have $F_1(a_i) = F_1(b) F_1(uv) F_1(vw) F_1(c) = F_1(b) F_1(uw) F_1(c)= F_1(a_{i+1})$. Thus, we have a well-defined map between the set of morphisms  $\Egpd{K}(x,y) \to \grpd{C}(F_1(x),F_1(y))$. Defining $F(x) = F_1(x)$, and $F([uv]) = F_1(uv)$, we thus have a functor $F: \Egpd{K} \to \grpd{C}$ such that $F_1 = F \circ \Egpd{\iota}$.
\end{proof}
\end{lemma}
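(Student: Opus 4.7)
The plan is to prove both directions of the biconditional by exploiting the fact that morphisms in $\Egpd{K}$ are generated by edge classes $[uv]$ subject to relations coming from elementary contractions over 2-simplices.

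For the forward direction (necessity), I would fix any 2-simplex $\{u,v,w\}$ of $K$ and observe that the edge loop $uvwu$ is edge-homotopic to the constant path $u$ by two successive elementary contractions across the simplex $uvw$. Since $F$ is a functor that agrees with $F_1$ on the one-skeleton, applying $F$ to this identity of morphisms in $\Egpd{K}(u,u)$ and using multiplicativity yields $F_1(uv) F_1(vw) F_1(wu) = F([u]) = \iden$.

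For the converse, I would define $F$ on objects by $F(x) = F_1(x)$ and on a representative edge path by $F([u_0 u_1 \cdots u_m]) = F_1([u_0 u_1]) \cdots F_1([u_{m-1} u_m])$, and then verify this descends to edge-homotopy classes. By transitivity of the edge-homotopy relation, it suffices to show two paths related by a single elementary contraction have the same image. Such a pair has the form $\eta' = b \cdot uvw \cdot c$ and $\eta = b \cdot uw \cdot c$ with $\{u,v,w\}$ spanning a simplex. Since $\Egpd{K_1}$ has invertible morphisms and $[wu] = [uw]^{-1}$, the cocycle hypothesis rearranges to $F_1(uv) F_1(vw) = F_1(uw)$, and multiplicativity of $F_1$ along $b$ and $c$ then gives $F(\eta') = F(\eta)$. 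This shows $F$ is well-defined on morphisms; functoriality (preservation of identities and composition) follows immediately from the definition as a product along edges, and the factorisation $F_1 = F \circ \Egpd{\iota}$ holds by construction.

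The main obstacle, such as it is, lies in handling the degenerate cases of the cocycle condition and elementary contractions where the vertices $u,v,w$ are not distinct. In those situations $\{u,v,w\}$ spans a 0- or 1-simplex rather than a 2-simplex, but the asserted identities still follow, either vacuously or from the functoriality of $F_1$ on the one-skeleton (e.g.\ $F_1(uu) = \iden_{F_1(u)}$), so no separate argument is required beyond noting that \cref{eq:cocycle_egpd} reduces to a tautology in these cases.
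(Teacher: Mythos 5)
Your proposal is correct and follows essentially the same route as the paper's proof: the forward direction reads off the cocycle condition from functoriality applied to the null-homotopic loop $uvwu$, and the converse defines $F$ on edges via $F_1$ and checks invariance under a single elementary contraction using the (rearranged) cocycle identity. Your extra remark that the degenerate cases with repeated vertices follow automatically from $F_1$ being a groupoid homomorphism on the one-skeleton is a harmless refinement the paper leaves implicit.
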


\subsection{Equivalence of the Fundamental and Edge Groupoids}
We now show that the geometric realisation of a simplicial complex $K$ induces a groupoid homomorphism $R : \Egpd{K} \to \Fgpd{K}$. The homomorphism $R$ formally relates homotopy classes of edge paths to homotopy classes of paths on the geometric realisation of a simplicial complex. We set up this result by considering a slightly more general category than that of simplicial complexes.

Recall a \emph{good cover} $\cU = \sett{U_i}_{i \in \cI}$ of a topological space is a collection of open sets $U_i$ where nonempty finite intersections of cover elements are contractible. We further recall the concept of the \emph{nerve} of an open cover. The nerve $\nerve{\cU}$  of an open cover $\cU$ is the abstract simplicial complex on a vertex set indexed by $\cI$, where 
\begin{equation}
    \nerve{\cU} = \settt{J \subseteq \cI }{\cap_{j \in J} U_j \neq \emptyset \quad \& \quad |J| < \infty}.
\end{equation}
Since we have restricted ourselves to good covers, the original nerve lemma of Leray~\cite{Leray1945SurRepresentations} states that the geometric realisation of the nerve of a good cover is homotopy equivalent $X \simeq |\nerve{\cU}|$ to the underlying space. We refer the reader to~\cite{Bjorner2003NervesGroups,Bauer2023AVariations} for contemporary results where conditions on the cover are relaxed.
\begin{ex}
   Finite simplicial complexes dimension admit good covers. If we take the cover to be the collection of open vertex stars $\cV = \settt{\opstar{u}}{u \in K_0}$, the nerve of the vertex stars is in fact isomorphic to $K$ itself: an intersection of vertex stars $\opstar{u_0},\ldots, \opstar{u_m}$  is non-empty precisely because their intersection contains the simplex $u_0, \ldots, u_m$ and any of its cofaces (i.e. its upper set in the face poset of $K$). 
\end{ex}
We now show that there is an equivalence of groupoids $\Fgpd{X} \simeq \Egpd{\nerve{\cU}}$ if $\cU$ is a good cover. This is directly implied by the nerve lemma which proves that there exists a homotopy equivalence between $X \simeq |\nerve{\cU}|$. Here we explicitly construct the groupoid homomorphisms $R: \Fgpd{X} \to \Egpd{\nerve{\cU}}$ and its adjoint $S: \Egpd{\nerve{\cU}} \to \Fgpd{X}$, such that $R: \Fgpd{X} \rightleftarrows \Egpd{\nerve{\cU}: S}$ is an equivalence of groupoids. We refer to $R$ as the \emph{realisation} homomorphism and $S$ as the \emph{snapping} homomorphism. These homomorphisms translate between homotopy classes of paths and edge paths. For paths in $X$, we can use the notion of \emph{carriers}~\cite{Bjorner2003NervesGroups} to discretise or `snap' them to paths on the complex.

\begin{restatable}{definition}{defpathcarrier}
    \label{def:pathcarrier} Given topological space $X$ with a good cover $\cU = \sett{U_i}_{i\in \cI}$, we say an edge path $\eta: I_m \to \nerve{\cU}$ \emph{carries} a path $\gamma: I \to X$, if the path can be parametrised such that for $k= 0,\ldots, m$
\begin{align}\textstyle
    \fun{\gamma}{\frac{k}{m}} &\in U_{\eta(k)} \\
    \textstyle \fun{\gamma}{\left(\frac{k}{m},\frac{k+1}{m}\right)} &\subset U_{\eta(k)} \cup U_{\eta({k+1})},\quad k<m.
\end{align}
\end{restatable}

The proof of the following statements, which is a generalisation of the proofs of similar statements relating the edge group to the fundamental group in \cite[Theorem 6.10]{Armstrong2013BasicTopology}, is given in \Cref{app:carrier}. Using the fact that any path in $X$ is carried by some edge path on the nerve of a good cover, we can define the snapping morphism in \Cref{prop:equivalence_groupoids}. We defer the proofs to~\Cref{app:carrier}.

\begin{restatable}[Path Carrier]{lemma}{pathcarrier}
\label{lem:pathcarrier}%
 Let $X$ be a space that admits an open cover $\cU = \sett{U_i}_{i \in \cI}$. If $x \in U_i$ and $y \in U_j$, any path $\gamma: I \to X$ from $x$ to $y$ is carried by some edge path on $\nerve{\cU}$ from $i$ to $j$.
\end{restatable}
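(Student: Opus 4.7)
The plan is a standard Lebesgue-number subdivision argument on the compact interval $I = [0,1]$. First, I would pull back the cover along $\gamma$: the collection $\{\gamma^{-1}(U_i)\}_{i \in \cI}$ is an open cover of the compact metric space $I$, so by the Lebesgue number lemma there exists $\delta > 0$ such that every closed subinterval of $I$ of length at most $\delta$ is contained in some $\gamma^{-1}(U_i)$. I would then choose a partition $0 = t_0 < t_1 < \cdots < t_n = 1$ of mesh less than $\delta$, and for each $0 \leq k \leq n-1$ select an index $i_k \in \cI$ with $\gamma([t_k, t_{k+1}]) \subset U_{i_k}$.

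A small refinement near the endpoints forces $i_0 = i$ and $i_{n-1} = j$: because $x = \gamma(0) \in U_i$ and $U_i$ is open, $\gamma([0, \varepsilon]) \subset U_i$ for some $\varepsilon > 0$, so inserting such an $\varepsilon$ as an additional partition point permits the choice $i_0 = i$, and the symmetric argument at $t = 1$ gives $i_{n-1} = j$. For each $0 \leq k \leq n-2$, the point $\gamma(t_{k+1})$ lies in $U_{i_k} \cap U_{i_{k+1}}$, so $\{i_k, i_{k+1}\}$ spans a simplex of $\nerve{\cU}$ (a single vertex when $i_k = i_{k+1}$, an edge otherwise). Setting $\eta(k) := i_k$ therefore defines a valid edge path $\eta: I_{n-1} \to \nerve{\cU}$ running from $i$ to $j$.

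To verify that $\eta$ carries $\gamma$ in the sense of \Cref{def:pathcarrier}, I would reparametrise $\gamma$ by a piecewise-linear orientation-preserving homeomorphism $\phi: I \to I$ fixing $0$ and $1$ and sending $k/(n-1)$ to the midpoint of $[t_k, t_{k+1}]$ for each $1 \leq k \leq n-2$. Then $(\gamma \circ \phi)(k/(n-1)) \in U_{i_k} = U_{\eta(k)}$ at every evaluation node, while $\phi$ maps each open interval $(k/(n-1), (k+1)/(n-1))$ into a subinterval of $[t_k, t_{k+2}]$; applying $\gamma$ places the image in $U_{i_k} \cup U_{i_{k+1}} = U_{\eta(k)} \cup U_{\eta(k+1)}$, yielding a parametrisation that satisfies both carrier conditions.

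The main obstacle is notational rather than mathematical: one must simultaneously arrange the endpoint choices $i_0 = i$ and $i_{n-1} = j$ so that $\eta$ begins and ends at the prescribed vertices, and choose a reparametrisation aligning the evaluation nodes $k/(n-1)$ with the interiors of the partition cells. Both reduce to the continuity of $\gamma$ and the openness of the cover elements, so there is no substantive geometric content beyond the Lebesgue number lemma itself.
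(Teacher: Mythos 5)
Your proposal is correct and follows essentially the same route as the paper's proof: Lebesgue's number lemma applied to the pulled-back cover, a fine subdivision of $I$ with cover indices assigned to cells, an adjustment at the endpoints to force the edge path to start at $i$ and end at $j$, and a piecewise-linear reparametrisation to meet the two conditions of \Cref{def:pathcarrier}. The only cosmetic difference is that the paper assigns indices to balls around evenly spaced sample points and appends the vertices $i,j$ when needed, whereas you assign indices to partition cells and refine near the endpoints; the substance is identical.
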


\begin{restatable}{proposition}{equivalenceEFG}
    \label{prop:equivalence_groupoids} Let $X$ be a space with a good cover $\cU = \sett{U_i}_{i \in \cI}$. Consider a pair of maps $r:\cI \leftrightarrows X: s$, satisfying $ r(i) \in U_i$  and $x \in U_{s(x)}$. Then $r$ and $s$ \emph{uniquely} define a pair of groupoid homomorphisms $R: \Egpd{\nerve{\cU}} \leftrightarrows \Fgpd{X}: S$, where on objects $r(u) = R(u)$ and $S(x) = s(x)$; and on classes of paths:
\begin{itemize}
    \item $R([\eta]) \in \Fgpd{X}(r(i), r(j))$ is the unique class of paths represented by a path $\gamma \in R([\eta])$ carried by the edge path $\eta$; and 
    \item $S([\gamma]) \in \Egpd{\nerve{\cU}}(s(x), s(y))$ is the unique class of edge paths, one of which carries $\gamma \in [\gamma] \in \Fgpd{X}(x,y)$.
\end{itemize}
Furthermore, the pair $(R,S)$ is an equivalence of categories between $\Egpd{\nerve{\cU}}$ and $\Fgpd{X}$. Explicitly, we have natural isomorphisms $\varsigma: \iden_{\Fgpd{X}} \Rightarrow RS$, and $\varrho: SR \Rightarrow \iden_{\Egpd{\nerve{\cU}}}$. Furthermore,  $\varsigma_x \in \Fgpd{X}(x, rs(x))$ for any $x \in X$ is a unique homotopy class of paths carried by $U_{S(x)}$, and $\varrho_i \in \Egpd{\nerve{\cU}}(i, sr(i))$ is the edge homotopy class of the edge from $i$ to $sr(i)$ on the nerve.
\end{restatable}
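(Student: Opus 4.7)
The plan is to build $R$ and $S$ incrementally on objects, then on morphisms, then verify functoriality and establish the natural isomorphisms $\varsigma$ and $\varrho$. On objects, the assignments $R(i):=r(i)$ and $S(x):=s(x)$ are forced by hypothesis. The technical content lies in defining the two homomorphisms on morphisms and showing well-definedness.

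For $S$, Lemma~\ref{lem:pathcarrier} provides, for each path $\gamma$ from $x$ to $y$, some edge path $\eta$ on $\nerve{\cU}$ from $s(x)$ to $s(y)$ carrying $\gamma$; set $S([\gamma]):=[\eta]$. Well-definedness requires two independent checks:
\emph{(i)} If two edge paths $\eta, \eta'$ both carry $\gamma$, they are edge-homotopic. Between successive parameter values $k/m$ and $(k{+}1)/m$ both edge paths touch a common cover element containing $\gamma$, so contractibility of each $U_i$ and of pairwise intersections (good cover, $|J|\le 2$) yields a chain of elementary contractions.
\emph{(ii)} If $\gamma_0\simeq \gamma_1$ via $H:I\times I\to X$, their carriers are edge-homotopic. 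This is the main obstacle. I apply the Lebesgue number lemma to the open cover $\{H^{-1}(U_i)\}_{i\in\cI}$ of the compact square $I\times I$ to extract a grid fine enough that each small square maps into a single $U_i$. Labelling grid vertices by such indices, rows of the grid yield a sequence of edge paths interpolating the carriers of $\gamma_0$ and $\gamma_1$, and consecutive rows differ by elementary contractions supported on triangles of $\nerve{\cU}$ whose triple intersection is witnessed by the image of a single grid square.

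For $R$ on morphisms, given $\eta = i_0 i_1\cdots i_m$, I choose any $p_k \in U_{i_k}\cap U_{i_{k+1}}$ and any paths $r(i_k)\rightsquigarrow p_k$ in $U_{i_k}$ and $p_k\rightsquigarrow r(i_{k+1})$ in $U_{i_{k+1}}$; concatenate to obtain a path carried by $\eta$. Any two such concatenations are homotopic rel endpoints by contractibility of $U_{i_k}$ and $U_{i_k}\cap U_{i_{k+1}}$. For edge-homotopy invariance, an elementary contraction $\cdots uvw\cdots \leadsto \cdots uw\cdots$ requires $\{u,v,w\}\in \nerve{\cU}$, hence $U_u\cap U_v\cap U_w$ is nonempty and contractible, and that contractibility supplies the homotopy between corresponding concatenated paths. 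Functoriality of $R$ and $S$ on composition is immediate: concatenation of edge paths corresponds to concatenation of carried paths (and vice versa), and constant paths carry constant edge paths.

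For the natural isomorphisms: for each $x\in X$, both $x$ and $rs(x)$ lie in $U_{s(x)}$, which is contractible, so there is a unique class $\varsigma_x\in \Fgpd{X}(x, rs(x))$ of paths in $U_{s(x)}$; dually, for each $i\in\cI$, the point $r(i)$ lies in $U_i\cap U_{sr(i)}$, so $\{i,sr(i)\}$ spans an edge in $\nerve{\cU}$ and I let $\varrho_i$ be the class of that edge. Naturality of $\varsigma$ amounts to showing that for $[\gamma]\in \Fgpd{X}(x,y)$ the square $\varsigma_y\ast\gamma \simeq RS([\gamma])\ast \varsigma_x$ commutes; this reduces, via a carrier of $\gamma$, to a finite sequence of squares each supported in a single pairwise intersection $U_{\eta(k)}\cap U_{\eta(k+1)}$, and contractibility closes each square. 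Naturality of $\varrho$ is the analogous combinatorial statement on $\nerve{\cU}$, reduced via Lemma~\ref{lem:cocycle_egpd} to checking the cocycle condition on the relevant triangles. Finally, uniqueness of $R$ and $S$ follows because the morphism-level definitions are forced once we fix the object-level behaviour and require compatibility with the carrier relation.
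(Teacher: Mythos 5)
Your overall architecture mirrors the paper's (a Lebesgue-number grid argument for homotopy invariance of carriers, explicit realisation of edge paths from the good-cover hypothesis, and the same components $\varsigma_x$, $\varrho_i$), but two steps contain genuine gaps. First, your check \emph{(i)} --- ``if two edge paths both carry $\gamma$ they are edge homotopic'' --- is false as stated: edge homotopies fix endpoints, while two carriers of the same path may begin and end at different vertices (any $i$ with $\gamma(0)\in U_i$ can serve as an initial vertex). The carriers must be pinned to $s(x)$ and $s(y)$, which is exactly how the paper phrases \Cref{lem:hmtpy_implies_ehmtpy}. Moreover the mechanism you cite --- contractibility of the $U_i$ and of pairwise intersections --- is not what produces elementary contractions: replacing $uvw$ by $uw$ requires $\{u,v,w\}$ to span a $2$-simplex of $\nerve{\cU}$, i.e.\ a nonempty \emph{triple} intersection, and this must be witnessed (in the paper, by the image of a vertex star of the triangulated homotopy square). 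Relatedly, in your step \emph{(ii)} the rows of the labelled grid only interpolate between \emph{some} carriers of $\gamma_0$ and $\gamma_1$; to conclude $[\eta]=[\eta']$ you must also control the boundary: label the left and right columns constantly (using that the homotopy is rel endpoints) so all rows share endpoints, and refine the grid compatibly with the subdivisions defining $\eta$ and $\eta'$ so that the bottom and top rows are edge homotopic to the \emph{given} carriers --- this boundary bookkeeping is where most of the paper's proof of that lemma is spent, and it is absent from your sketch.

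Second, defining $R([\eta])$ via concatenations through chosen points $p_k\in U_{i_k}\cap U_{i_{k+1}}$ only shows agreement among those particular representatives, whereas the statement (and your later computation of $\varsigma$) requires that \emph{every} path from $r(i)$ to $r(j)$ carried by $\eta$ lies in a single class rel endpoints. The paper obtains this from the carrier lemma (\Cref{lem:carrier}): a homotopy through maps all carried by $\eta$, whose restrictions to the two ends are loops inside the contractible sets $U_{\eta(0)}$ and $U_{\eta(m)}$ and can therefore be contracted to fix the endpoints. Your ladder argument for naturality of $\varsigma$ could be upgraded to supply this, but as written it is off target: the elementary squares live in the unions $U_{\eta(k)}\cup U_{\eta(k+1)}$, which are simply connected by the nerve lemma, not in the pairwise intersections. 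These issues are all repairable, and modulo them your route (direct contractibility arguments in place of the carrier-lemma machinery, and splitting well-definedness of $S$ into two checks) is a legitimate variant of the paper's proof rather than a different method.
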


By considering $(|K|, \sett{\opstar{u}}_{u \in K_0})$ as a space with good cover, \Cref{prop:equivalence_groupoids} specialises to the following corollary. Note that we have a stronger condition on the choice of representatives of cover elements: a vertex star is represented by the vertex itself. Since the nerve of the vertex star cover is the simplicial complex itself, the edge groupoid here is the edge groupoid of the complex.

\begin{corollary} \label{cor:equivalence_groupoids_scpx} Let $K$ be a simplicial complex. We have an equivalence of categories $R: \Egpd{K} \leftrightarrows \Fgpd{K}: S$, defined as follows:
\begin{itemize}
    \item On objects: $R$ sends vertices on the abstract simplicial complex $K$ to their corresponding points in its geometric realisation; and $S$ snaps points in $x$ to some vertex such that $x \in \opstar{S(x)}$. In particular, we can choose $SR(u) = u$;
    \item On classes of paths, $R([\eta]) = [|\eta|] \in \Fgpd{X}(r(i), r(j))$, and $S([\gamma]) \in \Egpd{K}(S(x), S(y))$ is the class of edge paths edge-homotopic to one whose vertex stars carry $\gamma$.
\end{itemize}
Explicitly, we have natural isomorphisms $\varsigma: \iden_{\Fgpd{K}} \Rightarrow RS$, and $SR = \iden_{\Egpd{K}}$. Furthermore, $\varsigma$ is unique, and carried by $\opstar{S(x)}$. 
\end{corollary}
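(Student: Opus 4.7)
The plan is to derive this corollary as a direct specialization of \Cref{prop:equivalence_groupoids} to the pair $(|K|, \cV)$, where $\cV = \sett{\opstar{u}}_{u \in K_0}$ is the open vertex star cover. As noted in the preceding example, $\cV$ is a good cover of $|K|$ (finite nonempty intersections of vertex stars are contractible) and its nerve $\nerve{\cV}$ is canonically isomorphic to $K$; consequently $\Egpd{\nerve{\cV}} = \Egpd{K}$, so that the conclusion of \Cref{prop:equivalence_groupoids} will literally be an equivalence between $\Egpd{K}$ and $\Fgpd{|K|}$.

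First I would fix representatives: take $r: K_0 \to |K|$ to be the inclusion of vertices as geometric points, and let $s: |K| \to K_0$ be any function with $x \in \opstar{s(x)}$ for all $x$. The elementary but crucial observation is that for a vertex $u \in K_0$, one has $u \in \opstar{u'}$ if and only if $u = u'$: the only open simplex of $|K|$ whose interior contains the point $u$ is $\sett{u}$ itself, since $u$ lies in the boundary of every higher-dimensional simplex in which it appears. Thus $s|_{K_0} = \iden_{K_0}$ is forced regardless of how $s$ is chosen off the $0$-skeleton, and in particular $sr = \iden_{K_0}$ automatically, rather than only up to a choice.

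Now I would apply \Cref{prop:equivalence_groupoids}, which produces an equivalence $R: \Egpd{K} \leftrightarrows \Fgpd{|K|} : S$ together with natural isomorphisms $\varsigma: \iden_{\Fgpd{|K|}} \Rightarrow RS$ and $\varrho: SR \Rightarrow \iden_{\Egpd{K}}$. The proposition identifies $\varrho_u \in \Egpd{K}(u, sr(u))$ with the edge class from $u$ to $sr(u) = u$, which is the identity morphism at $u$; hence $\varrho$ is the identity natural transformation, and $SR = \iden_{\Egpd{K}}$ holds strictly. To match $R([\eta])$ with $[|\eta|]$ for an edge path $\eta = u_0 u_1 \cdots u_m$, I would verify that the geometric realization $|\eta|$ is carried by $\eta$ in the sense of \Cref{def:pathcarrier}: the sampled points $r(u_k) = u_k$ lie in $\opstar{u_k}$, and the open segment between two consecutive samples sits inside the open simplex spanned by $u_k$ and $u_{k+1}$, hence inside $\opstar{u_k} \cup \opstar{u_{k+1}}$. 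The uniqueness clause of the proposition then forces $R([\eta]) = [|\eta|]$.

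Finally, for the uniqueness of $\varsigma$: the proposition already says $\varsigma_x \in \Fgpd{|K|}(x, S(x))$ admits a representative lying in $\opstar{S(x)}$. Since $\opstar{S(x)}$ is contractible (it deformation retracts onto its cone point $S(x)$), any two paths from $x$ to $S(x)$ supported in $\opstar{S(x)}$ are path-homotopic inside $\opstar{S(x)}$, and such a homotopy is a fortiori a homotopy in $|K|$; therefore $\varsigma_x$ is the unique homotopy class of paths representable inside $\opstar{S(x)}$. The most delicate step in this plan is the verification that a combinatorial edge path carries its own geometric realization under the exact parametrization convention of \Cref{def:pathcarrier}; once that compatibility is in place, all other assertions follow by unpacking the general proposition using the rigid choice $sr = \iden_{K_0}$.
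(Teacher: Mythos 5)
Your proposal is correct and follows essentially the same route as the paper: specialise \Cref{prop:equivalence_groupoids} to the vertex-star cover (whose nerve is $K$), and use the fact that a vertex lies only in its own open star to force $s$ on vertices, hence $SR = \iden_{\Egpd{K}}$, with $R([\eta]) = [|\eta|]$ and the uniqueness of $\varsigma$ coming from the carrier/uniqueness clauses of the proposition. Your extra verifications (that $|\eta|$ is carried by $\eta$ under the parametrisation of \Cref{def:pathcarrier}, and that an identity-component natural isomorphism forces strict equality $SR=\iden$) are exactly the details the paper leaves implicit.
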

Note that because each vertex is only contained in its own star and no others, the snapping map $s$ must send $u \in |K|$ to $u \in K$. Thus, if we choose $r$ to send $u \in K$ to $u \in |K|$, we must have $SR = \iden_{\Egpd{K}}$. 

We now show that the realisation functor $R: \Egpd{K}\to \Fgpd{K}$ is in fact a natural transformation of functors from the category of simplicial complexes to groupoids. Recall $\cE: \SCpx \to \Grpd$ is a functor from the category of simplicial complexes to groupoids. There is another functor, $\Pi: \SCpx \to \Grpd$, which is given by sending simplicial complexes to the fundamental groupoid of their geometric realisation. The following proposition shows that $R$ is a natural transformation between these two functors.  

\begin{proposition} \label{prop:gr_functorial} Consider the functors $\cE: \SCpx \to \Grpd$ and $\Pi: \SCpx \to \Grpd$ from the category of simplicial complexes to groupoids. For any simplicial map $f: K \to L$, the realisation homomorphisms of $K$ and $L$ commute with the groupoid homomorphisms induced by $f$:
\begin{equation} \label{dgm:gr_functorial}
    \begin{tikzcd}
        \Egpd{K} \arrow[r,"R_K"]\arrow[d,"\Egpd{f}"'] & \Fgpd{K}  \arrow[d,"\Fgpd{f}"] \\
        \Egpd{L} \arrow[r,"R_L"] & \Fgpd{L}
    \end{tikzcd}
\end{equation}
In other words,  the realisation homomorphism is a natural transformation $R: \cE \Rightarrow \Pi$. 
\end{proposition}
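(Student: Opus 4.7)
The plan is to verify commutativity of the square separately on objects and on morphisms, invoking \Cref{lem:cocycle_egpd} to reduce the morphism-side check to a check on single edges. On objects, commutativity is immediate: for a vertex $u$ of $K$, the realisation $R_K$ sends $u \in K$ to $u \in |K|$ (by \Cref{cor:equivalence_groupoids_scpx}), and then $\Fgpd{f}$ sends it to $|f|(u) = f(u) \in |L|$; going the other way, $\Egpd{f}(u) = f(u) \in L$, and $R_L$ sends this to $f(u) \in |L|$.

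For morphisms, note that every morphism in $\Egpd{K}$ decomposes as a finite composition of edge classes $[u_i u_{i+1}]$, and that both $R_L \circ \Egpd{f}$ and $\Fgpd{f} \circ R_K$ are groupoid homomorphisms. Hence it is enough to check the two compositions agree on a single edge class $[uv] \in \Egpd{K}(u,v)$, where $uv$ spans a simplex of $K$ (possibly with $u=v$). By \Cref{cor:equivalence_groupoids_scpx}, the class $R_K([uv]) \in \Fgpd{|K|}(u,v)$ is represented by the straight-line path $\gamma_{uv}: I \to |K|$ that parametrises the edge $uv$ in $|K|$, and is carried by the stars $\opstar{u}, \opstar{v}$. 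Then $\Fgpd{f}(R_K([uv]))$ is represented by the composite path $|f| \circ \gamma_{uv}$.

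On the other side, $\Egpd{f}([uv])$ is the edge class $[f(u)f(v)] \in \Egpd{L}$ if $f(u) \neq f(v)$, and the identity class $[f(u)]$ otherwise (since $f$ is simplicial, $f(u)f(v)$ either spans an edge or degenerates to a single vertex). Applying $R_L$ gives the class of the straight-line path $\gamma_{f(u)f(v)}$ in $|L|$ in the former case, and the constant path at $f(u)$ in the latter. In either case, because the geometric realisation $|f|$ is the piecewise-linear extension of $f$, we have the literal equality of paths $|f| \circ \gamma_{uv} = \gamma_{f(u)f(v)}$ (interpreted as the constant path when $f(u) = f(v)$). Hence the two classes in $\Fgpd{|L|}(f(u), f(v))$ agree, completing the check on edges and therefore on all morphisms.

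The main obstacle, such as it is, is conceptual rather than technical: one must explicitly invoke \Cref{lem:cocycle_egpd} to reduce well-definedness of both composites to their common values on edges, and then rely on the concrete description of $R$ from \Cref{cor:equivalence_groupoids_scpx} together with the fact that $|f|$ is affine on each simplex. No subtle homotopy argument is needed because the two candidate representatives coincide on the nose.
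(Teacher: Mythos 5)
Your proof is correct and follows essentially the same route as the paper: verify the square on a single edge class, using that $|f|$ is affine on each simplex so the realised edge $|uv|$ maps on the nose to $|f(u)f(v)|$ (or a constant path), and then extend to all morphisms since edge classes generate and all four maps respect composition. The only quibble is that the appeal to \Cref{lem:cocycle_egpd} is unnecessary: both composites are already groupoid homomorphisms, so no well-definedness check is needed, only agreement on the generating edge classes.
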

\begin{proof} Recall the continuous map $|f|: |K| \to |L|$ induced by a simplicial map $f$ satisfies $|f|(|\sigma|) = |f(\sigma)|$. Furthermore, recall simplicial maps sends edge homotopic paths to homotopy classes of paths, and continuous maps sends homotopy classes of paths to homotopy classes of paths.

Consider an edge $vw \in K$, and the class $[vw] \in \Egpd{K}(v,w)$. Following the top path of composition of maps in the square above,
\begin{equation*}
    [vw] \overset{R_K}{\longmapsto} [|vw|] \overset{\Fgpd{f}}{\longmapsto} [|f|(|vw|)] = [|f(v)f(w)|],
\end{equation*}
where the last equality is due to the property of the continuous maps induced by simplicial maps. Following the top path of composition of maps in the square above,
\begin{equation*}
    [vw]\overset{\Egpd{f}}{\longmapsto} [f(v)f(w)] \overset{R_L}{\longmapsto} [|f(v)f(w)|].
\end{equation*}
Thus on the level of homotopy classes of edge paths represented by edges, the square commutes. Since any class of edge path is composed by homotopy classes of edge paths represented by edges, and $R_\bullet$, $\Pi$, and $\cE$ respects composition, the commutativity of the square for edges extends to commutativity of the square for any class of edge paths in $\Egpd{K}$.
\end{proof}

While realisation is functorial, the snapping homomorphism $S$ is not natural and depends on which vertices are chosen to represent points in $|K|$. However, the lemma below shows that snapping admits pullbacks: if $f: K \to L$ is a simplicial map, for a given snapping $S_L: \Fgpd{L} \to \Egpd{L}$, we can construct snapping $S_K: \Fgpd{K} \to \Egpd{K}$, such that \cref{dgm:snapping_functorial} commutes.

\begin{lemma}\label{lem:compatible_snapping} Let $f: K \to L$ be a simplicial map. For any snapping homomorphism $S_L: 
\Fgpd{L} \to \Egpd{L}$, we can find a snapping homomorphism $S_K:\Fgpd{K} \to \Egpd{K}$ such that the following diagram commutes:
\begin{equation} \label{dgm:snapping_functorial}
    \begin{tikzcd}[cramped]
	\Egpd{K} & \Fgpd{K} \\
	\Egpd{L} & \Fgpd{L}
	\arrow["\Egpd{f}"', from=1-1, to=2-1]
	\arrow["{\exists S_K}"', from=1-2, to=1-1]
	\arrow["\Fgpd{f}", from=1-2, to=2-2]
	\arrow["{S_L}"', from=2-2, to=2-1]
\end{tikzcd}.\end{equation}
\end{lemma}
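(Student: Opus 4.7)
The plan is to reduce the lemma to a question about the underlying vertex-assignment maps of the snapping homomorphisms. By \Cref{cor:equivalence_groupoids_scpx}, $S_L$ is uniquely determined by a choice function $s_L : |L| \to L_0$ with $y \in \opstar{s_L(y)}$ for every $y$, and any candidate $s_K : |K| \to K_0$ satisfying $x \in \opstar{s_K(x)}$ induces a snapping $S_K$. On objects the desired commutativity reads $f(s_K(x)) = s_L(|f|(x))$, so the entire problem reduces to constructing an $s_K$ realising this equation.

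I will build $s_K$ pointwise. For $x \in |K|$, let $\sigma_x \in K$ denote the unique simplex whose open realisation contains $x$. Using the affine action of $|f|$ on $|\sigma_x|$, a short barycentric computation shows that $|f|(x)$ lies in the \emph{interior} of $|f(\sigma_x)|$: each vertex $w$ of $f(\sigma_x)$ carries weight $\sum_{f(v) = w} \lambda_v > 0$, where the $\lambda_v$ are the positive barycentric coordinates of $x$. Consequently $w := s_L(|f|(x))$, being a vertex whose open star contains $|f|(x)$, must be a vertex of $f(\sigma_x)$. I then pick any $v \in \sigma_x$ with $f(v) = w$ and set $s_K(x) := v$. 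By construction $v$ is a vertex of $\sigma_x$, so $x \in \opstar{v}$, and $f \circ s_K = s_L \circ |f|$ holds on the nose.

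Let $S_K$ denote the snapping induced by $s_K$ via \Cref{cor:equivalence_groupoids_scpx}. Commutativity on objects holds by construction. For a morphism $[\gamma] \in \Fgpd{K}(x,y)$, I take an edge path $\eta = v_0 v_1 \cdots v_m$ in $K$ carrying $\gamma$ with $v_0 = s_K(x)$ and $v_m = s_K(y)$, as guaranteed by \Cref{lem:pathcarrier} and \Cref{prop:equivalence_groupoids}. Since every simplicial map satisfies $|f|(\opstar{v}) \subset \opstar{f(v)}$, the image path $f\circ \eta = f(v_0) \cdots f(v_m)$ carries $|f|\circ \gamma$ and has endpoints $s_L(|f|(x)), s_L(|f|(y))$. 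The uniqueness clause in \Cref{prop:equivalence_groupoids} then forces $\Egpd{f}(S_K([\gamma])) = [f\circ \eta] = S_L(\Fgpd{f}([\gamma]))$, closing the diagram.

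The main obstacle I foresee is the construction step: we need to locate the exact simplex of $L$ whose interior contains $|f|(x)$, and argue that the vertex $s_L(|f|(x))$ selected by the given snapping is pulled back along $f$ inside the carrier simplex $\sigma_x$. Everything else — including the morphism-level commutativity — is bookkeeping that falls out of the compatibility of carriers with simplicial maps and the uniqueness statement in \Cref{prop:equivalence_groupoids}.
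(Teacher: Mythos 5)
Your proposal is correct. The object-level construction of $s_K$ is essentially the paper's own: you locate the carrier simplex $\sigma_x$ of $x$, observe that $|f|(x)$ lies in the interior of $|f(\sigma_x)|$ (you do this with barycentric coordinates, the paper with preimages of the carrier $\tau$ of $f(x)$), conclude that $s_L(|f|(x))$ is a vertex of $f(\sigma_x)$, and pull it back to a vertex of $\sigma_x$. Where you genuinely diverge is the morphism-level check. The paper argues abstractly: using $S_LR_L = \iden$ and the naturality of $R$ (\Cref{prop:gr_functorial}) it reduces commutativity to the identity $S_L \circ \Fgpd{f} \circ R_K S_K = S_L \circ \Fgpd{f}$, and then verifies $\Fgpd{f}(\varsigma^K_x) = \varsigma^L_{f(x)}$ by noting $f(\opstar{S_K(x)}) \subseteq \opstar{S_L(f(x))}$. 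You instead argue directly with carriers: an edge path $\eta$ from $s_K(x)$ to $s_K(y)$ carrying $\gamma$ is pushed forward to $f\circ\eta$, which (by the same star-containment fact $|f|(\opstar{v}) \subseteq \opstar{f(v)}$) carries $|f|\circ\gamma$ and has the endpoints prescribed by $s_L$, so the well-definedness/uniqueness of the snap (\Cref{lem:pathcarrier}, \Cref{lem:hmtpy_implies_ehmtpy} via \Cref{prop:equivalence_groupoids}) identifies $[f\circ\eta]$ with $S_L(\Fgpd{f}([\gamma]))$ while $\Egpd{f}(S_K([\gamma])) = [f\circ\eta]$ by definition. Your route is more elementary and self-contained, bypassing \Cref{prop:gr_functorial} and the $\varsigma$ bookkeeping at the cost of re-entering the carrier machinery; the paper's route buys reuse of the structural results already established. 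Both hinge on the same key inclusion of vertex stars, and both are complete proofs.
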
 

\begin{proof} We first proceed on the level of objects. Let $s_L: |L| \to L_0$ be the snapping map that defines $S_L$. We show that we can find $s_K: |K| \to K_0$ such that for all $x \in |K|$,
    \begin{equation} \label{eq:s_a_quasi_functorial_object}
       s_L \circ f(x) = f \circ  s_K(x) .
    \end{equation}
Let $\tau$ be the unique simplex in $L$ containing $f(x)$ in its interior, and $s_L \circ f(x) = v$. Because $s_L$ defines a snapping, $\tau$ contains $v$ in its boundary. If we examine preimages in $|K|$, $\fibre{f}{\tau} \ni x$, and the unique simplex $\sigma$ in $K$ containing $x$ in its interior is in $\fibre{f}{\tau}$. Since every simplex in $\fibre{f}{\tau}$ must have some vertex $u \in \fibre{f}{v}$ in its boundary, $\sigma$ must have some $u \in \fibre{f}{v}$ in its boundary. In other words, there is some $u \in K_0$ such that $x \in \opstar{u}$, and $f(u) = s_L \circ f(x)$. We define $s_K(x)$ by picking one such vertex $u = s_K(x)$ to be the image of $x$. From \Cref{prop:equivalence_groupoids} $s_K$ uniquely defines a snapping homomorphism $S: \Fgpd{X} \to \Egpd{K}$.

Having established \cref{eq:s_a_quasi_functorial_object} which shows that \Cref{dgm:snapping_functorial} commutes on the level of objects, we now compare how $S_L \circ \Fgpd{f}$ and $\Egpd{f} \circ S_K$ sends homotopy classes of paths. Since $SR$ is the identity functor, and $R_L \circ \Egpd{f} = \Fgpd{f} \circ R_K$, we have 
\begin{equation*}
     \Egpd{f} \circ S_K= S_L R_L \circ  \Egpd{f} \circ S_K = S_L \circ  \Fgpd{f} \circ R_K S_K .
\end{equation*}
Thus \cref{dgm:snapping_functorial} commutes iff $S_L \circ \Fgpd{f} \circ  R_K S_K = S_L \circ  \Fgpd{f}$.

From \Cref{prop:equivalence_groupoids}, we have a unique natural transformation $\varsigma^K: \iden_{\Fgpd{K}} \Rightarrow R_KS_K$. Consider $[\gamma] \in \Fgpd{K}(x,y)$. Then 
\begin{equation}
    S_L \circ \Fgpd{f} \circ  R_K S_K([\gamma]) = \inv{(S_L \circ \Fgpd{f}(\varsigma_x^K))} \ast (S_L \circ \Fgpd{f}([\gamma])) \ast (S_L \circ \Fgpd{f}(\varsigma_y^K)).
\end{equation}
Consider $S_L \circ \Fgpd{f}(\varsigma_x^K)$. Recall $\varsigma_x^K$ is the unique class of paths from $x$ to $S_K(x)$ carried by $\opstar{S_K(x)}$. Since $f(\opstar{S_K(x)}) \subseteq \opstar{f \circ S_K(x)} = \opstar{S_L \circ f(x)}$, the class of paths $\varsigma_x^K$ is mapped by $\Fgpd{f}$ to a class of paths $\Fgpd{X}(f(x), R_L S_L(f(x)))$ carried by $\opstar{S_L \circ f(x)}$. In other words, $\Fgpd{f}(\varsigma_x^K) = \varsigma_{f(x)}^L$. Thus, $S_L \circ \Fgpd{f}(\varsigma_x^K) = \inv{(\varsigma_{f(x)}^L)} \varsigma_{f(x)}^L = e \in \fungroup{L, S_L \circ f(x)}$. Substituting into the equation above, we thus have $S_L \circ \Fgpd{f} \circ R_K S_K = S_L \circ \Fgpd{f}$.
\end{proof}

\begin{remark}
  We emphasise that snappings are not natural transformations. Since the snapping $S_K$ is constructed so that it is compatible with the snapping $S_L$ in the codomain, if we have multiple simplicial maps $f_i : K \to L_i$, there may not be a snapping $S_K$ that is compatible with all of $S_{L_i}$. The example in \Cref{fig:snapping_contradiction} illustrates how there is no choice of snappings $S_{L_i}$ that admit a compatible snapping $S_K$. 
\end{remark}
\begin{figure}
    \centering
    \includegraphics[width = 0.45\textwidth]{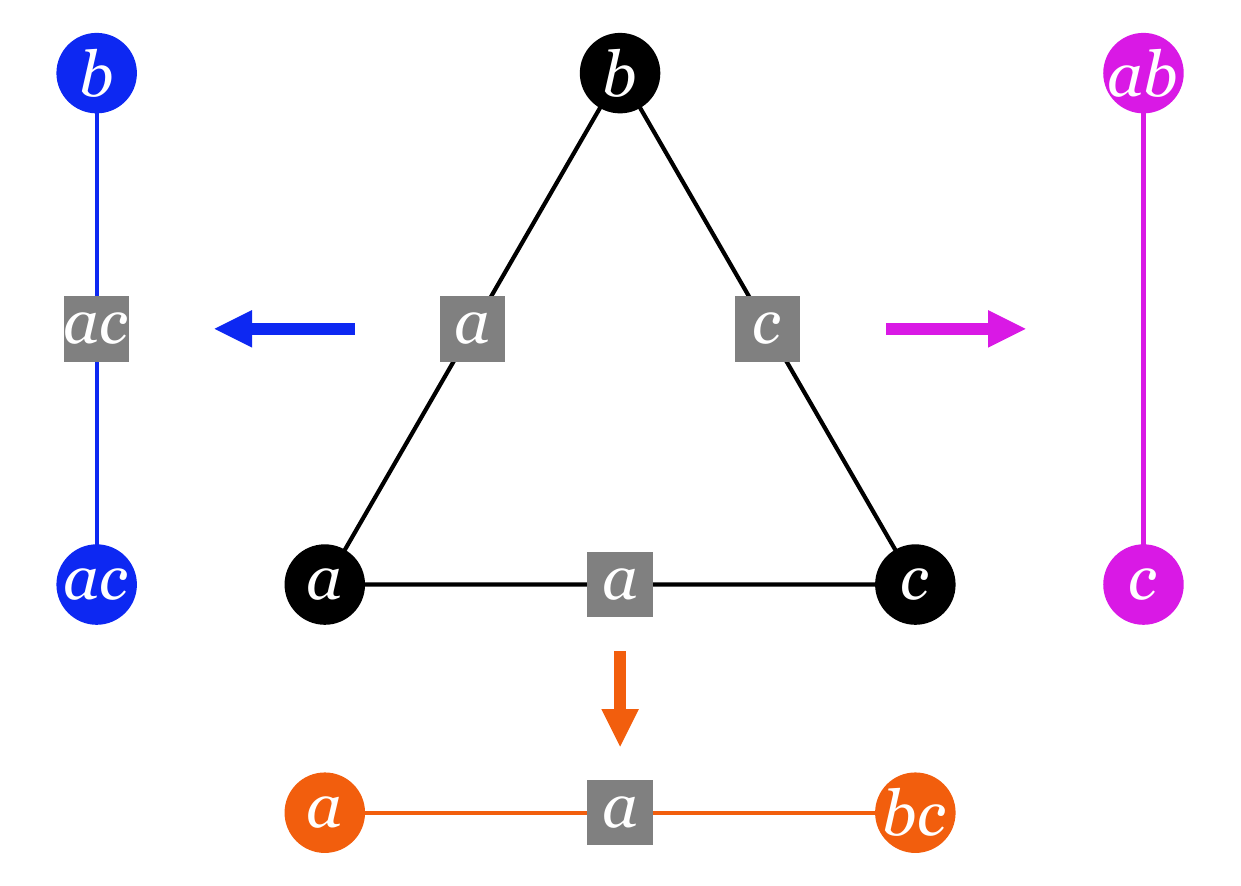}
    \caption{Consider the boundary of a two-simplex $K$ and the three possible surjective simplicial maps from $K$ to the closure of edge. The images of the maps on vertices is marked out in the diagram. For the (arbitrarily chosen) snapping on the left hand side edge, we illustrate the only snapping on the bottom edge, such that there is a snapping on $K$ that is simultaneously compatible with snappings on both the left and bottom edge. There is no snapping on the remaining edge that is compatible with this snapping. In other words, there is no choice of snappings on the three codomains, where there is a snapping on the pre-image such that  \cref{dgm:snapping_functorial} commutes for all three maps.}
    \label{fig:snapping_contradiction}
\end{figure}

\subsection{Homology} \label{ssec:homology_groupoid}
The concept of group homology extends to that of groupoids. We refer the reader to~\cite[Ch.6]{Weibel1994AnAlgebra} for a comprehensive account of group homology, and give an elementary account of the basics of groupoid homology, deferring details to~\cite[Ch.16]{Higgins1971CategoriesGroupoids}. 

We first define the notion of a \emph{simplex} of the groupoid $\grpd{C}$. Recall the simplicial groupoid $\Delta^n$ \Cref{ex:simplicial_grpd}. An $n$-simplex of $\grpd{C}$ is a groupoid homomorphism $\sigma: \Delta^n \to \grpd{C}$; we denote the set of $n$-simplices of $\grpd{C}$ as $\Sigma_n(G)$. In particular, $\Sigma_0(\grpd{C})$ is the set of objects of $\grpd{C}$; $\Sigma_1(\grpd{C})$ is the set of morphisms of $\grpd{C}$; and $\Sigma_2(\grpd{C})$ is the set of triplets of morphisms $(f,g,h)$ of $\grpd{C}$ satisfying $h = fg$. Denoting $\phi_k^n: \Delta^{n-1} \hookrightarrow \Delta^n$ to be the groupoid monomorphism including the $k$\textsuperscript{th} face of $\Delta^n$ into $\Delta^n$ (that is, the image is the simplex with the $k$\textsuperscript{th} object deleted), we induce a function $\psi_k^n: \Sigma_n \to \Sigma_{n-1}$ where $\sigma \mapsto \sigma \circ \phi_k^n$. A groupoid homomorphism $F: \grpd{C} \to \grpd{D}$ induces a function $\Sigma_k(\grpd{C}) \to \Sigma_k(\grpd{D})$ where $\sigma: \Delta^n \to \grpd{C}$ is sent to a simplex $F \circ \sigma: \Delta^n \to \grpd{D}$.

We fix an abelian group $A$ to be the homology coefficient group. We define the $k$-chains of $\grpd{C}$ with $A$-coefficients to be the direct sum over the abelian groups $A$ indexed over the $k$-simplices $\sigma \in \Sigma_k(\grpd{C})$:
\begin{equation}
    \ChGrpd{k}{\grpd{C};M} := \bigoplus_{\sigma \in \Sigma_k(\grpd{C})} A.
\end{equation}
The face relations induce the boundary maps $\partial_k: \ChGrpd{k}{\grpd{C};M} \to \ChGrpd{k-1}{\grpd{C};M}$ that is the linear extension of 
\begin{equation}
    \partial \sigma=\sum_{k=0}^n (-1)^k \psi_k^n \sigma.
\end{equation}
We can check via standard arguments that $\partial^2= 0$. We then define the $k$\textsuperscript{th} homology of the groupoid $\HGrpd{k}{\grpd{C};M}$ with $A$ coefficients as the chain complex of $(\ChGrpd{k}{\grpd{C};M}, \partial)$.

Passing from groupoids to homology is functorial. A groupoid homomorphism $F: \grpd{C} \to \grpd{D}$ induces morphisms of complexes $\ChGrpd{\bullet}{\grpd{C};A} \to\ChGrpd{\bullet}{\grpd{D};A}$, and homomorphism of homology groups $\HGrpd{\bullet}{F;A}: \HGrpd{\bullet}{\grpd{C};A} \to \HGrpd{\bullet}{\grpd{D};A}$. 
Furthermore, if $\theta: F \Rightarrow G$ is a homotopy of groupoid morphisms, then we induce identical homomorphisms $\HGrpd{\bullet}{F;A} = \HGrpd{\bullet}{G;A}$~\cite[Theorem 15]{Higgins1971CategoriesGroupoids}. As a consequence, an equivalence of groupoids $F: \grpd{C} \to \grpd{D}$ induces an isomorphism $\HGrpd{\bullet}{\grpd{C};A} \xrightarrow{\cong} \HGrpd{\bullet}{\grpd{D};A}$. 

We collect some further facts about the homology of groupoids. Here on we suppress the coefficient group $A$ in our notation. First, if $\grpd{C}$ has connected components $\grpd{C}^{(i)}$, then $\HGrpd{\bullet}{\grpd{C}} = \bigoplus_i \HGrpd{\bullet}{\grpd{C}^{(i)}}$; if $\grpd{C}$ is connected, and $\grpd{C}_v$ is a vertex group of $\grpd{C}$, then the equivalence of groupoids between a connected groupoid and any of its vertex groups implies $\HGrpd{\bullet}{\grpd{C}_v} \xrightarrow{\cong} \HGrpd{\bullet}{\grpd{C}}$. Since the groupoid homology of a group $G$ is its group homology, for a connected groupoid $\grpd{C}$ we thus have $\HGrpd{0}{\grpd{C};A} \cong A$ and $\HGrpd{1}{\grpd{C};A} \cong {\grpd{C}_v}/[\grpd{C}_v, \grpd{C}_v] \otimes_{\Z} A$ due to~\cite[Theorem 6.1.12]{Weibel1994AnAlgebra}. We reproduce an elementary proof in \Cref{app:grouphomology} for completeness.
\begin{restatable}{proposition}{grouphomology}
    \label{prop:grp_homology}
    If $G$ is a group, and $A$ is an abelian group, then
    \begin{align}
        \HGrpd{0}{G; A} & \cong A \\
        \eta: \HGrpd{1}{G;A} & \xrightarrow{\cong} \abel{G} \otimes A.
    \end{align}
    The isomorphism $\eta$ is evaluated on a representative element of a class of $\HGrpd{1}{G;A}$ as 
    \begin{equation}
        \textstyle \eta: \left[\sum_i \langle  g_i \rangle \otimes a_i \right] \mapsto \sum_i g_i \otimes a_i,
    \end{equation}
    where $\langle g \rangle$ to denote elements of $G$ as a basis for the free abelian group $\ChGrpd{1}{G} = \Z[G]$; the left hand side sum is a formal sum, while the right hand side sum denotes addition in the abelian group $\abel{G} \otimes A$.
\end{restatable}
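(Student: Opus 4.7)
The plan is to compute the chain complex of $G$ directly from the simplicial definition in \Cref{ssec:homology_groupoid}, and then hand-craft an inverse to $\eta$. Since $G$ has a single object, $\Sigma_0(G)$ is a one-point set and $\ChGrpd{0}{G;A} \cong A$. A groupoid homomorphism $\Delta^1 \to G$ is determined by the image of the unique non-identity morphism, so $\Sigma_1(G) \cong G$ and $\ChGrpd{1}{G;A} \cong \bigoplus_{g \in G} A$. Likewise $\Sigma_2(G)$ is in bijection with pairs $(f,g) \in G \times G$, corresponding to the triple of morphisms $(f, g, fg)$. Unpacking the face maps yields $\partial_1 (\langle g \rangle \otimes a) = 0$, since both faces of a $1$-simplex land on the unique object of $G$, and $\partial_2 (\langle f, g \rangle \otimes a) = \langle g \rangle \otimes a - \langle fg \rangle \otimes a + \langle f \rangle \otimes a$.

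Because $\partial_1 = 0$, one has $\HGrpd{0}{G;A} = \ChGrpd{0}{G;A} \cong A$, which gives the first isomorphism. For the second, define $\eta$ at the level of chains by $\langle g \rangle \otimes a \mapsto g \otimes a \in \abel{G} \otimes A$ and extend additively. A direct check shows $\eta \circ \partial_2 = 0$: the image of the generating $2$-boundary is $g \otimes a - (fg) \otimes a + f \otimes a$, which vanishes by bilinearity and the defining relation of $\abel{G}$. Hence $\eta$ descends to a well-defined homomorphism $\HGrpd{1}{G;A} \to \abel{G} \otimes A$ of the form stated in the proposition.

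To prove $\eta$ is an isomorphism, I construct a candidate inverse $\zeta : \abel{G} \otimes A \to \HGrpd{1}{G;A}$ by $g \otimes a \mapsto [\langle g \rangle \otimes a]$. Three identities in $\HGrpd{1}{G;A}$ make this well-defined: (i) $[\langle e \rangle \otimes a] = 0$, obtained from the boundary of the $2$-simplex $(e, e)$ tensored with $a$; (ii) $[\langle g^{-1} \rangle \otimes a] = -[\langle g \rangle \otimes a]$, obtained from the boundary of $(g, g^{-1})$ together with (i); and (iii) $[\langle fg \rangle \otimes a] = [\langle f \rangle \otimes a] + [\langle g \rangle \otimes a]$, which is a rearrangement of the boundary relation for $(f,g)$. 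Identities (ii) and (iii) imply that for each fixed $a$ the map $g \mapsto [\langle g \rangle \otimes a]$ is a group homomorphism into an abelian group, hence kills commutators and factors through $\abel{G}$; $A$-bilinearity then extends it to $\abel{G} \otimes A$. Finally, evaluating $\eta \circ \zeta$ and $\zeta \circ \eta$ on generators gives the identity on each.

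The main obstacle is verifying the small identities (i)--(iii), since the combinatorial boundary formula does not visibly treat the identity element or inverses as distinguished; they must be coaxed out by making judicious choices of degenerate $2$-simplices. Once those are in hand, the remainder of the argument is formal, and the explicit formula for $\eta$ in the statement is precisely the descent of the chain-level map constructed above.
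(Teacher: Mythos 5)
Your proof is correct, and it takes a genuinely different route to injectivity and to general coefficients than the paper does. The chain-level computation ($\Sigma_1(G)\cong G$, $\Sigma_2(G)\cong G\times G$, $\partial_1=0$, $\partial_2(\langle f,g\rangle\otimes a)=\langle g\rangle\otimes a-\langle fg\rangle\otimes a+\langle f\rangle\otimes a$) and the descent of $\eta$ via $\eta\circ\partial_2=0$ coincide with the paper's verification that $\mathcal{B}_1(G)\subseteq\ker\eta$. Where you diverge is in establishing that $\eta$ is an isomorphism: the paper works first with $A=\Z$, proves the reverse containment $\ker\eta\subseteq\mathcal{B}_1(G)$ by a telescoping rewriting of any kernel element as a sum of boundaries (so that $\eta$ identifies $\HGrpd{1}{G}=\ChGrpd{1}{G}/\mathcal{B}_1(G)$ with $\abel{G}$), and then passes to arbitrary $A$ by right-exactness of $-\otimes A$ applied to the presentation $\mathcal{B}_1(G)\hookrightarrow\ChGrpd{1}{G}\twoheadrightarrow\abel{G}$. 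You instead work with $A$-coefficients throughout and build an explicit two-sided inverse $\zeta:\abel{G}\otimes A\to\HGrpd{1}{G;A}$, whose well-definedness rests on the homology-level identities (i)--(iii) extracted from the degenerate $2$-simplices $(e,e)$, $(g,g^{-1})$, $(f,g)$, followed by the universal properties of abelianisation and the tensor product; your identities are correctly derived, and checking both composites on generators suffices since $\partial_1=0$ makes the classes $[\langle g\rangle\otimes a]$ generate $\HGrpd{1}{G;A}$. What each approach buys: the paper's version isolates a clean $\Z$-statement ($\ker\eta=\mathcal{B}_1(G)$) and leans on homological algebra to handle coefficients, at the cost of the somewhat delicate telescoping/kernel argument; yours avoids any kernel computation and any tensoring step, obtaining injectivity for free from the existence of $\zeta$, at the cost of verifying that $\zeta$ is well defined on $\abel{G}\otimes A$ -- which you do.
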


In the context of this paper, we are mainly concerned with relating the homology of the fundamental groupoid of a space with the singular homology of the underlying space, we restrict our treatment here to homology up to degree one, as the fundamental groupoid retains only the one-dimensional topology of the underlying space. Due to the Hurewicz homomorphism~\cite[\S 2.A]{HatcherAlgebraicTopology} $\abel{\fungroup{X,x}} \cong H_1(X)$ for the singular homology of path-connected spaces, and $\abel{\Egroup{K,u}} \cong H_1(K)$~\cite[\S 5.1.3]{Stillwell2012ClassicalTheory} for the first simplicial homology group of a path-connected complex, we obtain the following result by considering the vertex groups of the fundamental and edge complexes: for $k=0,1$, 
    \begin{align}
        \HGrpd{k}{\Fgpd{X};A} &\cong H_k(X;A) \\
        \HGrpd{k}{\Egpd{K};A} &\cong H_k(K;A). 
    \end{align}
Because we have an equivalence of groupoids between the edge groupoid of a simplicial complex and the fundamental groupoid of its geometric realisation, this also implies the equivalence of singular and simplicial homology in the special cases of degree = 0,1. 

We show below that regarding $\mathscr{H}_\bullet 
\circ \Pi$ and $H_\bullet$ as functors from topological spaces to abelian groups, we have a natural isomorphism between them.

\begin{lemma}\label{lem:fun_grpd_chains} Let $X$ be a topological space. The function $\gamma \mapsto [\gamma]$ sending paths to their homotopy class rel end points, induces for $k= 0,1$ isomorphisms $\mathscr{s}_k: H_k(X;A) \xrightarrow{\cong} \HGrpd{k}{\Fgpd{X};A}$, given on representative cycles by
\begin{align}
    \mathscr{s}_0: \sum_i x_i \otimes a_i + B_0(X;A) &\mapsto \sum_i x_i \otimes a_i + \mathcal{B}_0({\Fgpd{X};A}); \\
    \mathscr{s}_1: \sum_i \sigma_i \otimes a_i + B_1(X;A) &\mapsto \sum_i [\sigma_i] \otimes a_i + \mathcal{B}_1({\Fgpd{X};A}).
\end{align}
\end{lemma}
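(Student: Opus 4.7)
The plan is to promote the assignment $\gamma \mapsto [\gamma]$ to a chain map $\widetilde{\mathscr{s}}_\bullet: C_\bullet(X;A) \to \ChGrpd{\bullet}{\Fgpd{X};A}$ for $\bullet = 0, 1$ and show the induced maps on homology are isomorphisms. On 0-chains, $\widetilde{\mathscr{s}}_0$ is the identity on the shared free $A$-module with basis $X = \mathrm{Ob}(\Fgpd{X})$. On 1-chains, $\widetilde{\mathscr{s}}_1(\sigma) = [\sigma]$ sends a singular simplex $\sigma: I \to X$ to its rel-endpoint homotopy class, which is a morphism in $\Fgpd{X}$ and hence a 1-simplex of the groupoid. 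The chain map condition reduces to comparing $\partial_1 \sigma = \sigma(1) - \sigma(0)$ with $\partial_1 [\sigma] = \psi_0^1 [\sigma] - \psi_1^1 [\sigma]$, where the face maps pick out the target and source of the morphism $[\sigma]$, so both sides equal $\sigma(1) - \sigma(0)$.

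For $k = 0$, since $\widetilde{\mathscr{s}}_0$ is the identity on the underlying $A$-module, it suffices to check that the boundary submodules coincide. In both complexes, $\imag(\partial_1)$ is generated by the differences $y - x$ with $x, y$ joined by a path, equivalently, with $\Fgpd{X}(x,y) \neq \emptyset$. Thus $\mathscr{s}_0$ is the identity isomorphism on quotients.

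For $k = 1$, I would first decompose $X$ into path components; both $H_1$ and $\HGrpd{1}{\Fgpd{X};A}$ split as direct sums over these components (which are the connected components of $\Fgpd{X}$), and $\widetilde{\mathscr{s}}$ respects this splitting. Assuming $X$ is path-connected with basepoint $x_0$, the equivalence of groupoids between $\Fgpd{X}$ and its vertex group $\fungroup{X,x_0}$ (as a one-object groupoid) gives $\HGrpd{1}{\Fgpd{X};A} \cong \HGrpd{1}{\fungroup{X,x_0};A} \cong \abel{\fungroup{X,x_0}} \otimes A$ by Proposition~\ref{prop:grp_homology}. On the singular side, the Hurewicz theorem together with the universal coefficient theorem (applicable since $H_0(X;\Z) \cong \Z$ is free) yields $H_1(X;A) \cong \abel{\fungroup{X,x_0}} \otimes A$. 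Evaluating $\mathscr{s}_1$ on a loop $\gamma$ based at $x_0$ — a 1-cycle on both sides — the class tracks to $[\gamma] \otimes 1$ under either identification, so $\mathscr{s}_1$ realises the comparison isomorphism.

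The main obstacle is rigorously verifying the preceding compatibility for an \emph{arbitrary} 1-cycle $c = \sum a_i \sigma_i$, whose summands need not be based loops. The standard remedy is to fix a path $\alpha_y$ from $x_0$ to each $y \in X$ (with $\alpha_{x_0}$ constant) and replace each $\sigma_i$ by the loop $\alpha_{\sigma_i(0)} \cdot \sigma_i \cdot \overline{\alpha_{\sigma_i(1)}}$; the cycle condition on $c$ ensures the correction terms cancel modulo a boundary. Since path concatenation in $\ChGrpd{\bullet}{\Fgpd{X};A}$ is literal morphism composition, the same substitution is available there, and $\widetilde{\mathscr{s}}_1$ intertwines the two. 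This reduces the general case to the loop case handled above, completing the proof.
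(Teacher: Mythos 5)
Your overall route can be made to work, but as written it has a genuine gap at the very first step: you construct the chain map only in degrees $0$ and $1$ and verify the square against $\partial_1$. That shows $1$-cycles are sent to $1$-cycles, but it does not show that singular $1$-boundaries are sent into $\mathcal{B}_1(\Fgpd{X};A)$, and without that the map $\mathscr{s}_1$ in the statement is not yet defined on homology classes, so there is nothing to compare with the Hurewicz isomorphism. The missing verification is exactly the degree-two compatibility: for a singular $2$-simplex $\tau$ with faces $b=\tau|_{[0,1]}$, $a=\tau|_{[1,2]}$, $c=\tau|_{[0,2]}$ one has $c\simeq b\cdot a$ rel end points, so the image of $\partial\tau=a-c+b$ under $\sigma\mapsto[\sigma]$ is $[a]-[b][a]+[b]$, which is the boundary of the groupoid $2$-simplex $([b],[a],[b][a])$; extending linearly gives $\widetilde{\mathscr{s}}_1(B_1(X;A))\subseteq\mathcal{B}_1(\Fgpd{X};A)$. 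This is precisely the computation the paper performs (quoting the proof of Hatcher's Theorem 2A.1), and your closing paragraph, which asserts that ``the correction terms cancel modulo a boundary'', quietly relies on the same unproved membership.

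Once that check is added, your argument is a genuinely different and viable route from the paper's. The paper works entirely at chain level: it shows $\ker q\subseteq B_1$ and $q(B_1)=\mathcal{B}_1$, deduces $C_1/B_1\cong\ChGrpd{1}{\Fgpd{X}}/\mathcal{B}_1$, and reads off $H_0$ and $H_1$ before passing to coefficients in $A$. You instead split over path components and sandwich $\mathscr{s}_1$ between two known isomorphisms: the Hurewicz map tensored with $A$ on the singular side, and the vertex-group equivalence together with \Cref{prop:grp_homology} on the groupoid side (both ingredients are available in the paper). The resulting square need only be checked on classes of the form $g\otimes a$ with $g$ represented by a loop $\gamma$ at $x_0$, where both composites give the class of $[\gamma]\otimes a$; since the horizontal maps are isomorphisms and the Hurewicz map is surjective, $\mathscr{s}_1$ is forced to be an isomorphism. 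In particular, your final paragraph about conjugating arbitrary cycles by basepoint paths is unnecessary --- it amounts to reproving the Hurewicz theorem --- and can be dropped. The trade-off is that your route leans on Hurewicz and on homotopy invariance of groupoid homology, whereas the paper's chain-level argument is self-contained apart from two explicit computations borrowed from Hatcher.
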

\begin{proof} 
We derive the isomorphism $\mathscr{s}$ from a chain homomorphism 
\begin{equation}
    \begin{tikzcd}
	{B_1(X;A)} & {C_1(X;A)} & {C_0(X;A)} \\
	\mathcal{B}_0({\Fgpd{X};A})& {\ChGrpd{1}{\Fgpd{X};A}} & {\ChGrpd{0}{\Fgpd{X};A}}
	\arrow[hook, from=1-1, to=1-2]
	\arrow[two heads, from=1-1, to=2-1]
	\arrow["{\partial_1}", from=1-2, to=1-3]
	\arrow["q", two heads, from=1-2, to=2-2]
	\arrow["\cong", from=1-3, to=2-3]
	\arrow[hook, from=2-1, to=2-2]
	\arrow["{\partial_1}", from=2-2, to=2-3]
\end{tikzcd}.
\end{equation}
We first consider the case where $A = \Z$. As a short hand, we let $B_1 = B_1(X)$, $C_1 = C_1(X)$, and $\mathcal{B}_1 = \ChGrpd{1}{\Fgpd{X}}$, and $\mathcal{B}_1 = \mathcal{B}_1({\Fgpd{X}})$. 
    
    We first show that $C_1/B_1 \cong \mathcal{B}_1/\mathcal{B}_1$
    Let $q: C_1 \twoheadrightarrow \mathcal{B}_1$ be the unique homomorphism between the free abelian groups induced by the function $\gamma \mapsto [\gamma]$. Explicitly, 
    \[
    q: \sum_i n_i \sigma_i \mapsto \sum_i n_i [\sigma_i]
    \]
    where $\sigma_i$ are singular one-simplices. We show that $\ker q \subset B$. Let us write any element of $C_1$ as $c = (-1)^{k_1}\sigma_1 + \cdots + (-1)^{k_m} \sigma_m$, where $\sigma_i$ are allowed to repeat. If $q(c) = 0$, then terms in the formal sum can be paired and indexed such that $\sigma_{2k} \simeq \sigma_{2k+1}$ and they have oppositely signed coefficients. Let us then write $c$ as $c = (-1)^{k_1}(\sigma_1 - \sigma_2) + \cdots + (-1)^{k_{m-1}}(\sigma_{m-1} - \sigma_{m})$. Since $\sigma_{2k} \simeq \sigma_{2k+1}$, each $(\sigma_{2j} - \sigma_{2j+1})$ term in the sum is a boundary of a singular two-simplex (see~\cite{HatcherAlgebraicTopology}, theorem 2A.1, points \emph{(i-ii)} of proof). Thus, $\ker q \subset B$.

    We first consider the following homomorphisms between such groups
    \[\begin{tikzcd}[cramped]
	B_1 & C_1 & {C_1/B_1} \\
	{\mathcal{B}_1} & {\mathcal{B}_1} & {\mathcal{B}_1/\mathcal{B}_1}
	\arrow[hook, from=1-1, to=1-2]
	\arrow["{q\rvert_B}"', dashed, from=1-1, to=2-1]
	\arrow[two heads, from=1-2, to=1-3]
	\arrow["q"', two heads, from=1-2, to=2-2]
	\arrow["q^\ast", dashed, from=1-3, to=2-3]
	\arrow[hook, from=2-1, to=2-2]
	\arrow[two heads, from=2-2, to=2-3]
\end{tikzcd},\]
    where the homomorphism $q$ sends simplicial chains $\sum_i c_i \sigma_i$ to the formal sum of homotopy classes of paths $\sum_i c_i [\sigma_i]$. We proceed to verify that the homomorphisms $q\rvert_B$ and $q^\ast$ -- the induced homomorphisms on quotients -- exist, and commute with the other homomorphisms in the diagram. 

    Let us first consider $q\rvert_B$ and check that $q(B_1) \subseteq \mathcal{B}_1$. Consider the boundary of singular two-simplex $a + b - c$. Since $c \simeq a \cdot b$, we have $q(a+ b - c)= [a] + [b] - [a\cdot b] = [a] + [b] - [a] \cdot [b] \in \mathcal{B}$. Extending linearly to boundary chains shows that $q(B_1) \subseteq \mathcal{B}_1$. As such $q$ induces a homomorphism $q^\ast: C_1/B_1 \to \mathscr{C_1/B_1}$. Moreover, $q\rvert_B$ is a surjection onto $\mathcal{B}_1$, since every $[a]+[b]-[a \cdot b]$ is the image of a boundary of a two-simplex (see~\cite{HatcherAlgebraicTopology}, proof of theorem 2A.1, item \emph{(iii)}). As such, $q(B_1) = \mathcal{B}_1$, and the diagram above commutes. This implies $q^\ast$ is injective. Because $q$ is surjective, we thus conclude that $q^\ast$ is an isomorphism.

    If we consider the zero chain groups, we have an isomorphism $C_0(X) = \ChGrpd{0}{\Fgpd{X}}$, as they are free abelian groups over the same basis. From the definition of the first boundary homomorphism of either chains, which sends (the class of) path $\gamma$ to $\gamma(1) - \gamma(0)$, the following commutes and thus we have a chain homomorphism 
    \begin{equation*}
        \begin{tikzcd}
	{B_1} & {C_1} & {C_0} \\
	{\mathcal{B}_1} & {\mathcal{B}_1} & {\mathcal{B}_0}
	\arrow[hook, from=1-1, to=1-2]
	\arrow[two heads, from=1-1, to=2-1]
	\arrow["{\partial_1}", from=1-2, to=1-3]
	\arrow["q", two heads, from=1-2, to=2-2]
	\arrow["\cong", from=1-3, to=2-3]
	\arrow[hook, from=2-1, to=2-2]
	\arrow["{\partial_1}", from=2-2, to=2-3]
\end{tikzcd}.
    \end{equation*}
    Because the right hand side square commutes and $q$ is surjective, we have an isomorphism $\HGrpd{0}{X} \cong H_0(X)$. Furthermore, since $q^\ast: C_1/B_1 \to \mathscr{C_1/B_1}$ is an isomorphism, the diagram above implies an isomorphism $\mathscr{s}: H_1(X) \xrightarrow{\cong} \HGrpd{1}{\Fgpd{X}}$ between the kernels of $\partial_1$ restricted to the quotient groups. 

    Having taken care of the case $A = \Z$, we consider coefficients in arbitrary abelian groups. Taking tensor products of the diagram above with $\otimes A$ yield the corresponding homomorphisms for the chains in $A$-coefficients. Because the universal coefficient theorem in degree $ k \leq 1$ implies there are natural homomorphisms $H_k(C_\bullet) \otimes A \xrightarrow{\cong} H_k(C_\bullet; A)$ for $C_\bullet$ a chain complex of free abelian groups, the induced homomorphisms $\mathscr{s}_k: H_k(X;A) \to \HGrpd{k}{\Fgpd{X};A}$ is an isomorphism for all abelian groups $A$. 
\end{proof}

\begin{proposition}\label{prop:Hfgpd_func}
    For $k =0,1$, there is a natural isomorphism between the singular homology functor $H_k: \TopS \to \AbGrp$ and $\mathscr{H}_k \circ \Pi: \TopS \xrightarrow{\Pi} \Grpd \xrightarrow{\mathscr{H}_k} \AbGrp$. Concretely, for a continuous map $f: X \to Y$ between topological spaces, the isomorphisms $\mathscr{s}_k$ given in \Cref{lem:fun_grpd_chains} commute with the induced homomorphisms below:
    \begin{equation}
        \begin{tikzcd}
	{H_k(X;A)} & {H_k(Y;A)} \\
	{\HGrpd{k}{\Fgpd{X};A}} & {\HGrpd{k}{\Fgpd{Y};A}}
	\arrow["{H_k(f)}", from=1-1, to=1-2]
	\arrow["\cong", "{\mathscr{s}^X_k}"', from=1-1, to=2-1]
	\arrow["{\mathscr{s}^Y_k}", "\cong"', from=1-2, to=2-2]
	\arrow["\HGrpd{k}{\Fgpd{f}}", from=2-1, to=2-2]
\end{tikzcd}.
    \end{equation}
\end{proposition}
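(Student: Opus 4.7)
The plan is to verify naturality by chasing a generator around the square at the level of chains, then descending to homology using the chain-level identifications established in the proof of \Cref{lem:fun_grpd_chains}. Since the vertical arrows $\mathscr{s}_k^X$ and $\mathscr{s}_k^Y$ are already known to be isomorphisms, all that remains is the commutativity of the square, and that reduces to a check on generating cycles.

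First I would dispose of the case $k=0$. Here $C_0(X;A) = \ChGrpd{0}{\Fgpd{X};A}$ literally coincides as the free $A$-module on the points of $X$, and the chain map induced by $f$ on singular chains sends $x \otimes a \mapsto f(x) \otimes a$, as does the chain map induced by $\Fgpd{f}$ on groupoid chains (since $\Fgpd{f}$ acts as $f$ on objects). Passing to $H_0$ and $\HGrpd{0}{\cdot;A}$, the square commutes tautologically on the representatives given in \Cref{lem:fun_grpd_chains}.

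For $k=1$ I would lift the square to the chain level. Recall from the lemma's proof the surjective chain map $q: C_1(X;A) \twoheadrightarrow \ChGrpd{1}{\Fgpd{X};A}$ defined on generators by $\sigma \otimes a \mapsto [\sigma] \otimes a$; the analogous $q^Y$ exists for $Y$. For a singular $1$-simplex $\sigma:\Delta^1 \to X$, the top-then-right composition sends $\sigma \otimes a$ to $(f\circ\sigma)\otimes a$ and then to $[f\circ\sigma]\otimes a$, while the bottom-then-right composition sends $\sigma\otimes a$ to $[\sigma]\otimes a$ and then, by definition of the induced groupoid homomorphism $\Fgpd{f}$ on paths (which acts by post-composition with $f$), to $[f\circ\sigma]\otimes a$. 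So
\begin{equation*}
q^Y \circ C_1(f;A) \;=\; \ChGrpd{1}{\Fgpd{f};A} \circ q
\end{equation*}
on generators, hence on all of $C_1(X;A)$ by linearity. Combined with the analogous trivially commuting square for $q$ in degree zero, this gives a commutative diagram of chain maps, which in turn yields commutativity of the induced square on $H_1$, i.e. exactly the required naturality.

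The only mild subtlety, which I would flag rather than belabour, is the coefficient extension from $\Z$ to an arbitrary abelian group $A$. Since all chain groups in sight are free abelian and the universal coefficient theorem gives a natural isomorphism $H_k(C_\bullet) \otimes A \xrightarrow{\cong} H_k(C_\bullet;A)$ for $k=0,1$, the $\Z$-coefficient naturality square tensors up to the $A$-coefficient naturality square. I expect no genuine obstacle here: the essential content is the identity $\Fgpd{f}([\sigma]) = [f\circ\sigma]$, which is built into the definition of $\Pi$ as a functor, and the rest is bookkeeping between chain-level and homology-level representatives.
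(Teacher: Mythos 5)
Your proposal is correct and matches the paper's argument in substance: the paper also verifies commutativity by direct evaluation on representative cycles, where both composites send $\sum_i \sigma_i \otimes a_i$ to $\sum_i [f\circ\sigma_i]\otimes a_i$, with the $k=0$ case and the coefficient extension handled exactly as you indicate. Your phrasing at the chain level followed by descent to homology is only a cosmetic repackaging of the same check, so there is nothing to add.
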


\begin{proof}
    We show this from direct evaluation. For $k = 1$, and a representative cycle $h = \sum_i \sigma_i \otimes a_i + B_1(X;A)$, 
    \begin{align*}
       \mathscr{s}_1^Y \circ H_1(f) (h) &= \mathscr{s}_1^Y (\sum_i (f \circ \sigma_i) \otimes a_i + B_1(Y;A)) = \sum_i [f \circ \sigma_i] \otimes a_i + B_1(\Fgpd{Y};A) \\
       \HGrpd{1}{\Fgpd{f}} \circ \mathscr{s}_1^X (h) &= \HGrpd{1}{\Fgpd{f}}(\sum_i [\sigma_i] \otimes a_i + B_1(
       \Fgpd{X};A)) = \sum_i [f \circ \sigma_i] \otimes a_i + B_1(\Fgpd{Y};A) \\
       &=  \mathscr{s}_1^Y \circ H_1(f) (h).
    \end{align*}
    The case for $k = 0$ follows along similar lines upon direct evaluation of such homomorphisms. 
\end{proof}

We show similar results for the edge groupoid.

\begin{lemma}\label{lem:egrpd_chains} Let $K$ be a simplicial complex. The function $uv \mapsto [uv]$ sending edges to their homotopy class of edge paths rel end points, , induces for $k= 0,1$ isomorphisms from simplicial homology groups $\mathscr{s}_k: H_k(K;A) \xrightarrow{\cong} \HGrpd{k}{\Egpd{K};A}$; they are given on representative cycles by
\begin{align}
    \mathscr{s}_0: \sum_i v_i \otimes a_i + B_0(K;A) &\mapsto \sum_i v_i \otimes a_i + \mathcal{B}_0({\Egpd{K};A}); \\
    \mathscr{s}_1: \sum_i \sigma_i \otimes a_i + B_1(K;A) &\mapsto \sum_i [\sigma_i] \otimes a_i + \mathcal{B}_1(\Egpd{K};A).
\end{align}
\end{lemma}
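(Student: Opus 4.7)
The plan is to mirror the proof of \Cref{lem:fun_grpd_chains}, replacing singular chains by simplicial chains and the fundamental groupoid by the edge groupoid. The main new feature is that the analogue of the map $q$ is no longer surjective on $1$-chains (only edges, not arbitrary edge paths, lie in its image), so the reduction of general groupoid $1$-cycles to cycles in the image of $q$ must be performed modulo boundaries rather than at the chain level.

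First, I would fix an orientation of each edge of $K$ and define $q_1: C_1(K) \to \ChGrpd{1}{\Egpd{K}}$ on basis elements by sending an oriented edge $[u,v]$ to the groupoid $1$-simplex $[uv]$, together with the canonical identification $q_0: C_0(K) \xrightarrow{=} \ChGrpd{0}{\Egpd{K}}$. Both sides of $\partial_1 q_1 = q_0 \partial_1$ send $[u,v]$ to $v - u$, so $q_\bullet$ is a chain map in degrees $0$ and $1$. Next I would check that $q_1$ sends simplicial boundaries to groupoid boundaries: for a $2$-simplex $\{u,v,w\} \in K$, the simplicial boundary $[v,w] - [u,w] + [u,v]$ maps to $[vw] - [uw] + [uv]$, and elementary contraction across the triangle $uvw$ gives $[uw] = [uv] \cdot [vw]$ in $\Egpd{K}$, so the triple $([uv], [vw], [uw])$ is a valid groupoid $2$-simplex whose boundary is exactly this expression. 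Tensoring with $A$ throughout yields well-defined homomorphisms $\mathscr{s}_k: H_k(K;A) \to \HGrpd{k}{\Egpd{K};A}$ for $k = 0, 1$ with the formulas stated in the lemma.

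For surjectivity of $\mathscr{s}_1$, I would exploit the relation $[fg] \sim [f] + [g]$ modulo boundaries, which arises because the groupoid $2$-simplex $(f, g, fg)$ has boundary $g - fg + f$. By induction on edge path length, the class of any morphism $[v_0 v_1 \cdots v_m]$ is homologous in $\ChGrpd{1}{\Egpd{K};A}$ to the edge sum $\sum_{i=1}^m [v_{i-1} v_i]$, which lies in the image of $q_1$. Hence every $1$-cycle in $\ChGrpd{1}{\Egpd{K};A}$ is homologous to the image of a $1$-cycle in $C_1(K;A)$, so $\mathscr{s}_1$ is surjective.

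For injectivity, I would invoke the abstract identifications $H_1(K;A) \cong \abel{\fungroup{|K|,u}} \otimes A \cong \abel{\Egroup{K,u}} \otimes A \cong \HGrpd{1}{\Egpd{K};A}$, obtained (after decomposing into connected components) by composing the Hurewicz isomorphism, the edge/fundamental group equivalence cited above, \Cref{prop:grp_homology}, and the equivalence between a connected groupoid and any of its vertex groups. A traceback on a representative cycle of the form $[v_0 v_1 \cdots v_m v_0]$ shows $\mathscr{s}_1$ agrees with this composite, hence is injective. The main obstacle I anticipate lies in this last verification: identifying $H_1(K)$ with $\abel{\Egroup{K,u}}$ via Hurewicz typically requires choosing a spanning tree to conjugate cycles into loops based at $u$, and checking that the resulting formula matches $\mathscr{s}_1$ modulo $\mathcal{B}_1(\Egpd{K};A)$ requires careful bookkeeping of these choices. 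The case $k = 0$ is immediate since $q_0$ is an isomorphism already at the chain level.
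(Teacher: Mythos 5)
Your construction of the chain map $q_\bullet$, the check that $q_1(B_1)\subseteq \mathcal{B}_1(\Egpd{K};A)$ via the groupoid $2$-simplex $([uv],[vw],[uw])$, and the surjectivity argument (reducing $[v_0v_1\cdots v_m]$ to $\sum_i [v_{i-1}v_i]$ modulo boundaries) essentially reproduce the first half of the paper's proof. The only loose end there is orientation and degeneracy bookkeeping: the terms $[v_{i-1}v_i]$ need not lie in the image of $q_1$ as you defined it, since the edge may occur with the opposite orientation or with $v_{i-1}=v_i$; you need the relations $\langle [vu]\rangle \equiv -\langle [uv]\rangle$ and $\langle \iden_u\rangle \equiv 0$ modulo $\mathcal{B}_1(\Egpd{K};A)$, which the paper records explicitly before concluding surjectivity. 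That is minor.

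The genuine gap is injectivity. You propose to deduce it by showing that $\mathscr{s}_1$ coincides with the composite $H_1(K;A)\cong \abel{\Egroup{K,u}}\otimes A\cong \HGrpd{1}{\Egpd{K};A}$, but you only assert the coincidence (``a traceback \ldots shows'') and then yourself flag that the verification --- matching the spanning-tree/Hurewicz description of $H_1(K)\cong\abel{\Egroup{K,u}}$ against the chain-level formula for $\mathscr{s}_1$ --- is not carried out. This step cannot be waved away: $K$ may be infinite, so $H_1(K;A)$ need not be finitely generated, and a surjection between abstractly isomorphic abelian groups is not automatically injective; injectivity stands or falls with the claimed agreement, and establishing that agreement (with base points, connected components, and general coefficients $A$ all handled) is essentially the whole difficulty of the lemma relocated rather than removed. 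The paper avoids this machinery with a short explicit argument you could adopt: define $p:\ChGrpd{1}{\Egpd{K}}\to C_1(K)$ by choosing for each class of edge paths a fixed representative, taking the edge $uv$ itself whenever the class is $[uv]$, and sending the class to the corresponding sum of edges; then $p(\mathcal{B}_1(\Egpd{K}))\subseteq B_1(K)$ because a null-homotopic edge loop bounds in simplicial homology, and $p\circ q=\iden$ on $C_1(K)$ by the choice of representatives, so $q(c)\in\mathcal{B}_1(\Egpd{K})$ forces $c=p(q(c))\in B_1(K)$, i.e. $q^\ast$ is injective; the case of general $A$ then follows by the same right-exactness/universal-coefficient argument as in \Cref{lem:fun_grpd_chains}. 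Either carry out your traceback in full or substitute this retraction argument; as written, the injectivity half of your proposal is incomplete.
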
 

\begin{proof} 
In a similar vein to \Cref{lem:fun_grpd_chains}, we derive the isomorphisms $\mathscr{s}$ from a chain homomorphism. Note however that there are subtle differences in the chain homomorphisms: the singular chains surject onto $\ChGrpd{\bullet}{\Fgpd{X}}$, while the simplicial chains inject into $\ChGrpd{\bullet}{\Egpd{K}}$.

\begin{equation}\label{eq:dgm_egrpd_chain}
    \begin{tikzcd}
	{B_1(K;A)} & {C_1(K;A)} & {C_0(K;A)} \\
	{\mathcal{B}_1(\Egpd{K};A)} & {\ChGrpd{1}{\Egpd{K};A}} & {\ChGrpd{0}{\Egpd{K};A}}
	\arrow[hook, from=1-1, to=1-2]
	\arrow[hook, from=1-1, to=2-1]
	\arrow["{\partial_1}", from=1-2, to=1-3]
	\arrow["q", hook, from=1-2, to=2-2]
	\arrow["\cong", from=1-3, to=2-3]
	\arrow[hook, from=2-1, to=2-2]
	\arrow["{\partial_1}", from=2-2, to=2-3]
\end{tikzcd}.
\end{equation}
We first consider the case where $A = \Z$. As a short hand, we let $B = B_1(K)$, $C = C_1(K)$, and $\mathcal{C} = \mathcal{C}(\Egpd{X})$, and $\mathcal{B} = \mathcal{B}(\Egpd{K})$. Let $q: C \to \mathcal{C}$ be the unique homomorphism between the free abelian groups induced by the function $uv \mapsto [uv]$ sending edges to their homotopy class of paths. We first show that $q(B) \subseteq \mathcal{B}$, and thus we induce a homomorphism $q^\ast: C/B \to \mathcal{C}/\mathcal{B}$. Each element in $B$ is a linear combination over elements $uv + vw - uw$ where $uvw$ is a two-simplex in $K$. Because 
\[q(uv + vw - uw) = \langle[uv] \rangle + \langle[vw] \rangle - \langle[uw] \rangle  = \langle[uv] \rangle + \langle[vw] \rangle - \langle[uv]\cdot [vw] \rangle \in \mathcal{B},\]
we thus have $q(B) \subseteq \mathcal{B}$. In addition, $q^\ast$ is surjective: because $\langle [f]\cdot [g] \rangle \sim \langle [f] \rangle  + \langle [g] \rangle $ in $\mathcal{C}/\mathcal{B}$, and every homotopy class of edge path can be expressed as a composition of classes of edges, any element of $\mathcal{C}/\mathcal{B}$ is represented by a formal sum over edge classes $\langle [uv] \rangle$. Suppose the orientation of the edge class $\langle [vu] \rangle $ is opposite to the fixed orientation of the edge $uv$ as a basis element of $C$. Then we have $\langle [vu] \rangle  + \mathcal{B} = -\langle [uv] \rangle  + \mathcal{B}$, because 
\[\langle [uv] \rangle  + \langle [vu] \rangle  - \langle [uvu] \rangle  \in \mathcal{B} \qtextq{and} \langle [uvu] \rangle = \langle [u] \rangle = \langle [u] \rangle  + \langle [u] \rangle   -\langle [u] \cdot [u]  \rangle  \in \mathcal{B}. \]
We can thus express any element of $\mathcal{C}/\mathcal{B}$ as the image of some formal sum over a basis of oriented edges. Therefore $q^\ast$ is surjective. 

Finally, to show that $q^\ast$ is an isomorphism, we prove that it is also injective. In other words, if a one-chain $c \in C$ is mapped to $q(c) \in \mathcal{B}$, then $c \in B$. We construct a homomorphism $p: \mathcal{B} \to C$ between the free abelian groups by specifying the function between basis elements: for each basis element $[f] \in \mathcal{B}$ corresponding to an edge path, we choose a fixed representative edge path $v_0 \cdots v_m \in [f]$, and send it to $p(\langle [f] \rangle) = v_0v_1 + \cdots v_{m-1}v_m$. Consider then the elements in $\mathcal{B}$.  We have $p(\mathcal{B}) \subseteq B$, because each element $\langle[f] \rangle + \langle [g] \rangle - \langle [f \cdot g] \rangle$ in the spanning set of 
$\mathcal{B}$ is sent to a chain $a_0 a_1  + \cdots +a_{n-1}a_n+ a_{n}a_0$ such that $a_0 a_1 \cdots a_na_0$ is a null-homotopic loop. We recall if an edge loop $a_0 a_1 \cdots a_m$ is null-homotopic, then the chain $a_0a_1 + \cdots + a_{m-1}a_m + a_m a_0$ is in $B$ (see~\cite[p.182]{Armstrong2013BasicTopology}). Hence, $a_0 a_1  + \cdots +a_{n-1}a_n+ a_{n}a_0$ is in $B$ and $p(\mathcal{B}) \subseteq B$.

We now show that $q(c) \in \mathcal{B}$ implies $c \in B$, by showing that $p \circ q$ is the identity on $B$. The homomorphism $p$ depends on the choice of representative of edge paths. For a class of edge paths homotopy equivalent to an edge path $uv$, we always choose the edge $uv$ to be the representative path. Thus, for $c = \sum_i n_i \sigma_i \in \fibre{q}{\mathcal{B}}$,
\[\textstyle p \circ q(c) = \fun{p}{\sum_i n_i \langle [\sigma_i] \rangle} = \sum_i n_i \sigma_i = c \in \imag p \subseteq B.  \]
Hence $q(c) \in \mathcal{B}$ implies $c \in B$. Thus $q^\ast$ is injective and therefore an isomorphism. 

If we consider the zero-chain groups, we have an isomorphism $C_0(K) \cong \ChGrpd{0}{\Egpd{K}}$, as they are both free abelian groups. The right hand side square of \cref{eq:dgm_egrpd_chain} commutes: $\partial_1 \circ q (uv)=  \partial_1 [uv] = u-v$. Because $q^\ast$ is also an isomorphism, the image and kernels of the respective boundary homomorphisms on the top and bottom chains are isomorphic, and thus $q$ induces the isomorphism $\mathscr{s}_k: H_k(K) \xrightarrow{\cong} \HGrpd{k}{\Egpd{K}}$ for $k = 0,1$. 

In the case for arbitrary coefficients, the same homological algebra arguments used in the proof of \Cref{lem:fun_grpd_chains} apply to arrive at the same statements. 
\end{proof}
\begin{proposition}\label{prop:hegrpd_func}
    For $k =0,1$, there is a natural isomorphism between the simplicial homology functor $H_k: \SCpx \to \AbGrp$ and $\mathscr{H}_k \circ \mathcal{E}: \SCpx \xrightarrow{\mathcal{E}} \Grpd \xrightarrow{\mathscr{H}_k} \AbGrp$. Concretely, for a simplicial map $f: K \to L$, the isomorphisms $\mathscr{s}_k$ given in \Cref{lem:egrpd_chains} commute with the induced homomorphisms below:
    \begin{equation}
        \begin{tikzcd}
	{H_k(K;A)} & {H_k(L;A)} \\
	{\HGrpd{k}{\Egpd{K};A}} & {\HGrpd{k}{\Egpd{L};A}}
	\arrow["{H_k(f)}", from=1-1, to=1-2]
	\arrow["\cong", "{\mathscr{s}^X_k}"', from=1-1, to=2-1]
	\arrow["{\mathscr{s}^Y_k}", "\cong"', from=1-2, to=2-2]
	\arrow["\HGrpd{k}{\Egpd{f}}", from=2-1, to=2-2]
\end{tikzcd}.
    \end{equation}
\end{proposition}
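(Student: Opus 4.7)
The plan is to mirror the direct-evaluation argument used in the proof of \Cref{prop:Hfgpd_func}, which in this setting is made possible by the construction of $\mathscr{s}_k$ given in \Cref{lem:egrpd_chains}. Concretely, I would fix a simplicial map $f: K \to L$, take a representative cycle in each degree, and verify that chasing it around the square yields the same element of $\HGrpd{k}{\Egpd{L};A}$.

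First, I would handle the $k = 1$ case. Given a cycle representative $h = \sum_i \sigma_i \otimes a_i + B_1(K;A)$ with each $\sigma_i$ an oriented edge, the induced simplicial map on chains sends $\sigma_i = u_i v_i$ to $f(u_i)f(v_i)$, while $\Egpd{f}$ is characterised on edge classes (\Cref{lem:cocycle_egpd}) by $[u_i v_i] \mapsto [f(u_i)f(v_i)]$. Substituting into the two compositions gives
\begin{align*}
    \mathscr{s}_1^L \circ H_1(f)(h) &= \textstyle\sum_i [f(u_i) f(v_i)] \otimes a_i + \mathcal{B}_1(\Egpd{L};A), \\
    \HGrpd{1}{\Egpd{f}} \circ \mathscr{s}_1^K(h) &= \textstyle\sum_i [f(u_i) f(v_i)] \otimes a_i + \mathcal{B}_1(\Egpd{L};A),
\end{align*}
which agree. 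For $k = 0$, the analogous computation is immediate from the identification of $C_0(K;A)$ with $\ChGrpd{0}{\Egpd{K};A}$ as free $A$-modules on vertices, since both $f_\#$ and $\Egpd{f}$ restrict on objects to the vertex map $v \mapsto f(v)$.

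The only subtlety worth spelling out explicitly is that on a degenerate edge $u u$ (a constant one-simplex), $f_\#$ yields the constant one-simplex $f(u)f(u)$, and $\Egpd{f}$ yields the identity class $[f(u)] = [f(u)f(u)]$, so both compositions continue to agree on such chains. Since a general one-cycle in simplicial homology is a linear combination of oriented edges and the computation is $A$-linear and passes to the quotients by $B_1$ and $\mathcal{B}_1$ respectively, commutativity of the naturality square follows. I do not anticipate any obstacle beyond carefully bookkeeping orientations of edges under $\mathscr{s}_1$, which is precisely the ambiguity that was already resolved in the surjectivity argument for $q^\ast$ in the proof of \Cref{lem:egrpd_chains} (via $\langle [vu] \rangle + \mathcal{B} = -\langle [uv] \rangle + \mathcal{B}$).
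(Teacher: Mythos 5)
Your proposal is correct and takes essentially the same route as the paper, whose proof of this proposition is just the remark that it "can be verified via explicit computation, as in \Cref{prop:Hfgpd_func}" --- exactly the direct evaluation on representative cycles that you carry out. One minor wording point: in the simplicial chain complex an edge collapsed by $f$ is sent to zero rather than to a ``constant one-simplex'', but since the corresponding identity class $\langle [f(u)] \rangle$ lies in $\mathcal{B}_1(\Egpd{L};A)$, your conclusion that the two compositions agree modulo boundaries is unaffected.
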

\begin{proof}
    This can be verified via explicit computation, as in \Cref{prop:Hfgpd_func}.
\end{proof}

While passing the homology of the fundamental or edge groupoid does not encode more information about the underlying space than that of singular or simplicial homology, we make the equivalence plain as we deal with how groupoid homomorphisms induce homomorphisms on the level of homology. In particular, when we discuss $\Gamma$-coverings (where $\Gamma$ is a discrete group) of a space $X$ in the subsequent section, the monodromy of the covering can be encoded as a groupoid homomorphism $\mu: \Fgpd{X} \to \Gamma$. If the base space $X$ of the covering admits a good cover $\cU$, then the atlas of the bundle can be equivalently encoded as a groupoid homomorphism $t: \Egpd{\nerve{\cU}} \to \Gamma$. Applying the functor $\mathscr{H}_\bullet: \Grpd \to \AbGrp$, we show that the monodromy and atlas homomorphism $\mu$ and $t$ respectively induce homomorphisms on singular homology $\mu^\ast: H_1(X;A) \to \abel{\Gamma} \otimes A$ and simplicial homology $t^\ast: H_1(K;A) \to \abel{\Gamma} \otimes A$ 
\begin{align*}
    \mu^\ast : \sum_i \sigma_i \otimes a_i + B_1(X;A) \mapsto \sum_i \mu([\sigma_i]) \otimes a_i \\
    t^\ast : \sum_i \sigma_i \otimes a_i + B_1(K;A) \mapsto \sum_i t([\sigma_i]) \otimes a_i.
\end{align*}

\section{Covering Spaces}\label{sec:covering}

We refer the reader to~\cite{HatcherAlgebraicTopology} for the details of the facts about covering spaces summarised below. Recall a \emph{covering} of a topological space $X$ is a continuous surjection $p: \tilde{X}  \twoheadrightarrow X$, such that for each neighbourhood of $x \in X$, there is an open neighbourhood $U$ where
\begin{equation}
    \fibre{p}{U} = \bigsqcup_{\tilde{x} \in \fibre{p}{x}} U_{\tilde{x}},
\end{equation}
and the restriction of $p$ to each $U_{\tilde{x}}$ is a homeomorphism onto $U$. 

We focus on the following special case of coverings $p: \tilde{X} \twoheadrightarrow X$.

\begin{definition}\label{def:gamma_covering} A continuous surjection $p: \tilde{X} \twoheadrightarrow X$ is a $\Gamma$-covering, if the following conditions are satisfied:
\begin{enumerate}[label = \textbf{(C\arabic*)}, ref =  \textbf{(C\arabic*)}]
    \item \label{C1} $X$ is locally path connected and semi-locally simply connected topological spaces;
    \item \label{C3} $p$ is the quotient of $\tilde{X}$ by a 
\emph{covering space action} of a discrete group $\Gamma$ on $\tilde{X}$.
\end{enumerate}
\end{definition}

We recall a free group action $\Gamma \times Y \to Y$ is a {covering space action}~\cite{HatcherAlgebraicTopology} \footnote{a.k.a. a properly discontinuous action.}, if for any point $y \in Y$, there is a neighbourhood $V$ such that 
\begin{equation}
    (g \cdot V) \cap V \neq \emptyset \quad \implies \quad  g  = e.
\end{equation}
Since the covering map is the quotient of such an action, $U = p(V)$ is evenly covered, and $\fibre{p}{U}$ is simply the orbit $\Gamma \cdot V$. Thus for $U$ evenly covered, we have a \emph{local trivialisation}
\begin{equation} \label{dgm:local_triv}
\begin{tikzcd}
	{\fibre{p}{U}} & {U \times \Gamma} \\
	{U}
	\arrow["p", from=1-1, to=2-1]
	\arrow["{\varphi_U}", from=1-1, to=1-2]
	\arrow["{\pr_1}", from=1-2, to=2-1]
\end{tikzcd},
\end{equation}
where $\varphi_U$ is a $\Gamma$-equivariant map, which we call a \emph{chart}; if $\varphi_U(z) = (x,g)$, then
\begin{equation}
\varphi_U(h \cdot z) = h \cdot \varphi_U(z) = (x,hg).
\end{equation}
Because $\varphi_U$ is equivariant,  $\varphi_U$ is uniquely defined by where $\inv{\varphi_U}$ sends $U \times \sett{e}$. If $\varphi_U'$ is another local trivialisation, then equivariance implies there is some $\theta \in \Gamma$ such that  $\varphi_U' \circ \inv{\varphi_U}(x,g) = (x,g \cdot  \theta)$ for all $x \in U$. Note that $\theta$ does not depend on $x$ as $\Gamma$ is discrete.

We now consider the group of automorphisms of coverings. If $p,q: \tilde{X}_1 \to X$ are coverings of $X$, then we say a homeomorphism $f: \tilde{X} \to \tilde{X}$ is an isomorphism of coverings $p$ and $q$ if $q = \tilde{f} p$. These homeomorphisms form the group of \emph{deck transformations} $\decktransf{p}$ of the covering $p$. For $p$ a $\Gamma$-covering, it follows from the definition that $\Gamma$ is a subgroup of deck transformations, and furthermore, if the covering is path-connected, then $\Gamma$ is the group of deck transformations~\cite[Prop 1.40]{HatcherAlgebraicTopology}. 

An important class of $\Gamma$-covering spaces are \emph{universal coverings} $p: \tilde{X} \to X$, those where the covering space $\tilde{X}$ is simply connected. A path-connected space $X$ admits a universal covering iff \labelcref{C1} is satisfied. In that case, we have an isomorphism $\fungroupbased{X}{x} \cong \decktransf{p} \cong \Gamma$ mediated by homotopy path lifting, which we discuss below. We give some classical examples of universal coverings of manifolds. 
\begin{example}\label{ex:surface_coverings} We consider four simple examples of $\Gamma$-coverings that are universal. 
\begin{enumerate}[label = (\roman*)] 
    \item The universal covering $p: \R^2 \to \mathbb{T}$ of the flat torus by $\R^2$ is a $\Gamma = \Z \oplus \Z$ covering; the group action is given by 
    \begin{equation*}
        (n,m) \cdot  (x,y) \mapsto (x + n, y+ m).
    \end{equation*}
    \item The universal covering $p: \R^2 \to \mathbb{K}$ of the flat Klein bottle by $\R^2$ is a $\Gamma = \Z \rtimes \Z$ covering; the group action is given by 
    \begin{equation*}
        (n,m) \cdot  (x,y) \mapsto ((-1)^mx + n, y+ m).
    \end{equation*}
    \item The universal covering $p: \Sbb^n \to \RP^n$ of the real projective space by $\Sbb^2$ is a $\Gamma = \Z/2\Z$ covering; the group action is given by 
    \begin{equation*}
        a \cdot x \mapsto (-1)^ax
    \end{equation*}
    where a point $x \in \Sbb^n$ here is regarded as a unit vector in $\R^{n+1}$.
    \item The universal covering $p: \mathbb{D} \to \Sigma_2$ of the compact surface of genus 2 by the hyperbolic plane is a $\Gamma =  \langle g_0, g_1, g_2, g_3 \mid g_{0}g_{1}^{-1}g_{2}g_{3}^{-1}g_{0}^{-1}g_{1}g_{2}^{-1}g_{3}\rangle$ covering; using the Poincar\'e disk model $\mathbb{D} = \settt{ z \in \mathbb{C} }{|z| < 1}$ as the model for the hyperbolic plane,  the group action on $\mathbb{D}$ is a M\"obius transformation on the complex plane, where the individual action of a generator $g_k$ is given in matrix form by
    \begin{equation*}
        g_k \cdot z = \frac{z +\omega^k\sqrt{1-c^2} }{\omega^{-k}\sqrt{1-c^2 } z  + 1},
    \end{equation*}
    where $c = \tan(\pi/8)$, and $\omega = e^{i\pi/4}$.
\end{enumerate}
\end{example}

\subsection{Monodromy}
Since covering spaces are fibre bundles and thus have the homotopy lifting property with respect to discs~\cite[Prop 4.48]{HatcherAlgebraicTopology}, the homotopy groups of $\tilde{X}$ and $X$ are related via that of the fibre of the projection by a long exact sequence~\cite[Theorem 4.41]{HatcherAlgebraicTopology} if $X$ is path-connected. In particular, since the fibres of covering spaces are discrete, the long exact sequence breaks apart into the following exact sequences: for homotopy groups $\pi_n$ with $n \geq 2$, the covering map induces isomorphisms $\pi_n (p) : \pi_n(\tilde{X},\tilde{x}) \xrightarrow{\cong}  \pi_n(X,x)$; and for $n \leq 1$, 
\begin{equation} \label{eq:fibration_LES}
\begin{tikzcd}
	0 & {\pi_1(\tilde{X},\tilde{x})} & {\pi_1(X,x)} & {\pi_0(\fibre{p}{x},\tilde{x})} & {\pi_0(\tilde{X},\tilde{x})}
	\arrow[from=1-1, to=1-2]
	\arrow["{\pi_1(p)}", hook, from=1-2, to=1-3]
	\arrow["\delta", from=1-3, to=1-4]
	\arrow["{\pi_0(\iota)}", two heads, from=1-4, to=1-5]
\end{tikzcd}.
\end{equation}
Since $\fibre{p}{x}$ is discrete, we have a bijection between $\fibre{p}{x}$ and $\pi_0(\fibre{p}{x},\tilde{x})$. Note that since $\pi_0$ is the pointed set of path-connected components rather than a group, `exactness' of this sequence should be understood in terms of maps between pointed sets:
\begin{itemize}
    \item The map $\delta$ takes a class of loops to a point in $\fibre{p}{x}$, that is path-connected with $\tilde{x}$ in $\tilde{X}$;
    \item The injection of loops in $\fungroup{\tilde{X},\tilde{x}}$ into  $\fungroup{X,x}$ are the classes of loops $\fungroup{X,x}$ that are sent to  $\tilde{x}$ by $\delta$;
    \item Furthermore, the path connected components of $\tilde{X}$ are enumerated by points of the fibre $\fibre{p}{x}$, modulo equivalence by being in the image of $\delta$. 
\end{itemize}

Due to the exactness of \cref{eq:fibration_LES}, the homotopy groups of the covering space can be recovered by knowing the map $\delta$; because $\fungroup{p}$ is a monomorphism, $\fungroup{\tilde{X}}$ can be recovered from the kernel of $\delta$, whereas $\pi_0(\tilde{X})$ can be recovered from the image of $\delta$ in $\pi_0(\fibre{p}{x})$. The higher homotopy groups are isomorphic to that of $X$ itself. 

In the case of a $\Gamma$-covering, since $\Gamma$ acts freely on fibres, we have a bijection between $\pi_0(\fibre{p}{x})$ and $\Gamma$ as sets. In fact, the set map $\delta$ can be refined as a group homomorphism $\mu$ between $\pi_1(X)$ and $\Gamma$. We call $\mu$ the \emph{monodromy homomorphism}. We define a generalisation of this group homomorphism to a groupoid homomorphism from $\Fgpd{X}$ to $\Gamma$ in \Cref{def:monodromy}. In \Cref{prop:monodromy_ses} and \Cref{prop:monodromy_cc}, we refine the observations we have surmised from \cref{eq:fibration_LES} in group theoretic terms. We first derive the monodromy homomorphism via homotopy lifting, and demonstrate some relevant properties. 

The monodromy homomorphism is constructed by homotopy lifting of paths. We recall the following elementary fact of covering spaces and path lifting. Each path $\gamma: (I,0) \to (X,x_0)$  has a unique lift $\tilde{\gamma}: (I,0) \to  (\tilde{X}, \tilde{x}_0)$ which commutes with the projection $\gamma = p \circ \tilde{\gamma}$. Furthermore, the lift respects homotopy: the homotopy class of paths $[\gamma]$ based at $x_0$ lifts to a homotopy class of paths $[\tilde{\gamma}]$ based at $\tilde{x}_0$. In particular, if $[\gamma]$ is a class of loops based at $x$, then $[\gamma]$ lifts to a unique homotopy class of paths $[\tilde{\gamma}]$ with fixed end points in $\tilde{x}_0, \tilde{x}_1 \in \fibre{p}{x_0}$. Furthermore, classes of loops that lift to loops (i.e. $\tilde{x}_1 = \tilde{x}_0$) form precisely the subgroup of $\fungroupbased{X}{x_0}$ that is the image of the induced map $p_\ast: \fungroupbased{\tilde{X}}{\tilde{x}_0} \to \fungroupbased{X}{x_0}$.

We can describe path lifting in terms of the fundamental groupoids of the covering spaces. For $x \in X$, let $\mathstar_{X} x$ denote the classes of paths in the fundamental groupoid with $x$ as the source
\begin{equation}
    \mathstar_X x  := \bigcup_{y \in X} \Fgpd{X}(x,y).
\end{equation}
Fixing a point $\tilde{x} \in \fibre{p}{x}$, the homotopy lifting of paths can be regarded as a map $\hslash_{\tilde{x}}:\mathstar_X x \to \mathstar_{\tilde{X}}\tilde{x}$, where $\hslash_{\tilde{x}}([\gamma])$ is the unique element of $\mathstar \tilde{x}$ such that 
\begin{equation}
  \Fgpd{p} \circ \hslash_{\tilde{x}}([\gamma]) =[\gamma]. 
\end{equation}
The lifting map $\hslash_{\tilde{x}}$ is a bijection~\cite[10.2.1]{Brown2006TopologyGroupoids}: any path $\tilde{\gamma}$ in $\tilde{X}$ based at $\tilde{x}$ projects onto a path $\gamma$ that can only lift to $\tilde{\gamma}$ for paths based at $\tilde{\gamma}$; and if two paths with the same end points are lifted to the same class of paths in $\tilde{X}$, then the projection of the homotopy between the two lifted paths rel end points is a homotopy  of the initial two paths in $X$ rel end points. Thus $\hslash_{\tilde{x}}$ is surjective and injective.

For $p$ a $\Gamma$-covering, since the group action on $\tilde{X}$ acts freely on the fibre of points, we can specify $\tilde{\gamma}(1) \in \fibre{p}{y}$ of a lifted path by the unique group element $g$ such that for a given fibre reference point $\lambda(y) \in \fibre{p}{y}$, we have $\tilde{\gamma}(1) = g \cdot \lambda(y)$. Since all paths in $[\gamma] \in \Fgpd{X}(x,y)$ lift to the same class $[\tilde{\gamma}] \in \Fgpd{\tilde{X}}(\lambda(x),g \cdot \lambda(y))$, we have a well-defined map from $\Fgpd{X}(x,y)$ to $\Gamma$. Note that this depends on the choice of fibre references $\lambda(x)$ and $\lambda(y)$.

If we have a choice of reference points in the fibre $\lambda: X \to \tilde{X}$, then we define a map $\mu_\lambda: \Fgpd{X} \to \Gamma$. We refer to this map as the \emph{monodromy homomorphism}. 

\begin{definition} \label{def:monodromy}
    Let $p: \tilde{X} \to X$ be a $\Gamma$-covering. For a given  pseudo-section of the covering $\lambda: X \to \tilde{X}$ (i.e. a not necessarily continuous map satisfying $p \circ \lambda = \iden_X$), The \emph{monodromy homomorphism} is a groupoid homomorphism $\nu_\lambda: \Fgpd{X} \to \Gamma$, where for $[\gamma] \in \Fgpd{X}(x,y)$, the group element $\mu_\lambda([\gamma])$ is the unique element such that 
\begin{equation}\label{eq:deckmapdef}
     \hslash_{\lambda(x)}([\gamma]) \in \Fgpd{\tilde{X}}(\lambda(x),  \mu_\lambda([\gamma]) \cdot \lambda(y)).
\end{equation}
\end{definition}
Because of the uniqueness of lifts of homotopy classes of paths rel end points, $\mu_\lambda$ is a well-defined map. \Cref{lem:concat} shows that $\mu_\lambda$ is indeed a groupoid homomorphism. 

\begin{lemma}\label{lem:concat} The monodromy homomorphism $\mu_\lambda$ of a $\Gamma$-covering is indeed a groupoid homomorphism. In other words, it sends homotopy classes of constant paths to the identity, and respects composition:
\begin{equation}
    \mu_{\lambda}([\gamma_0 \cdot \gamma_1])= \mu_{\lambda}([\gamma_0]) \cdot \mu_{\lambda}([\gamma_1]).
\end{equation}
\end{lemma}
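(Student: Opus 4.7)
The plan is to verify the two groupoid axioms directly from the definition of $\mu_\lambda$ via path lifting, exploiting the $\Gamma$-equivariance of the covering map.

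First, for identity morphisms: given $x \in X$, the constant path $c_x$ at $x$ admits as its unique lift starting at $\lambda(x)$ the constant path at $\lambda(x)$, whose endpoint is $\lambda(x) = e \cdot \lambda(x)$. Hence $\mu_\lambda([c_x]) = e$.

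For composition, fix $[\gamma_0] \in \Fgpd{X}(x,y)$ and $[\gamma_1] \in \Fgpd{X}(y,z)$, and set $g_0 = \mu_\lambda([\gamma_0])$ and $g_1 = \mu_\lambda([\gamma_1])$. By definition, $\hslash_{\lambda(x)}([\gamma_0])$ is represented by a lift $\tilde{\gamma}_0$ ending at $g_0 \cdot \lambda(y)$, and $\hslash_{\lambda(y)}([\gamma_1])$ is represented by a lift $\tilde{\gamma}_1$ starting at $\lambda(y)$ and ending at $g_1 \cdot \lambda(z)$. The main step is to argue that the lift of $\gamma_1$ starting at the point $g_0 \cdot \lambda(y)$ is precisely $g_0 \cdot \tilde{\gamma}_1$. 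This is immediate from the fact that $\Gamma$ acts by deck transformations: since $p \circ (g_0 \cdot \tilde{\gamma}_1) = p \circ \tilde{\gamma}_1 = \gamma_1$ and the lift of a path with a prescribed starting point is unique, $g_0 \cdot \tilde{\gamma}_1$ is the unique lift starting at $g_0 \cdot \lambda(y)$, and it ends at $g_0 \cdot (g_1 \cdot \lambda(z)) = (g_0 g_1) \cdot \lambda(z)$.

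Now concatenating $\tilde{\gamma}_0$ with $g_0 \cdot \tilde{\gamma}_1$ produces a well-defined path from $\lambda(x)$ to $(g_0 g_1) \cdot \lambda(z)$ whose projection under $p$ is $\gamma_0 \cdot \gamma_1$. By the uniqueness of lifts of homotopy classes rel endpoints, this path represents $\hslash_{\lambda(x)}([\gamma_0 \cdot \gamma_1])$, and so
\begin{equation*}
\mu_\lambda([\gamma_0 \cdot \gamma_1]) = g_0 g_1 = \mu_\lambda([\gamma_0]) \cdot \mu_\lambda([\gamma_1]),
\end{equation*}
as required. The main (and essentially only) obstacle is invoking $\Gamma$-equivariance cleanly to identify the lift of $\gamma_1$ at the shifted basepoint $g_0 \cdot \lambda(y)$; everything else is bookkeeping from the uniqueness of path lifts already recalled before Definition \ref{def:monodromy}.
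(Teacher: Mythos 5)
Your proof is correct and follows essentially the same route as the paper's: lift $\gamma_0$, translate the lift of $\gamma_1$ by $g_0$ using equivariance and uniqueness of lifts, concatenate, and read off the endpoint $(g_0 g_1)\cdot\lambda(z)$ (the paper phrases the final step via freeness of the action, which is exactly what makes the group element in your last step unique). No gaps.
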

\begin{proof}
    Since the homotopy class of the constant path $[\gamma = x]$ lifts to $[\tilde{\gamma} = \lambda(x)]$, $\mu_{\lambda}([\gamma = x]) = e$.
    
    We now verify that $ \mu_{\lambda}$ respects composition. Let $\tilde{\gamma}_i$ be the lifts of $\gamma_i$, and $\tilde{\gamma}$ be the lift of $\gamma$. For convenience of notation, let us denote $\tilde{x}_i = \lambda(\gamma_i(0))$, $g_i = \mu_{\lambda}([\gamma_i])$ for $i=0,1$, and $g = \mu_{\lambda}([\gamma])$. We align the end point $g_0 \cdot \tilde{x}_1$ of $\tilde{\gamma}_0$ with the starting point $\tilde{x}_1$ of $\tilde{\gamma_1}$ by considering the shifted path $g_0 \cdot \tilde{\gamma}_1$. The concatenation $\hat{\gamma} = \tilde{\gamma}_0 \cdot (g_0 \cdot \tilde{\gamma}_1)$ gives us a lift of $\gamma$ with base point $\tilde{x}_0$, and end point $\hat{\gamma}(1) = g_0 \cdot \tilde{\gamma}_1(1)= (g_0 \cdot g_1) \cdot \tilde{x}_2$. By uniqueness of lifts, we have $\hat{\gamma} = \tilde{\gamma}$. In particular, $\tilde{\gamma}(1) = \hat{\gamma}(1)$ implies $(g_0 \cdot g_1) \cdot \tilde{x}_2 = g \cdot \tilde{x}_2$. Since the action is free, we thus conclude that $g = g_0 \cdot g_1$.
\end{proof}

The following lemma describes the effect of the choice of pseudo-section on the the monodromy homomorphism.

\begin{lemma}\label{lem:gauge} Let $p: \tilde{X} \to X$ be a $\Gamma$-covering,and consider two pseudo-sections  $\lambda,\lambda': X \to \tilde{X}$ of the covering. Let $\theta: X \to \Gamma$ be the assignment such that $\lambda'(x) = \inv{\theta(x)} \cdot \lambda(x)$. Then for $[\gamma] \in \Fgpd{X}(x,y)$
\begin{equation}\label{eq:changerep}
    \mu_{\lambda'}([\gamma]) = \inv{\theta(x)} \cdot \mu_{\lambda}([\gamma])\cdot {\theta(y)}.
\end{equation}
In other words, $\theta$ defines a natural transformation $\theta: \mu_{\lambda} \Rightarrow \mu_{\lambda'}$ in the category of groupoids. 
\end{lemma}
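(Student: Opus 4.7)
The plan is to establish \cref{eq:changerep} by exploiting how the free $\Gamma$-action on $\tilde{X}$ commutes with the covering projection --- so that $\Gamma$-translates of path lifts are again path lifts --- and then to observe that the naturality claim is a purely formal rearrangement of the resulting identity.

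First I would fix a representative $\gamma : I \to X$ of $[\gamma] \in \Fgpd{X}(x,y)$ and let $\tilde{\gamma}$ denote the unique lift with $\tilde{\gamma}(0) = \lambda(x)$. By \Cref{def:monodromy} its end point is $\tilde{\gamma}(1) = \mu_{\lambda}([\gamma]) \cdot \lambda(y)$. Next, because $\Gamma$ acts by homeomorphisms covering the identity on $X$ (that is, $p(g \cdot z) = p(z)$), for every $g \in \Gamma$ the translated path $g \cdot \tilde{\gamma}$ is again a lift of $\gamma$, now starting at $g \cdot \lambda(x)$ and terminating at $g \cdot \mu_{\lambda}([\gamma]) \cdot \lambda(y)$. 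Taking $g = \inv{\theta(x)}$ produces the unique lift of $\gamma$ based at $\lambda'(x) = \inv{\theta(x)} \cdot \lambda(x)$, with end point $\inv{\theta(x)} \cdot \mu_{\lambda}([\gamma]) \cdot \lambda(y)$.

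On the other hand, the defining property of $\mu_{\lambda'}$ identifies this same end point with $\mu_{\lambda'}([\gamma]) \cdot \lambda'(y) = \mu_{\lambda'}([\gamma]) \cdot \inv{\theta(y)} \cdot \lambda(y)$. Equating the two expressions and invoking freeness of the $\Gamma$-action on $\fibre{p}{y}$, the $\Gamma$-coefficients of $\lambda(y)$ must agree, yielding $\inv{\theta(x)} \cdot \mu_{\lambda}([\gamma]) = \mu_{\lambda'}([\gamma]) \cdot \inv{\theta(y)}$, which rearranges to \cref{eq:changerep}.

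Finally, to justify the naturality claim, I would observe that $\Gamma$ viewed as a one-object groupoid admits a single object, so each element $\theta(x) \in \Gamma$ is a legitimate morphism from $\mu_{\lambda}(x)$ to $\mu_{\lambda'}(x)$ in $\Gamma$. The naturality square for $[\gamma] \in \Fgpd{X}(x,y)$ asserts $\theta(x) \cdot \mu_{\lambda'}([\gamma]) = \mu_{\lambda}([\gamma]) \cdot \theta(y)$, which is obtained by left-multiplying \cref{eq:changerep} by $\theta(x)$. The only substantive step in the whole argument is really the second one --- confirming that a $\Gamma$-translate of a path lift is again a path lift --- but this is immediate from \Cref{def:gamma_covering}, so I do not anticipate any serious obstacle; the remaining work is bookkeeping with the free action.
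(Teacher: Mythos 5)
Your proposal is correct and follows essentially the same route as the paper: translate the lift based at $\lambda(x)$ by $\inv{\theta(x)}$, identify it via uniqueness of path lifts with the lift based at $\lambda'(x)$, compare end points, and use freeness of the $\Gamma$-action to extract \cref{eq:changerep}, with the naturality square then being a formal rearrangement. No gaps.
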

\begin{proof} For convenience of notation denote $x = \gamma(0)$, $y = \gamma(1)$, $\tilde{x} = \lambda (\gamma(0))$ and $\tilde{y} = \lambda(\gamma(1))$. Let $\tilde{\gamma}: (I,0) \to (\tilde{X}, \tilde{x})$ be the $\tilde{\gamma}': (I,0) \to (\tilde{X}, \theta_x \cdot \tilde{x})$ lifts of $\gamma$ to the two different base points. Due to the uniqueness of lifts of paths, $\theta_x \cdot \tilde{\gamma} = \tilde{\gamma}'$. In particular, $\inv{\theta_x} \cdot \tilde{\gamma}(1) = \tilde{\gamma}'(1)$. Substituting \cref{eq:deckmapdef}, $\inv{\theta_x} \cdot \mu_{\lambda}([\gamma]) \cdot \tilde{y} = \mu_{\lambda'}([\gamma]) \cdot \tilde{y}' = \mu_{\lambda'}([\gamma]) \cdot \inv{\theta_y} \cdot \tilde{y}$. Because the action is free, we obtain \cref{eq:changerep}.    
\end{proof}

Consider the restriction $\mu_{\lambda\rvert x} : \fungroupbased{X}{x} \to \Gamma$ of the monodromy homomorphism to the fundamental group. Since $\mu_\lambda$ is a groupoid homomorphism (\Cref{lem:concat}),  the restriction to a vertex group of the domain $\mu_{\lambda\rvert x} $ is a group homomorphism. In topological terms, $\mu_{\lambda\rvert x}$ sends a class of loop $[\gamma]$ based at $x \in X$ to the unique deck transformation such that 
\begin{equation}
    \mu_{\lambda\rvert x} ([\gamma])  \cdot \hslash_{\lambda(x)}([\gamma])(1) = \lambda({x}).
\end{equation}
We note that $\mu_{\lambda\rvert x} $ only depends on the choice of reference point $\lambda(x)$ of $x$ in the fibre, and not any other point. If we choose a different choice of reference point in the fibre $\lambda'(x)$, \Cref{lem:gauge} implies we induce an inner automorphism:
\begin{equation}\label{eq:monodromy_change_lift}
   \mu_{\theta \cdot \lambda\rvert x}  = \inv{\theta} \cdot  \mu_{\lambda\rvert x} \cdot  {\theta},
\end{equation}
where $\theta \in \Gamma$ is the unique element such that $\lambda(x) = \theta \cdot \lambda'(x)$.

The monodromy group homomorphism $\mu_{\lambda\rvert x}$  encodes topological information about the $\Gamma$-covering. We first consider a statement in terms of path-connected coverings.
\begin{proposition}[{\cite[Proposition 1.31,39-40]{HatcherAlgebraicTopology}}]\label{prop:monodromy_ses}
    Suppose $p: \tilde{X} \to X$ is a path-connected $\Gamma$-covering. We have a short exact sequence
\begin{equation} \label{eq:monodromy_ses}
    \begin{tikzcd}
        0 & \fungroup{\tilde{X},\lambda(x)} & \fungroup{X,x} & \Gamma & 0 
    \arrow["\fungroup{p}",hook, from=1-2, to=1-3]
	\arrow[from=1-4, to=1-5]
	\arrow["{\mu_{\lambda\rvert x} }", two heads, from=1-3, to=1-4]
	\arrow[from=1-1, to=1-2]
    \end{tikzcd}.
\end{equation}
Letting $T: \Gamma \times \tilde{X} \to \tilde{X}$ denote the covering space action on $\tilde{X}$, the group of deck transformations $\decktransf{p}$ is isomorphic to $\Gamma$:
\begin{equation}
    \decktransf{p} = \settt{T(g,\cdot): \tilde{X} \to \tilde{X}}{ g \in \Gamma}.
\end{equation}
\end{proposition}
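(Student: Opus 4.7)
The plan is to verify the exactness of the sequence at each of its three nontrivial positions and then identify the deck transformation group, leveraging the bijectivity of the homotopy lift $\hslash_{\lambda(x)}$ and the properties of the monodromy established in \Cref{lem:concat,lem:gauge}. The short exact sequence is really a refinement of the pointed-set sequence in \cref{eq:fibration_LES} once we observe that the covering space action endows $\fibre{p}{x}$ with a free transitive $\Gamma$-action, so the pointed set $\pi_0(\fibre{p}{x})$ is in bijection with $\Gamma$ and the connecting map $\delta$ is upgraded to a group homomorphism by \Cref{lem:concat}.

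First I would show injectivity of $\fungroup{p}: \fungroup{\tilde{X},\lambda(x)} \hookrightarrow \fungroup{X,x}$. Given a loop $\tilde{\gamma}$ in $\tilde{X}$ based at $\lambda(x)$ whose projection $p\tilde{\gamma}$ is null-homotopic in $X$, the homotopy lifting property lifts the null-homotopy to a homotopy from $\tilde{\gamma}$ to a lift of the constant loop at $x$ starting at $\lambda(x)$; by uniqueness of such lifts the target is the constant loop at $\lambda(x)$, so $[\tilde{\gamma}]$ is trivial. Next I would establish exactness at $\fungroup{X,x}$: a class $[\gamma] \in \fungroup{X,x}$ lies in $\ker \mu_{\lambda\rvert x}$ iff the unique lift $\hslash_{\lambda(x)}([\gamma])$ has endpoint $e \cdot \lambda(x) = \lambda(x)$, i.e.\ iff $[\gamma]$ lifts to a based loop in $\tilde{X}$, which happens iff $[\gamma] \in \imag \fungroup{p}$ because $\fungroup{p}\circ \hslash_{\lambda(x)}([\gamma]) = [\gamma]$.

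For surjectivity of $\mu_{\lambda\rvert x}$, I would use that $\tilde{X}$ is path-connected: given $g \in \Gamma$, choose a path $\tilde{\gamma}$ in $\tilde{X}$ from $\lambda(x)$ to $g \cdot \lambda(x)$ and set $\gamma := p \circ \tilde{\gamma}$, a loop in $X$ based at $x$. Uniqueness of path lifts forces $\hslash_{\lambda(x)}([\gamma]) = [\tilde{\gamma}]$, whose endpoint lies in the $\Gamma$-orbit of $\lambda(x)$ at the element $g$, so $\mu_{\lambda\rvert x}([\gamma]) = g$. This completes the exactness of \cref{eq:monodromy_ses}.

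Finally, for the identification $\decktransf{p} \cong \Gamma$, I would recall that by \labelcref{C3} the elements $T(g,\cdot)$ are homeomorphisms of $\tilde{X}$ commuting with $p$, hence are deck transformations, giving an injective group homomorphism $\Gamma \hookrightarrow \decktransf{p}$. For surjectivity, let $\varphi \in \decktransf{p}$ and fix $\tilde{x} := \lambda(x)$; then $\varphi(\tilde{x}) \in \fibre{p}{x}$, so there is a unique $g \in \Gamma$ with $\varphi(\tilde{x}) = g \cdot \tilde{x}$. Since $\tilde{X}$ is path-connected, any $\tilde{y} \in \tilde{X}$ is joined to $\tilde{x}$ by a path $\tilde{\alpha}$; both $\varphi \circ \tilde{\alpha}$ and $T(g,\cdot)\circ \tilde{\alpha}$ are lifts of $p \circ \tilde{\alpha}$ starting at $g \cdot \tilde{x}$, and by uniqueness of path lifts they coincide at the endpoint, so $\varphi(\tilde{y}) = g \cdot \tilde{y}$. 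Hence $\varphi = T(g,\cdot)$, which is the main (and essentially only) subtle step: it relies on path-connectedness of $\tilde{X}$ together with uniqueness of lifts to pin down a deck transformation from its value at a single point.
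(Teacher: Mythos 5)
Your proof is correct. The paper does not prove this proposition itself --- it is quoted from Hatcher (Propositions 1.31, 1.39--40) --- and your argument is exactly the standard one from that source: unique lifting of homotopies for injectivity of $\fungroup{p}$, the free action on the fibre for exactness at $\fungroup{X,x}$, path-connectedness of $\tilde{X}$ for surjectivity of $\mu_{\lambda\rvert x}$, and the rigidity of lifts (a deck transformation is determined by its value at one point) for $\decktransf{p}\cong\Gamma$; it also fits cleanly with the paper's monodromy formalism via $\hslash_{\lambda(x)}$ and \Cref{lem:concat}.
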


In fact, the monodromy homomorphism (up to isomorphism) classifies path-connected $\Gamma$-covering spaces.

\begin{theorem}[{\cite[Theorem 1.38-40]{HatcherAlgebraicTopology}}] There is a bijection between isomorphism classes of path-connected $\Gamma$-coverings, and normal subgroups of $\fungroup{X,x}$. Two $\Gamma$-coverings have the same monodromy homomorphism kernel iff they are isomorphic coverings; and every normal subgroup $H \triangleleft \fungroup{X,x}$ has a path-connected $\Gamma$-covering with monodromy homomorphism kernel $H$. 
\end{theorem}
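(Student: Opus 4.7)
The plan is to define the claimed bijection by sending a path-connected $\Gamma$-covering $p: \tilde{X} \twoheadrightarrow X$ to the kernel $H_p := \ker \mu_{\lambda\rvert x}$ of the restricted monodromy from \Cref{def:monodromy}, and then to check that this assignment is well-defined on isomorphism classes, injective, and surjective. The main tools are \Cref{prop:monodromy_ses}, \Cref{lem:gauge}, and the classical classification of path-connected covering spaces from~\cite{HatcherAlgebraicTopology} (Theorems~1.37--1.39), which we leverage \emph{after} stripping the $\Gamma$-structure.

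First I would verify the map is well-defined. By \Cref{lem:gauge}, swapping the pseudo-section $\lambda$ for another $\lambda'$ conjugates $\mu_{\lambda\rvert x}$ by the fixed element $\theta(x) \in \Gamma$ (see also \cref{eq:monodromy_change_lift}), and conjugation preserves the kernel. For a $\Gamma$-equivariant homeomorphism $f: \tilde{X} \to \tilde{X}'$ over $X$, taking $\lambda' = f \circ \lambda$ forces $\mu_{\lambda'\rvert x} = \mu_{\lambda\rvert x}$ directly from \cref{eq:deckmapdef}, since $f$ transports lifts to lifts and intertwines the two $\Gamma$-actions. Normality of $H_p$ is automatic as it is a kernel.

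For injectivity, suppose $p, p'$ both yield the same $H$. \Cref{prop:monodromy_ses} identifies $\imag \fungroup{p} = H = \imag \fungroup{p'}$ as subgroups of $\fungroupbased{X}{x}$, so Hatcher's pointed classification produces a based covering isomorphism $f: (\tilde{X}, \lambda(x)) \to (\tilde{X}', \lambda'(x))$ satisfying $p' \circ f = p$. I would then upgrade $f$ to be $\Gamma$-equivariant: for each $g \in \Gamma$, the two maps $f \circ T_g$ and $T'_g \circ f$ are deck transformations of $p'$, and both send $\lambda(x)$ to $g \cdot \lambda'(x)$ because the monodromies of $p$ and $p'$ agree and the defining equation \cref{eq:deckmapdef} rigidly pins down end points of lifts. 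Hence $f T_g = T'_g f$ globally by the rigidity of deck transformations. For surjectivity, given any normal $H \triangleleft \fungroupbased{X}{x}$ whose quotient is isomorphic to $\Gamma$, condition~\ref{C1} together with Hatcher's existence result (Thm.~1.38) produces a path-connected covering $\hat{p}: \hat{X} \to X$ with $\imag \fungroup{\hat{p}} = H$. Normality of $H$ makes $\hat{p}$ a \emph{regular} covering, so $\decktransf{\hat{p}} \cong \fungroupbased{X}{x}/H \cong \Gamma$ (Hatcher Prop.~1.39); pulling back this isomorphism equips $\hat{X}$ with a free covering space action of $\Gamma$ whose quotient is $X$, and \Cref{prop:monodromy_ses} then reads off $H_{\hat{p}} = H$.

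The principal obstacle I foresee is the $\Gamma$-equivariance upgrade in the injectivity argument: the classical classification only supplies a bare covering isomorphism, and one must argue compatibility of the two $\Gamma$-actions. The key observation making this tractable is that for path-connected $\Gamma$-coverings, \Cref{prop:monodromy_ses} forces the $\Gamma$-actions on $\tilde{X}$ and $\tilde{X}'$ to coincide with the respective full deck groups, both of which act simply transitively on each fibre. Combined with the rigidity of deck transformations (they are determined by their value at a single point), the based isomorphism $f$ automatically intertwines the two $\Gamma$-actions once its behaviour on the single distinguished fibre point $\lambda(x)$ matches the monodromy data --- which is guaranteed by the common kernel hypothesis.
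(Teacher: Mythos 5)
You should first note that the paper does not prove this statement at all: it is quoted from Hatcher, and the only non-classical ingredient in the surrounding text is \Cref{prop:monodromy_ses}, which identifies $\ker \mu_{\lambda\rvert x}$ with $\imag \fungroup{p}$. So your overall plan --- reduce to the classical classification of path-connected coverings, using \Cref{lem:gauge} for independence of the pseudo-section and \Cref{prop:monodromy_ses} to translate between image subgroups and monodromy kernels, with the (correctly added) proviso $\fungroup{X,x}/H \cong \Gamma$ in the surjectivity step --- is exactly the intended reading of the citation, and those parts of your proposal are sound.

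The genuine problem is the $\Gamma$-equivariance upgrade in your injectivity step, where you assert that ``the monodromies of $p$ and $p'$ agree''. The hypothesis only gives equality of kernels: two surjections $\mu_{\lambda\rvert x}, \mu'_{\lambda'\rvert x}: \fungroup{X,x} \twoheadrightarrow \Gamma$ with common kernel $H$ differ by an automorphism $\alpha$ of $\Gamma$, and by \Cref{lem:gauge} a change of pseudo-section only alters the monodromy by an inner automorphism, so $\alpha$ cannot in general be normalised away. The based covering isomorphism $f$ then satisfies $f(g \cdot \lambda(x)) = \alpha(g)\cdot \lambda'(x)$, and the identity $f \circ T_g = T'_g \circ f$ fails whenever $\alpha \neq \iden$. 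Concretely, take $X = S^1$ with $\tilde{X} = \tilde{X}' = \R$, $p = p'$ the exponential covering, and the two $\Z$-actions $n \cdot t = t+n$ and $n \cdot t = t-n$: both have trivial monodromy kernel, every covering isomorphism over $S^1$ is $t \mapsto t+m$, and none of these is $\Z$-equivariant. The saving grace is that the theorem, together with the paper's definition of an isomorphism of coverings (a homeomorphism $f$ with $p = p' \circ f$ only), does not ask for equivariance; delete the upgrade and the rest of your argument stands. If one did want to classify $\Gamma$-coverings up to \emph{equivariant} isomorphism (as in the second bullet of \Cref{prop:transition_homomorphism}), the correct invariant is the monodromy homomorphism up to conjugation (natural isomorphism), not merely its kernel.
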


If $\tilde{X}$ is not path-connected, $\mu_{\lambda \rvert x}$ is no longer surjective, though $\fungroup{p}$ is still a monomorphism whose image is the kernel of $\mu_{\lambda \rvert x}$. The  image of $\mu_{\lambda \rvert x}$in $\Gamma$ encodes information about the connected components of $\tilde{X}$.

\begin{proposition}
\label{prop:monodromy_cc} Consider $p: \tilde{X} \twoheadrightarrow X$ be a $\Gamma$-covering, where $X$ is path-connected. Let $\{\tilde{X}_i\}_{i \in \cI}$ be the set of connected components of $\tilde{X}$, and $p_i: \tilde{X}_i \to X$ be the restriction of $p$ to $\tilde{X}_i$. Let $\mu_{\lambda\rvert x} : \fungroup{X,x} \to \Gamma$ be a monodromy homomorphism, and $\Gamma'$ be the image of $\mu_{\lambda\rvert x}$. Then 
\begin{itemize}
    \item All restrictions $p_i$ are isomorphic $\Gamma'$-coverings of $X$;
    \item We have a bijection between left cosets of $\Gamma'$ in $\Gamma$ and connected components of $\tilde{X}$.
\end{itemize}
\end{proposition}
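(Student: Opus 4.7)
The plan is to extract the key information from the $\Gamma$-action on connected components of $\tilde{X}$, where transitivity follows from path-connectedness of $X$, and the stabiliser of the component containing $\lambda(x)$ is exactly $\Gamma'$.

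First, I would establish that every connected component $\tilde{X}_i$ surjects onto $X$. Since $p$ is a covering map it is open, and since its restriction to $\tilde{X}_i$ is also open, the image $p(\tilde{X}_i)$ is open in $X$. The complement of $p(\tilde{X}_i)$ is $\bigcup_{j \neq i} p(\tilde{X}_j)$, which is likewise open. Because $X$ is path-connected and hence connected, and $p$ is surjective so $p(\tilde{X}_i)$ is nonempty, we get $p(\tilde{X}_i) = X$. In particular $\tilde{X}_i$ meets the fibre $\fibre{p}{x}$.

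Next, I would show $\Gamma$ acts transitively on the set of connected components. Since the $\Gamma$-action is by homeomorphisms, it permutes components. Given any component $\tilde{X}_i$, pick a point $\tilde{y} \in \tilde{X}_i \cap \fibre{p}{x}$; since the $\Gamma$-action is free and transitive on $\fibre{p}{x}$, we have $\tilde{y} = g \cdot \lambda(x)$ for some $g \in \Gamma$, hence $\tilde{X}_i = g \cdot \tilde{X}_0$ where $\tilde{X}_0$ denotes the component containing $\lambda(x)$. Now I compute the stabiliser $\mathrm{Stab}(\tilde{X}_0) = \settt{g \in \Gamma}{g \cdot \tilde{X}_0 = \tilde{X}_0}$. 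Because components are disjoint or equal, $g \cdot \tilde{X}_0 = \tilde{X}_0$ is equivalent to $g \cdot \lambda(x) \in \tilde{X}_0$, which (using local path-connectedness of $\tilde{X}$, inherited from $X$ through $p$) is equivalent to the existence of a path in $\tilde{X}$ from $\lambda(x)$ to $g \cdot \lambda(x)$. Such a path projects to a loop $\gamma$ based at $x$ with $\mu_{\lambda\rvert x}([\gamma]) = g$ by \Cref{def:monodromy}; conversely any $g \in \Gamma'$ arises this way via the unique lift of a representative loop. Hence $\mathrm{Stab}(\tilde{X}_0) = \Gamma'$, and by orbit--stabiliser the set of components is in bijection with the left cosets $\Gamma/\Gamma'$.

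For the first claim, I would first treat $\tilde{X}_0$. The subgroup $\Gamma' \leq \Gamma$ stabilises $\tilde{X}_0$, so it acts on $\tilde{X}_0$; this action inherits the covering space property from the ambient $\Gamma$-action by restriction to the open subset. The orbit of $y \in \tilde{X}_0$ under $\Gamma$ meets $\tilde{X}_0$ precisely in its $\Gamma'$-orbit, because if $g \cdot y \in \tilde{X}_0$ then $g \cdot \tilde{X}_0 \cap \tilde{X}_0 \neq \emptyset$ forces $g \in \Gamma'$. Hence $p_0$ induces a continuous bijection $\tilde{X}_0/\Gamma' \to X$, which is a homeomorphism since $p_0$ is open, making $p_0: \tilde{X}_0 \twoheadrightarrow X$ a $\Gamma'$-covering. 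For an arbitrary component $\tilde{X}_i = g_i \cdot \tilde{X}_0$, the left translation $L_{g_i}: \tilde{X}_0 \to \tilde{X}_i$ is a homeomorphism satisfying $p_i \circ L_{g_i} = p_0$, so $p_i$ is isomorphic to $p_0$ as a covering of $X$. Transporting the $\Gamma'$-action on $\tilde{X}_0$ through $L_{g_i}$ endows $\tilde{X}_i$ with a $\Gamma'$-covering structure for which $L_{g_i}$ is a $\Gamma'$-equivariant isomorphism of coverings, proving all the $p_i$ are mutually isomorphic $\Gamma'$-coverings.

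The main subtlety I expect is the last step: the stabiliser of $\tilde{X}_i$ inside $\Gamma$ is the conjugate $g_i \Gamma' g_i^{-1}$, so the canonical restriction of the ambient $\Gamma$-action to $\tilde{X}_i$ only yields a $\Gamma'$-covering structure on the nose when $\Gamma'$ is normal in $\Gamma$; otherwise one must transport the action through $L_{g_i}$ (equivalently, pull back along the inner automorphism $h \mapsto g_i^{-1} h g_i$). I would make this transport explicit to justify calling the $p_i$ isomorphic as $\Gamma'$-coverings rather than merely as coverings of $X$.
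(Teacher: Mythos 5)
Your overall strategy matches the paper's, but one step's justification fails: the surjectivity of the restrictions $p_i$. You assert that the complement of $p(\tilde{X}_i)$ equals $\bigcup_{j\neq i}p(\tilde{X}_j)$; this set identity is false in general, since images of distinct components can overlap (indeed, once the proposition is proved they are all equal to $X$, so the claimed complement is all of $X$ while the actual complement is empty). All that holds is the containment of the complement in that union, and a subset of an open set need not be open, so the clopen argument does not close as written. Two easy repairs: argue as the paper does --- pick $x\in p(\tilde{X}_i)$, a point $\tilde{x}\in \fibre{p}{x}\cap\tilde{X}_i$, and lift a path from $x$ to an arbitrary $y\in X$ starting at $\tilde{x}$; connectedness of $I$ keeps the lift inside $\tilde{X}_i$, so its endpoint lies in $\fibre{p}{y}\cap\tilde{X}_i$ --- or use \labelcref{C1}: around any $x\notin p(\tilde{X}_i)$ choose a path-connected evenly covered neighbourhood $U$; its sheets are connected, hence each lies in a single component and none lies in $\tilde{X}_i$, so $U\cap p(\tilde{X}_i)=\emptyset$ and the complement is open.

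Apart from that step, your argument is correct and essentially the paper's: the $\Gamma$-action permutes components transitively, the stabiliser of the component of $\lambda(x)$ is $\imag \mu_{\lambda\rvert x}=\Gamma'$, and orbit--stabiliser gives the bijection with left cosets. Where the paper identifies the group acting on a single component with $\Gamma'$ by invoking \Cref{prop:monodromy_ses} for the path-connected restriction, you compute the stabiliser directly from \Cref{def:monodromy} via path lifting; these are equivalent. Your closing discussion --- that the stabiliser of $\tilde{X}_i$ is the conjugate $g_i\Gamma' g_i^{-1}$, so one must transport the $\Gamma'$-action along $L_{g_i}$ to make every $p_i$ literally an isomorphic $\Gamma'$-covering --- is actually more explicit than the paper, which passes over this point.
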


\begin{proof}
    Consider a connected component $\tilde{X}_i$ of $\tilde{X}$, and let $p_i$ be the restriction of $p$ to $\tilde{X}_i$ . We first show that $p_i$ is a surjection onto $X$. Consider some $x$ in the image of $p_i$. Since $X$ is path-connected, there is a path $\gamma$ from $x$ to any $y \in X$. Because $p$ is a covering, $\gamma$ lifts to a path $\tilde{\gamma}$ based at $\tilde{x}_i \in \fibre{p}{x} \cap \tilde{X}_i$, satisfying $p \circ \tilde{\gamma} = \gamma$. Evaluating this at $t= 1$, we thus have a point path connected to $\tilde{x}_i$ that is in $\fibre{p}{y}$. Thus, for any $y \in X$, there is an element in $\tilde{X}_i$ that projects onto $y$, i.e. $p_i$ is a surjection onto $X$.

    Recall $p$ is the quotient of $\tilde{X}$ by a $\Gamma$ action that satisfies the covering space action criterion \labelcref{C3}. The subgroup $\Gamma_i \subset \Gamma$ that sends $\tilde{X}_i$ to itself also satisfies \labelcref{C3}, and thus $p_i$ is a path-connected $\Gamma_i$-covering of $X$. Note that since $\mu_{\lambda \rvert x}$ is obtained by querying lifts of paths, $\mu_{\lambda \rvert x}$ of $p$ is the monodromy homomorphism of $p_i$ if $\lambda(x) = \tilde{x}_i$. Applying \Cref{prop:monodromy_ses}, we have $\Gamma_i \cong \imag \mu_{\lambda \rvert x} = \Gamma'$. Furthermore, the second half of \Cref{prop:monodromy_ses} implies the set of automorphisms of $p_i$  is given by $\Gamma'$
    \begin{equation*}
       \decktransf{p_i} =  \settt{T(g, \cdot): \tilde{X}_i \to \tilde{X}_i}{g \in \Gamma'}.
    \end{equation*}
    
    Consider some other connected component $\tilde{X}_j$ of $\tilde{X}$. Since the covering space action is free and transitive on fibres of $p$, if we pick some $x \in X$, and $\tilde{x}_i \in \fibre{p}{x} \cap \tilde{X}$, there a unique element $g_{ij} \in \Gamma$, such that $\tilde{x}_j = g_{ij} \cdot \tilde{x}_i$. Since the action of $g_{ij}$ on $\tilde{X}$ is a homeomorphism and must send each connected component to a homeomorphic connected components, the action of $g_{ij}$ restricted to $\tilde{X}_i$ is a homeomorphism sending $\tilde{X}_i$ to $\tilde{X}_j$. Furthermore, since $p$ is the quotient of the $\Gamma$ action, $p_j = p_i \circ T(g_{ij}, \cdot)$, where $T: \Gamma \times \tilde{X} \to \tilde{X}$ denotes the $\Gamma$-action on $\tilde{X}$. Thus, $T(g_{ij}, \cdot)$ is a deck transformation between $p_i$ and $p_j$. This implies $\tilde{X}$ consists of isomorphic path-connected coverings over $X$. $\Gamma_i \cong \Gamma_j \cong \Gamma'$.

    The $\Gamma$-group action on $\tilde{X}$ descends to a $\Gamma$-group action on its space of connected components $\pi_0 \tilde{X}$. Because the action of $\Gamma$ on $\tilde{X}$ is a covering space action, $\Gamma$ descends to a transitive action on $\pi_0 \tilde{X}$; in particular, we have $\Gamma \cdot \tilde{X}_i = \pi_0 \tilde{X}$. Thus the orbit stabiliser theorem implies there is a bijection between $ \pi_0 \tilde{X}$ and cosets of the stabiliser of $\tilde{X}_i$. However, the stabiliser of $\tilde{X}_i$ is precisely the group actions that send points on $\tilde{X}_i$ to points on $\tilde{X}_i$, i.e. $\imag \mu_{\lambda \rvert x}$. Thus, we have a bijection between $\pi_0 \tilde{X}$ and cosets of $\imag \mu_{\lambda \rvert x}$.
\end{proof}

\subsection{Transition Maps of the Covering}\label{ssec:transition}

We consider $\Gamma$-coverings that satisfy a more stringent topological condition than \labelcref{C1}: that is, the base space $X$ admits a good cover $\cU$. Since each $U_i \in \cU$ is contractible, the homotopy lifting of $U_i$ to $\tilde{X}$ implies there is a chart $\varphi_i: \fibre{p}{U_i} \to U_i \times \Gamma$. We call the set of local trivialisations $\sett{\varphi_i}_{i \in \cI}$ the atlas corresponding to the covering $\cU$. 

Given the atlas $\sett{\varphi_i}_{i \in \cI}$, the system of transition maps is the collection of maps $\psi_{ji} = \varphi_j \circ \inv{\varphi_i} : (U_i \cap U_j) \times \Gamma \to (U_i \cap U_j) \times  \Gamma$ where $U_i \cap U_j \neq \emptyset$. Because $\cU$ is a good cover, $U_i \cap U_j$ is connected. Furthemore, since $\Gamma$ is discrete, there is a unique $t_{ij}\in\Gamma$ such that transition maps correspond to a local group action $\psi_{ji}(x,g) = (x,g \cdot t_{ij}) $. Due to $\psi_{ki} = \psi_{kj}\psi_{ji}$, the equivariance of $\varphi_i$ implies $t_{ij}t_{jk} = t_{ik}$ for $U_i \cap U_j \cap U_k \neq \emptyset$. By~\Cref{lem:cocycle_egpd}, $t$ extends to an edge groupoid homomorphism $t: \Egpd{\nerve{\cU}} \to \Gamma$, whose evaluation on edges of the nerve yields the group elements $t_{ij}$ that specify $\psi_{ji}$. We can $t$ the \emph{transition homomorphism} of the atlas.

Given a transition homomorphism associated to an atlas, we can reconstruct the covering up to isomorphism by gluing together local models of the covering space $U_i \times \Gamma$ on overlaps. We recall the following facts about transition maps of principal $\Gamma$-bundles, specialised to $\Gamma$-coverings and good covers as in our setting; see for example~\cite{Husemoller1966FibreBundles} for a detailed exposition of on the subject.

\begin{proposition} \label{prop:transition_homomorphism}
    Let $X$ be a space with a good cover $\cU = \sett{U_i}_{i \in \cI}$. 
    \begin{itemize}
        \item For any groupoid homomorphism $t: \Egpd{\nerve{\cU}} \to \Gamma$, there is a $\Gamma$-covering of $p: \tilde{X} \twoheadrightarrow X$ and an atlas $\sett{\varphi_i: \fibre{p}{U_i} \to U_i \times \Gamma}_{i \in \cI}$ of the covering, such that its transition homomorphism is precisely $t$~\cite[\S5 Theorem 3.2]{Husemoller1966FibreBundles}.
    \item  Consider two $\Gamma$-coverings $p: \tilde{X} \to X$ and $p':  \tilde{X}' \to X$ of $X$. There is a $\Gamma$-equivariant automorphism $\alpha \in \decktransf{p}$ relating $p' = p \circ \alpha$, iff for any arbitrary choices of atlases for $p$ and $p'$, the transition homomorphisms $t,t': \Egpd{\nerve{\cU}} \to \Gamma$ are related by a natural isomorphism $\theta: t \Rightarrow t'$~\cite[\S5 Prop 2.5]{Husemoller1966FibreBundles}.
    \end{itemize}
\end{proposition}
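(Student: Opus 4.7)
The plan for the first statement is the classical gluing construction for principal bundles, adapted to the discrete structure group $\Gamma$. Starting from a groupoid homomorphism $t: \Egpd{\nerve{\cU}} \to \Gamma$, I would form the disjoint union $Y = \bigsqcup_{i \in \cI} U_i \times \Gamma$ and quotient by the equivalence relation generated by $(x,g) \sim (x, g \cdot t([ij]))$ whenever $i,j$ index cover elements with $x \in U_i \cap U_j$. The first step is to verify this actually is an equivalence relation: reflexivity uses $t([ii]) = e$, symmetry uses $t([ji]) = t([ij])^{-1}$, and transitivity is exactly the cocycle condition $t([ij])\,t([jk]) = t([ik])$ for $ijk$ spanning a two-simplex of $\nerve{\cU}$ --- which holds precisely because $t$ is a groupoid homomorphism (\Cref{lem:cocycle_egpd}). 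I would then set $\tilde X = Y/{\sim}$, define $p([(x,g)]) := x$ and $h \cdot [(x,g)] := [(x, hg)]$, and let $\varphi_i: \fibre{p}{U_i} \to U_i \times \Gamma$ be the inverse of the map induced by inclusion of $U_i \times \Gamma$ into $Y$. That $\varphi_i$ is well-defined, $\Gamma$-equivariant, and a homeomorphism is a routine check; that $p$ is a covering follows from the local description $\varphi_i$; and by construction, $\varphi_j \circ \varphi_i^{-1}(x,g) = (x, g \cdot t([ij]))$, so the transition homomorphism of this atlas is exactly $t$.

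For the forward direction of the second statement, suppose $\alpha: \tilde X \to \tilde X'$ is a $\Gamma$-equivariant homeomorphism with $p' \circ \alpha = p$. I would examine the composite $\varphi'_i \circ \alpha \circ \varphi_i^{-1} : U_i \times \Gamma \to U_i \times \Gamma$. Equivariance forces this map to act on the $\Gamma$-factor by right multiplication, and since $U_i$ is connected and $\Gamma$ is discrete, the multiplier is a single element $\theta_i \in \Gamma$ (independent of $x \in U_i$). I would then define the candidate natural transformation by $\theta(i) := \theta_i$. Naturality over an edge $[ij]$ --- which amounts to the identity $\theta_i \cdot t'([ij]) = t([ij]) \cdot \theta_j$ in $\Gamma$ --- then follows by computing $\varphi'_j \circ \alpha \circ \varphi_i^{-1}$ on $(U_i \cap U_j) \times \Gamma$ in two ways: once as $\varphi'_j \circ (\varphi'_i)^{-1} \circ (\varphi'_i \circ \alpha \circ \varphi_i^{-1})$, and once as $(\varphi'_j \circ \alpha \circ \varphi_j^{-1}) \circ \varphi_j \circ \varphi_i^{-1}$, and equating the two. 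Extending naturality from edges to arbitrary edge-paths is automatic since both $t, t'$ are functors.

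For the reverse direction, given $\theta: t \Rightarrow t'$, I would define $\alpha$ locally on $\fibre{p}{U_i}$ by $\alpha := (\varphi'_i)^{-1} \circ \rho_{\theta_i} \circ \varphi_i$, where $\rho_g(x,h) = (x, h \cdot g)$. The work is then to check that on overlaps $\fibre{p}{U_i \cap U_j}$ these local recipes coincide, which is precisely the content of the naturality identity above: on the overlap, the two definitions differ by the word $\theta_i^{-1} t([ij]) \theta_j t'([ij])^{-1}$ in $\Gamma$, which is trivial by naturality. Once $\alpha$ is shown to be well-defined, $\Gamma$-equivariance is immediate from $\rho_g$ commuting with the left action, continuity follows from continuity of each local definition and the gluing, and bijectivity follows from the invertibility of $\theta$ in the functor category (so the same recipe with $\theta^{-1}$ yields the inverse).

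The main obstacle I anticipate is purely bookkeeping: keeping the conventions consistent between the paper's definition $\psi_{ji}(x,g) = (x, g \cdot t_{ij})$, the right-multiplication convention for the naturality square, and the composition order $[u_0 u_1]\cdots[u_{m-1}u_m]$ used for edge paths. Once those conventions are pinned down, none of the steps involve substantive topology beyond the discreteness of $\Gamma$ and connectedness of elements of $\cU$; all the content lies in translating the cocycle and naturality relations between the atlas picture and the groupoid picture.
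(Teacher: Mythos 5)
Your proposal is correct and follows essentially the same route as the paper (and the cited Husemoller arguments): the first bullet via the standard gluing of $\bigsqcup_i U_i \times \Gamma$ along the cocycle data, with the cocycle condition supplied by $t$ being a groupoid homomorphism, and the second bullet via the correspondence between $\Gamma$-equivariant isomorphisms and constant gauge shifts $\theta_i$ on each connected chart, which is exactly the naturality identity $\theta_i\, t'([ij]) = t([ij])\, \theta_j$. If anything, you spell out the converse direction of the second bullet (building $\alpha$ from a given $\theta$ by gluing local maps) more explicitly than the paper does, but the substance is the same.
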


We now relate the transition homomorphism with the monodromy homomorphism. Since the transition homomorphism is defined on the edge groupoid of the nerve of a good cover, while monodromy is defined on the fundamental groupoid of the underlying space, we relate the two homomorphisms via the realisation and snapping groupoid homomorphisms $R$ and $S$. We show that by choosing compatible choices of pseudo-sections and atlases, monodromy -- an attachment of $\Gamma$ to any class of paths in $X$ -- is an extension of the data encoded by the transition homomorphism on the nerve of a good cover of the space.

\begin{proposition} \label{prop:monodromy_compression} Consider $p: \tilde{X} \to X$ a $\Gamma$-covering of a space with good cover $\cU = \sett{U_i}_{i\in \cI}$. Let $S: \Fgpd{X} \leftrightarrows \Egpd{\nerve{\cU}}: R$ be the equivalence of categories as specified in \Cref{prop:equivalence_groupoids}, such that $SR(i)= i$ for $i \in \cI$. Then given a pseudo-section $\lambda: X \to \tilde{X}$:
\begin{itemize}
    \item There is a choice of atlas $\sett{\varphi_i}_{i \in \cI}$, such that the transition homomorphism $t: \Egpd{\nerve{\cU}} \to \Gamma$ is the pullback of the monodromy homomorphism along the realisation homomorphism: $t = \mu_\lambda \circ R$;
    \item The following diagram commutes iff  the pseudo-section is given by $\lambda(x) = \varphi_{S(x)}(x,e)$:
    \begin{equation}
    \begin{tikzcd}[cramped]
	\Egpd{\nerve{\cU}} && \Fgpd{X} \\
	& \Gamma
	\arrow["R", curve={height=-6pt}, from=1-1, to=1-3]
	\arrow["t"', from=1-1, to=2-2]
	\arrow["S", curve={height=-6pt}, from=1-3, to=1-1]
	\arrow["{\mu_{\lambda}}", from=1-3, to=2-2]
\end{tikzcd}.
\end{equation}
\end{itemize}
\end{proposition}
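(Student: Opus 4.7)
My strategy is to extract the atlas of the first claim directly from the pseudo-section $\lambda$ via homotopy lifting on contractible patches, then verify the cocycle identity on edges of the nerve. For the second claim, I will use the natural isomorphism $\varsigma:\iden_{\Fgpd{X}}\Rightarrow RS$ guaranteed by \Cref{prop:equivalence_groupoids} to reduce commutativity of the triangle to the single condition $\mu_\lambda(\varsigma_x)=e$ for all $x$, and then translate that condition into the sheet-geometry statement $\lambda(x)=\varphi_{S(x)}^{-1}(x,e)$.

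\textbf{Construction of the atlas and proof of (1).} For each $i\in\cI$ set $\tilde r(i):=\lambda(R(i))\in p^{-1}(R(i))$. Since $U_i$ is contractible it is simply connected, so for each $x\in U_i$ there is a unique homotopy class of paths from $R(i)$ to $x$ inside $U_i$; lift such a representative starting at $\tilde r(i)$ using the path-lifting property of the covering, and define $\varphi_i^{-1}(x,e)$ to be its endpoint. Extend by $\Gamma$-equivariance via $\varphi_i^{-1}(x,g):=g\cdot\varphi_i^{-1}(x,e)$. Standard arguments show $\varphi_i$ is a well-defined chart, so $\{\varphi_i\}_{i\in\cI}$ is an atlas with some transition homomorphism $t$. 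To show $t=\mu_\lambda\circ R$, note that both sides are groupoid homomorphisms out of $\Egpd{\nerve{\cU}}$, so by \Cref{lem:cocycle_egpd} it suffices to check equality on an edge $[ij]$. Pick any $x\in U_i\cap U_j$ and decompose a representative of $R([ij])$ as $\alpha\cdot\beta$ with $\alpha\subset U_i$ from $R(i)$ to $x$ and $\beta\subset U_j$ from $x$ to $R(j)$. By construction the lift of $\alpha$ starting at $\lambda(R(i))$ ends at $\varphi_i^{-1}(x,e)=t_{ij}\cdot\varphi_j^{-1}(x,e)$, where the second equality is the definition of $t_{ij}$ in terms of $\psi_{ji}$. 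Equivariance of the $\beta$-lift then shows it ends at $t_{ij}\cdot\lambda(R(j))$, so $\mu_\lambda(R([ij]))=t_{ij}=t([ij])$.

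\textbf{Proof of (2).} Fix the atlas from (1). The identity on objects $SR=\iden_\cI$ implies $\varrho$ is trivial, so commutativity of the triangle reduces to the single equation $\mu_\lambda=t\circ S$. Using $t=\mu_\lambda\circ R$ this becomes $\mu_\lambda=\mu_\lambda\circ RS$. Writing the naturality of $\varsigma:\iden\Rightarrow RS$ on a morphism $[\gamma]\in\Fgpd{X}(x,y)$ gives $RS([\gamma])=\varsigma_x^{-1}\cdot[\gamma]\cdot\varsigma_y$, and applying $\mu_\lambda$ we obtain
\begin{equation*}
\mu_\lambda\circ RS([\gamma])=\mu_\lambda(\varsigma_x)^{-1}\cdot\mu_\lambda([\gamma])\cdot\mu_\lambda(\varsigma_y).
\end{equation*}
Hence the triangle commutes iff $\mu_\lambda(\varsigma_x)=e$ for every $x\in X$; the ``only if'' direction follows by taking $[\gamma]=\varsigma_x$ and noting that $\varsigma_{RS(x)}$ is trivial because $RS(x)$ has contractible neighbourhood $U_{S(x)}=U_{S(RS(x))}$, while the ``if'' is immediate.

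\textbf{From $\mu_\lambda(\varsigma_x)=e$ to the sheet condition.} Recall $\varsigma_x$ is the unique homotopy class of paths from $x$ to $RS(x)$ carried by $U_{S(x)}$. Since any lift starting at $\lambda(x)$ lies inside a single sheet of $p^{-1}(U_{S(x)})$, writing this sheet as $\varphi_{S(x)}^{-1}(U_{S(x)}\times\{h\})$ for some $h\in\Gamma$, the endpoint is $\varphi_{S(x)}^{-1}(RS(x),h)=h\cdot\lambda(RS(x))$ by the construction of the atlas. Thus $\mu_\lambda(\varsigma_x)=h$, and $\mu_\lambda(\varsigma_x)=e$ is equivalent to $\lambda(x)$ lying in the $e$-sheet, i.e.\ $\lambda(x)=\varphi_{S(x)}^{-1}(x,e)$. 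The only subtlety I foresee is bookkeeping the order of composition in the naturality identity (groupoid composition versus left/right $\Gamma$-action) to make sure the sign of the conjugation is correct; once that is fixed, both directions of (2) follow from the sheet-geometry argument above.
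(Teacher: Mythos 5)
Your proof is correct and follows essentially the same route as the paper: the monodromy of an edge realisation is computed by lifting a representative split across $U_i$ and $U_j$ and using equivariance of the charts to read off $t_{ij}$, and commutativity of the triangle is reduced via the natural isomorphism $\varsigma$ to the condition $\mu_\lambda(\varsigma_x)=e$, which is then identified with $\lambda(x)$ lying in the identity sheet over $U_{S(x)}$. The only organisational difference is in the first bullet: you build the adapted atlas directly by basing each chart at $\lambda(R(i))$, whereas the paper computes $\mu_\lambda\circ R([ij])=\theta_i\, t_{ij}\,\theta_j^{-1}$ for an arbitrary atlas and then re-gauges via \Cref{prop:transition_homomorphism} so that $\theta$ is trivial --- the same normalisation reached from the other end.
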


\begin{proof}
To prove the first point, we show that 
for any pseudo-section $\lambda$ of the covering, and any choice of transition homomorphism $t$, there is a unique natural isomorphism $\theta: \mu_\lambda \circ R \Rightarrow t$. Thus by \Cref{prop:transition_homomorphism} there is a choice of atlas such that $\theta$ is the identity and the transition homomorphism $t$ is a sample of the monodromy $\mu_\lambda$. Since the covering group action is free and transitive on fibres of $p$, for any pseudo-section $\lambda$, we can find a unique assignment $\theta: \cI \to \Gamma$, such that 
    \begin{equation*}
        \lambda(R(i)) =  \inv{\varphi}_{i}(R(i), \theta(i)).
    \end{equation*}
 Consider an edge $ij$ in $\nerve{\cU}$ and the realisation of edge path $R(ij) \in \Fgpd{X}(R(i),R(j))$. By definition there is a representative path $\gamma$ of the class $R(ij)$ that is carried by $ij$. In other words $\gamma \subset U_i \cup U_j$ with $\gamma(0) \in U_i$ and $\gamma(1) \in U_j$. Since $\cU$ is a good cover, $U_i \cup U_j$ is simply connected, due to homotopy lifting properties, we can choose a section $s_{ij}: U_i \cup U_j \to \tilde{X}$ such that $s_{ij}(R(i)) = \lambda(R(i)) = \inv{\varphi}_{i}(R(i),\theta_i)$. Since this is a section of the covering, $s_{ij}$ is uniquely specified by where it sends $R(i)$. In particular, since $\inv{\varphi_i}(\cdot, \theta_i)$ is a section, uniqueness of lifts implies $s_{ij}\rvert_{U_i} = \inv{\varphi_i}(\cdot, \theta_i)$. Consider  $Y_{ij} = s_{ij}(U_i \cap U_j)$, and the map $\varphi_j \circ s_{ij}\rvert_{U_i \cap U_j}: Y_{ij} \to U_j \times \Gamma$; since $s_{ij}\rvert_{U_i \cap U_j} = \inv{\varphi_i}(\cdot, \theta_i)\rvert_{U_i \cap U_j}$, we have  $\varphi_j \circ s_{ij}\rvert_{U_i \cap U_j}(x) = \psi_{ji}(x, \theta_i) = (x, \theta_i t_{ij})$. Because $\varphi_j$ is a homeomorphism, $s_{ij}\rvert_{U_i \cap U_j} = \inv{\varphi_j}(\cdot, \theta_i t_{ij})\rvert_{U_i \cap U_j}$. We now note that $\inv{\varphi_j}(\cdot, \theta_i t_{ij})$ and $s_{ij}\rvert_{U_j}$ are lifts of $U_j$ to the covering space, and lifts are uniquely determined by where they send a single point. Since the two lifts coincide on the overlap $U_i \cap U_j$, we deduce that $s_{ij}\rvert_{U_j} = \inv{\varphi_j}(\cdot, \theta_i t_{ij})\rvert_{U_j}$.

Due to uniqueness of lifts, the lift of $\gamma$ to $\tilde{X}$ such that $\tilde{\gamma}(0) = \lambda(R(i))$ is $\tilde{\gamma} = s_{ij} \circ \gamma$. Because $\gamma(1) = R(j)$, we have $\tilde{\gamma}(1) = \inv{\varphi_j}(R(j), \theta_i t_{ij}) $. Because $\inv{\varphi_j}$ is $\Gamma$-equivariant, $\tilde{\gamma}(1) = \theta_i t_{ij}\inv{\theta_j}  \cdot \inv{\varphi_j}(R(j), \theta_j) = \theta_i t_{ij}\inv{\theta_j} \cdot \lambda(R(j))$. Therefore, the definition of $\mu_\lambda$ implies $\mu_\lambda
    \circ R([ij]) = \theta_i t_{ij} \inv{\theta_j}$. 
Extending to all classes of edge paths by composition of edges, the assignment $\theta: \cI \to \Gamma$ specifies a natural isomorphism $\theta: \mu_\lambda \circ R \Rightarrow t$.

Recall we have a natural isomorphism $\varsigma_\bullet: \iden_{\Fgpd{X}} \Rightarrow RS$, where $\varsigma_x$ is the unique class of paths in $\Fgpd{X}(x,RS(x))$ with a representative  path $s_x$ contained in $U_{S(x)}$. Thus $\mu_\lambda \circ RS([\gamma]) = \mu_\lambda(\varsigma_x) \mu_\lambda([\gamma]) \inv{\mu_\lambda}(\varsigma_y)$. We choose a section $\sigma: U_{S(x)} \to \fibre{p}{U_{S(x)}}$, such that $\sigma(\cdot) = \varphi_{S(x)}( \cdot , \theta(S(x)))$. In particular, on the end points of $s_x$, we have  $\sigma(x) = \varphi_{S(x)}(x, \theta(S(x)))$ and $\sigma(RS(x)) = \varphi_{S(x)}( RS(x), \theta(S(x)))$. The section lifts the path $s_x$ to a path between $\sigma(x)$ and $\sigma(RS(x))$. 

For monodromy, we choose a pseudo-section $\lambda_S(\cdot) = \varphi_{S(\cdot)}(\cdot,\theta(S(\cdot)))$ determined purely by $S$, the choise of $\theta: \mathcal{I} \to \Gamma$, and the atlas $\varphi_\cdot$ . For the path $s_x$,  the unique lifted path based at  $\lambda_S(x) =  \sigma(x)$ has the end point 
$$\sigma(RS(x)) = \varphi_{S(x)}( RS(x), \theta(S(x)))  = \varphi_{SRS(x)}( RS(x), \theta(SRS(x))) = \lambda_S(x),$$  
because we have chosen $R,S$ such that $SR(i) = i$ for all indices of cover elements. Thus, if we choose $\lambda = \lambda_S$, then $\mu_\lambda([s_x]) = \mu_\lambda(\varsigma_x) = e$. Therefore we obtain $\mu_\lambda \circ RS = \mu_\lambda$. Conversely, if $\mu_\lambda = \mu_\lambda \circ RS$ for all classes of paths, then for any $x \in U_{i}$ such that $i  = S(x)$, and $\inv{\varsigma_x} \in \Fgpd{X}(R(i),x)$, we have $\mu_\lambda(\inv{\varsigma_x}) = e$. In other words, the choice of pseudo-section for the monodromy homomorphism is the lift along end paths of points $\lambda(x) = \lambda_{S}(x)$. Repeat for all points $x \in \fibre{S}{i}$, and all $i \in \cI$, we thus obtain $\mu_\lambda = \mu_\lambda \circ RS$ implying $\lambda = \lambda_S$.

    If we set $\theta = e$, then we have $t = \mu_\lambda \circ R$. Choosing $\lambda = \lambda_S$, we have $\mu_\lambda = \mu_\lambda \circ RS$, thus implying $\mu_\lambda = t \circ S$. 
\end{proof}

\subsection{Induced Coverings}

Given a $\Gamma$-covering $p: \tilde{Y} \twoheadrightarrow Y$, a map $f: X \to Y$ from a space $X$ satisfying the topological criterion \labelcref{C1}, consider the pullback of the pair of maps $X \xrightarrow{f} Y \xleftarrow{p} \tilde{Y}$:
\begin{equation} \label{dgm:induced_covering}
    \begin{tikzcd}
	{\tilde{X}} & {\tilde{Y}} \\
	X & Y
	\arrow["f^\ast p"',  two heads, dashed, from=1-1, to=2-1]
	\arrow["{f_\sharp}", two heads, dashed, from=1-1, to=1-2]
	\arrow["\lrcorner"{anchor=center, pos=0.125}, draw=none, from=1-1, to=2-2]
	\arrow["p",  two heads, from=1-2, to=2-2]
	\arrow["f", from=2-1, to=2-2]
\end{tikzcd}.
\end{equation}
Explicitly, the pullback space $\tilde{X}$ is given by
\begin{equation}
    \tilde{X} = X \times_Y \tilde{Y} = \settt{(x,\tilde{y}) \in X \times \tilde{Y} }{f(x) = p(\tilde{y})},
\end{equation}
and $f^\ast p$ is the restriction of the projection $X \times \tilde{Y} \to X$ on $\tilde{X}$. Similarly, the map $f_\sharp$ is the restriction of the projection $X \times \tilde{Y} \to \tilde{Y}$ on $\tilde{X}$. We recall that $p: \tilde{Y} \twoheadrightarrow Y$ is a $\Gamma$-covering, and $f: X \to Y$ is a map between spaces satisfying \labelcref{C1}, then:
\begin{enumerate}
    \item The map $f^\ast p :\tilde{X} \to X$ in the pullback \cref{dgm:induced_covering} is a $\Gamma$-covering;
    \item The map $f_\sharp$ in the pullback \cref{dgm:induced_covering} is equivariant w.r.t. the $\Gamma$-covering space actions on $\tilde{X}$ and $\tilde{Y}$ respectively.
\end{enumerate}

\begin{example} \label{ex:lift_inconsistent_eg}
    Let $f: I \to S^1$ be the map that identifies the end points of the interval to form a circle. Let $p: \R \to S^1$ be the universal covering $p(t) = e^{2\pi i t}$ of $S^1$. Then the induced covering space is given by $\tilde{X} = \settt{(t, t+n)}{t \in [0,1],\ n \in \Z}$, the covering map $f^\ast p$ and $f_\sharp$ are the projections onto the first and second factors respectively. Note that the projection map $f_\sharp$  is a surjection onto $\R$ as it glues together end points of disjoint intervals in $\tilde{X}$.
\end{example}

Let us consider then the monodromy homomorphism of the induced covering. We show that if the pseudo-section $\lambda: X \to \tilde{X}$ is chosen carefully, the monodromy homomorphism $\mu^\ast_\lambda: \Fgpd{X} \to \Gamma$ of the induced covering $f^\ast p$  is the pullback of a monodromy homomorphism $\mu_{\lambda_Y}: \Fgpd{Y} \to \Gamma$ of $p$ along $f$. The choices $\lambda$ and $\lambda_Y$ need to be compatible; in other words, we choose $\lambda(x) = (x, \lambda_Y \circ f(x))$. This is equivalent to forcing $\lambda$ to satisfy $f(a) = f(b) \iff f_\sharp \lambda(a) = f_\sharp \lambda(b)$. If this is not satisfied, then there are cases, such as \Cref{ex:lift_inconsistent_eg}, where  $\mu_\lambda$ cannot be expressed as a pullback of $\mu_{\lambda_Y}$.

\begin{lemma}\label{prop:pullback_monodromy_full} Let $f^\ast p: \tilde{X} \to X$ be the pullback of a $\Gamma$-covering $p: \tilde{Y} \to Y$ along a continuous map $f: X \to Y$, where $X$ is path-connected. Suppose for some point $x \in X$, the pseudo-sections $\lambda_Y: Y \to \tilde{Y}$ and $\lambda: X \to \tilde{X}$ satisfy $f_\sharp \circ \lambda(x) = \lambda_Y \circ f(x)$. Then the pesudo-sections are compatible for all points in $X$ iff $\mu_\lambda$ is the pullback of $
\mu_{\lambda_Y}$:  
    \begin{align} \label{eq:induced_monodromy_full}
    \mu_{\lambda}^\ast &= \mu_{\lambda_Y} \circ \Fgpd{f} \iff f_\sharp \circ \lambda = \lambda_Y \circ f. 
\end{align}
\end{lemma}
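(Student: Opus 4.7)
The plan is to prove both directions of the biconditional using the uniqueness of path lifts together with the equivariance of $f_\sharp$, which restricts points in the pullback covering to points in $\tilde{Y}$ in a $\Gamma$-preserving way. The basic mechanism is: any lift $\tilde{\gamma}$ of a path $\gamma$ in $X$ to $\tilde{X}$ projects via $f_\sharp$ to a lift of $f\circ\gamma$ in $\tilde{Y}$, because the pullback square gives $p \circ f_\sharp = f \circ f^\ast p$, so $p \circ (f_\sharp\circ\tilde{\gamma}) = f \circ (f^\ast p \circ \tilde{\gamma}) = f \circ \gamma$.

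For the forward direction, assume $f_\sharp\circ\lambda = \lambda_Y\circ f$ holds everywhere. Pick any $[\gamma]\in\Fgpd{X}(a,b)$ and let $\tilde{\gamma}$ be its lift based at $\lambda(a)$, so that $\tilde{\gamma}(1) = \mu_\lambda([\gamma])\cdot\lambda(b)$. The projection $f_\sharp\circ\tilde{\gamma}$ is then the unique lift of $f\circ\gamma$ based at $f_\sharp(\lambda(a)) = \lambda_Y(f(a))$, and its endpoint equals $\mu_{\lambda_Y}(\Fgpd{f}[\gamma])\cdot\lambda_Y(f(b))$ by definition of monodromy. On the other hand, applying $f_\sharp$ directly to $\tilde{\gamma}(1)$ and using equivariance gives $\mu_\lambda([\gamma])\cdot f_\sharp(\lambda(b)) = \mu_\lambda([\gamma])\cdot\lambda_Y(f(b))$. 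Equating the two expressions and invoking freeness of the $\Gamma$-action on fibres yields $\mu_\lambda([\gamma]) = \mu_{\lambda_Y}(\Fgpd{f}[\gamma])$.

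For the reverse direction, assume $\mu_\lambda = \mu_{\lambda_Y}\circ\Fgpd{f}$ and that compatibility holds at the single point $x\in X$. Given any $y\in X$, use path-connectedness to pick a path $\gamma$ from $x$ to $y$ and run the same computation: the lift $\tilde{\gamma}$ based at $\lambda(x)$ has endpoint $\mu_\lambda([\gamma])\cdot\lambda(y)$, while $f_\sharp\circ\tilde{\gamma}$ — which now starts at $\lambda_Y(f(x))$ thanks to the single-point hypothesis — is the lift of $f\circ\gamma$ used by $\mu_{\lambda_Y}$, with endpoint $\mu_{\lambda_Y}(\Fgpd{f}[\gamma])\cdot\lambda_Y(f(y))$. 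Applying $f_\sharp$ to $\tilde\gamma(1)$ in the other order and using equivariance give
\begin{equation*}
\mu_\lambda([\gamma])\cdot f_\sharp(\lambda(y)) \;=\; \mu_{\lambda_Y}(\Fgpd{f}[\gamma])\cdot\lambda_Y(f(y)).
\end{equation*}
The pullback equation cancels the group elements on both sides, and freeness of the action forces $f_\sharp(\lambda(y)) = \lambda_Y(f(y))$. Since $y$ was arbitrary, compatibility extends to all of $X$.

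The only subtlety I expect is the bookkeeping on which pseudo-section was used to define each monodromy, and checking that $f_\sharp\circ\tilde{\gamma}$ really is the lift appearing in the definition of $\mu_{\lambda_Y}$ — both of which are handled by the compatibility assumption at the base point $x$ together with uniqueness of lifts based at a given point. No topological hypothesis beyond path-connectedness of $X$ (to propagate the base-point information) and the standard properties of covering space actions (freeness, equivariance of $f_\sharp$) is needed.
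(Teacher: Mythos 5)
Your proof is correct and takes essentially the same route as the paper's: both rest on the identity $p \circ f_\sharp = f \circ f^\ast p$ from the pullback square, uniqueness of lifts to identify $f_\sharp \circ \tilde{\gamma}$ with the lift of $f \circ \gamma$ defining $\mu_{\lambda_Y}$, equivariance of $f_\sharp$, freeness of the $\Gamma$-action, and path-connectedness to propagate compatibility from the single base point in the reverse direction. The only cosmetic difference is that the paper packages the endpoint comparison as a pointwise biconditional lemma and then derives both directions from it, whereas you argue the two directions directly with the same computation.
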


\begin{proof} 
We first proof a slightly weaker result first. If the lifts satisfy $f_\sharp \circ \lambda(x_0) = \lambda_Y \circ f(x_0)$, then for classes of paths between a pair of points $x_0, x_1 \in X$, we have 
\begin{align} \label{eq:induced_monodromy_full_lem}
    \forall [\gamma] \in \Fgpd{X}(x_0,x_1),\quad \mu_{\lambda}([\gamma]) = \mu^\ast_{\lambda_Y} \circ \Fgpd{f}([\gamma]) \iff f_\sharp \circ \lambda(x_1) = \lambda_Y \circ f(x_1). 
\end{align}
We show this by considering path lifts in the induced covering. Let us consider a path $\gamma: I \to X$, from $\gamma(0) = x_0$ to $\gamma(1) = x_1$, and $f\circ \gamma: I \to Y$. The former path admits a lift $\gamma^\ast$ to $\tilde{X}$ and the latter a lift $(f \circ \gamma)^\ast$ to $\tilde{Y}$. If we choose base points of the lifts consistently, such that $(f \circ \gamma)^\ast(0) = f_\sharp \gamma^\ast(0) = \lambda_Y \circ f(x_0)$, then by uniqueness of lifts, we have $(f\circ \gamma)^\ast = f_\sharp \circ \gamma^\ast$. This statement is expressed in the following commutative diagram:
\begin{equation}\label{dgm:induced_path_lift}
    \begin{tikzcd}
	{(I,0)} & {(\tilde{X},\tilde{x})} & {(\tilde{Y},f_\sharp(\tilde{x}))} \\
	& {(X,x)} & {(Y,f(x))}
	\arrow["{f^\ast p}"', from=1-2, to=2-2]
	\arrow["{f_\sharp}", from=1-2, to=1-3]
	\arrow["p", from=1-3, to=2-3]
	\arrow["f", from=2-2, to=2-3]
	\arrow["\lrcorner"{anchor=center, pos=0.125}, draw=none, from=1-2, to=2-3]
	\arrow["\gamma"', from=1-1, to=2-2]
	\arrow["{\gamma^\ast}", from=1-1, to=1-2]
	\arrow["{(f\circ\gamma)^\ast}", curve={height=-18pt}, from=1-1, to=1-3]
\end{tikzcd}
\end{equation}
Consider then the monodromy homomorphisms $\mu_{\lambda_Y}: \Fgpd{Y} \to \Gamma$, and $\mu^\ast_{\lambda}: \Fgpd{X} \to \Gamma$ defined by respective pseudo-sections $\lambda_Y$ and $\lambda$. For brevity of notation, let $\tilde{y}_i = \lambda_Y \circ f(x_i)$, and $\tilde{x}_i = \lambda(x_i)$. From the definition of the monodromy homomorphism, we must have
\begin{align*}
    f_\sharp \circ \gamma^\ast (1) &= \fun{f_\sharp}{\mu^\ast_{\lambda}([\gamma]) \cdot \tilde{x}_1 }= \mu^\ast_{\lambda}([\gamma]) \cdot f_\sharp(\tilde{x}_1) \\
    (f \circ \gamma)^\ast (1)  &= \mu_{\lambda}([f \circ \gamma])  \cdot \tilde{y}_1,
\end{align*}
where in the first line we use the fact that $f_\sharp$ is an equivariant map. Because the upper triangle commutes in \cref{dgm:induced_path_lift}, $(f \circ \gamma)^\ast = f_\sharp \circ \gamma^\ast$ implies for any $[\gamma] \in \Fgpd{X}(x_0,x_1)$,
\begin{equation}\label{eq:dummy0}
    \mu_{\lambda_Y}([f\circ\gamma]) \cdot \tilde{y}_1  = \mu^\ast_{\lambda}([\gamma]) \cdot f_\sharp(\tilde{x}_1).
\end{equation}
Since the covering $p$ satisfies \labelcref{C3}, the action of $\Gamma$ on $\tilde{y}$ is free. 

If we have $\mu_{\lambda_Y}([f\circ\gamma]) = \mu^\ast_{\lambda}([\gamma]) $ for some $[\gamma]$, then \cref{eq:dummy0} implies $\lambda_Y \circ f(x_1) = f_\sharp \circ \lambda(x_1)$ due to the free group action. Conversely. If we have $f_\sharp \circ \lambda(x_1) = \lambda_Y \circ f(x_1)$, then due to the free group action, this implies $\mu_{\lambda_Y}([f\circ\gamma]) = \mu^\ast_{\lambda}([\gamma])$ for any $[\gamma] \in \Fgpd{X}(x_0,x_1)$.

We have thus proved \cref{eq:induced_monodromy_full_lem}.
If $f_\sharp \circ \lambda = \lambda_Y \circ f$ for all points, then $\mu^\ast_{\lambda} = \mu_{\lambda_Y} \circ \Fgpd{f}$ for all homotopy classes of paths. Conversely, if for all homotopy classes of paths $\mu_{\lambda} = \mu_{\lambda_Y} \circ \Fgpd{f}$, and the lifts are synchronised $f_\sharp \circ \lambda(x) = \lambda_Y \circ f(x)$ at some point $x$, then since $X$ is path-connected, for any point $x'$ we can consider a path from $x$ to $x'$; then the fact that for $[\gamma] \in \Fgpd{X}(x,x')$ we have  $\mu_{\lambda}([\gamma]) = \mu_{\lambda_Y} \circ \Fgpd{f}([\gamma])$  implies $f_\sharp \circ \lambda(x') = \lambda_Y \circ f(x')$.
\end{proof}

Since the pullback of a monodromy homomorphism is a monodromy homomorphism for some choice of lift, we can choose a compatible atlas whose transition homomorphism is a sample of the pulled back monodromy homomorphism. 

\begin{proposition} \label{prop:monodromy_groupoid_big}
    Let $f^\ast p: \tilde{X} \to X$ be the pullback of a $\Gamma$-covering $p: \tilde{Y} \to Y$ along a continuous map $f: X \to Y$. If $X$ admits a good cover $\cU$, then for any pseudo-section $\lambda: Y \to \tilde{Y}$, and associated monodromy homomorphism $\mu_\lambda: \Fgpd{Y} \to \Gamma$, there is some choice of atlas of the pullback covering $f^\ast p$, such that the transition homomorphism $t^\ast : \Egpd{\nerve{\cU}} \to \Gamma$ commutes with the following homomorphisms:
    \begin{equation}\label{dgm:monodromy_groupoid_big}
        \begin{tikzcd}[cramped]
	{\Pi X} & {\Pi Y} \\
	{\mathscr{E}\mathsf{N}(\mathscr{U})} & \Gamma
	\arrow["{\Pi f}", from=1-1, to=1-2]
	\arrow["{\mu_\lambda}", from=1-2, to=2-2]
	\arrow["R", from=2-1, to=1-1]
	\arrow["{t^\ast }", from=2-1, to=2-2]
\end{tikzcd}.
    \end{equation}
\end{proposition}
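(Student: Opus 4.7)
The plan is to assemble the result from two earlier results: \Cref{prop:pullback_monodromy_full}, which expresses a monodromy of a pullback covering as the pullback of a monodromy, and \Cref{prop:monodromy_compression}, which compresses a monodromy homomorphism on $\Fgpd{X}$ to a transition homomorphism on $\Egpd{\nerve{\cU}}$ via the realisation $R$.

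First I would define a pseudo-section $\lambda^\ast : X \to \tilde{X}$ of $f^\ast p$ compatible with $\lambda$. Since the pullback is realised concretely as $\tilde{X} = \settt{(x,\tilde{y}) \in X \times \tilde{Y}}{f(x) = p(\tilde{y})}$, the assignment
\begin{equation*}
    \lambda^\ast(x) = (x, \lambda \circ f(x))
\end{equation*}
is a well-defined set-theoretic map satisfying $f^\ast p \circ \lambda^\ast = \iden_X$ and, by construction, $f_\sharp \circ \lambda^\ast = \lambda \circ f$. Applying \Cref{prop:pullback_monodromy_full} on each path-connected component of $X$ (the result is stated for path-connected $X$, but since path lifting is a local-to-path condition and the compatibility of pseudo-sections holds pointwise, the same conclusion extends component-wise), we obtain the identity of groupoid homomorphisms
\begin{equation*}
    \mu^\ast_{\lambda^\ast} = \mu_{\lambda} \circ \Fgpd{f} : \Fgpd{X} \to \Gamma.
\end{equation*}

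Next, I would invoke \Cref{prop:monodromy_compression} for the $\Gamma$-covering $f^\ast p : \tilde{X} \to X$ and the good cover $\cU$ of $X$, together with the equivalence of groupoids $R: \Egpd{\nerve{\cU}} \leftrightarrows \Fgpd{X} :S$ chosen so that $SR(i) = i$ for every $i \in \cI$. Applied to the pseudo-section $\lambda^\ast$, this proposition furnishes an atlas $\sett{\varphi_i^\ast}_{i \in \cI}$ of $f^\ast p$ whose associated transition homomorphism $t^\ast : \Egpd{\nerve{\cU}} \to \Gamma$ satisfies
\begin{equation*}
    t^\ast = \mu^\ast_{\lambda^\ast} \circ R.
\end{equation*}

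Combining the two identities immediately yields $t^\ast = \mu_\lambda \circ \Fgpd{f} \circ R$, which is precisely the commutativity of \cref{dgm:monodromy_groupoid_big}. The only subtle point, and the one worth double-checking, is the compatibility requirement in \Cref{prop:pullback_monodromy_full}: we need $f_\sharp \circ \lambda^\ast = \lambda \circ f$ to hold globally rather than at a single base point, which is why choosing $\lambda^\ast(x) = (x, \lambda(f(x)))$ as an honest pseudo-section of the pullback (not merely a lift of a single point) is essential. Once this is in place, no further calculation is required; the result follows by composing the two established functorial statements.
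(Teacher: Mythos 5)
Your proposal is correct and follows essentially the same route as the paper's proof: choose the compatible pseudo-section $\lambda^\ast(x) = (x,\lambda\circ f(x))$ so that \Cref{prop:pullback_monodromy_full} gives $\mu^\ast_{\lambda^\ast} = \mu_\lambda \circ \Fgpd{f}$, then apply \Cref{prop:monodromy_compression} to obtain an atlas with $t^\ast = \mu^\ast_{\lambda^\ast}\circ R$. Your explicit formula for $\lambda^\ast$ and the component-wise remark simply spell out what the paper's terser proof leaves implicit.
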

\begin{proof}
    We choose the lift of points in $X$ to the induced covering space $\tilde{X}$ satisfying the conditions of \Cref{prop:pullback_monodromy_full}. Since $\mu_\lambda \circ \Fgpd{f}$ is equal to the monodromy homomorphism of such a lift, then we can choose the atlas as specified in \Cref{prop:monodromy_compression} such that its transition homomorphism is a sample of the monodromy. 
\end{proof}
\begin{remark}
    While we can sample from a pullback monodromy to recover a transition homomorphism, in general we cannot extend a transition homomorphism of a pullback covering to recover the pullback monodromy. That is because the extension requires a type of lift of points in the induced covering specified in \Cref{prop:monodromy_compression}, that might not be compatible with the requirements on the lift as specified in \Cref{prop:pullback_monodromy_full}.
\end{remark}
 
\begin{remark} Consider a simplicial map $L \xrightarrow{\psi} K$. If we have a map $f: |K| \to X$, because the realisation functor $R$ is a natural transformation $R: \cE \Rightarrow \Pi$, the combination of \Cref{prop:monodromy_groupoid_big} and \Cref{prop:gr_functorial} implies if $f^\ast t: \Egpd{K} \to \Gamma$ and $(f \circ |\Psi|)^\ast t: \Egpd{L} \to \Gamma$ are transition homomorphism derived from the same monodromy homomorphism $\mu_\lambda$, then we can write $(f \circ |\Psi|)^\ast t = f^\ast t \circ \Egpd{\Psi}$.
\end{remark}
Equivalence up to natural isomorphism allows us to make some statements about the fundamental groups of $X$ and $Y$. In particular, if we choose the covering $p$ on $Y$ to be the universal covering of $Y$, then the transition homomorphism of the pullback covering recovers the homomorphism on the fundamental group induced by the map $f: X \to Y$ up to isomorphism. 

By restricting~\Cref{prop:monodromy_groupoid_big} from groupoids to vertex groups of the groupoids, we immediately obtain the following. 
\begin{corollary} \label{prop:monodromy_group_big}
     Let $f^\ast p: \tilde{X} \to X$ be the pullback of the universal covering $p: \tilde{Y} \to Y$ of $Y$ along a continuous map $f: X \to Y$. Assume $X$ admits a good cover $\cU$, and let $t^\ast: \Egpd{\nerve{\cU}} \to \Gamma$ be the transition homomorphism for some choice of atlas on $f^\ast p$. Then the group homomorphism $\Egroup{\nerve{\cU},u} \xrightarrow{\imath_u} \Egpd{\nerve{\cU}} \xrightarrow{t^\ast} \Gamma$ recovers the induced homomorphism of $f$ on the fundamental group up to isomorphism:
     \begin{equation} \label{dgm:th_recovers_induced_f}
         \begin{tikzcd}[cramped]
         \fungroup{X,R(u)} & \fungroup{Y,f\circ R(u)} \\
	\Egroup{\nerve{\cU},u} & \Gamma 
	\arrow["\fungroup{f}", from=1-1, to=1-2]
	\arrow["\cong","R"', from=2-1, to=1-1]
	\arrow["\cong", "\mu_{\lambda \rvert f \circ R(u)}"',from=1-2, to=2-2]
	\arrow["t^\ast \circ \imath_u", from=2-1, to=2-2]
     \end{tikzcd}.
     \end{equation}
\end{corollary}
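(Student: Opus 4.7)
The plan is to derive this Corollary by restricting the commutative diagram of \Cref{prop:monodromy_groupoid_big} to vertex groups at the base point $u$. First I would invoke \Cref{prop:monodromy_groupoid_big} for the pullback covering $f^\ast p$ along $f: X \to Y$ equipped with the good cover $\cU$: for any pseudo-section $\lambda: Y \to \tilde{Y}$, there is a choice of atlas on $f^\ast p$ such that $t^\ast = \mu_\lambda \circ \Fgpd{f} \circ R$ as groupoid homomorphisms $\Egpd{\nerve{\cU}} \to \Gamma$. Composing on the right with the inclusion $\imath_u: \Egroup{\nerve{\cU},u} \hookrightarrow \Egpd{\nerve{\cU}}$ therefore yields the equality $t^\ast \circ \imath_u = \mu_{\lambda\rvert f\circ R(u)} \circ \fungroup{f} \circ R\rvert_{\Egroup{\nerve{\cU},u}}$ of group homomorphisms landing in the vertex group $\Gamma$ of the one-object groupoid $\Gamma$.

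Next, I would identify the two vertical maps in \cref{dgm:th_recovers_induced_f} as isomorphisms. The realisation homomorphism $R: \Egpd{\nerve{\cU}} \to \Fgpd{X}$ is an equivalence of groupoids by \Cref{prop:equivalence_groupoids}, hence full and faithful; in particular, its restriction to any vertex group is a group isomorphism $R: \Egroup{\nerve{\cU},u} \xrightarrow{\cong} \fungroup{X,R(u)}$. For the right-hand vertical map, I would appeal to \Cref{prop:monodromy_ses} applied to $p$: since $p$ is the \emph{universal} covering, $\tilde{Y}$ is simply connected, so $\fungroup{\tilde{Y}, \lambda(f\circ R(u))} = 0$ and the short exact sequence collapses to an isomorphism $\mu_{\lambda\rvert f \circ R(u)}: \fungroup{Y,f\circ R(u)} \xrightarrow{\cong} \Gamma$.

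Combining these two observations, \cref{dgm:th_recovers_induced_f} commutes and its vertical arrows are isomorphisms. This presents $t^\ast \circ \imath_u$ as conjugate to $\fungroup{f}$ via the two isomorphisms $R$ and $\mu_{\lambda \rvert f \circ R(u)}$, establishing that the composition $t^\ast \circ \imath_u$ recovers $\fungroup{f}$ up to isomorphism (with the mild ambiguity of an inner automorphism of $\Gamma$ corresponding to changing the pseudo-section, as recorded by \cref{eq:monodromy_change_lift}).

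I do not expect any substantial obstacle: the corollary is essentially a vertex-group restriction of an already-proven groupoid-level statement. The only items to handle carefully are bookkeeping ones, namely that all three groupoid homomorphisms in $t^\ast = \mu_\lambda \circ \Fgpd{f} \circ R$ send the relevant vertex groups to each other (which is immediate from $R(u), f(R(u))$ being the images of $u$ under the object maps of $R$ and $\Fgpd{f}$), and that invoking \Cref{prop:monodromy_ses} requires $\tilde{Y}$ to be path-connected, which is guaranteed here because $\tilde{Y}$ is simply connected.
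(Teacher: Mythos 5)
Your proposal is correct and follows essentially the same route as the paper, which obtains the corollary precisely by restricting the groupoid-level identity $t^\ast = \mu_\lambda \circ \Fgpd{f} \circ R$ of \Cref{prop:monodromy_groupoid_big} to vertex groups, using that $R$ is full and faithful (hence an isomorphism on vertex groups) and that $\mu_{\lambda\rvert f\circ R(u)}$ is an isomorphism since $\tilde{Y}$ is simply connected (\Cref{prop:monodromy_ses}). Your remark about the inner-automorphism ambiguity under a change of atlas or pseudo-section is consistent with the paper's discussion following the corollary.
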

\Cref{prop:monodromy_group_big} immediately suggests that $\fungroup{f}$ is encoded on finite data $t^\ast$ specified on the edges of the nerve, one that can be obtained by realising those edges as explicit paths, and lifting the images of the paths in the universal cover of $X$. We emphasise that $t^\ast$ is dependent on a choice of atlas in the pullback covering $f^\ast p$, as stated in \Cref{prop:monodromy_groupoid_big}. As a change of atlas -- that is, a gauge transformation -- induces a natural transformation between the groupoid homomorphisms (\Cref{prop:transition_homomorphism}), the induced homomorphisms on the edge group is uniquely defined up to conjugation. Thus, if we have different spaces mapped into $Y$, we cannot meaningfully compare the images of their fundamental/edge groups in $\Gamma$ due to dependency on such choices. However, we can mod out the effect of the gauge symmetry by comparing the conjugacy classes of the image of loops. Since $\Gamma$ is isomorphic to the fundamental group of $Y$, the set of conjugacy classes of $\Gamma$ are in bijective correspondence with $[S^1, Y]$, the homotopy types of maps from $S^1$ into $Y$.

An even simpler approach to mod out the conjugacy would be to work with homology groups instead. If we apply the groupoid homology functor $\mathcal{H}_1: \Grpd \to \AbGrp$ to~\cref{dgm:monodromy_groupoid_big} in \Cref{prop:monodromy_groupoid_big}, the equivalence between the first groupoid homology of the fundamental group and first singular homology, as well as that between the first groupoid homology of the edge groupoid of the nerve and first simplicial homology, yields a homomorphism $\tau: \homol{1}{\nerve{\cU}} \to \abel{\Gamma}$ that recovers the induced homomorphism of $f$ on singular homology groups.

\begin{proposition} \label{prop:monodromy_homology_big}
    Consider a space $X$ admitting a good cover $\cU$. Let $f: X \to Y$ be a continuous map, and $f^\ast p: \tilde{X} \to X$ be the covering map induced from the universal covering of $Y$. Let $t: \Egpd{\nerve{\cU}} \to \Gamma$ be a transition homomorphism of the covering $f^\ast p$. Then for any abelian group $A$, we have a well-defined group homomorphism $\tau: H_1(\nerve{\cU};A) \to \abel{\Gamma} \otimes A$ given by,
     \begin{equation}
         \tau: \left[ \sum_i \sigma_i \otimes a_i \right]  \mapsto \sum_i \abel{t}([\sigma_i]) \otimes a_i,
     \end{equation}
     which recovers the induced homomorphism of $f$ on homology up to isomorphism
     \begin{equation} \label{dgm:th_recovers_induced_hf}
         \begin{tikzcd}[cramped]
         H_1(X;A) & H_1(Y;A) \\
	H_1(\nerve{\cU};A) & \abel{\Gamma} \otimes A 
	\arrow["H_1(f)", from=1-1, to=1-2]
	\arrow["\cong", from=2-1, to=1-1]
	\arrow["\cong", from=1-2, to=2-2]
	\arrow["\tau", from=2-1, to=2-2]
     \end{tikzcd}.
     \end{equation}
\end{proposition}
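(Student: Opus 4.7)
The plan is to obtain $\tau$ by applying the groupoid homology functor $\HGrpd{1}{-;A}$ to the commutative square of \Cref{prop:monodromy_groupoid_big}, and then transport each of the four vertices to the abelian groups appearing in the statement using the natural isomorphisms established in \Cref{prop:Hfgpd_func}, \Cref{prop:hegrpd_func}, and \Cref{prop:grp_homology}. Functoriality of groupoid homology takes the commutative square $t^\ast = \mu_\lambda \circ \Fgpd{f} \circ R$ to a commutative square of abelian groups
\begin{equation*}
\begin{tikzcd}[cramped]
    \HGrpd{1}{\Fgpd{X};A} & \HGrpd{1}{\Fgpd{Y};A} \\
    \HGrpd{1}{\Egpd{\nerve{\cU}};A} & \HGrpd{1}{\Gamma;A}
    \arrow["\HGrpd{1}{\Fgpd{f}}", from=1-1, to=1-2]
    \arrow["\HGrpd{1}{\mu_\lambda}", from=1-2, to=2-2]
    \arrow["\HGrpd{1}{R}", from=2-1, to=1-1]
    \arrow["\HGrpd{1}{t^\ast}"', from=2-1, to=2-2]
\end{tikzcd},
\end{equation*}
and the fact that $R$ is an equivalence of groupoids (\Cref{prop:equivalence_groupoids}) together with the homotopy invariance of groupoid homology (noted in \Cref{ssec:homology_groupoid}) ensures that $\HGrpd{1}{R}$ is an isomorphism.

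Next, I would define $\tau := \eta \circ \HGrpd{1}{t^\ast} \circ \mathscr{s}^{\nerve{\cU}}_1$, where $\mathscr{s}^{\nerve{\cU}}_1$ is the natural isomorphism of \Cref{prop:hegrpd_func} and $\eta$ is the isomorphism $\HGrpd{1}{\Gamma;A} \xrightarrow{\cong} \abel{\Gamma} \otimes A$ of \Cref{prop:grp_homology}. Verifying the explicit formula is then a direct computation on a representative cycle: $\mathscr{s}^{\nerve{\cU}}_1$ sends $[\sum_i \sigma_i \otimes a_i]$ to the class $[\sum_i \langle [\sigma_i] \rangle \otimes a_i]$ in $\HGrpd{1}{\Egpd{\nerve{\cU}};A}$; the induced chain map for $t^\ast$ sends each generator $\langle [\sigma_i] \rangle$ to $\langle t^\ast([\sigma_i]) \rangle$; and finally $\eta$ identifies $[\sum_i \langle t^\ast([\sigma_i]) \rangle \otimes a_i]$ with $\sum_i t^\ast([\sigma_i]) \otimes a_i$ regarded as an element of $\abel{\Gamma} \otimes A$, which is precisely $\sum_i \abel{t}([\sigma_i]) \otimes a_i$ after projecting to the abelianisation.

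To establish~\cref{dgm:th_recovers_induced_hf}, I would take the left vertical isomorphism to be $(\mathscr{s}^{\nerve{\cU}}_1)^{-1} \circ \HGrpd{1}{R}^{-1} \circ \mathscr{s}^X_1$ and the right vertical isomorphism to be $\eta \circ \HGrpd{1}{\mu_\lambda} \circ \mathscr{s}^Y_1$. Commutativity of the outer square then reduces to pasting together the naturality square of $\mathscr{s}$ with respect to $f$ (from \Cref{prop:Hfgpd_func}) on the top and the functorial image of \cref{dgm:monodromy_groupoid_big} on the bottom, followed by post-composition with $\eta$. The step I expect to require the most care is confirming that the right vertical map is genuinely an isomorphism: this uses that $p$ is the \emph{universal} covering, so the monodromy restricts to a group isomorphism $\fungroup{Y,y} \cong \Gamma$ on each vertex group (\Cref{prop:monodromy_ses}), and the equivalence-of-groupoids calculation combined with \Cref{prop:grp_homology} shows that $\HGrpd{1}{\mu_\lambda}$ is an isomorphism on the level of first groupoid homology. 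Given all of this, well-definedness of $\tau$ on homology classes is automatic from the functoriality used to construct it.
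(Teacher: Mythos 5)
Your proposal is correct and follows essentially the same route as the paper: applying $\HGrpd{1}{-;A}$ to the square of \Cref{prop:monodromy_groupoid_big} and transporting the vertices via the isomorphisms of \Cref{prop:Hfgpd_func}, \Cref{prop:hegrpd_func}, and \Cref{prop:grp_homology}. You additionally spell out the explicit cycle-level computation and the reason $\HGrpd{1}{\mu_\lambda}$ is an isomorphism (universality of $p$ plus \Cref{prop:monodromy_ses}), which the paper leaves implicit, but the argument is the same.
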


\begin{proof} 
We assemble $\tau$ by considering the following commutative diagram 
    \[\begin{tikzcd}[column sep=large]
	& {H_1(X;A)} & {H_1(Y;A)} \\
	& \HGrpd{1}{\Fgpd{X}} &\HGrpd{1}{\Fgpd{Y}} \\
	{H_1(\nerve{\cU};A)} & {\HGrpd{1}{\Egpd{\nerve{\cU}};A}} & {\HGrpd{1}{\Gamma;A}} & {\abel{\Gamma} \otimes A}
	\arrow["{H_1(f)}", from=1-2, to=1-3]
	\arrow["\cong", from=1-2, to=2-2]
	\arrow["\cong"', from=1-3, to=2-3]
	\arrow["{\HGrpd{1}{\Pi f}}", from=2-2, to=2-3]
	\arrow["{\HGrpd{1}{\mu_\lambda}}", "\cong"', from=2-3, to=3-3]
	\arrow["\cong", from=3-1, to=3-2]
	\arrow["\HGrpd{1}{R}","\cong"', from=3-2, to=2-2]
	\arrow["{\HGrpd{1}{t}}", from=3-2, to=3-3]
	\arrow["\cong", from=3-3, to=3-4]
\end{tikzcd}\]
The top square commutes due to the natural isomorphism between the first singular homology and the first groupoid homology of the fundamental groupoid \Cref{prop:Hfgpd_func}; the bottom square commutes due to applying the groupoid homology functor $\HGrpd{1}{-;A}$ to \cref{dgm:monodromy_groupoid_big}. The isomorphisms on the bottom row are due to \Cref{prop:hegrpd_func} and \Cref{prop:grp_homology}. Composing the explicit homomorphisms described in those propositions recovers the explicit expression for $\tau$. 
\end{proof}

\section{Recovering Induced Coverings on Data from Geodesics}
We focus on the case where the covering space in question is a Riemannian universal covering of a compact manifold $M$, and our data consists of a finite point cloud $\X \subset M$. The typical task of topological analysis is to summarise the `shape' of the $\X$, in relation to its inclusion into the ambient space. In this setting, we consider the question of whether $\X$ `captures' some non-trivial first homology of the ambient manifold. This is of course dependent on a construction of choice on $\X$ to simulate its shape. We consider two such constructions on $\X$. The first is the \v{C}ech complex on the point cloud, where we restrict the thickening radius $\epsilon$ to be less than the convexity radius of the manifold. The latter consists of graphs $G$ obtained by interpolating minimising geodesics between points in $\X$; we show in \Cref{sec:generic} that for generic point clouds, there is a unique choice of interpolating minimising geodesics given a point cloud. In the first setting, we aim to recover the homomorphism on first homology induced by the inclusion $\X^\epsilon \hookrightarrow M$. In the latter setting, we consider the homomorphism induced by the map $f: |G| \to M$ with the interpolated minimising geodesics as its image.   We show that in both cases, we only need the distances between points on $\X$, the distance between points in its fibre $\fibre{p}{\X}$ in the universal covering space, and the group action on the universal covering space to recover the induced homomorphisms. This is described in \Cref{prop:basic_infer_t}. 

As a simple application, we apply our framework to analyse cycles formed by four points randomly sampled on the collection of surfaces described in~\Cref{ex:surface_coverings}. The distribution of the persistent homology of such cycles is called the first \emph{principal persistence measure} of the surface from which they are sampled~\cite{Gomez2024CurvatureDiagrams}. We show that our framework can be applied to detect the homology classes of such four point cycles. By doing so we can  decompose the persistence measure by homology classes of the manifold. Since principal persistence measures are used as geometric features of spaces, such decomposition cna help us better interpret the geometric signatures detected by the persistence measure. 

\subsection{Geometry Preliminaries}
We first review some facts about smooth coverings of complete, Riemannian manifolds. We refer the reader to~\cite{Gallot2004RiemannianGeometry,LeeIntroductionManifolds,Gallier2020DifferentialGroups} for a comprehensive account of the subject. A \emph{smooth covering} is a smooth topological covering $p: \tilde{M} \twoheadrightarrow M$ between smooth manifolds, such that for each point $x \in M$, there is a neighbourhood $U$ where each component of $\fibre{p}{U}$ maps diffeomorphically onto $U$. For a smooth and connected manifold $M$, there exists a smooth universal covering~\cite[Corollary 4.43]{LeeIntroductionManifolds} i.e. a smooth covering where $\tilde{M}$ is simply connected. A \emph{Riemannian} covering is smooth covering between Riemannian manifolds which is also a local isometry. Conversely, a local isometry smooth covering $p$ is a Riemannian covering if $\tilde{M}$ is complete and $M$ is connected~\cite[Prop 18.7]{Gallier2020DifferentialGroups}. Any smooth covering $p: \tilde{M} \twoheadrightarrow M$ of a Riemmanian manifold $M$ can be made Riemannian: if $g$ is a Riemannian metric on $M$, then there is a unique pullback metric $\tilde{g} = p^\ast g$ on $\tilde{M}$ such that $p$ is a Riemannian covering~\cite[Prop 18.5]{Gallier2020DifferentialGroups}.

From henceforth we restrict ourselves to the following setting: $\tilde{M}$ and $M$ are both connected, complete Riemannian manfiolds, and $p: (\tilde{M}, \tilde{g}) \twoheadrightarrow (M,g)$ is a Riemannian $\Gamma$-coverings. We summarise some facts about smooth $\Gamma$-coverings of manifolds in the lemma below:
\begin{lemma} \label{lem:manifold_covering_facts}
    Let $p: \tilde{M} \to M$ be a path-connected smooth $\Gamma$-covering. Then:
    \begin{itemize}
        \item The $\Gamma$-actions on $\tilde{M}$ are also smooth and proper~\cite[Prop 21.12]{LeeIntroductionManifolds};
        \item If $p$ is also Riemannian -- i.e. it is a local isometry --- then $\Gamma$-actions are isometries of $\tilde{M}$~\cite[P.27]{Lee2018IntroductionManifolds}~\cite[P.548]{Gallier2020DifferentialGroups}.
    \end{itemize}
\end{lemma}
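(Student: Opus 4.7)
The plan is to verify each bullet point in turn; both are standard, so the argument is mainly bookkeeping of the defining properties of a smooth $\Gamma$-covering together with the fact that $p$ is a local diffeomorphism (and, in the second bullet, a local isometry).

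For the first bullet, I would start by fixing $\gamma \in \Gamma$ and a point $\tilde{x} \in \tilde{M}$. Since $p$ is a smooth covering, there exists an evenly covered neighbourhood $U$ of $p(\tilde{x})$ such that $\fibre{p}{U}$ decomposes as a disjoint union of open sets, each mapping diffeomorphically onto $U$ via $p$. Let $V \ni \tilde{x}$ and $V' \ni \gamma \cdot \tilde{x}$ be the two such sheets. Then on $V$ the action of $\gamma$ coincides with $(p\rvert_{V'})^{-1} \circ p\rvert_V$, a composition of smooth diffeomorphisms; hence $\gamma$ is smooth at $\tilde{x}$. Since $\tilde{x}$ was arbitrary, the action is smooth. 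Properness follows from the fact that the action is a covering space action (condition \labelcref{C3}) and $\Gamma$ is discrete: for any compact $K \subset \tilde{M}$, only finitely many $\gamma \in \Gamma$ satisfy $\gamma \cdot K \cap K \neq \emptyset$, because $K$ can be covered by finitely many of the evenly covered neighbourhoods from \labelcref{C3}, and properness of the action map $\Gamma \times \tilde{M} \to \tilde{M} \times \tilde{M}$, $(\gamma, \tilde{x}) \mapsto (\gamma \cdot \tilde{x}, \tilde{x})$, reduces to this finiteness statement.

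For the second bullet, I would exploit the defining identity of a deck transformation, namely $p \circ \gamma = p$ for all $\gamma \in \Gamma$. Pulling back the Riemannian metric $g$ of $M$ along both sides yields
\begin{equation*}
    \gamma^\ast (p^\ast g) = (p \circ \gamma)^\ast g = p^\ast g.
\end{equation*}
But $p$ is a local isometry, so $p^\ast g = \tilde{g}$, and the identity becomes $\gamma^\ast \tilde{g} = \tilde{g}$. Since $\gamma$ is a smooth diffeomorphism by the first bullet, this shows that $\gamma$ is an isometry of $(\tilde{M}, \tilde{g})$.

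I do not anticipate any serious obstacle: smoothness is a local statement that is handled sheet-by-sheet using $p$ as a local diffeomorphism, properness is immediate from \labelcref{C3} and discreteness of $\Gamma$, and the isometry property is a one-line pullback computation. The only place where care is needed is in the properness argument, since one must confirm that the covering-space-action hypothesis in \labelcref{C3} really does upgrade to properness in the topological-group-action sense, rather than only the weaker properly-discontinuous condition; the reduction to finiteness on compacta via a finite subcover of evenly covered neighbourhoods handles this cleanly.
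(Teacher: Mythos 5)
The paper itself offers no argument for this lemma --- it simply cites the standard references (Lee's Prop.~21.12 for smoothness and properness, and the pullback-metric fact for the isometry claim) --- so your attempt is effectively being measured against those standard proofs. Your smoothness argument (locally $\gamma = (p\rvert_{V'})^{-1}\circ p\rvert_V$, after shrinking $V$ or taking $V,V'$ to be components of $p^{-1}(U)$ over a connected evenly covered $U$ so that $\gamma\cdot V\subseteq V'$) and your isometry computation $\gamma^\ast(p^\ast g)=(p\circ\gamma)^\ast g=p^\ast g=\tilde g$ are both correct and are exactly the arguments behind the cited facts.

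The properness step, however, has a genuine gap. Reducing properness to ``for every compact $K$ only finitely many $\gamma$ satisfy $\gamma\cdot K\cap K\neq\emptyset$'' is fine for a discrete group, but that finiteness does \emph{not} follow from \labelcref{C3} together with a finite subcover: a group element can carry one cover element $V_i$ into a \emph{different} $V_j$, and \labelcref{C3} only controls $\gamma V_i\cap V_i$. Indeed, ``covering space action $\Rightarrow$ proper'' is false in general: let $\Gamma=\mathbb{Z}$ act on $\mathbb{R}^2\setminus\{0\}$ by $n\cdot(x,y)=(2^n x,\,2^{-n}y)$. Every point has a neighbourhood whose translates are pairwise disjoint, so this is a free covering space action, yet for $K$ the unit circle one has $n\cdot K\cap K\neq\emptyset$ for every $n$ (solve $4^n\cos^2\theta+4^{-n}\sin^2\theta=1$), so the action is not proper; covering $K$ by finitely many \labelcref{C3}-neighbourhoods does not help, since by pigeonhole some pair $(V_i,V_j)$ is met by infinitely many translates. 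What saves the lemma is that the quotient $M$ is a manifold, hence Hausdorff and locally compact, and your proof never uses this. A correct argument runs: if $\gamma_n$ are distinct with $x_n,\ \gamma_n\cdot x_n\in K$, pass to subsequences $x_n\to x$, $\gamma_n\cdot x_n\to y$ in $K$; then $p(x)=\lim p(x_n)=\lim p(\gamma_n\cdot x_n)=p(y)$ by uniqueness of limits in the Hausdorff space $M$, so $y=g\cdot x$ for some $g\in\Gamma$, and feeding $x_n\in V$, $\gamma_n\cdot x_n\in g\cdot V$ (with $V$ the \labelcref{C3}-neighbourhood of $x$) into the disjointness condition forces $\gamma_n=g$ for all large $n$, a contradiction. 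Either repair the step along these lines or simply cite the reference the paper uses, whose content is precisely this properness statement.
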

\begin{example}\label{ex:surface_coverings_geom}
    The examples of universal coverings in~\Cref{ex:surface_coverings} are all Riemannian coverings. As the group actions on the covering spaces are all isometries, the base spaces inherit the local metric of the covering space. In the coverings of the torus and Klein bottle by $\R^2$, both the base space and the covering space have the flat metric; $\RP^2$ inherits the local metric of the sphere $ds^2 = d\theta^2 + \sin^2(\theta) d\varphi^2$; and the genus two surface $\Sigma_2$ inherits the hyperbolic metric $ds^2 = 4 \frac{dz d\bar{z}}{(1 - |z|^2)^2}$ of the Poincar\'e disk. In particular, since the covering spaces $\R^2$, $\Sbb^2$, and $\mathbb{D}$ have constant scalar and sectional  curvatures $0, +1, -1$ respectively, the base spaces inherit the same constant scalar curvatures. 
\end{example}

Furthermore, we recall the following facts about complete Riemannian manifolds. For $M$ be a complete connected Riemannian manifold, the Hopf-Rinow theorem implies the existence of minimising geodesics for any pair of points. Recall a geodesic $\gamma: [0, s] \to M$ with unit speed parametrisation is a \emph{minimising geodesic} if the length of the curve is the distance between the two points on the manifold $|\fun{\gamma_i}{[0,s]} | = s = \dmet{\gamma(0)}{\gamma(s)}$. When we lift paths in Riemannian coverings, geodesics lift to geodesics in $\tilde{M}$~\cite[Prop 2.81]{Gallot2004RiemannianGeometry}; and in particular, minimising geodesics lift to minimising geodesics.

\begin{restatable}{proposition}{LemQuotientGeometry}
\label{lem:quotient_geometry}
Consider a Riemannian $\Gamma$-covering of  path-connected complete manifolds $p: \tilde{M} \to M$. Then:
\begin{enumerate}[label=(\roman*)]
    \item The distance $\dm(x,y)$ between points on $M$ is given by the Hausdorff distance between the fibres of points in $\tilde{M}$; 
    \begin{equation}
        \dm(x,y) = \tilde{\dm}_H\left(\fibre{p}{x},  \fibre{p}{y}
\right)
    \end{equation}
    \item In particular, for any $\tilde{x} \in \fibre{p}{x}$ and $\tilde{y} \in \fibre{p}{y}$,
    \begin{equation}
     \tilde{\dm}_H\left(\fibre{p}{x},  \fibre{p}{y}
\right) = \tilde{\dm}(\tilde{x}, \fibre{p}{y}) := \inf_{g \in \Gamma} \tilde{\dm}\left(\tilde{x}, g \cdot \tilde{y}\right),  \label{eq:metric_on_base}
\end{equation}
and the infimum on the right hand side is attained by finitely many $g \in \Gamma$;
\item A minimising geodesic $\gamma$ from $x$ to $y$ lifts to a minimising geodesic $\tilde{\gamma}$ from $\tilde{x}$ to $g \cdot \tilde{y}$ with the same arc length iff $g$ attains the right hand side of \cref{eq:metric_on_base}. 
\item Consequently, if $\gamma$ is the \emph{unique} minimising geodesic, then $\tilde{\gamma}$ is also the unique minimising geodesic from $\tilde{x}$ to $g \cdot \tilde{y}$.
\end{enumerate}

\end{restatable}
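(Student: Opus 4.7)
The plan is to handle the four parts in sequence, leaning on three structural facts collected in the preliminaries: $p$ is a local isometry, $\Gamma$ acts on $\tilde{M}$ by isometries transitively on each fibre of $p$, and the action is properly discontinuous so every orbit is a closed discrete subset. Completeness of $\tilde{M}$ transfers from completeness of $M$ via path lifting, so Hopf--Rinow is available on both spaces.

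For (i) and (ii), I would first establish the pointwise formula $\dm(x,y) = \inf_{g \in \Gamma} \tilde{\dm}(\tilde{x}, g \cdot \tilde{y})$ for any fixed pair of lifts $\tilde{x} \in \fibre{p}{x}$, $\tilde{y} \in \fibre{p}{y}$. The inequality $\leq$ is immediate because any path in $\tilde{M}$ from $\tilde{x}$ to $g \cdot \tilde{y}$ projects under the local isometry $p$ to a path in $M$ of the same length from $x$ to $y$. For the reverse inequality, any path from $x$ to $y$ lifts uniquely from $\tilde{x}$ to a path terminating in $\fibre{p}{y}$, which by transitivity equals $g \cdot \tilde{y}$ for a suitable $g$, with length preserved. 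The Hausdorff identity in (i) then follows by noting that because $\Gamma$ acts by isometries and transitively on each fibre, the asymmetric distance $\tilde{\dm}(\tilde{x}, \fibre{p}{y})$ is independent of the representative $\tilde{x}$ chosen in $\fibre{p}{x}$; reversing roles gives the same statement with $y$, so both one-sided suprema in $\tilde{\dm}_H$ collapse to the common value.

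The finite-attainment claim in (ii) is the step I expect to require the most care. Letting $r = \inf_{g} \tilde{\dm}(\tilde{x}, g \cdot \tilde{y})$ and picking a minimising sequence $g_n$, the points $g_n \cdot \tilde{y}$ eventually lie in the closed metric ball $\bar{B}(\tilde{x}, r+1)$, which is compact by Hopf--Rinow. Since the orbit $\Gamma \cdot \tilde{y}$ is discrete by proper discontinuity of the action, its intersection with this compact ball is finite, so $\{g_n \cdot \tilde{y}\}$ takes only finitely many values, the infimum is realised, and the set of minimisers is finite.

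For (iii), given a minimising geodesic $\gamma$ from $x$ to $y$, its unique lift from $\tilde{x}$ is a geodesic $\tilde{\gamma}$ ending at some $g \cdot \tilde{y}$ with $\mathrm{length}(\tilde{\gamma}) = \mathrm{length}(\gamma) = \dm(x,y)$. The squeeze $\dm(x,y) = \tilde{\dm}(\tilde{x}, \fibre{p}{y}) \leq \tilde{\dm}(\tilde{x}, g \cdot \tilde{y}) \leq \mathrm{length}(\tilde{\gamma}) = \dm(x,y)$ collapses to equalities, simultaneously showing that $g$ attains the infimum and that $\tilde{\gamma}$ is minimising. The converse is symmetric: projecting a minimising geodesic realising the infimum yields a minimising geodesic in $M$ of equal length. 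Part (iv) then falls out: any two minimising geodesics upstairs with endpoints $\tilde{x}$ and $g \cdot \tilde{y}$ would project by (iii) to minimising geodesics in $M$ with endpoints $x$ and $y$, contradicting uniqueness of $\gamma$ unless they project to the same curve, and the unique path-lifting property from the common starting point $\tilde{x}$ then forces the two upstairs curves to coincide.
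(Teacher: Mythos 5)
Your proposal is correct, and its overall architecture matches the paper's: collapse the Hausdorff distance using that $\Gamma$ acts by isometries transitively on fibres, get finiteness of minimisers from compactness of closed balls (Hopf--Rinow) together with the discreteness coming from the covering space action, and handle (iii)--(iv) by lifting/projecting minimising geodesics with arc length preserved by the local isometry. The one step where you genuinely diverge is the identity $\dm(x,y)=\inf_{g\in\Gamma}\tilde{\dm}(\tilde{x},g\cdot\tilde{y})$: you obtain it by comparing length infima over arbitrary lifted and projected paths, which is more elementary and does not use existence of minimising geodesics at that stage, whereas the paper deduces it only after lifting a minimising geodesic on $M$ (invoking completeness of $M$ there); your route cleanly separates the metric identity from the geodesic statements, while the paper's bundles them, and both are valid. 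Two small points to make explicit if you write this up: passing from ``finitely many minimising points of the orbit $\Gamma\cdot\tilde{y}$ in the closed ball'' to ``finitely many minimising $g\in\Gamma$'' uses freeness of the covering space action (so $g\mapsto g\cdot\tilde{y}$ is injective), and the closed discreteness of the orbit is most directly justified by noting it is the fibre $\fibre{p}{y}$ of the covering map; with those remarks your finiteness argument is equivalent to the paper's appeal to properness. Your proof of (iv) (project both minimisers, use uniqueness downstairs, then uniqueness of lifts from $\tilde{x}$) is in fact slightly more explicit than the paper's length-comparison sentence and is fine as stated.
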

\begin{proof}For simplicity of notation we let $[\tilde{x}] = \fibre{p}{x}$. We first show that $ \tilde{\dm}_H\left([\tilde{x}], [\tilde{y}]\right) = \tilde{\dm}(\tilde{x},[\tilde{y}])$. Recall the Hausdorff distance is defined by
\begin{equation*} \textstyle
    \tilde{\dm}_H\left([x], [y]\right) = \max\left\{\sup_{g \in \Gamma} \tilde{\dm}(g \cdot x, [y]),\sup_{g \in \Gamma} \tilde{\dm}([x],  g\cdot y)
     \right\}.
\end{equation*}
We take apart the definition term by term. Since $\Gamma$-actions are isometries (\Cref{lem:manifold_covering_facts}) and trivially transitive on orbits, we have $\sup_{g \in \Gamma} \tilde{\dm}(g \cdot \tilde{x}, [\tilde{y}]) = \tilde{\dm}(\tilde{x},  [\tilde{y}])$, as the assignment $g \mapsto \tilde{\dm}(g \cdot \tilde{x},  [\tilde{y}])$ is constant over $\Gamma$:
\begin{equation*}
   \textstyle  \tilde{\dm}(g \cdot \tilde{x},  [\tilde{y}]) = \tilde{\dm}(\tilde{x}, g^{-1} \cdot  [\tilde{y}]) = \tilde{\dm}(\tilde{x}, [\tilde{y}]).
\end{equation*}
Furthermore, the isometry condition also implies the two terms over which the max is taken are equivalent:
\begin{equation*}
   \textstyle  \tilde{\dm}(\tilde{x},[\tilde{y}]) = \inf_{g \in \Gamma} \tilde{\dm}\left(\tilde{x}, g \cdot \tilde{y}\right) = \inf_{g \in \Gamma} \tilde{\dm}\left(\inv{g} \cdot \tilde{x}, \tilde{y}\right) =  \inf_{h \in \Gamma} \tilde{\dm}\left(h \cdot \tilde{x}, \tilde{y}\right) = \tilde{\dm}([\tilde{x}],\tilde{y}). 
\end{equation*}
Thus, the Hausdorff distance between the orbits $[x]$ and $[y]$ are given by 
\begin{equation*}
  \textstyle   \tilde{\dm}_H\left([\tilde{x}], [\tilde{y}]\right) = \max\left\{\sup_{g \in \Gamma} \tilde{\dm}(g \cdot \tilde{x}, [\tilde{y}]),\sup_{g \in \Gamma} \tilde{\dm}([\tilde{x}],  g\cdot \tilde{y})
     \right\} = \max\left\{ \tilde{\dm}(\tilde{x}, [\tilde{y}]), \tilde{\dm}([\tilde{x}], \tilde{y})\right\} = \tilde{\dm}(\tilde{x}, [\tilde{y}]).
\end{equation*}
We now show that $L =  \inf_{g \in \Gamma} \tilde{\dm}\left(\tilde{x}, g \cdot \tilde{y}\right)$ is attained by finitely many $g \in \Gamma$. Recall the $\Gamma$-action being proper  (\Cref{lem:manifold_covering_facts}) implies for $K,K'$ compact subsets of $M$, the set of $g$ such that $(g \cdot K) \cap K' \neq \emptyset$ is compact. Since $\Gamma$ has the discrete topology, this implies that set of $g$ is finite. We now consider $r = \tilde{\dm}(\tilde{x},\tilde{y})$ and the closed $r$-ball centred at $\tilde{y}$. Since $\tilde{M}$ is complete this is a compact subset. By definition $r \geq L$, and the intersection of the orbit $[\tilde{x}]$ with the closed $r$-ball is non-empty and at least contains $\tilde{x}$. Now since the action is proper, there are finitely many elements of $g$ such that $g \cdot \tilde{x}$ is contained in the closed $r$-ball. Thus, the infimum of $\tilde{\dm}(g \cdot \tilde{x}, \tilde{y})$ is attained by some $g$ such that $g \cdot \tilde{x}$ is contained in the closed $r$-ball, and there can only be finitely many such $g$'s. 

We defer proving $\dm(x,y) = \inf_{g \in \Gamma} \tilde{\dm}(\tilde{x}, g \cdot \tilde{y}) = \tilde{d}_H([\tilde{x}], [\tilde{y}])$ to the end as we need the following observations about geodesics. Recall then that for Riemannian coverings, geodesics of the base space are projections of geodesics in the covering space, and geodesics in the covering space are lifting of geodesics in the base space~\cite[Proposition 18.6]{Gallier2020DifferentialGroups}. Moreover, since since $p$ is a local isometry, the arc lengths are preserved when the geodesics are projected and lifted. 

Because $M$ is complete, there is at least one geodesic $\gamma$ from $x$ to $y$ whose arc length attains $\dm(x,y)$ (that is the infimum of arc lengths over the geodesics between $x$ and $y$). When we lift any such geodesic to a geodesic $\tilde{\gamma}$ based at $\tilde{x}$, the lifted geodesic $\tilde{\gamma}$ must also have arc length $\dm(x,y)$, which must be the least out of all geodesics between not only $\tilde{x}$ and $\tilde{y}$, but also any geodesic between $\tilde{x}$ and $g \cdot \tilde{y}$, as these geodesics also project to geodesics between $x$ and $y$, and must thus have arc length at least $\dm(x,y)$ (due to local isometry preserving arc lengths). Thus, $\tilde{\gamma}$ is a geodesic attaining $\inf_{g \in \Gamma} \tilde{\dm}(\tilde{x}, g \cdot \tilde{y})$. Because $\tilde{\gamma}$ has arc length $\dm(x,y)$, we thus have $\dm(x,y) = \inf_{g \in \Gamma} \tilde{\dm}(\tilde{x}, g \cdot \tilde{y}) = \tilde{d}_H([\tilde{x}], [\tilde{y}])$. If $\gamma$ is the unique minimising geodesic between $x$ and $y$, then any minimising geodesic between $\tilde{x}$ and $g \cdot \tilde{y}$ other than $\tilde{\gamma}$ must have greater arc length than $\tilde{\gamma}$, thus $\tilde{\gamma}$ is the unique minimising geodesic between $\tilde{x}$ and $g \cdot \tilde{y}$.

Conversely, suppose $g_\ast$ attains $L = \inf_{g \in \Gamma} \tilde{\dm}(\tilde{x}, g \cdot \tilde{y})$, and let $\tilde{\gamma}$ be the minimising geodesic from $\tilde{x}$ to $g_\ast \cdot \tilde{y}$ with arc length $L$. Because we have established that $L = \tilde{\dm}(\bar{x}, [\bar{y}]) = \tilde{\dm}([\bar{x}], \bar{y})$ for \emph{any} $\bar{x} \in \fibre{p}{x}$ and $\bar{y} \in \fibre{p}{y}$, any geodesic between any $\bar{x}$ and $\bar{y}$ must have arc length at least $L$. Because $p$ is a local isometry, any geodesic between $x$ and $y$ must have length at least $L$. Thus the geodesic $p \circ \tilde{\gamma}$ is a minimising geodesic from $x$ to $y$. Therefore, we have a minimising geodesic $\gamma = p\circ \tilde{\gamma}$ that lifts to $\tilde{\gamma}$.

\end{proof}

\begin{corollary}\label{prop:geodesic_groupoid_assignment} Let $p: \tilde{M} \to M$ be a Riemannian $\Gamma$-covering of path-connected complete manifolds. Given any pseudo-section $\lambda: M \to \tilde{M}$, the monodromy homomorphism $\mu_\lambda: \Fgpd{M} \to \Gamma$ on minimising geodesics $\gamma$ from $x$ to $y$ satisfies 
\begin{equation} \label{eq:geodesic_groupoid_assignment}
    \dm(x,y) = \tilde{\dm}(\lambda(x), \mu_\lambda([\gamma]) \cdot \lambda(y)). 
\end{equation}
If all the minimising geodesics belong to a unique class $[\gamma]$, then $\mu_\lambda([\gamma])$ is the only element of $\Gamma$ satisfying \cref{eq:geodesic_groupoid_assignment}.
\end{corollary}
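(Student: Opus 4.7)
The plan is to derive this essentially as a direct consequence of \Cref{lem:quotient_geometry}, combined with the homotopy lifting characterisation that defines $\mu_\lambda$. The proof will unfold in two short steps: first verifying the distance identity \cref{eq:geodesic_groupoid_assignment}, then handling the uniqueness claim under the assumption of a single homotopy class of minimising geodesics.

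For the distance identity, I would start by recalling that, by \Cref{def:monodromy}, the monodromy element $\mu_\lambda([\gamma])$ is characterised as the unique $g \in \Gamma$ such that the lift $\tilde{\gamma}$ of $\gamma$ starting at $\lambda(x)$ ends at $g \cdot \lambda(y)$. Since $\gamma$ is assumed to be a minimising geodesic from $x$ to $y$, part (iii) of \Cref{lem:quotient_geometry} tells us that $\tilde{\gamma}$ is then a minimising geodesic from $\lambda(x)$ to $\mu_\lambda([\gamma]) \cdot \lambda(y)$. Because $p$ is a local isometry, the arc length of $\tilde{\gamma}$ equals that of $\gamma$; since both are minimising geodesics between their respective endpoints, we obtain
\[
\tilde{\dm}\bigl(\lambda(x),\ \mu_\lambda([\gamma])\cdot\lambda(y)\bigr) = |\tilde{\gamma}| = |\gamma| = \dm(x,y),
\]
giving \cref{eq:geodesic_groupoid_assignment}.

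For uniqueness under the hypothesis that all minimising geodesics from $x$ to $y$ belong to a single homotopy class $[\gamma]$, suppose $g' \in \Gamma$ also satisfies $\dm(x,y) = \tilde{\dm}(\lambda(x), g' \cdot \lambda(y))$. Then $g'$ attains the infimum on the right-hand side of \cref{eq:metric_on_base}, so by \Cref{lem:quotient_geometry}(iii) there is a minimising geodesic $\tilde{\gamma}'$ from $\lambda(x)$ to $g'\cdot\lambda(y)$ which projects to a minimising geodesic $\gamma' = p \circ \tilde{\gamma}'$ from $x$ to $y$. By assumption $[\gamma'] = [\gamma]$, so by uniqueness of homotopy lifting rel endpoints at the basepoint $\lambda(x)$, the lifts of $\gamma$ and $\gamma'$ at $\lambda(x)$ have the same endpoint; that is, $g'\cdot\lambda(y) = \mu_\lambda([\gamma])\cdot\lambda(y)$. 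Because the $\Gamma$-action is free (\Cref{C3}), we conclude $g' = \mu_\lambda([\gamma])$.

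There is no real obstacle here: both statements are essentially bookkeeping, with the substantive content already contained in \Cref{lem:quotient_geometry} (for the existence/length of the lifted minimising geodesic) and in the definition of $\mu_\lambda$ via homotopy path lifting (for uniqueness modulo the single-class hypothesis and the freeness of the action). The only subtlety worth being explicit about is that multiple group elements $g \in \Gamma$ can in principle attain the infimum in \cref{eq:metric_on_base}, so the uniqueness assertion genuinely uses the hypothesis that the minimising geodesic class is unique, not merely that the minimising geodesic itself is unique as an unparametrised curve.
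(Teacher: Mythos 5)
Your proposal is correct and follows essentially the same route as the paper's own proof: both directions are obtained by combining \Cref{lem:quotient_geometry} (lifts of minimising geodesics are minimising with preserved arc length, and conversely any $g$ attaining the infimum in \cref{eq:metric_on_base} arises from some minimising geodesic) with the definition of $\mu_\lambda$ via unique homotopy path lifting and freeness of the $\Gamma$-action. Your write-up merely makes explicit the lifting-uniqueness and freeness steps that the paper compresses into ``from the definition of the monodromy homomorphism, this $g$ must be $\mu_\lambda([\gamma])$'', so no substantive difference or gap.
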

\begin{proof}
    By \Cref{lem:quotient_geometry}, every minimising geodesic $\gamma$ from $x$ to $y$ lifts to a unique minimising geodesic $\tilde{\gamma}$ from $
    \lambda(x)$ to $g \cdot \lambda(y)$ for some $g \in \Gamma$, iff they satisfying $\dm(x,y) = \tilde{\dm}(\lambda(x), g \cdot \lambda(y))$. From the definition of the monodromy homomorphism \Cref{def:monodromy}, this $g$ must be $\mu_\lambda([\gamma])$. Thus, if every minimising geodesic between $x$ and $y$ belong to the same homotopy class , then $\mu_\lambda([\gamma])$ is the unique element of $\Gamma$ satisfying \cref{eq:geodesic_groupoid_assignment}.
\end{proof}

In the context of induced coverings along maps $f: X \to M$, \Cref{prop:geodesic_groupoid_assignment} implies we can construct a transition homomorphism for the induced covering by solving a minimisation problem (\cref{eq:t_infer}), given the assumptions below.

\begin{proposition} \label{prop:basic_infer_t}
    Consider a Riemannian $\Gamma$-covering of  path-connected complete manifolds $p: \tilde{M} \to M$. Let $f: X \to M$ be a continuous map and $\cU = \sett{U_i}_{i \in \cI}$ be a good cover of $X$. Assume:
    \begin{enumerate}[label=(S\arabic*), ref=(S\arabic*)]
        \item \label{item:S1} The data $(\cU, f)$ admit a realisation functor $R: \Egpd{\nerve{\cU}} \xrightarrow{\simeq} \Fgpd{X}$, such that $\Fgpd{f}\circ R([ij]) \in \Fgpd{M}(y_i, y_j)$ is the unique homotopy class of paths containing a minimising geodesic between $y_i = f \circ R(i)$ and  $y_j = f \circ R(j)$.
    \end{enumerate}
    Then a transition homomorphism $t: \Egpd{\nerve{\cU}} \to \Gamma$ of the induced covering on $X$ is prescribed, by the following assignment to edges $ij$:
    \begin{equation} \label{eq:t_infer}
        t(ij) = \arg \inf_{g \in \Gamma} \tilde{\dm}\left(\tilde{y}_i, g \cdot \tilde{y}_j\right).
    \end{equation}
\end{proposition}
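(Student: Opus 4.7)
The plan is to show that the formula on the right-hand side of \cref{eq:t_infer} coincides, edge by edge, with the composition $\mu_\lambda \circ \Fgpd{f} \circ R$ for a suitably chosen pseudo-section $\lambda$, and then invoke \Cref{prop:monodromy_groupoid_big} to conclude that this composition is a transition homomorphism of the induced covering. The main technical point will be unambiguously identifying the argmin with the monodromy of the unique geodesic class, which is where assumption \ref{item:S1} carries its weight.

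First I would fix, for each $i \in \cI$, the point $\tilde{y}_i \in \fibre{p}{y_i}$ implicit in the statement, and then choose any pseudo-section $\lambda: M \to \tilde{M}$ of $p$ with $\lambda(y_i) = \tilde{y}_i$ for every $i$; the values of $\lambda$ off the finite (or at least discrete) set $\sett{y_i}_{i \in \cI}$ are irrelevant to the argument. This determines a monodromy homomorphism $\mu_\lambda : \Fgpd{M} \to \Gamma$ (\Cref{def:monodromy}).

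Next, for each edge $ij \in \nerve{\cU}$, assumption \ref{item:S1} says that $\Fgpd{f} \circ R([ij]) \in \Fgpd{M}(y_i, y_j)$ is the \emph{unique} homotopy class containing a minimising geodesic from $y_i$ to $y_j$. By \Cref{prop:geodesic_groupoid_assignment}, the element $g = \mu_\lambda(\Fgpd{f} \circ R([ij]))$ is then the unique $g \in \Gamma$ satisfying
\begin{equation*}
    \dm(y_i, y_j) = \tilde{\dm}(\tilde{y}_i, g \cdot \tilde{y}_j),
\end{equation*}
and by \Cref{lem:quotient_geometry} this value equals $\inf_{g \in \Gamma} \tilde{\dm}(\tilde{y}_i, g \cdot \tilde{y}_j)$. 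Hence the argmin in \cref{eq:t_infer} is well-defined and
\begin{equation*}
    t(ij) \;=\; \mu_\lambda \circ \Fgpd{f} \circ R([ij]).
\end{equation*}

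Since the right-hand side is the evaluation on edges of the groupoid homomorphism $\mu_\lambda \circ \Fgpd{f} \circ R: \Egpd{\nerve{\cU}} \to \Gamma$, it automatically satisfies the cocycle condition \cref{eq:cocycle_egpd} on every $2$-simplex of $\nerve{\cU}$, so by \Cref{lem:cocycle_egpd} the edge assignment extends to a unique groupoid homomorphism $t: \Egpd{\nerve{\cU}} \to \Gamma$ which coincides with $\mu_\lambda \circ \Fgpd{f} \circ R$. Finally, \Cref{prop:pullback_monodromy_full} shows that $\mu_\lambda \circ \Fgpd{f}$ is itself a monodromy homomorphism of the pullback covering $f^\ast p$ (for the compatible pseudo-section $x \mapsto (x, \lambda \circ f(x))$), and then \Cref{prop:monodromy_groupoid_big} provides an atlas of $f^\ast p$ whose transition homomorphism is precisely $(\mu_\lambda \circ \Fgpd{f}) \circ R = t$, completing the proof. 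The only real obstacle is the uniqueness of the argmin; everything else is a packaging of already-established naturality and sampling results.
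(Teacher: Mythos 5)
Your proposal is correct and follows essentially the same route as the paper: identify $t(ij)$ with $\mu_\lambda \circ \Fgpd{f} \circ R([ij])$ via \Cref{prop:geodesic_groupoid_assignment} together with the uniqueness supplied by \ref{item:S1}, then invoke \Cref{prop:monodromy_groupoid_big} (whose proof itself rests on \Cref{prop:pullback_monodromy_full}) to recognise this composition as a transition homomorphism of the induced covering. Your write-up is somewhat more explicit than the paper's about the well-definedness of the argmin and the choice of pseudo-section $\lambda$ with $\lambda(y_i) = \tilde{y}_i$, but the argument is the same.
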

\begin{proof}
    Recall \Cref{prop:monodromy_groupoid_big}, which allows us to infer a transition homomorphism $t: \Egpd{\nerve{\cU}} \to \Gamma$ for the induced covering from a monodromy homomorphism $\mu_\lambda: \Fgpd{X} \to \Gamma$; this is given by $t = \mu_\lambda \circ \Fgpd{f} \circ R$. If \labelcref{item:S1} holds, then for every $ij \in \Egpd{\nerve{\cU}}$, we apply \Cref{prop:geodesic_groupoid_assignment} to obtain 
    \begin{equation}
        t(ij) = \mu_\lambda \circ \Fgpd{f} \circ R([ij]) = \inf_{g \in \Gamma} \tilde{\dm}\left(\lambda(y_i), g \cdot \lambda(y_j)\right).
    \end{equation}
\end{proof}
While the assumption~\labelcref{item:S1} might seem unnatural, we discuss two constructions in the subsection below which satisfies it. 

\subsection{Inference of Ambient Cycles}
We explore the consequences of \Cref{prop:basic_infer_t} to inferring whether `cycles' of a point cloud $\X \subset M$ correspond to underlying cycles of $M$. As usual by cycles of a point cloud, we mean the cycles of some simplicial complex $K$ built on top of the point cloud, in a way that reflects its geometry within $M$. 

The first simplicial complex we consider is the \v Cech complex, a standard construction in topological data analysis. Given a scale parameter $\epsilon > 0$, the $\epsilon$-\v Cech complex $\mathrm{\check Cech}_\epsilon(\X)$ is the nerve of $\cU_\epsilon = \settt{\openball{\epsilon}{x}}{x \in \X}$ which covers the $\epsilon$-thickening of the point set $\X$. If $M$ is a Euclidean space, then for any $\epsilon$, the cover $\cU_\epsilon$ is a good cover of $\X^\epsilon$, and thus we can leverage the nerve lemma use the homotopy equivalence between $\X^\epsilon$ and $\nerve{\cU_\epsilon}$ to work with the simplicial complex $\nerve{\cU_\epsilon}$ as a topological and geometric descriptor of $\X^\epsilon$. However, relaxing the ambient space to be any Riemannian manifold,  even individual geodesic balls $\openball{\epsilon}{x}$ might not be contractible for some sufficiently large $\epsilon$, so $\cU_\epsilon$ ceases to be a good cover of $\X^\epsilon$. This illustrates the difficulty in using the \v Cech complex, or indeed its combinatorially simpler cousin the Vietoris-Rips complex, to model the shape of a point cloud in $M$ at large spatial scales. 

Nonetheless, if we limit our thickening scale $\epsilon$ to be less than the \emph{convexity radius} $\conv{M}$of the manifold~\cite{Carmo1992RiemannianGeometry}, then we can still guarantee $\cU_\epsilon$ to be a good cover of $\X^\epsilon$. Recall the convexity radius $\conv{p}$ of a point $p \in M$ is the supremum over all radii of geodesic balls $\openball{r}{p}$, such that $\openball{r}{p}$ is strongly geodesically convex. We also note that $\conv{p} > 0$ for any $p$~\cite{Whitehead1962CONVEXPATHS}. The convexity radius of the manifold is then the infimum over the convexity radius of all points in the manifold. Consider then the induced cover of $\X^\epsilon$ by the inclusion into $M$ and the Riemannian universal covering of $M$.  We show that for $\epsilon < \conv{M}$, we can recover the induced covering on $\X^\epsilon$ by solving the minimisation problem~\cref{eq:t_infer} described in~\Cref{prop:basic_infer_t}, as we can construct a realisation functor $R: \Egpd{\nerve{\cU_\epsilon}} \xrightarrow{\simeq} \Fgpd{\X^\epsilon}$ 
 that satisfies \labelcref{item:S1}. This is the content of \Cref{prop:pointcloud_convx_radius}. Using the transition homomorphism of the induced covering thus obtained,  we can infer whether the point cloud `captures' any ambient cycles at scale $\epsilon < \conv{M}$.

\begin{proposition} \label{prop:pointcloud_convx_radius}
    Consider a Riemannian $\Gamma$-covering of  path-connected complete manifolds $p: \tilde{M} \to M$. Let $\X \subset M$ be a finite point set and consider $\X^\epsilon$. Suppose $\epsilon < \conv{M}$; then:
    \begin{enumerate}[label = (\roman*)]
        \item $\cU =\settt{\openball{\epsilon}{x}}{x \in \X} $ is a good cover of $\X^\epsilon$~\cite[Prop A.1]{Fiorenza2012CechConstruction};
        \item Furthermore, we can infer a transition homomorphism $t: \Egpd{\mathrm{\check  Cech}_\epsilon(\X)} \to \Gamma$ of the induced covering on $\X^\epsilon$ in the manner of~\cref{eq:t_infer}, where for edge $ij$ in $\mathrm{\check  Cech}_\epsilon(\X)$,
    \begin{equation}
         t(ij) = \arg \inf_{g \in \Gamma} \tilde{\dm}\left(\tilde{x}_i, g \cdot \tilde{x}_j\right),
    \end{equation}
    for arbitrary choices $\tilde{x} \in \fibre{p}{x}$ of representatives of $x \in \X$ in their fibres. 
    \end{enumerate}
\end{proposition}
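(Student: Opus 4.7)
The plan is to verify the hypotheses of \Cref{prop:basic_infer_t} in this setting and then apply it directly. Statement (i) is a citation and follows because each geodesic ball $\openball{\epsilon}{x}$ with $\epsilon < \conv{M}$ is strongly geodesically convex, and intersections of strongly convex sets are strongly convex, hence contractible. For (ii) I first build a realisation functor $R: \Egpd{\mathrm{\check Cech}_\epsilon(\X)} \to \Fgpd{\X^\epsilon}$ via \Cref{prop:equivalence_groupoids}, taking the representative assignment $r(i) = x_i \in U_i$ (and any compatible snapping $s$). It then remains to verify assumption \labelcref{item:S1}, namely that for each edge $ij$ of the \v Cech complex, the class $\Fgpd{f} \circ R([ij]) \in \Fgpd{M}(x_i, x_j)$ contains a unique minimising geodesic in $M$.

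The key geometric step is to establish existence, uniqueness, and carrying of the minimising geodesic. Pick any $y \in U_i \cap U_j$. Then $x_i, x_j \in \openball{\epsilon}{y}$, which by $\epsilon < \conv{M}$ is strongly convex, yielding a unique minimising geodesic $\gamma_{ij}$ between $x_i$ and $x_j$ in $M$. Parametrising $\gamma_{ij}$ by arc length on $[0, L]$ with $L = \dmet{x_i}{x_j} < 2\epsilon$, each point $\gamma_{ij}(t)$ satisfies $\dmet{x_i}{\gamma_{ij}(t)} = t$ and $\dmet{\gamma_{ij}(t)}{x_j} = L - t$; since $L < 2\epsilon$ implies $L - \epsilon < \epsilon$, every $t \in [0,L]$ satisfies either $t < \epsilon$ (so $\gamma_{ij}(t) \in U_i$) or $L - t < \epsilon$ (so $\gamma_{ij}(t) \in U_j$). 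Hence $\gamma_{ij}([0,L]) \subset U_i \cup U_j \subset \X^\epsilon$, so $\gamma_{ij}$ is an admissible path in $\X^\epsilon$ carried by the edge $ij$. By the uniqueness clause of \Cref{prop:equivalence_groupoids}, $\gamma_{ij}$ represents the class $R([ij])$, and applying the inclusion $f:\X^\epsilon \hookrightarrow M$, it also represents $\Fgpd{f} \circ R([ij])$.

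The uniqueness of $\gamma_{ij}$ in $M$ yields the uniqueness half of \labelcref{item:S1}: any other minimising geodesic between $x_i$ and $x_j$ must coincide with $\gamma_{ij}$, hence lies in the same homotopy class. Applying \Cref{prop:basic_infer_t} now gives the transition homomorphism formula \cref{eq:t_infer}, which with $\tilde{y}_i = \tilde{x}_i \in \fibre{p}{x_i}$ reads $t(ij) = \arg \inf_{g \in \Gamma} \tilde{\dm}(\tilde{x}_i, g \cdot \tilde{x}_j)$. The choice of fibre representatives is arbitrary because it corresponds to fixing a pseudo-section $\lambda$ of $p$ over $\X$, and \Cref{lem:quotient_geometry} ensures that the infimum is attained by (finitely many) $g \in \Gamma$ for each such choice; a change of representatives merely induces a gauge natural isomorphism between transition homomorphisms, as per \Cref{prop:transition_homomorphism}. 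The main conceptual hurdle is the geometric containment $\gamma_{ij} \subset U_i \cup U_j$ together with uniqueness of $\gamma_{ij}$; both hinge on the hypothesis $\epsilon < \conv{M}$, which is also what powers the good-cover statement (i).
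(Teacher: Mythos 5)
Your proposal is correct and follows the paper's own skeleton: build the realisation functor with $r(i)=x_i$, show any minimising geodesic between $x_i$ and $x_j$ lies in $\openball{\epsilon}{x_i}\cup\openball{\epsilon}{x_j}$ (your arc-length estimate is just the contrapositive of the paper's "$\dm(x_i,x_j)\geq 2\epsilon$" contradiction), verify \labelcref{item:S1}, and invoke \Cref{prop:basic_infer_t}. The one point where you diverge is the "unique class" step: the paper never claims the minimising geodesic itself is unique — it notes that $\{\openball{\epsilon}{x_i},\openball{\epsilon}{x_j}\}$ is a good cover of the union, so by the nerve lemma the union is simply connected and \emph{all} paths (hence all minimising geodesics) between $x_i$ and $x_j$ inside it are homotopic — whereas you obtain genuine uniqueness of the geodesic from strong convexity of $\openball{\epsilon}{y}$ for $y\in U_i\cap U_j$. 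Your route uses a slightly stronger geometric input (strong convexity of balls below the convexity radius, which is indeed what $\epsilon<\conv{M}$ provides), while the paper's topological argument is a bit more economical and would survive even in settings where minimising geodesics are merely all homotopic rather than unique; both satisfy \labelcref{item:S1} and the rest of your argument, including the gauge/pseudo-section remark on the fibre representatives, matches the paper.
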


\begin{proof}
    The first statement is proved in~\cite[Prop A.1]{Fiorenza2012CechConstruction}.  We then specify a functor $R: \Egpd{\mathrm{\check Cech}_\epsilon(\X)} \to \Fgpd{\X^\epsilon}$. On objects of $\Egpd{\mathrm{\check Cech}_\epsilon(\X)}$, we set $R$ to send each vertex of $\mathrm{\check Cech}_\epsilon(\X)$, corresponding to cover element $\openball{\epsilon}{x}$, to $x \in \X \subset M$. Consider $x_i$ and $x_j$ in $\X$ where $\openball{\epsilon}{x_i} \cap \openball{\epsilon}{x_j} \neq \emptyset$. Because $\{\openball{\epsilon}{x_i},\openball{\epsilon}{x_j}\}$ is a good cover of $\openball{\epsilon}{x_i} \cup \openball{\epsilon}{x_j}$, the nerve lemma implies $\openball{\epsilon}{x_i} \cup \openball{\epsilon}{x_j}$ is simply connected, and thus paths between $x_i$ and $x_j$ are homotopy equivalent. This is also the class containing any minimising geodesics between $x_i$ and $x_j$; we do so by showing that such minimising geodesics must be contained in $\openball{\epsilon}{x_i} \cup \openball{\epsilon}{x_j}$. Suppose a minimising geodesic is not fully contained in $\openball{\epsilon}{x_i} \cup \openball{\epsilon}{x_j}$; then there is a point along the geodesic where the distance to each $x_i$ and $x_j$ is at least $\epsilon$. Since the geodesic is minimising this implies the distance between $x_i$ and $x_j$ is at least $2\epsilon$, which contradicts the fact that $\openball{\epsilon}{x_i} \cap \openball{\epsilon}{x_j}\neq \emptyset$. Thus, any minimising geodesic between $x_i$ and $x_j$ represents the unique class of paths between $x_i$ and $x_j$ in $\openball{\epsilon}{x_i} \cup \openball{\epsilon}{x_j}$. 
    
    We have thus constructed a realisation functor $R$ to satisfy~\labelcref{item:S1} of~\Cref{prop:basic_infer_t}. Applying~\Cref{prop:basic_infer_t} completes the proof. 
\end{proof}
 
In an alternative approach to compensate for the lack of good covers for $\X^\epsilon$ for large $\epsilon$, we can simplify the problem by only considering the one-skeleton of the complex, an associated edges in the one-skeleton with a minimising geodesic between the points represented by the vertices of the complex. We define the construction formally below. 

\begin{definition} \label{def:min_geodesic} Consider a finite simple graph $G$ along with a map $f: |G| \to M$ from its geometric realisation to a complete Riemannian manifold $M$. We say $f$ is a \emph{min-geodesic map} if $f$ is injective on the vertices, and for an edge $uv \in G$, $f\rvert_{uv} : I \to M$ is a reparametrisation of a minimising geodesic between $f(u)$ and $f(v)$. Collective, we say $(G,f)$ is a min-geodesic graph
\end{definition}

Given a graph $G$ built on a point cloud as its vertex set, the associated min-geodesic graph interpolates between pairs of points  by minimising geodesics wherever the pair of points are joined by an edge in $G$. 

If we choose vertex stars as a cover of $|G|$, then a min-geodesic graph satisfies~\labelcref{item:S1} in~\Cref{prop:basic_infer_t} by construction. Thus, we can once again reduce the problem of inferring the induced covering on $|G|$ to solving the minimisation problem~\cref{eq:t_infer}.

\begin{proposition} \label{prop:graph_convx_radius}
    Consider a Riemannian $\Gamma$-covering of  path-connected complete manifolds $p: \tilde{M} \to M$. Consider a graph $G$ equipped with a min-geodesic embedding $f: |G| \to M$. Then we can infer a transition homomorphism $t: \Egpd{G} \to \Gamma$ of the induced $\Gamma$-covering on $G$ in the manner of~\cref{eq:t_infer}, where for any edge $uv$ in $G$,
    \begin{equation*}
         t(uv) = \arg \inf_{g \in \Gamma} \tilde{\dm}\left(\tilde{u}, g \cdot \tilde{v}\right),
    \end{equation*}
    for arbitrary choices $\tilde{u} \in \fibre{p}{f(u)}$  and $\tilde{v} \in \fibre{p}{f(v)}$  of representatives of vertices in their fibres.  Conversely, any groupoid homomorphism $t: \Egpd{G} \to \Gamma$ where $t(uv) \in  \arg \inf_{g \in \Gamma} \tilde{\dm}\left(\tilde{u}, g \cdot \tilde{v}\right)$ is a transition homomorphism for some min-geodesic embedding.
\end{proposition}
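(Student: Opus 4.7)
The plan is to specialise \Cref{prop:basic_infer_t} to the min-geodesic setting for the forward direction, and to invert the construction explicitly for the converse. Equip $|G|$ with the good cover $\cV = \sett{\opstar{u}}_{u \in G_0}$; since $G$ is a simple graph, $\nerve{\cV}$ is isomorphic to $G$, and \Cref{cor:equivalence_groupoids_scpx} furnishes a realisation functor $R: \Egpd{G} \to \Fgpd{|G|}$ acting as inclusion on vertices and sending each edge class $[uv]$ to the class of the straight parametrisation $e_{uv}: I \to |G|$ of the edge. By the min-geodesic hypothesis, $\Fgpd{f} \circ R([uv]) = [f|_{uv}]$ is the homotopy class of a minimising geodesic in $M$ from $f(u)$ to $f(v)$.

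For the forward direction, apply \Cref{prop:monodromy_groupoid_big}: there is an atlas on the induced covering $f^\ast p$ whose transition homomorphism is $t = \mu_\lambda \circ \Fgpd{f} \circ R$, where the pseudo-section $\lambda: M \to \tilde{M}$ is chosen so that $\lambda(f(u)) = \tilde{u}$ for every vertex $u$ (possible because $f$ is injective on vertices). Then $t(uv) = \mu_\lambda([f|_{uv}])$, and \Cref{prop:geodesic_groupoid_assignment} applied to the minimising geodesic $f|_{uv}$, combined with \Cref{lem:quotient_geometry}, forces $t(uv) \in \arg \inf_{g \in \Gamma} \tilde{\dm}(\tilde{u}, g \cdot \tilde{v})$.

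For the converse, given $t: \Egpd{G} \to \Gamma$ with $t(uv) \in \arg \inf_{g \in \Gamma} \tilde{\dm}(\tilde{u}, g \cdot \tilde{v})$, define $f$ on vertices by $f(u) = p(\tilde{u})$. For each edge $uv$, since $t(uv)$ attains the infimum, \Cref{lem:quotient_geometry} provides a minimising geodesic $\tilde{\gamma}_{uv}$ in $\tilde{M}$ from $\tilde{u}$ to $t(uv) \cdot \tilde{v}$, and its projection $p \circ \tilde{\gamma}_{uv}$ is a minimising geodesic between $f(u)$ and $f(v)$ in $M$; set $f|_{uv}$ to be this projection, up to reparametrisation. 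Gluing across edges yields a min-geodesic map $f: |G| \to M$. Taking the pseudo-section $\lambda(f(u)) = \tilde{u}$, the unique lift of $f|_{uv}$ based at $\tilde{u}$ is $\tilde{\gamma}_{uv}$, which terminates at $t(uv) \cdot \tilde{v}$, so $\mu_\lambda([f|_{uv}]) = t(uv)$ by \Cref{def:monodromy}. Running the forward argument with this choice of data then certifies $t$ as a transition homomorphism on $f^\ast p$.

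The principal technical subtlety is non-uniqueness of the infimiser: several group elements may attain $\inf_{g \in \Gamma} \tilde{\dm}(\tilde{u}, g \cdot \tilde{v})$, corresponding to distinct homotopy classes of minimising geodesics between $f(u)$ and $f(v)$; this is why the statement employs $\in$ rather than $=$. The converse direction is where this is carefully unpacked, since \Cref{lem:quotient_geometry} is invoked precisely to exhibit a minimising geodesic realising any prescribed infimiser, thereby showing every admissible $t$ arises from an actual min-geodesic embedding.
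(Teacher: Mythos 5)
Your proof is correct and takes essentially the same route as the paper: the vertex-star good cover with a realisation functor and \Cref{prop:monodromy_groupoid_big} for the forward direction (you inline the content of \Cref{prop:basic_infer_t} rather than citing it, choosing the pseudo-section to match the given fibre representatives), and \Cref{lem:quotient_geometry}(iii) to realise each prescribed infimiser by a minimising geodesic whose lift identifies $t(uv)$ as monodromy data for the converse.
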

\begin{proof}
    Recall the cover of $|G|$ by open vertex stars $\cU = \settt{|\opstar{u}|}{ u \in V(G)}$ is a good cover of $|G|$. Thus we consider the edge groupoid on $\nerve{\cU} = G$. Since we have assumed every edge is mapped by $f$ to the unique minimising geodesic between the image of its boundary vertices, we trivially satisfy~\labelcref{item:S1} in~\Cref{prop:basic_infer_t}. Thus,~\Cref{prop:basic_infer_t} implies the result of this proposition.

    For the converse, we recall~\Cref{lem:quotient_geometry}(iii); for any edge $uv \in G$, if $t(u_i u_j)$ is a solution to the minimisation problem $\inf_{g \in \Gamma} \tilde{\dm}\left(\tilde{u}_i, g \cdot \tilde{u}_j\right)$, then there is a minimising geodesic $\gamma$ from $f(u_i)$ to $f(u_j)$ that lifts to a minimising geodesic $\tilde{\gamma}$ from $\tilde{u}_i$ to $t(u_i u_j)\cdot \tilde{u}_j$. Let $f$ be the min-geodesic embedding of $G$ that is compatible with such minimising geodesics. Then, $t(u_iu_j)$ is the evaluation of a monodromy homomorphism $\mu_\lambda: \Fgpd{M} \to \Gamma$ on $f\rvert_{ij}: I \hookrightarrow |G| \to M$, for a choice of pseudo-section $\lambda$ where $\lambda(f(u_i)) = \tilde{u}_i$. We can then apply~\Cref{prop:monodromy_groupoid_big} which states that the monodromy data $t(u_iu_j)$ recovers a transition homomorphism $t: \Egpd{G} \to \Gamma$ of the induced $\Gamma$-covering on $G$.
\end{proof}
 Naturally, minimising geodesics may not be unique between all pairs of points in the image of the vertices of $G$, but we show in~\Cref{prop:NC_generic} and~\Cref{cor:UMG_generic} of the following section that for a finite point cloud $\X \subset M$, there is an open dense subset of the  $\# \X$-point configuration space, such that any pair of points in $\X$ has a unique minimising geodesic.

\subsection{Genericity of Unique Minimising Geodesics}\label{sec:generic}
We first show that finite point clouds on manifolds are generically equipped with unique minimising geodesics. We consider the unordered configuration space $\uconf{n}{M}$ of $n$ distinct points on a manifold $M$; that is the configuration space of $n$ ordered points, 
\begin{equation*}
    \conf{n}{M} = \settt{(x_0, \ldots, x_{n-1}) \in M^n}{x_i \neq x_j,\ \text{if } i \neq j},
\end{equation*}
modulo the action of the symmetry group $S_n$ on the points. The Hausdorff distance between subsets of $M$ descends to a metric between point sets in $\uconf{n}{M}$.

We show that with respect to this metric on $\uconf{n}{M}$, the set of point clouds in $\uconf{n}{M}$ that are equipped with unique minimising geodesics contain an open dense subset of $\uconf{n}{M}$. These are point sets where no point in the set is in the \emph{cut locus} another; we call them $\emph{NC}$ point sets. Any $NC$ point set is equipped with unique minimising geodesics between each pair of points~\cite[Ch. 13, cor. 28]{Carmo1992RiemannianGeometry}. We show that NC point sets are open and dense in $\uconf{n}{M}$. 

We first review the notion of the \emph{cut locus} for a complete Riemannian manfiold $M$ equipped with a Levi-Civita connection. Fixing a point $p$, consider $v \in S_pM \subset T_pM$, where $S_pM$ is the unit sphere in the tangent space; we define $\rho_p: S_pM \to [0,\infty]$ to be the function 
\begin{equation}
    \rho_p(v) = \sup \settt{t > 0}{\dm(p,\exp_p(tv)) = t}.
\end{equation}
That is, $\rho(v)$ measures the maximal extent along which the geodesic $\gamma$ (satisfying $\gamma(0) = p$ and $\gamma'(0) = v$)  is minimal. The cut locus $L_p \subset M$ of the point $p$ is defined to be the subset 
\begin{equation}
    L_p = \settt{\exp_p(\rho(v)v)}{v \in S_pM,\ \rho(v) < \infty }.
\end{equation}
The cut locus $L_p$ of any point is a closed, nowhere dense, measure zero subset of $M$~\cite{Grove1993CriticalFunctions}, and $L_p$ is the boundary of $M \setminus L_p$~\cite[III Lemma 4.4]{Carmo1992RiemannianGeometry}, with Hausdorff dimension at most $(d-1)$, where $d$ is the dimension of $M$~\cite{Mantegazza2002HamiltonJacobiManifolds}. We note that every point $q$ in $M \setminus L_p$ has a unique minimal geodesic $\gamma$ from $p$ to $q$; if there is another geodesic of the same length $|\gamma| = s$ between them, then the extension $\exp_p(tv)$ of $\gamma$ for $t \in [0,s +\epsilon]$ is no longer minimal for any $\epsilon > 0$~\cite[Prop 16.18]{Gallier2020DifferentialGroups}, thus placing $q = \exp_p(sv)$ in the cut locus.

Recently,~\cite{Eltzner2021StabilityManifolds} showed that the cut locus varies in a stable way w.r.t. $p$ for compact manifolds. 
\begin{proposition}[{\cite[Cor 3.8, Prop 3.11]{Eltzner2021StabilityManifolds}}] \label{prop:cutlocus_stability} Let $M$ be a compact Riemannian manifold. Then for any $p \in M$, and any $\epsilon > 0$, there exists $\delta > 0$ s.t.
\begin{equation}
    \bigcup_{q: \dm(p,q) < \delta} L_q \subseteq (L_p)^\epsilon := \settt{x \in M}{\dm(L_p, x) < \epsilon}.
\end{equation}
\end{proposition}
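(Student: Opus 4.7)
The plan is to proceed by contradiction, combining the compactness of $M$ with the classical continuity of the ``cut value'' function $\rho \colon SM \to (0,\infty]$ defined on the unit tangent bundle. Specifically, suppose the conclusion fails: there exist $\epsilon > 0$ and sequences $q_n \to p$ in $M$ together with $x_n \in L_{q_n}$ satisfying $\dm(L_p, x_n) \geq \epsilon$ for every $n$. By compactness of $M$ pass to a subsequence so that $x_n \to x_\infty$; continuity of $\dm$ forces $\dm(L_p, x_\infty) \geq \epsilon > 0$, and in particular $x_\infty \notin L_p$. The goal will be to derive a contradiction by showing that $x_\infty$ must in fact lie in $L_p$.

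Next I would unpack the membership $x_n \in L_{q_n}$: for each $n$ choose a unit vector $v_n \in S_{q_n}M$ with $x_n = \exp_{q_n}(\rho_{q_n}(v_n)\, v_n)$. Because $M$ is compact, the unit tangent bundle $SM$ is compact, so after extracting a further subsequence I may assume $v_n \to v_\infty \in S_pM$. Setting $s_n = \rho_{q_n}(v_n) = \dm(q_n, x_n)$, the continuity of the Riemannian distance gives $s_n \to s_\infty := \dm(p, x_\infty)$.

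The decisive ingredient is the classical fact that the cut value $\rho \colon SM \to (0,\infty]$ is continuous on the unit tangent bundle of any complete Riemannian manifold (with $(0,\infty]$ carrying its natural one-point-compactification topology). Granting this, $\rho_p(v_\infty) = \lim_n \rho_{q_n}(v_n) = s_\infty$, and continuity of the exponential map then delivers
\[
x_\infty \;=\; \exp_p(s_\infty\, v_\infty) \;=\; \exp_p\bigl(\rho_p(v_\infty)\, v_\infty\bigr) \;\in\; L_p,
\]
contradicting $x_\infty \notin L_p$. The $\delta$ produced by this argument depends only on $p$ and $\epsilon$, as required.

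The main obstacle is the continuity of $\rho$ itself. Lower semi-continuity is the easier direction: if $\rho(v_n) \to a < \rho(v_\infty)$, then for large $n$ the geodesic $t \mapsto \exp_{q_n}(t v_n)$ has already ceased to be minimising at parameters slightly larger than $a$, while $t \mapsto \exp_p(t v_\infty)$ is still minimising there; passing to the limit via continuity of $\exp$ and of $\dm$ contradicts this. Upper semi-continuity is more delicate and invokes the classical dichotomy that a cut point is either a first conjugate point along a minimising geodesic or the endpoint of two distinct minimising geodesics; a subsequence argument on the compact $SM$, combined with continuity of both $\exp$ and its differential $d\exp$, shows that either witness persists in the limit and hence bounds $\rho(v_\infty)$ from above by $\liminf_n \rho(v_n)$. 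This two-case analysis is the technical heart, but it is entirely local in a compact neighbourhood of $p$ and so the compactness of $M$ is used in an essential but routine way.
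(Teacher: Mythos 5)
The paper gives no proof of this proposition --- it is quoted verbatim from Eltzner et al. --- so your argument can only be judged on its own terms. Your overall reduction is the standard one and is sound: granting the classical continuity of the cut-time function $\rho \colon SM \to (0,\infty]$ on the unit tangent bundle, the contradiction argument via compactness of $M$ and of $SM$, continuity of $\exp$ and of $\dm$, and finiteness of $\rho$ (bounded by the diameter) on a compact manifold does yield the stated stability of the cut locus. Had you simply invoked that continuity theorem (do Carmo Ch.~13; Sakai Prop.~III.4.1; Klingenberg), the proof would be complete.

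The gap is in your sketch of that key ingredient: you have the two semicontinuity directions swapped, and the argument you offer for the direction your main proof actually needs is invalid. Upper semicontinuity, $\limsup_n \rho(v_n) \le \rho(v_\infty)$, is the \emph{easy} direction: being minimising is the equality $\dm(q_n, \exp_{q_n}(t v_n)) = t$, which passes to the limit. Lower semicontinuity, $\liminf_n \rho(v_n) \ge \rho(v_\infty)$, is the delicate direction, and it is exactly what your main argument requires: without it you only obtain $\rho_p(v_\infty) \ge s_\infty$, i.e.\ that $x_\infty$ lies on a minimising segment from $p$, not that it is the cut point, so no contradiction with $x_\infty \notin L_p$ follows. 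Your proposed ``easy'' proof of lower semicontinuity --- that for large $n$ the geodesic $t \mapsto \exp_{q_n}(t v_n)$ has ceased to be minimising slightly beyond $a$ while $t \mapsto \exp_p(t v_\infty)$ is still minimising there, so that passing to the limit gives a contradiction --- does not work: failure of minimality is the strict inequality $\dm\bigl(q_n, \exp_{q_n}(t' v_n)\bigr) < t'$, and strict inequalities are not preserved under limits (they may degenerate to equality). This is precisely where the classical dichotomy (a cut point is a first conjugate point or the endpoint of two distinct minimising geodesics) must be used, so that a witness of non-minimality survives the limit; you instead deploy the dichotomy on the other, already easy, direction. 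The defect is reparable --- either cite the continuity theorem outright, or redo the dichotomy argument in the correct direction --- but as written the technical heart of your proof is the wrong way round.
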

That is, an $\epsilon$-thickening of $L_p$ contains the cut locus of any $\delta$-perturbed point away from $p$ for $\delta$ sufficiently small. 

We now turn our attention to finite point clouds on compact manifolds $M$. Our aim is to show that the following desirable property is generic in $\uconf{n}{M}$; in other words, we wish the property to holds true on an open dense subset w.r.t. to the topology induced by the Hausdorff distance. 

\begin{definition}
    We say a finite set of $n$ distinct points $\X \subset M$ on a complete Riemannian manfiold is $\emph{NC}$ if for any pair of points in $\X$
    \begin{equation}
        q \notin  L_p, \qtextq{and}  p \notin L_q.
    \end{equation}
\end{definition}
Since $q \in L_p$ iff $p \in L_q$~\cite[Ch. 13, cor. 2.7]{Carmo1992RiemannianGeometry}, we can apply an inductive argument to show that it is NC holds true on a dense subset. 

\begin{proposition}
     NC point sets in $\uconf{n}{M}$ are dense on a complete Riemannian manifold for all $n \in \mathbb{N}$.
\end{proposition}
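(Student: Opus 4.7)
My plan is to proceed by induction on $n$, strengthening the conclusion slightly to a pointwise perturbation statement that makes the induction run cleanly: I will show that for any $\X = \{x_1, \ldots, x_n\} \in \uconf{n}{M}$ and any $\epsilon > 0$, there exists an NC set $\X' = \{x_1', \ldots, x_n'\}$ with $\dm(x_i, x_i') < \epsilon$ for every $i$. Pointwise $\epsilon$-closeness immediately yields $\tilde{\dm}_H(\X, \X') < \epsilon$, so this implies density in $\uconf{n}{M}$.

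The base case $n = 1$ is vacuous since every singleton is trivially NC. For the inductive step I will first apply the hypothesis to $\{x_1, \ldots, x_{n-1}\}$ at tolerance $\epsilon$, producing an NC set $\{x_1', \ldots, x_{n-1}'\}$ with $\dm(x_i, x_i') < \epsilon$ for $i < n$. I then need to pick $x_n'$ in the open ball $\openball{\epsilon}{x_n}$ so that adjoining it to $\{x_1', \ldots, x_{n-1}'\}$ produces an NC $n$-point set. The natural strategy is to select $x_n'$ from the complement of
\[
A \;=\; \bigcup_{j=1}^{n-1} L_{x_j'} \;\cup\; \{x_1', \ldots, x_{n-1}'\}.
\]
Because each $L_{x_j'}$ is closed and nowhere dense in $M$ (as recorded in the discussion preceding the proposition), and any finite set is trivially nowhere dense, $A$ is a closed nowhere dense subset of $M$. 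Hence $\openball{\epsilon}{x_n} \setminus A$ is a nonempty open set, from which $x_n'$ can be drawn.

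It remains to verify that the resulting $\X' = \{x_1', \ldots, x_n'\}$ lies in $\uconf{n}{M}$ and is NC. Distinctness is automatic: the inductive hypothesis gives distinctness among $\{x_1', \ldots, x_{n-1}'\}$, and we explicitly excluded these points when selecting $x_n'$. For the NC condition, pairs $(x_i', x_j')$ with $i, j < n$ are NC by the inductive hypothesis, and for the new pairs $(x_i', x_n')$ with $i < n$, the choice $x_n' \notin L_{x_i'}$ handles one direction while the symmetry $p \in L_q \iff q \in L_p$ quoted immediately before the proposition handles the reverse.

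I do not foresee a substantial obstacle in executing this plan; the work reduces to combining two standing facts about the cut locus (closedness and nowhere-density of $L_p$, and symmetry of the cut-locus relation) with a routine induction. The only design choice worth flagging is the decision to formulate the inductive claim pointwise rather than directly in Hausdorff distance, which sidesteps the need to match up points between successive point sets when applying the inductive hypothesis.
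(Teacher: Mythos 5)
Your proof is correct and follows essentially the same route as the paper's: induction on the number of points, perturbing the new point off the (closed, nowhere dense) union of the cut loci of the already-perturbed points, and invoking the symmetry $p \in L_q \iff q \in L_p$ for the remaining pairs. Your pointwise formulation of the inductive claim is just a cleaner bookkeeping of the Hausdorff-distance comparison the paper carries out at the end of its inductive step.
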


\begin{proof}
    We prove this inductively. Throughout this proof for $\X_n = \{p_0, \ldots, p_{n-1}\} \in \uconf{n}{M}$, we let $L_i = L_{p_i}$ denote the cut locus of point $p_i$.

    We first show that NC is dense for $n=2$. For any given $p_0$, $M \setminus L_0$ is open (since $L_0$ is closed), and dense (since $L_0$ is the boundary of $M\setminus L_0$). Thus, for any two point configuration $\X_2 = \sett{p_0, p_1}$, we can find $p_1' \neq p_0$ that is arbitrarily close to $p_1$  while $\X_2' = \{p_0, p_1'\}$ is NC. Thus, NC pairs are dense in $M_2$: for any $\epsilon>0$ and $\X_2 \in M_2$, there is some NC pair of points $\X'_2 \in M_2$ such that $d_H(\X_2', \X_2) < \epsilon$.

    Having established the base case, we prove the inductive step. Suppose for all $n \leq k$, the set of NC point clouds is dense in $\uconf{n}{M}$. Now consider $\X_{k+1} = \{x_0, \ldots, x_k\} \in M_{k+1}$, and let us denote $\X_{k} = \{x_0, \ldots, x_{k-1}\}$ for some arbitrary choice of $k$ points from $\X_{k+1}$. For any $\epsilon > 0$, we can find $\X_k'$ that is $\epsilon$-close to $\X_k$ and NC. Suppose $x_k = \X_{k+1} \setminus \X_k$ is in some of the cut loci of the other points in $\X_k'$, i.e. $x_k \in \cup_{i=0}^{k-1} L_i$. Since $L_i$ are nowhere dense, the finite union $\cup_{i=0}^{k-1} L_i$ is also nowhere dense, and thus we can find a point $x_k'$ that is $\epsilon$-close to $x_k$, such that $x_k' \notin \cup_{i=0}^{k-1} L_i'$. Since $q \in L_p$ iff $p \in L_q$~\cite[Ch. 13, cor. 2.8]{Carmo1992RiemannianGeometry}, none of the points $\X_k'$ are in the cut locus of $L_n'$. We can furthermore choose $x_k'$ such that it is distinct from points in $\X_k'$ as the latter is only a finite point set. Thus $\X_{k+1}' = \X_k' \cup \{x_k'\}$ is an NC point set in $M_{k+1}$. Since these are finite point sets in $M$ and thus are closed subsets, $\dm_H(\X'_k, \X_k) < \epsilon$ iff the $\epsilon$-thickening $(\X_k)^\epsilon$ of $\X_k$ contains $\X_k'$, and vice versa. Because $\dm(x_k, x_k')< \epsilon$, we thus have $(\X_{k+1})^\epsilon \supseteq \X_{k+1}'$ and $(\X_{k+1}')^\epsilon \supseteq \X_{k+1}$, i.e. we have found for any $\X_{k+1} \in M_{k+1}$, and every $\epsilon > 0$, an NC point set $\X_{k+1}'$ that is $\epsilon$-close to $\X_{k+1}$ in Hausdorff distance. In other words, the set of NC point sets is dense in $M_{k+1}$. We have thus shown inductively that NC point sets are dense in $\uconf{n}{M}$ for all finite $n$.
\end{proof}

To show that NC point sets are open in $\uconf{n}{M}$, we require the stability result of \Cref{prop:cutlocus_stability}.

\begin{proposition} \label{prop:NC_generic}
    For a compact Riemannian manifold and all $n \in \mathbb{N}$, NC point sets form an open dense subset of $\uconf{n}{M}$.
\end{proposition}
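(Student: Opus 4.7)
My proposal is to combine the cut-locus stability result (Proposition 3.8, cutlocus\_stability) with a finite-union argument, controlling both the perturbation of points and the perturbation of their cut loci.

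The plan is as follows. Fix an NC point set $\X = \{p_0, \ldots, p_{n-1}\} \in \uconf{n}{M}$, and I will exhibit an open Hausdorff neighbourhood of $\X$ contained in the NC locus. First, since each cut locus $L_{p_i}$ is closed in the compact manifold $M$, and since by the NC hypothesis $p_j \notin L_{p_i}$ for every ordered pair $i\neq j$, the quantity
\begin{equation*}
    \epsilon \; := \; \tfrac{1}{2}\min_{i \neq j}\, \dm\!\left(p_j,\, L_{p_i}\right)
\end{equation*}
is strictly positive (a finite minimum of strictly positive numbers). Next, I apply Proposition 3.8 to each $p_i$ with this $\epsilon$, obtaining a $\delta_i > 0$ such that every $q$ with $\dm(p_i,q) < \delta_i$ satisfies $L_q \subseteq (L_{p_i})^\epsilon$. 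Finally, denoting by $m > 0$ the minimum pairwise distance in $\X$, I set
\begin{equation*}
    \delta \; := \; \min\!\left(\tfrac{m}{3},\ \tfrac{\epsilon}{2},\ \min_i \delta_i \right) \; > \; 0.
\end{equation*}

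The main claim is that any $\X' \in \uconf{n}{M}$ with $\dm_H(\X, \X') < \delta$ is NC. I will argue this in two steps. First, because $\delta < m/3$, the closed balls of radius $\delta$ around the $p_i$ are pairwise disjoint, and any $\X' \in \uconf{n}{M}$ within Hausdorff distance $\delta$ of $\X$ must place exactly one of its $n$ points in each such ball; this yields a canonical bijection $p_i \leftrightarrow p_i'$ satisfying $\dm(p_i, p_i') < \delta$. (This handles the unordered nature of $\uconf{n}{M}$.) Second, for any ordered pair $i \neq j$: since $\dm(p_i, p_i') < \delta \leq \delta_i$, stability gives $L_{p_i'} \subseteq (L_{p_i})^\epsilon$; and by the triangle inequality,
\begin{equation*}
    \dm\!\left(p_j',\, L_{p_i}\right) \;\geq\; \dm\!\left(p_j,\, L_{p_i}\right) - \dm(p_j, p_j') \;\geq\; 2\epsilon - \tfrac{\epsilon}{2} \;>\; \epsilon,
\end{equation*}
so $p_j' \notin (L_{p_i})^\epsilon \supseteq L_{p_i'}$. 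Hence $p_j' \notin L_{p_i'}$ for all $i \neq j$, showing $\X'$ is NC.

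Together with density proved in the previous proposition, this gives openness and completes the proof that NC point sets form an open dense subset of $\uconf{n}{M}$. The main obstacle I anticipate is purely bookkeeping: ensuring the constants $\epsilon$, $\delta_i$, $m$ are composed in the right order so that both the point perturbation and the cut-locus perturbation are simultaneously controlled, and handling the unordered configuration space rigorously — specifically justifying that Hausdorff-close point sets of the same cardinality admit a canonical nearby-pair bijection when $\delta$ is taken small relative to $m$. No deeper tool than Proposition 3.8 and the triangle inequality is required.
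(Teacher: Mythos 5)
Your proposal is correct and follows essentially the same route as the paper: pick $\epsilon$ so the (thickened) cut loci stay away from the original points, invoke the stability result to confine the perturbed cut loci inside the $\epsilon$-thickenings, and take $\delta$ small relative to the minimum pairwise distance so that Hausdorff-closeness yields a per-point bijection with small displacements. The differences (your $\epsilon = \tfrac12\min\dm(p_j,L_{p_i})$ and $\delta < m/3$ versus the paper's $2\epsilon$-thickening and $2\delta < m$) are only bookkeeping.
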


\begin{proof}
    As we have shown that NC points are dense it only remains to show that they are open in $\uconf{n}{M}$. 

    Consider an NC point set $\X_n = \{x_0, \ldots, x_{n-1}\}$. In other words, $x_i \notin L_j$ for any $i,j$. Because $L_j$ are nowhere dense in $M$, for some sufficiently small $\epsilon >0$, the thickening $\cup_{i = 0}^{n-1} (L_i)^{2\epsilon}$ does not intersect $\X$. Because $M$ is compact Riemannian, we can now invoke \Cref{prop:cutlocus_stability} and choose the smallest $\delta \in (0, \epsilon)$ such that for all $x_i$, and $p_i \in B_\delta(x_i)$, the cut locus $L_p$ is contained within $(L_i)^\epsilon$. Now that implies for any choice of $x_i' \in B_\delta(x_i)$, the finite point set $\X' = \{x_0', \ldots, x_{n-1}'\} \in M_{n}$ does not intersect any of $L_{p_i}$, and thus $\X'$ is also NC. 
    
    If we choose $0 < 2\delta < \min_{i\neq j} \dm(x_i, x_j)$ (the right hand side is positive since minimum taken over a finite set of distinct points), then $\dm(x_i, x_i') < \delta$ for all $i = 0, \ldots, n-1$ iff $\dm_H(\X,\X') < \delta$. Thus, for any NC point set $\X$, there is an open $\delta$-ball around $\X$ in $\uconf{n}{M}$, such that all point sets in the ball are also NC. Thus, the set of NC point sets are open in $\uconf{n}{M}$.
\end{proof}

Because pairs of points that are not in mutual cut loci are joined by a unique minimising geodesic~\cite[Ch. 13, cor. 2.8]{Carmo1992RiemannianGeometry}, we obtain the following observation. 

\begin{restatable}{corollary}{UMG_generic}
\label{cor:UMG_generic}
Let $M$ be a compact Riemannian manifold. For all $n \in \mathbb{N}$, there is an open dense subset of $\uconf{n}{M}$ in which every point cloud $\X $ is equipped with unique minimising geodesics. 
\end{restatable}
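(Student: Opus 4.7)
The plan is to combine the previous proposition with the cited classical fact from do Carmo. Specifically, \Cref{prop:NC_generic} already establishes that the set of NC point clouds is an open dense subset of $\uconf{n}{M}$, so it suffices to show that every NC point cloud admits unique minimising geodesics between each pair of its points.

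Let $\X \in \uconf{n}{M}$ be NC, and fix any two distinct points $p,q \in \X$. By definition of NC, $q \notin L_p$ (equivalently $p \notin L_q$, using the symmetry $q \in L_p \iff p \in L_q$ already noted in the preceding discussion). Now invoke the classical fact~\cite[Ch.~13, Cor.~2.8]{Carmo1992RiemannianGeometry}, already cited in this subsection, which states that whenever $q$ lies outside the cut locus of $p$, there is a \emph{unique} minimising geodesic from $p$ to $q$. Applying this to every pair of points in $\X$ shows that $\X$ is equipped with unique minimising geodesics.

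Consequently, the open dense set of NC point clouds supplied by \Cref{prop:NC_generic} is contained in the set of configurations equipped with unique minimising geodesics, which is therefore itself a superset of an open dense subset and hence contains an open dense subset of $\uconf{n}{M}$. The main work has already been absorbed into the genericity argument of \Cref{prop:NC_generic}; the remaining step here is a one-line invocation of the do Carmo corollary, so no further obstacle is anticipated.
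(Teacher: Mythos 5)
Your proposal is correct and is essentially the paper's own argument: the paper deduces the corollary directly from \Cref{prop:NC_generic} together with the cited fact that a point outside the cut locus of another is joined to it by a unique minimising geodesic, exactly as you do. Nothing further is needed.
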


\begin{remark}
    While \Cref{cor:UMG_generic} implies point sets that are equipped with unique minimising geodesics are dense in $\uconf{n}{M}$, they may not form an open subset. That is because the cut locus of $p\in M$ can contain points $q$ that have a unique geodesic to $p$ (conjugate points); thus there can be point clouds that are equipped with unique minimising geodesics that are not NC. Furthermore, within any neighbourhood of $q$, there can be points on the cut locus of $p$ with more than one minimising geodesic to $p$. 
\end{remark}

\subsection{Application: Analysing Principal Persistence Measures}\label{subsec:4pt}

We consider a setting where we have an empirical sample of points $\X$ on a compact Riemannian manifold $M$. The principal Vietoris Rips persistence set of $\X$, for first homology, is a collection of points on a persistence diagram, where each point corresponds to a 1-cycle with non-trivial persistence formed over four points in $\X$. We show how we can associate a min-geodesic graph (\Cref{def:min_geodesic}) to each of these points, and doing so infer the homology class of each point in $M$ using the transition homomorphism induced by the universal cover. We demonstrate this on empirical uniform samples on compact surfaces, and show across several examples that cycles belonging to different classes can have different support on the persistence diagram.

\subsubsection{Characterising one-cycles on four points}

We recall the $\epsilon$-Vietoris-Rips complex of a point cloud  $\X$ in a metric metric space $X$ is a flag complex on the $\epsilon$-neighbourhood graph of $\X$. We refer to the persistent homology of the filtration of the $\epsilon$-Vietoris-Rips complex over a range of $\epsilon$ parameters as the Vietoris-Rips persistent homology.  In~\cite{Gomez2024CurvatureDiagrams}, they show that the $k$-dimensional Vietoris-Rips persistent homology of a $\X$ with $n$-points is trivial if $n < 2(k+1)$; if $n = 2(k+1)$, there can be at most one non-trivial generator of the persistent homology. Thus, we can send an $n$-tuple of points in $X$ (where $n = 2k+2$) to either the diagonal of the persistence diagram (when the persistent homology is trivial), or  a point $(b,d)$ the persistence diagram representing the persitence of the only cycle it generates. In~\cite{Gomez2024CurvatureDiagrams}, they define the \emph{Vietoris-Rips principal persistence set} of a metric space $X$ to be the set of points on the persistence diagram corresponding to the $k$-dimensional persistent homology generator of some set of $(2k+2)$-points on $X$. If $X$ admits a measure $\mu$, then this map from $X^n$ to persistence diagrams pushes forward the product measure $\mu^{\otimes n}$ on $X^n$ to a persistence measure on the diagram. This is the \emph{Vietoris-Rips principal persistence measure} of a metric measure space $(X, \mu)$. 

Focussing on the case where we are interested in one-dimensional homology, the empirical first Vietoris-Rips principal persistence measure of a point cloud $\X$ can be estimated by uniformly sampling $4$ points on $\X$, and computing the birth and death $[b,d)$ of its unique first Vietoris-Rips persistent homology generator, if any. The following lemma which is a special case of~\cite[Theorem 4.4, Ex. 4.5]{Gomez2024CurvatureDiagrams} summarises the geometric conditions where we obtain a non-trivial generator of first Vietoris-Rips persistent homology.

\begin{lemma}[{\cite[Theorem 4.4, Ex. 4.5]{Gomez2024CurvatureDiagrams}}]\label{lem:4ptcycle_rips} A Vietoris-Rips complex on four points from a metric space $(X, \dm)$ has at most one 1-cycle; furthermore, such a 1-cycle has non-trivial persistence $[b,d)$, if and only if we have a labelling of vertices $x_0, \ldots , x_3$ such that 
\begin{equation}
    \dm(x_0,x_1) \geq \dm(x_2,x_3) = d > b = \max_{(i,j) \in \{0,1\} \times \{2, 3\}}\dm(x_i, x_j). \label{eq:4ptcycle_rips}
\end{equation}
\end{lemma}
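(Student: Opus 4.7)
The plan is to prove the lemma by directly analysing the Vietoris-Rips complex on four vertices across all filtration scales. I would begin by enumerating the possible Vietoris-Rips complexes on four points, exploiting the flag property: if all three pairwise distances among a triple are at most $\epsilon$, then the corresponding $2$-simplex must be present, and if all six pairwise distances are at most $\epsilon$, then the full $3$-simplex is in. A short case analysis on the number of $1$-simplices present then reveals that $\beta_1$ is bounded by one. Indeed, if at most three edges are present, the underlying graph is a forest or a triangle (possibly with an isolated vertex), both with $\beta_1 = 0$; if five or six edges are present, the complex is contractible by the flag property; for exactly four edges there are two topological types, namely a triangle with a pendant edge (contractible) or a $4$-cycle on the four vertices with no $2$-simplex present (homotopy equivalent to $S^1$). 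Only the last type realises $\beta_1 = 1$, which proves the first assertion.

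Next, I would use this classification to extract the birth and death of the unique persistence interval when it exists. The $4$-cycle configuration on four points partitions the six unordered pairs into a set of four \emph{ring} pairs forming the cycle and two \emph{diagonal} pairs. For the complex at scale $\epsilon$ to coincide with this $4$-cycle, every ring pair must satisfy $\dm \le \epsilon$ while both diagonal pairs must satisfy $\dm > \epsilon$. Labelling the diagonals as $\{x_0,x_1\}$ and $\{x_2,x_3\}$, the ring pairs are precisely the four elements of $\{0,1\} \times \{2,3\}$, so the lower bound for the interval is
\begin{equation*}
b = \max_{(i,j) \in \{0,1\} \times \{2,3\}} \dm(x_i,x_j).
\end{equation*}
The upper bound is $\min\{\dm(x_0,x_1),\dm(x_2,x_3)\}$: as soon as the shorter diagonal pair enters the filtration, its two adjacent $2$-simplices (already having both of their other edges among the ring) appear immediately by the flag property, and together they bound the generator of the $4$-cycle, killing the class. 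Non-triviality of the interval thus reads $b < \min\{\dm(x_0,x_1),\dm(x_2,x_3)\}$, and relabelling so that $\dm(x_0,x_1) \ge \dm(x_2,x_3)$ makes $d = \dm(x_2,x_3)$ and yields exactly the inequality in the lemma statement.

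Conversely, if the four distances $\dm(x_i,x_j)$ with $(i,j) \in \{0,1\} \times \{2,3\}$ are all strictly smaller than $\dm(x_2,x_3) \le \dm(x_0,x_1)$, then at any scale $\epsilon \in [b,d)$ the Vietoris-Rips complex is the $4$-cycle described above, so the persistence interval exists with the claimed endpoints. The forward and reverse directions together give the biconditional. The main bookkeeping obstacle is verifying cleanly that the combinatorial case analysis in the first step covers every Vietoris-Rips complex on four vertices while never double-counting configurations, and confirming that the only way $\beta_1 = 1$ arises is as the $4$-cycle whose complementary pairs of edges are forced to be strictly longer than every ring edge; once this is established, the identification of $b$ and $d$ with the stated extrema over the ring and diagonal distances is immediate from the definition of the Vietoris-Rips filtration.
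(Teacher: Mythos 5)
Your proposal is correct, and it is worth noting that the paper itself does not prove this lemma at all: it is quoted verbatim from the cited reference (Theorem 4.4 and Example 4.5 of the principal persistence sets paper), so your argument is an independent, self-contained verification rather than a reproduction of an in-paper proof. Your enumeration is sound: on four vertices the flag property forces $\beta_1 = 0$ whenever the $1$-skeleton contains a triangle or at least five edges, and the only configuration with $\beta_1 = 1$ is the empty $4$-cycle, whose missing edges form a perfect matching (the two ``diagonals''); the birth/death identification $b = \max$ of the four ring distances and $d = \min$ of the two diagonal distances then follows exactly as you say, with the two $2$-simplices adjacent to the shorter diagonal killing the class the moment that diagonal enters. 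One point you should make explicit: the statement ``at most one $1$-cycle'' is about the persistence module (at most one bar), and pointwise $\beta_1 \le 1$ does not by itself rule out two disjoint bars. What closes this is monotonicity of the filtration: the set of missing edges only shrinks as $\epsilon$ grows, so if the complex is an empty $4$-cycle at two scales the two perfect matchings of missing edges must coincide, and hence the scales at which $\beta_1 = 1$ form a single interval $[b,d)$ over which the inclusion maps are identities. Your second and third paragraphs implicitly contain this (the $4$-cycle occurs precisely for $\epsilon \in [b,d)$ for the unique admissible matching, and no two matchings can be simultaneously admissible since each would force the other's diagonal to be strictly shorter), but stating it would make the ``at most one'' claim airtight.
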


In other words, we see a non-trivial generator of one-dimensional Vietoris-Rips persistence when the $\epsilon$-neighbourhood graph $G_\epsilon$ on the four points is a cycle graph, for precisely the scales $\epsilon \in [b,d)$. For $x_0, \ldots, x_3 \subset M$ in complete Riemannian manifold with non-trivial dimension one persistence, we can explicitly relate the point on the persistence diagram to a first homology class or a conjugacy class of the fundamental group of $M$ using a min-geodesic map $f: |G_\epsilon| \to M$ (see~\Cref{def:min_geodesic}). We show that the induced homomorphism of $f$ on homology or fundamental group is no different to that induced by the inclusion of its image $f(|G|)$ into $M$: for any min-geodesic map $f$ associated to four points with non-trivial persistence, $f$ is  an embedding of $S^1 \simeq |G|$ into $M$. To facilitate our proof we adopt the following terminology.

\begin{definition} Let $\{x_1, \ldots, x_n\} = \X$ be a finite point set in a metric space $(X, \dm)$ with $n \geq 3$. Consider a cycle graph $G$ with $\X$ as its vertex set. We say $G$ is a \emph{minimal $\epsilon$-cycle} when $\dm(x_i, x_j) \leq \epsilon$ iff $(i,j) \in G$. 
\end{definition}

To show that $f$ is an embedding of $|G| \simeq S^1$, we show that no pair of edges incident on the same point can have intersecting images, unless their images are contained in one another. 

\begin{lemma}[Intersection Lemma] \label{lem:intersection} Let $\gamma: [0,s] \to M$ and $\tilde{\gamma}: [0,\tilde{s}] \to M$ be minimising geodesics parametrised by arc length. Suppose they have the same base point $\gamma(0) = \tilde{\gamma}(0) = x$ and denote their respective end points by $\gamma(s) = y$ and $\tilde{\gamma}(\tilde{s}) = \tilde{y}$. Suppose w.l.o.g. that $s \leq \tilde{s}$. Then either $\gamma \subseteq \tilde{\gamma}$ or  $\gamma \cap \tilde{\gamma} \subseteq \sett{x,y}$, where in the latter case we have $y = \tilde{y}$.
\end{lemma}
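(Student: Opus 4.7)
The plan is to split on whether $\gamma$ and $\tilde\gamma$ have the same initial velocity at $x$.

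First, suppose $\gamma'(0) = \tilde\gamma'(0)$. Both curves are geodesics with identical initial position and velocity, so by uniqueness of solutions to the geodesic ODE they coincide on the common parameter interval $[0,s]$. Since $s \leq \tilde s$, this directly gives $\gamma \subseteq \tilde\gamma$.

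Now suppose $\gamma'(0) \neq \tilde\gamma'(0)$, and let $z \in (\gamma \cap \tilde\gamma) \setminus \{x\}$, with $z = \gamma(t) = \tilde\gamma(\tilde t)$ for some $t \in (0,s]$, $\tilde t \in (0,\tilde s]$. Any subsegment of a minimising geodesic is itself minimising, so $\gamma|_{[0,t]}$ and $\tilde\gamma|_{[0,\tilde t]}$ are both unit-speed minimising geodesics from $x$ to $z$; comparing lengths forces $t = \tilde t = \dm(x,z)$. Because the initial velocities differ, $\gamma|_{[0,t]}$ and $\tilde\gamma|_{[0,\tilde t]}$ are two \emph{distinct} minimising geodesics from $x$ to $z$. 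The key step is now to invoke the standard fact that interior points of a minimising geodesic do not lie in the cut locus of its starting point (see e.g.~\cite[Ch.~13, Cor.~2.8]{Carmo1992RiemannianGeometry}, combined with the uniqueness of minimising geodesics to points off the cut locus). If $t < s$, then $z = \gamma(t)$ is an interior point of the minimising segment $\gamma|_{[0,s]}$, so $z \notin L_x$ and hence there is a \emph{unique} minimising geodesic from $x$ to $z$, contradicting the existence of the two distinct ones. Thus $t = s$, i.e.\ $z = y$. The symmetric argument applied to $\tilde\gamma$ gives $\tilde t = \tilde s$, i.e.\ $z = \tilde y$. Hence $\gamma \cap \tilde\gamma \subseteq \{x,y\}$, and if the intersection contains the second endpoint then $y = \tilde y$.

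I expect the only delicate step to be the cut-locus invocation: one must be careful to use the \emph{strict} inequality $t < s$ (so that $\gamma(t)$ is genuinely an interior point of the minimising segment, where uniqueness of the minimiser from $x$ is guaranteed) and to note that the two candidate minimisers are indeed distinct because they have different initial tangent vectors at $x$. Everything else (the ODE-uniqueness of geodesics from initial data, and the fact that subsegments of minimising geodesics are minimising) is routine.
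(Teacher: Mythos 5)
Your proof is correct, but it takes a genuinely different route from the paper. The paper argues directly with a broken-geodesic (corner) argument: assuming an intersection point $p \neq x$, it concatenates $\gamma|_{[0,l]}$ with $\tilde\gamma|_{[l,\tilde s]}$ to get a curve of length $\tilde s = \dm(x,\tilde y)$, and observes that a length-minimising curve cannot have a corner; hence either the velocities of the two geodesics agree at $p$ (forcing containment by ODE uniqueness) or $l = \tilde s$, i.e.\ $p = y = \tilde y$. You instead case-split on the initial velocities at $x$ and, in the transverse case, outsource the contradiction to cut-locus theory: an interior point of a minimising segment is not in the cut locus of the starting point, and points off the cut locus admit a unique minimising geodesic, so a second distinct minimiser to $\gamma(t)$ with $t<s$ (and symmetrically $\tilde\gamma(\tilde t)$ with $\tilde t < \tilde s$) is impossible; this forces $t=s$ and $\tilde t = \tilde s$, hence $z = y = \tilde y$. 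Both arguments are sound, and your use of the strict inequalities and of the symmetric application to $\tilde\gamma$ is exactly what is needed; the facts you invoke are standard and are even quoted elsewhere in the paper (do Carmo Ch.~13 and Gallier, Prop.~16.18, in the genericity section). What each buys: your route is shorter modulo the cited background, but note that the ``interior points are not cut points'' fact is itself proved by the very concatenation/corner argument the paper uses, so the paper's version is the more elementary and self-contained one, needing only ODE uniqueness of geodesics and the fact that minimisers are unbroken geodesics, while yours makes the lemma a corollary of cut-locus theory. One cosmetic caveat: be precise about which statement of do Carmo you cite for the non-cut-point claim (the corollary numbering you give is for the mutual-cut-locus/unique-minimiser statement), or replace it by the non-extension fact that a minimising geodesic cannot remain minimising past a point admitting a second distinct minimiser.
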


\begin{proof}
    If $\dot{\gamma}\rvert_0 = \dot{\tilde{\gamma}}\rvert_0$, then the geodesics $\gamma$ and $\tilde{\gamma}$ are solutions to the geodesic equation with identical initial conditions; thus if we assume $s \leq \tilde{s}$, we have $\gamma \subseteq \tilde{\gamma}$. 

    For the case $\dot{\gamma}\rvert_0 \neq \dot{\tilde{\gamma}}\rvert_0$, assume there is some point $p \in \gamma \cap \tilde{\gamma}$ in the intersection that is distinct from $x$. Let $\gamma(l) = p = \tilde{\gamma}(\tilde{l})$. Since $\gamma$ and $\tilde{\gamma}$ are minimising geodesics, the restrictions of them to segments ending at $x$ and $p$ yield two minimising geodesics between $x$ and $p$, and thus $l = \tilde{l} = \dm(x, p) \leq s \leq \tilde{s}$. Let us consider the following piecewise geodesic curve from $x$ to $\tilde{y}$, where the segment between $x$ and $p$ in $\tilde{\gamma}$ is replaced by the corresponding segment in ${\gamma}$:
\begin{equation*}
    \sigma(t) = 
    \begin{cases}
    {\gamma}(t) & t \in [0, l] \\
    \tilde{\gamma}(t) & t \in [l, \tilde{s}]
    \end{cases}.
\end{equation*}
 As $\sigma$ is composed of two pieces of minimising geodesics with arc length parametrisation,  it has length $|\sigma| = \tilde{s} = \dm(x, \tilde{y})$. If $l < \tilde{s}$, and $\dot{\gamma}\rvert_l \neq \dot{\tilde{\gamma}}\rvert_l$ --- i.e. either $\gamma$ and $\tilde{\gamma}$ intersect transversely or are anti-parallel at $p$ --- the velocity $\dot{\sigma}$ is discontinuous at $p$. Thus $\sigma$ is not geodesic, and therefore cannot be a distance minimising curve. In other words, $|\sigma| > \dm(x,\tilde{y}) = \tilde{s}$. We thus reach a contradiction. Thus  $\dot{\gamma}\rvert_l = \dot{\tilde{\gamma}}\rvert_l$, or we conclude that $l = \tilde{s}$. In the former case, the uniqueness of solutions of the geodesic equation with specified initial position and velocity implies $\gamma \subseteq \tilde{\gamma}$. In the latter case where $l = \tilde{s}$, the fact that $l \leq s \leq \tilde{s}$  implies $l = s = \tilde{s}$. Thus we conclude that $p = y = \tilde{y}$.
\end{proof}

\begin{proposition}\label{prop:cycle_embed}
    Let $G$ be a minimal $\epsilon$-cycle and $f: |G| \to M$ be a minimal geodesic map. The map $f$ is not an embedding iff $\# V(G) = 3$ and one of the three vertices lie on the minimising geodesic between the other two.
\end{proposition}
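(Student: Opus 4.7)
The key observation is that since $|G|\simeq S^1$ is compact and $M$ is Hausdorff, the continuous map $f$ is an embedding if and only if it is injective; furthermore, since $f$ restricted to each edge parametrises a minimising geodesic (hence is injective on that edge), any failure of injectivity forces two \emph{distinct} edges of $G$ to have images overlapping at a point beyond any shared vertex. The $(\Leftarrow)$ direction is then immediate: if $V(G) = \{x_0, x_1, x_2\}$ and $x_1$ lies on $f(x_0 x_2)$, then some interior point of the edge $x_0 x_2$ shares its image with the vertex $x_1$, so $f$ is not injective.

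For the $(\Rightarrow)$ direction the plan is to split on whether the two offending edges $e_1, e_2$ share a vertex of $G$. If they share a vertex $x$, applying the Intersection Lemma (\Cref{lem:intersection}) at $x$ and using distinctness of vertices of $G$ (which rules out the $y = \tilde{y}$ alternative of the lemma) forces one image to contain the other. Writing $e_1 = x y_1$, $e_2 = x y_2$ with $f(e_1) \subseteq f(e_2)$, the point $y_1$ then lies on the minimising geodesic $f(e_2)$, yielding $\dm(y_1, y_2) = \dm(x, y_2) - \dm(x, y_1) < \dm(x, y_2) \leq \epsilon$. By the iff in the definition of a minimal $\epsilon$-cycle, $(y_1, y_2)$ must be an edge of $G$, producing a triangle $\{x, y_1, y_2\}$ in the cycle graph. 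Since a cycle contains a triangle only when it has exactly three vertices, this forces $\#V(G) = 3$ together with the desired configuration that $y_1$ lies on the minimising geodesic $f(e_2)$ between $x$ and $y_2$.

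If $e_1$ and $e_2$ share no vertex then $\#V(G) \geq 4$ (every pair of edges in a 3-cycle shares a vertex), and the plan is to derive a contradiction. Write $e_1 = x_i x_{i+1}$, $e_2 = x_j x_{j+1}$ with indices mod $n$ and $\{i, i+1\} \cap \{j, j+1\} = \emptyset$; in a cycle of length $n \geq 4$ the pairs $(x_i, x_j)$ and $(x_{i+1}, x_{j+1})$ are at cyclic distance at least $2$ and hence non-edges of $G$, so both these distances strictly exceed $\epsilon$. Choose an intersection point $p \in f(e_1) \cap f(e_2)$; if the overlap has positive length, pick $p$ in its relative interior so that $p$ is interior to both geodesics. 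When $p$ is interior to both, the triangle inequality combined with $p$ lying on two minimising geodesics gives
\begin{equation*}
\dm(x_i, x_j) + \dm(x_{i+1}, x_{j+1}) \leq \dm(x_i, x_{i+1}) + \dm(x_j, x_{j+1}) \leq 2\epsilon,
\end{equation*}
contradicting both summands exceeding $\epsilon$. In the remaining subcase, where $p$ coincides with a vertex of one edge while being interior to the other --- say $p = x_i$ on $f(e_2)$ --- the relation $\dm(x_j, x_i) + \dm(x_i, x_{j+1}) \leq \epsilon$ forces both $(x_i, x_j), (x_i, x_{j+1})$ to be edges; since $x_i$ has only the neighbours $x_{i\pm 1}$ in the cycle, this forces $\{x_j, x_{j+1}\} = \{x_{i-1}, x_{i+1}\}$, but $(x_{i-1}, x_{i+1})$ is an edge only when $n = 3$, contradicting $n \geq 4$. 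The main obstacle will be verifying this case analysis is exhaustive --- in particular reducing positive-length overlaps uniformly to the single-point cases via a careful choice of $p$, and excluding the degenerate possibility that $p$ is simultaneously a vertex of both edges, which is ruled out by the no-shared-vertex hypothesis.
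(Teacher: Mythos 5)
Your proposal is correct and takes essentially the same route as the paper: both arguments rest on \Cref{lem:intersection} applied to edges sharing a vertex (with injectivity on vertices ruling out the common-endpoint alternative, so one geodesic contains the other and forces $\#V(G)=3$ with a vertex on the geodesic) together with a triangle-inequality estimate using the minimal $\epsilon$-cycle condition for the remaining pairs of edges, and you merely reorganise the case split (shared vertex versus disjoint edges) where the paper first shows any intersection forces adjacency or $\#V(G)=3$. One small simplification: in your disjoint-edge case the summed estimate $\dm(x_i,x_j)+\dm(x_{i+1},x_{j+1})\leq \dm(x_i,x_{i+1})+\dm(x_j,x_{j+1})\leq 2\epsilon$ holds for \emph{any} intersection point $p$, whether interior or an endpoint, so your interior/endpoint subcases are redundant.
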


\begin{proof}
Since $f$ is a continuous map from a compact space to a Hausdorff space, it is a closed map.  In addition, if $f$ is injective, then $f$ is an embedding. We now show that this is the case. 

The map $f$ is not injective if the geodesic segments of $f$ intersect off any shared end points. Let $\gamma_i$ denote the minimising geodesic from $x_i$ to $x_{i+1}$ in the image of $f$. Suppose for some $i \neq j$, we have $p \in \imag \gamma_i \cap \imag \gamma_j$ for some choice of minimising geodesics $\gamma_i$ and $\gamma_j$. Let $s_k = \dmet{x_k}{x_{k+1}}$ and define $\mu_k = \frac{\dmet{x_k}{p}}{s_k}$ for $k = i,j$. Since $p$ lies on a minimising geodesic between $x_k$ and $x_{k+1}$, we have $\mu_k \in [0,1]$. W.l.o.g. index the indices along the cycle graph $G$ such that $\mu_i \leq \min(\mu_j, 1-\mu_j) \leq \frac{1}{2}$. Then 
\begin{align*}
    \dmet{x_i}{x_j} &\leq \dmet{x_i}{p} +  \dmet{p}{x_j} =  \mu_i s_i + \mu_j s_j \leq (\min(\mu_j, 1-\mu_j)  + \mu_j) \epsilon \leq \epsilon \\
    \dmet{x_i}{x_{j+1}} &\leq \dmet{x_i}{p} +  \dmet{p}{x_{j+1}} = \mu_i s_i + (1-\mu_j) s_j \leq (\min(\mu_j, 1-\mu_j)  + (1-\mu_j)) \epsilon  \leq \epsilon.
\end{align*}
Since $G$ is a minimal $\epsilon$-cycle, this implies $i$ is adjacent to both $j$ and $j+1$ in the minimal $\epsilon$-cycle. In other words, either $\# V(G) = 3$, or $\# V(G)  > 3$ and $i \in \{j, j+1\}$.

In either case, $f$ can only fail to be injective if the minimising geodesics $\gamma_i$ and $\gamma_{i+1}$ of adjacent vertices $i, i+1 \in G$ intersect at some point $p \neq x_{i+1}$. 
Without loss of generality we index the vertices such that 
\begin{equation*}
    s_i := |\gamma_i| = \dmet{x_i}{x_{i+1}} \leq  \dmet{x_{i+1}}{x_{i+2}}  = |\gamma_{i+1}| =: s_{i+1}.
\end{equation*}
Then \Cref{lem:intersection} implies that either $\imag \gamma_i \subseteq \imag \gamma_{i+1}$, or $p = x_i= x_{i+2}$, since we have assume $p \neq x_{i+1}$. However, since $f$ is injective on the vertices of $G$, we cannot have $x_i = x_{i+2}$, thus the only possibility is the former. In particular we have $x_i \in \imag \gamma_{i+1}$. Because $\gamma_{i+1}$ is a minimising geodesic between $x_{i+1}$ and $x_{i+2}$, we thus have $\dmet{x_{i+1}}{x_{i+2}} = \dmet{x_i}{x_{i+1}} + \dmet{x_i}{x_{i+2}}$; in particular because $\dmet{x_{i+1}}{x_{i+2}}  \leq \epsilon$, we have $\dmet{x_i}{x_{i+2}} \leq  \epsilon$. Because $G$ is a minimal $\epsilon$-cycle  this implies there's an edge between $i$ and $i+2$, i.e. the number of vertices on $G$ must be three. 

Hence we have shown that $f$ fails to be injective iff $\# V(G) = 3$ and one of the three vertices lie on the minimising geodesic between the other two. 
\end{proof}
Specialising to the four point case, the combination of~\Cref{lem:intersection} and~\Cref{prop:cycle_embed} yields the following observation.
\begin{corollary}
    Consider four points $\X = \{x_0, \ldots, x_3\} \subset M$ in a complete Riemannian manifold. The Vietoris-Rips persistence diagram of $\X$ contains at most one point. Such a point has non-trivial persistence $[b,d)$, iff the $\epsilon$-neighbourhood graph on $\X$ is a minimal $\epsilon$-cycle precisely when $\epsilon \in [b,d)$, and such a graph always admits an min-geodesic map that is an embedding $f: S^1 \simeq |G| \to M$.
\end{corollary}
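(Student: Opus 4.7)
The proof plan is to assemble this corollary from the two results that directly precede it, namely \Cref{lem:4ptcycle_rips} (which controls when a four-point Vietoris-Rips complex has non-trivial one-cycles) and \Cref{prop:cycle_embed} (which controls when a min-geodesic realisation of a minimal $\epsilon$-cycle is an embedding). The ``at most one point'' clause is immediate from \Cref{lem:4ptcycle_rips}, which explicitly states that a Vietoris-Rips complex on four points carries at most one 1-cycle.

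For the equivalence between non-trivial persistence and the $\epsilon$-neighbourhood graph being a minimal $\epsilon$-cycle precisely on $[b,d)$, I would translate the distance inequalities of \cref{eq:4ptcycle_rips} directly into edge-membership statements. In one direction, suppose we have a labelling $x_0, x_1, x_2, x_3$ satisfying \cref{eq:4ptcycle_rips}. Then for any $\epsilon \in [b,d)$, each of the four ``cross'' distances $\dm(x_i, x_j)$ with $(i,j) \in \{0,1\} \times \{2,3\}$ is at most $b \leq \epsilon$, while the two ``diagonal'' distances $\dm(x_0, x_1)$ and $\dm(x_2, x_3)$ are at least $d > \epsilon$; so the $\epsilon$-neighbourhood graph consists of exactly the four cross-edges arranged as the 4-cycle $x_0 x_2 x_1 x_3 x_0$, which by definition is a minimal $\epsilon$-cycle. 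For $\epsilon < b$ at least one cross distance exceeds $\epsilon$ and the cycle structure breaks; for $\epsilon \geq d$ at least one diagonal is added and the graph ceases to be a cycle. Conversely, if the $\epsilon$-neighbourhood graph on the four points is a minimal $\epsilon$-cycle, a relabelling produces the same combinatorial pattern: four cross-edges and two non-diagonal non-edges, giving the inequalities of \cref{eq:4ptcycle_rips} with $b = \max_{(i,j) \in \{0,1\}\times\{2,3\}} \dm(x_i, x_j)$ and $d = \min\{\dm(x_0,x_1), \dm(x_2,x_3)\}$, and by \Cref{lem:4ptcycle_rips} this gives the non-trivial persistence $[b,d)$.

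The embedding claim then follows immediately from \Cref{prop:cycle_embed}: a minimal $\epsilon$-cycle on four points has $\# V(G) = 4 \neq 3$, so the excluded degenerate case does not arise, and therefore every min-geodesic map $f : |G| \to M$ is an embedding. Since $|G|$ is the geometric realisation of a 4-cycle graph, it is homeomorphic to $S^1$, giving the embedding $f : S^1 \simeq |G| \hookrightarrow M$.

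The entire argument is essentially bookkeeping, and I anticipate no serious obstacle: the only subtlety is keeping track of the labelling (in \Cref{lem:4ptcycle_rips} the indices $0,1$ and $2,3$ mark the \emph{diagonal} non-adjacencies in the resulting 4-cycle, not cyclic neighbours) so that one correctly reads off which edges appear in the $\epsilon$-neighbourhood graph for $\epsilon \in [b,d)$.
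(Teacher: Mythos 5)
Your proposal is correct and follows essentially the same route as the paper: the paper presents this corollary as an immediate consequence of \Cref{lem:4ptcycle_rips} (translated into the minimal $\epsilon$-cycle language, including the same relabelling so that the indices $0,1$ and $2,3$ mark the diagonals) together with \Cref{prop:cycle_embed}, whose three-vertex degenerate case is excluded since $\# V(G) = 4$. Your write-up just makes explicit the bookkeeping that the paper leaves implicit.
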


\subsubsection{Empirical Observations on Compact Surfaces}
We consider the principal Vietoris-Rips persistence measures in dimension one for the compact surfaces with constant scalar curvature $M$ as described in~\Cref{ex:surface_coverings}. These are the torus, Klein bottle, the real projective plane, and the surface with genus two. We denote $\Gamma \cong \fungroup{M}$ to be the group that generates the covering space action of the universal covering $p: \tilde{M} \twoheadrightarrow M$. For each of these surfaces, we uniformly sample four points $\X = \sett{x_0, \ldots, x_3} \subset M$ independently and compute its Vietoris-Rips persistent homology in dimension one; if we have a generator with non-trivial persistence $[b,d)$, we then consider the $\epsilon$-nearest neighbour graph $G$ , for any $\epsilon \in [b,d)$, where the graph is a minimal $\epsilon$-cycle. For each point $x_i \in \X \subset M$, we find a point $\tilde{x}_i \in \fibre{p}{x_i}$ in the fibre of the universal covering of $M$, and compute for $ij \in G$ the element $t(ij) = \arg \inf_{g \in \Gamma} \tilde{\dmet{\tilde{x}_i}{\tilde{x}_j}}$. We recall~\Cref{prop:graph_convx_radius} which states the such data recovers a transition homomorphism on the induced covering of $|G|$ by some min-geodesic embedding, and that this min-geodesic embedding is generically uniquely determined by the position of the points~(\Cref{cor:UMG_generic}). Since the induced covering of $|G| \simeq S^1$ is obtained by pulling back the unviersal covering of $M$, we can recover the induced homomorphism on first homology $\tau: \Z \cong \homol{1}{G} \to \homol{1}{M} \cong \abel{\Gamma}$ by the folowing sum, due to~\Cref{prop:monodromy_homology_big}:
\begin{equation*}
    1 \mapsto \abel{t}(01) + \abel{t}(12) + \abel{t}(23) +\abel{t}(30) \in \abel{\Gamma}.
\end{equation*}
Note that while the product $t(01)t(12)t(23)t(30)$ recovers the monodromy homomorphism on the loop traced by the cycle graph in $M$, and that it is equivalent to the induced homomorphism $\fungroup{f}: \fungroup{G} \to \fungroup{M}$ up to isomorphism (\Cref{prop:monodromy_group_big}), the induced homomorphism depends on a choice of base point of the loop, and its lift to the covering space. We thus choose homology as the invariant to avoid such dependencies. This is especially relevant in the case where $M$ is the Klein bottle or the surface with genus two, whose fundamental groups are not abelian.

In these empirical experiments, we have performed one million independent four point samples on such surfaces, and labelled the homology class of the manifold that each persistent four point sample corresponds to. We visualise the empirical principal Vietoris-Rips persistence measures on the torus, klein bottle, real projective plane, and the surface with genus two in \Cref{fig:torus}, where we colour the diagram by a kernel density estimation on the empiricla measure to indicate the relevant abundance of points in each region of the persistence diagram. We display the full meausure, as well as component measures consisting of points belong to different the homology classes. In the diagram where the whole empirical measure is displayed, the empirical fraction of four point samples that have non-trivial persistence is labelled by $\phi_{\text{bar}}$ on the diagram where all the points are shown; on the individual diagrams depicting points belonging to different homology classes, the percentage of points belong to each class among all those with non-trivial persistence is also shown.

We recall from~\Cref{ex:surface_coverings_geom} that such surfaces have curvature inherited from their covering spaces; the torus and Klein bottle have zero scalar and sectional curvature; $\RP^2$ has $+1$ scalar and sectional curvature; and $\Sigma_2$ has $-1$ scalar and sectional curvature. In~\cite[Theorem 5.19]{Gomez2024CurvatureDiagrams}, they have proved that on simply connected manifolds with constant sectional curvature, the support of the principal Vietoris-Rips persistence set is bounded by an explicit formula that depends on curvature. We illustrate these bounds on the diagrams as orange dashed lines. In addition, we also mark the diameter of the surfaces on the diagrams. As expected, the persistent four point loops which are trivial in $M$ lie within the bounds, as they lift to their universal covering space and thus subject to the persistence bounds of their covering space. However, the four point cycles that belong to non-trivial classes can exceed those bounds, and can have far greater persistence. Furthermore, such cycles in non-trivial classes cannot have arbitrarily small birth death values, while the trivial cycles can approach the origin of the persistence diagram. 

While four point samples with non-trivial persistence are generally uncommon, among those that do have non-trivial persistence, most of them form non-trivial cycles. Non-trivial cycles tend to be generated by one or few generators. We conjecture this is due to the constrained geometry on the four point cycles due to the positive persistence as described in~\Cref{lem:4ptcycle_rips}.

\begin{figure}[h]
    \centering
    \includegraphics[width = 0.75\textwidth]{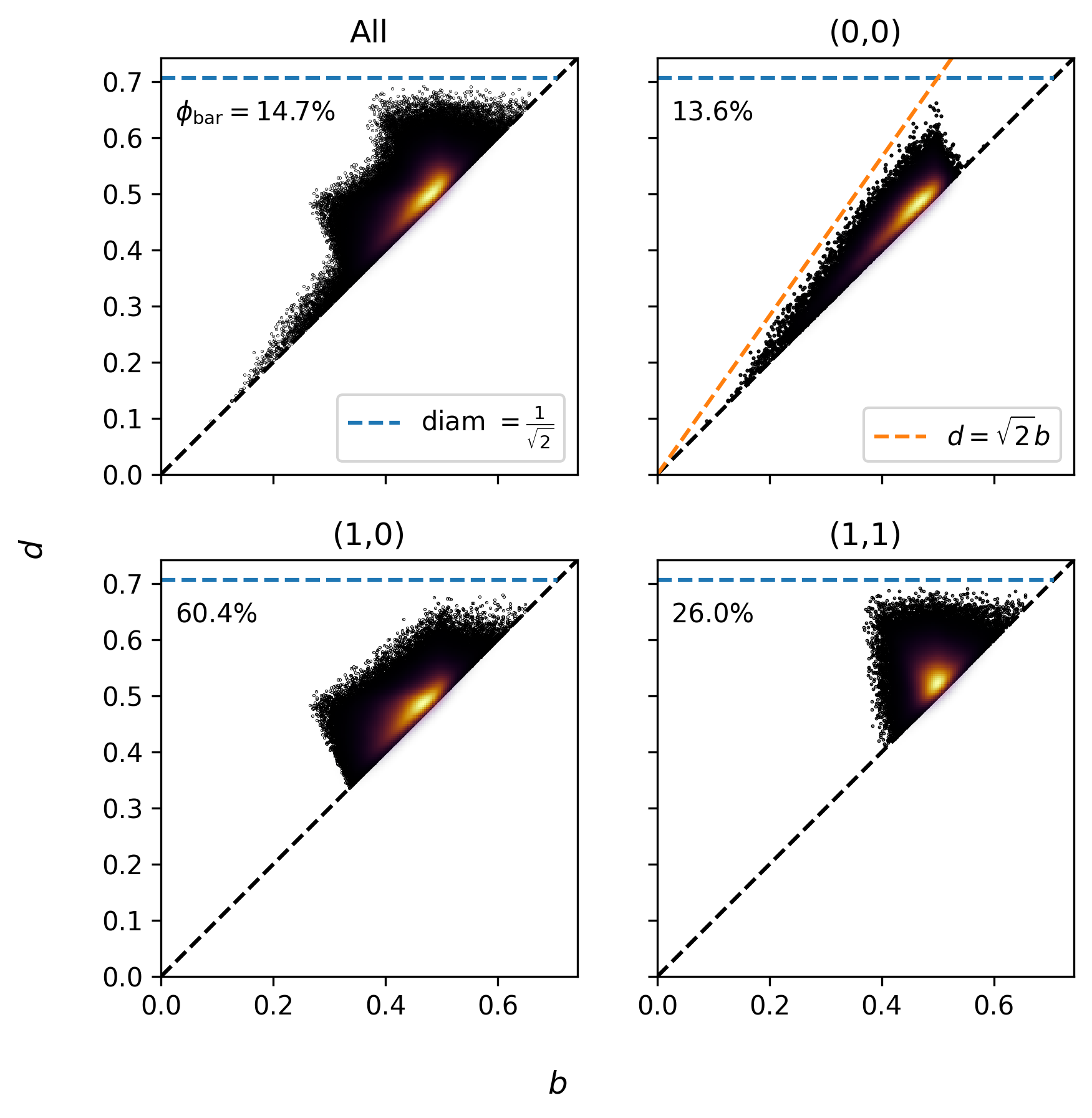}
    \caption{Empirical samples of the first principal Vietoris-Rips persistence measure on the flat torus. The homology classes of the four point cycles are represented by $(n,m) \in \Z \oplus \Z \cong \homol{1}{M}$. We group those $(1,0)$ and $(0,1)$ into one diagram $(1,0)$ due to the symmetry between the generators. }
    \label{fig:torus}
\end{figure}

\begin{figure}[h]
    \centering
    \includegraphics[width = 0.95\textwidth]{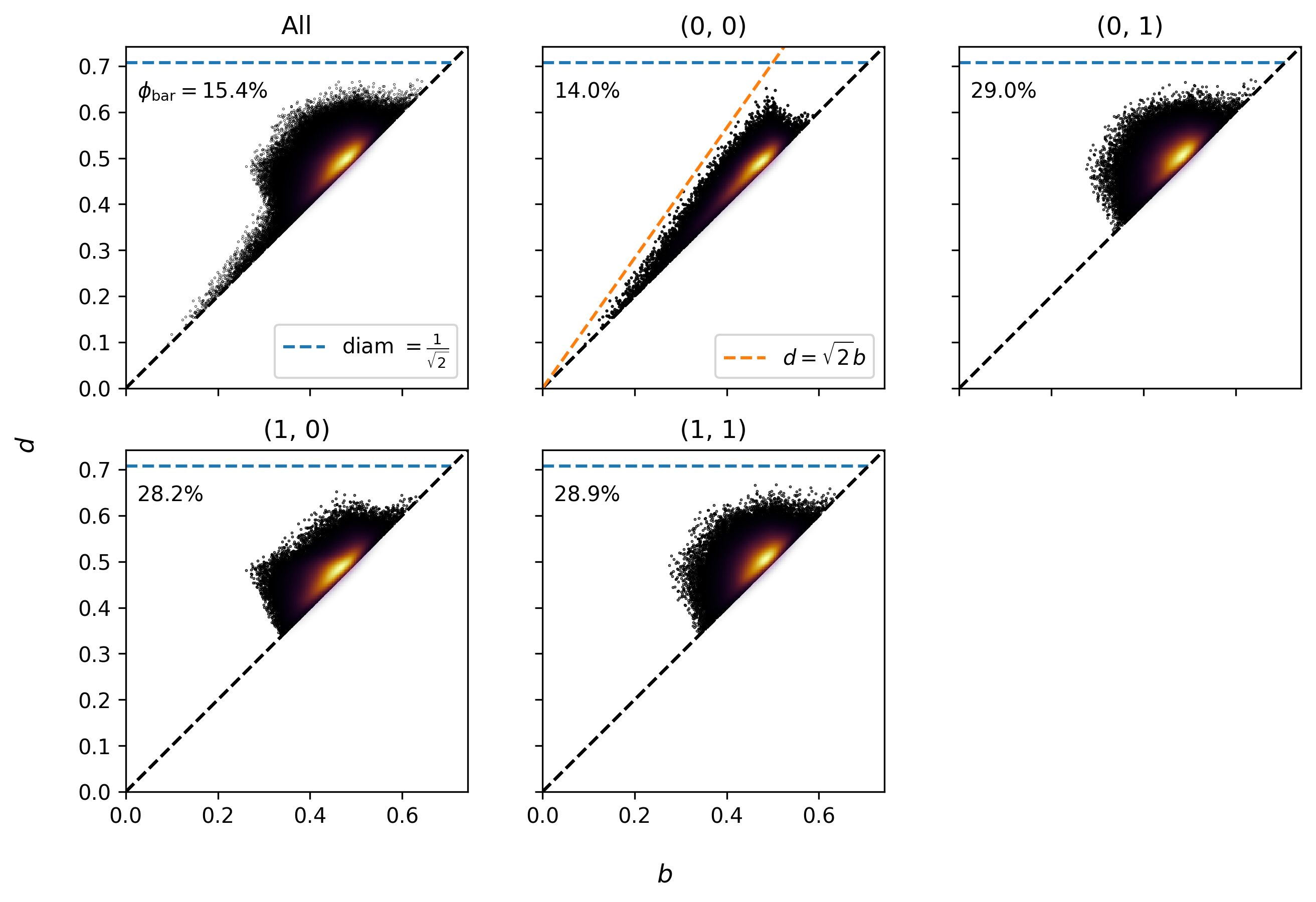}
    \caption{Empirical samples of the first principal Vietoris-Rips persistence measure on the flat Klein bottle. The homology classes of the four point cycles are represented by $(n,m) \in \Z \oplus \Z_2 \cong \homol{1}{M}$. }
    \label{fig:klein}
\end{figure}

\begin{figure}[h]
    \centering
    \includegraphics[width = 0.95\textwidth]{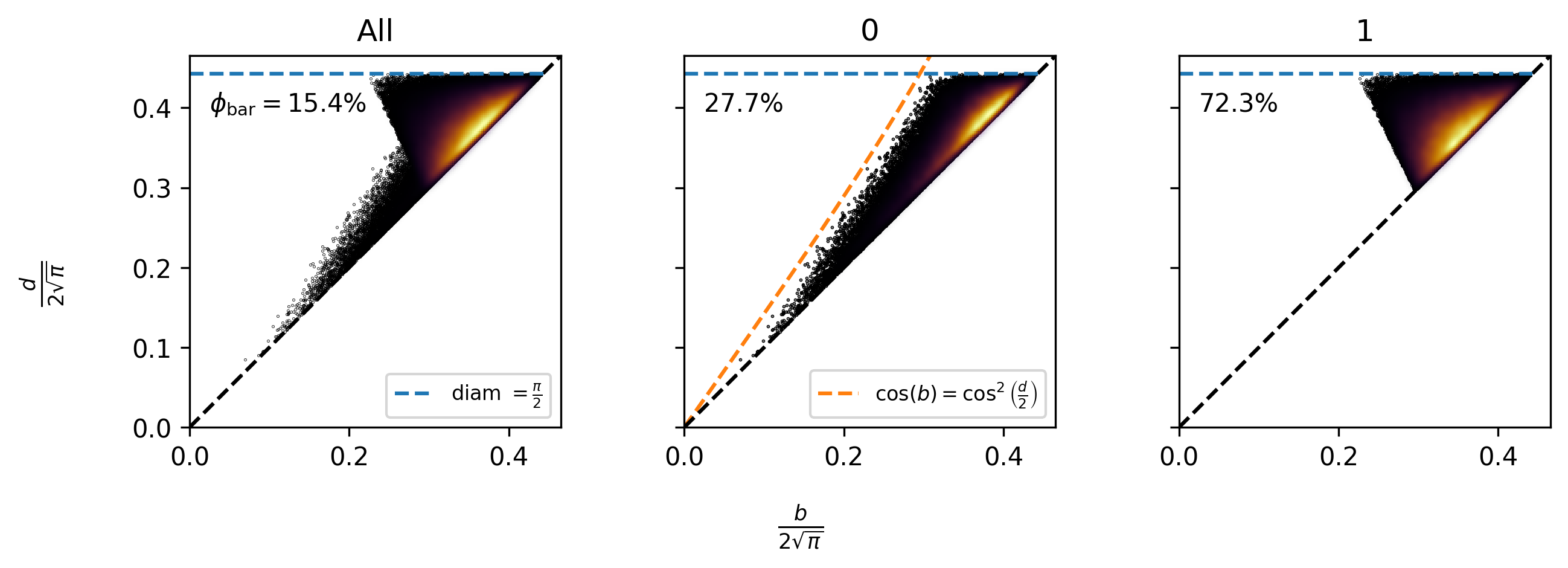}
    \caption{Empirical samples of the first principal Vietoris-Rips persistence measure on $\RP^2$. The homology classes of the four point cycles are represented by $a \in \Z_2 \cong \homol{1}{M}$. }
    \label{fig:rp2}
\end{figure}

\begin{figure}[h]
    \centering
    \includegraphics[width = 0.95\textwidth]{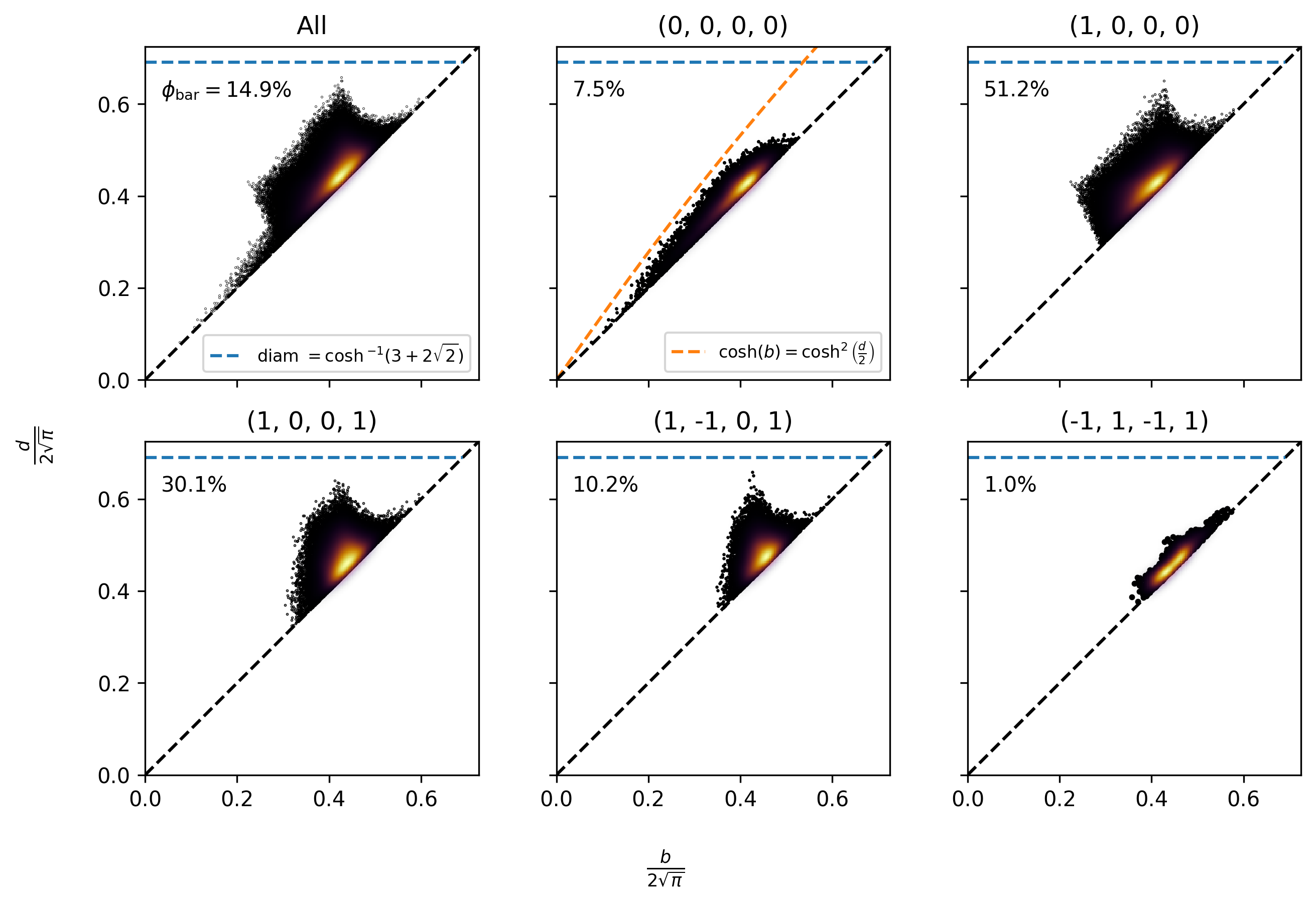}
    \caption{Empirical samples of the first principal Vietoris-Rips persistence measure on the orientable surface with genus two. The homology classes of the four point cycles are represented by $(n_0,\ldots, n_3) \in \Z^4  \cong \homol{1}{M}$. Due to symmetries betwen the the four generators, the homology class indicated in the diagrams represent classes up to permutation of $(n_0,\ldots, n_3)$. }
    \label{fig:g2}
\end{figure}

\subsection*{Acknowledgements}
The author would like to thank John Harvey for many fruitful and productive discussions throughout his work on this subject. The author is supported by a UKRI Future Leaders Fellowship [grant number MR/W01176X/1; PI: J. Harvey].

\bibliographystyle{plain}
\bibliography{references}
\appendix
\section{Carriers}\label{app:carrier}
Let $K$ be a simplicial complex, and $X$ be a topological space. An assignment $\sigma \mapsto C(\sigma) \subset X$ of simplices to subsets of $X$ is a \emph{carrier} if $C(\sigma) \subseteq C(\tau)$ for all $\sigma \subseteq \tau$ in $K$. That is, if a simplex $\sigma$ is in the face of a simplex $\tau$, then the subset assigned to $\tau$ by a carrier $C$ must contain that of $\tau$. Of particular interest are continuous maps $f: |K| \to X$ carried by $C$. We say a map $f$ is carried by $C$ if $f(|\sigma|) \subseteq C(\sigma)$. 

In particular, if we have a cover $\cU$ of space $X$, then the assignment of $\sigma \in \nerve{\cU}$ to the subset  $\sigma \mapsto \bigcup_{i \in \sigma} U_i$ is a carrier of the nerve in $X$.

Recall we say $\gamma: I\to X$ is carried by an edge path $\upsilon: I_m \to \nerve{\cU}$, if the assignment of vertex element $k$ to $U_{\nu(k)}$ carries $\gamma$. 

\pathcarrier*
\begin{proof}
    This follows from Lebesgue's number lemma. Since $\cV = \sett{\fibre{\gamma}{\gamma \cap U_i}}_{i \in \cI}$ is an open cover of the interval, there exists a small enough $\delta > 0$ such that intervals of length $\delta$ are contained in some member of $\cV$. Choose positive natural number $m > 2/\delta$ and consider open ball of radii $\delta/2$ around evenly spaced points $0, 1/m, \ldots, 1-1/m, 1$. Since we have chosen $m$ to be sufficiently large, the collection of balls cover the interval, and each ball is point is contained in one element of $\cV$. Let $\upsilon: \{0,\ldots, m\} \to \cI$ be a map such that $\upsilon(k)$ is the index of a cover element such that $\fibre{\gamma}{\gamma \cap U_{\upsilon(k)}}$ contains the open ball around $k/m$. Since $\delta/2$ is greater than the separation $1/m$ between the successive centres of the balls, $U_{\upsilon(k)} \cap U_{\upsilon(k+1)}$ is non-empty, and thus $\upsilon(k)\upsilon(k+1)$ is an edge in $\nerve{\cU}$ and $\upsilon(0)\cdots \upsilon(m)$ is an edge path on $\nerve{\cU}$. Moreover, by construction, $\gamma((\frac{k}{m}, \frac{k+1}{m}))$ is contained in $U_{\upsilon(k)}$ and $U_{\upsilon(k+1)}$, and $ \fun{\gamma}{\frac{k}{m}} \in U_{\upsilon(k)}$. Thus the edge path  $\upsilon(0)\cdots \upsilon(m)$ carries $\gamma$. If $\upsilon(0) \neq i$ or $\upsilon(m) \neq j$, consider the sequence of vertices  $i_0 \cdots i_{m+2}$ where $i_0 = i$, $i_{m+2} =j$, and $i_k = \upsilon(k-1)$ for $k = 1,\ldots, m+1$. Since $U_{\upsilon(0)} \cap U_i \ni \gamma(0)$ and $U_{\upsilon(m)} \cap U_j \ni \gamma(1)$, the sequence $i_0 \cdots i_{m+2}$ is an edge path. If we reparametrise the path $\gamma$ by a piecewise linear function 
    \begin{equation}
    a: t \mapsto 
        \begin{cases}
            0               & t \in [0, \frac{1}{m+2}] \\
            \frac{m+2}{m} t-\frac{1}{m} & t \in [\frac{1}{m+2},\frac{m+1}{m+2}] \\
            1               & t \in [\frac{m+1}{m+2}, 1]
        \end{cases},
    \end{equation}
    then the edge path satisfy the conditions for the edge path to carry $\gamma$ as stipulated in \Cref{def:pathcarrier}. By construction, for $k = 1,\ldots, m+1$ $\gamma \circ a(\frac{k}{m+2}) = \gamma(\frac{k-1}{m}) \in U_{\upsilon(k-1)} = U_{i_k}$ , and  $\gamma\circ a((\frac{k}{m+2}, \frac{k+1}{m+2})) = \gamma((\frac{k-1}{m}, \frac{k}{m})) \subset U_{\upsilon(k-1)} \cup U_{\upsilon(k)}   = U_{i_k} \cup U_{i_{k+1}}$; while for $k=0$ or $k = m+2$, we have $\gamma \circ a([0, \frac{1}{m+2})) = \gamma(0) \in U_i = U_{i_0}$, and $\gamma \circ a((\frac{m+1}{m+2},1]) = \gamma(1) \in U_j = U_{i_{m+2}}$.
\end{proof}

\begin{lemma} \label{lem:hmtpy_implies_ehmtpy}
    Let $\cU = \{ U_i \}_{i \in \cI}$ be an open cover of a space $X$. Suppose two paths $\gamma, \gamma' : I \to X$ from $x$ to $y$ are carried by edge paths $\eta, \eta'$ on the nerve of the cover from $i$ to $j$ respectively. If $[\gamma]
    = [\gamma'] \in \Fgpd{X}(x,y)$, then $[\eta] = [\eta'] \in \Egpd{\nerve{\cU}}(i,j)$.
\end{lemma}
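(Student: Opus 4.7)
The plan is to apply Lebesgue's number lemma to a homotopy $H \colon I \times I \to X$ from $\gamma$ to $\gamma'$ rel endpoints, subdivide $I \times I$ into a fine uniform grid of squares each carried by a single cover element, and use the rows of the grid to build a sequence of edge paths interpolating between $\eta$ and $\eta'$ with consecutive members related by elementary moves.

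Concretely, I would choose $N$ large enough so that each closed square $R_{k,\ell} = [k/N,(k+1)/N] \times [\ell/N,(\ell+1)/N]$ satisfies $H(R_{k,\ell}) \subseteq U_{\alpha_{k,\ell}}$ for some index $\alpha_{k,\ell}$, and so that the grid also refines the parametrizations of $\eta$ and $\eta'$. To each grid vertex $(k/N,\ell/N)$ I would assign a nerve vertex $v_{k,\ell}$ with $H(k/N,\ell/N) \in U_{v_{k,\ell}}$, taking $v_{0,\ell} = i$ and $v_{N,\ell} = j$ on the vertical boundaries (valid because $H(0,t) = x$ and $H(1,t) = y$ are constant), and aligning the top and bottom rows with suitably refined vertex sequences of $\eta$ and $\eta'$. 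Since an edge path can always be refined by inserting repeated vertices via degenerate elementary expansions without changing its edge-homotopy class, this preserves $[\eta]$ and $[\eta']$. For each $\ell$, the sequence $\eta_\ell := v_{0,\ell} v_{1,\ell} \cdots v_{N,\ell}$ is then an edge path in $\nerve{\cU}$ carrying the intermediate horizontal slice $\gamma_\ell := H(\cdot, \ell/N)$.

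The central step is showing that $\eta_\ell$ and $\eta_{\ell+1}$ are edge-homotopic. The key observation is that for each rectangle $R_{k,\ell}$, the four corner indices $v_{k,\ell}, v_{k+1,\ell}, v_{k,\ell+1}, v_{k+1,\ell+1}$ span a (possibly degenerate) simplex in $\nerve{\cU}$, because their four cover elements all contain the shared rectangle corner $H((k+1)/N,(\ell+1)/N)$. All $2$-simplices sitting inside this $3$-simplex are therefore available as elementary moves. I would then build a chain of elementary expansions and contractions that, rectangle by rectangle, replaces the bottom row of $\eta_\ell$ by the top row of $\eta_{\ell+1}$: at each $k$, use two $2$-simplices inside $R_{k,\ell}$ to expand a segment of $\eta_\ell$ into a detour through $v_{k,\ell+1}$ and $v_{k+1,\ell+1}$, and then cancel the redundant occurrence of $v_{k+1,\ell}$ against its copy appearing in the next rectangle. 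Iterating yields the chain $[\eta] = [\eta_0] = \cdots = [\eta_N] = [\eta']$ in $\Egpd{\nerve{\cU}}(i,j)$.

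The hardest part will be the boundary alignment: forcing $\eta_0$ to coincide with the refined $\eta$ literally (and similarly $\eta_N = \eta'$), rather than merely up to edge-homotopy. The grid-derived choice $v_{k,0} = \alpha_{k,0}$ generally does not equal the given $\eta(k)$, and the simplex conditions on the boundary rectangles $R_{k,0}$ must still hold once $v_{k,0} := \eta(k)$ is substituted in. I expect this can be arranged directly from the carrier condition on $\eta$ combined with $H$'s control on $R_{k,0}$. Failing that, the cleaner fallback is a preliminary sub-lemma --- any two edge paths between the same endpoints that both carry the same path $\gamma$ are edge-homotopic --- provable by an analogous but one-dimensional Lebesgue argument applied to $\gamma$ alone. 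Either route closes the gap between $\eta$ and $\eta_0$ and between $\eta_N$ and $\eta'$, completing the proof.
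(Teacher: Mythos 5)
Your overall strategy is the same as the paper's proof: take a homotopy rel end points, use Lebesgue's number lemma to subdivide $I \times I$ into a grid whose cells map into single cover elements, read an edge path off each horizontal level, pass between consecutive levels by elementary moves supported on simplices of $\nerve{\cU}$, and then align the boundary rows with $\eta$, $\eta'$ and the boundary columns with $i$, $j$. The paper packages the middle step by choosing a Freudenthal triangulation $L$ of the square together with a vertex labelling $\upsilon$ satisfying the star condition $F(\opstar{u}) \subseteq U_{\upsilon(u)}$, so that $\upsilon$ is a simplicial map $L \to \nerve{\cU}$ and the evident edge homotopy inside $L$ pushes forward; your rectangle-by-rectangle expansions and contractions are the hands-on version of exactly that.

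Two points do not work as literally written. First, the central simplex claim does not follow from the condition you impose on the labels: requiring only $H(k/N,\ell/N) \in U_{v_{k,\ell}}$ gives no control away from that single point, so there is no reason the four sets $U_{v_{k,\ell}}, U_{v_{k+1,\ell}}, U_{v_{k,\ell+1}}, U_{v_{k+1,\ell+1}}$ should share the corner $H((k+1)/N,(\ell+1)/N)$. You need the labels to be chosen from the Lebesgue data, i.e. $U_{v_{k,\ell}}$ must contain the image of every grid cell incident to $(k/N,\ell/N)$ (equivalently set $v_{k,\ell}=\alpha_{k,\ell}$ with shifted indexing) --- this is precisely the paper's star condition, and it is also what makes the forced boundary choices $v_{0,\ell}=i$, $v_{N,\ell}=j$ compatible with the simplex claim on the outermost rectangles (there the common point is $x$, respectively $y$, and one must take $N$ large enough that the outermost columns map into $U_i$, $U_j$, as in the paper's $\delta_0,\delta_1$ argument). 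Second, the boundary alignment you defer is where most of the real work lies, and your fallback sub-lemma --- two edge paths carrying the same path are edge homotopic --- is exactly the special case $\gamma=\gamma'$ of the lemma you are proving (take the constant homotopy), so it cannot be invoked as a known fact; it is provable, but not by a bare common-refinement argument, since the carrier condition only constrains images into unions $U_{\eta(k)} \cup U_{\eta(k+1)}$, and one must extract, by connectedness of subintervals, points lying in the intersection of consecutive cover elements to build the required ladder of nerve simplices (the same care the two-dimensional argument needs). With the strengthened labelling and the boundary refinement of $\eta$, $\eta'$ carried out as in the paper, your argument closes.
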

\begin{proof}
    Let $F: I \times I \to X$ be the homotopy from $\gamma(t) = F(t,0)$ to $\gamma'(t) = F(t,1)$. We consider the pullback of the open cover $\cV = \fibre{F}{\cU}$ on $X$. Consider the Freudenthal triangulation $L$ of $I \times I$ on an $n \times n$ square lattice of vertices on $I \times I$. In the proof below, we show that for a sufficiently large $n$, there exists an assignment $\upsilon: L_0 \to \cI$, such that:
    \begin{itemize}
        \item For every vertex $u \in L_0$, the image of its star is contained in some element of the cover $F(\opstar{u}) \subset U_{\upsilon(u)}$; in particular,
        \item For the vertices along the sides of $\{0\} \times I$ and $\{1\} \times I$ of the square where $F(0,\bullet) = x$ and $F(1,\bullet) = y$, $\upsilon$ assigns them to $i$ and $j$ respectively; and 
        \item The assignment $\upsilon$ restricted to the top and bottom edges are simplicial maps from $I_n \to \nerve{\cU}$ whose images are $\eta$ and $\eta'$ respectively.
    \end{itemize}
    We first explore the consequences of the existence of $L$, $\upsilon$ satisfying the conditions above before proving that it indeed exists. If the first condition is satisfied, then $\upsilon$ defines a simplicial map $\upsilon: L \to \nerve{\cU}$. Consider some simplex $\sigma \in L$. Because  $F(\opstar{u}) \subset U_{\upsilon(u)}$, the image of the simplex $F(|\sigma|) \in \bigcap_{u \in \sigma} U_{\upsilon(u)}$ is in the intersections of the cover elements assigned to contain the stars. Thus $\bigcap_{u \in \sigma} U_{\upsilon(u)} \neq \emptyset$, and $\upsilon$ sends subsets of vertices spanning a simplex in $L$ to subsets of vertices spanning a simplex in $\cN(\cU)$. In other words, $\upsilon$ defines a simplicial map.

    Furthermore, if the latter two conditions are satisfied, then the edge homotopy between the edge paths from $(0,0)$ to $(1,0)$ along the bottom edge $I \times \{0\}$ and the edge path along the other side of the boundary $\{0\} \times I$,$I \times \{1\}$,$\{1\} \times I$, in the Freudenthal triangulation is mapped to an edge homotopy between $\eta$ and $\eta'$ in $\nerve{\cU}$, as simplicial maps take edge homotopies to edge homotopies. This proves the claim of this lemma. 

    We now show that $\upsilon, L$ satisfying the conditions above exists for sufficiently large resolution $n$ of the Freudenthal triangulation. This is essentially a variation of Lebesgue's lemma to cater for the specific boundary conditions we face in tour special case. We recycle and adapt elements of the proof of Lebesgue's number theorem as presented in \cite[Lemma 27.5]{Munkres1974TopologyMunkres}. Applying Lebesgue lemma there is a $\delta > 0$ such that each subset of $I\times I$ with diameter less than $\delta$ is contained in some cover element of $\cV$. We now require the Freudenthal triangulation to be sufficiently fine (i.e. $n$ sufficiently large), such that diameters of stars are less than such a $\delta$, and also satisfy the size constraints set by the boundary conditions listed above.
    
    We now consider the conditions on vertex stars on the sides of $\{0\} \times I$ and $\{1\} \times I$ of the square. We consider the left hand side; without loss of generality the same argument applies for the right hand side as well. Recall  $F(0, \bullet) = x$, and $x \in U_i$ since an edge path starting from $i$ carries $F(t,0)$. Let $V_i = \fibre{F}{U_i}$, which contains $\{0\} \times I$. Consider the distance function to the complement of $V_i$ in $I \times I$, and restrict it to $\{0\} \times I$. Since $V_i$ is an open neighbourhood of $\{0\} \times I$, such a function $d$ is a positive function on $I$ and attains a positive minimum $\delta_0 > 0$. Let $\delta_1 > 0$ be similarly defined for $\{1\} \times I$, where it is the minimum of the distance of $\{1\} \times I$ to the complement of $V_j = \fibre{F}{U_j}$. If we take $n$ to be sufficiently large so that $n > \max 1+(1/\delta_0,1/\delta_1)$, then the stars of vertices along the sides $\{0\} \times I$ and $\{1\} \times I$ are respectively contained in $V_i$ and $V_j$. In other words, $F$ sends the stars of vertices along the left and right hand sides of $I \times I$ to $U_i$ and $U_j$ respectively. We can set $\upsilon$ on such vertices on the left and right hand side to $i$ and $j$ respectively. 

    Now we show that with a sufficiently large $n$ we can have an assignment $\upsilon$ to vertices on the top and bottom sides of $I \times I$, such that $\upsilon$ maps those paths onto $\eta'$ and $\eta$, while having stars satisfying the containment condition $F(\opstar{u}) \subset U_{\upsilon(u)}$. For simplicity we focus on $I \times \{0\}$ as the argument for the other side is the same. Recall the definition of $\eta$ carrying $F(\bullet, 0)$, which implies we can express $I \times \{0\}$ is the geometric realisation of a path graph with $m+1$ vertices, such that there is some sequence of cover elements $i_0, \ldots, i_{m}$ such that $F([k/m, (k+1)/m],0)$ is covered by $\{U_{\eta(k)}, U_{\eta(k+1)}\}$, and $F(k/m,0) \in U_{i_k}$. Applying Lebesgue's lemma, each interval $[k/{m}, (k+1)/m]$ can be further triangulated by a finite path graph with sufficiently many $n_k+1$ vertices, such that the star of each vertex is mapped within $U_{\eta(k)}$ or $U_{\eta(k+1)}$ by $F(\cdot, 0)$. Furthermore, from the definition of $\eta$ carrying $F(\cdot, 0)$, the star of the vertices at each end point $k/m$ and $(k+1)/m$ is mapped with  $U_{\eta(k)}$ and $U_{\eta(k+1)}$ respectively. Taking $l$ to be the maximum over all $n_k$'s for $k= 0,\ldots, m$, we can triangulate $I \times \{0\}$ by a path graph with $ml + 1$ vertices such that every $kl$\textsuperscript{th} vertex has the image of its star contained in $U_{\upsilon(k)}$; while vertices between the $kl$\textsuperscript{th} and $(k+1)l$\textsuperscript{th} has the image of their stars contained in either $U_{\upsilon(k)}$ or $U_{\upsilon(k+1)}$. Thus, for any $l$ sufficiently large, we have a simplicial map from the path graph with $ml + 1$ vertices to $\nerve{\cU}$, such that the image of this map is edge homotopic to $\eta$ itself. 

    Similarly, for any sufficiently large $l'$ , the path $F(\cdot, 1)$ is realised by a finite path graph with $m'l'+1$ vertices, and there is a simplicial map from the path graph to $\nerve{\cU}$ whose image is edge homotopic to $\eta'$. If we choose a sufficiently large $l'$ as a multiple of $m$, and $l$ as a multiple of $m'$, we can set $n-1 = m'l' = ml$ for the grid size of the Freudenthal triangulation $L$, such that for vertices $u \in L$, we have an assignment $\upsilon: L_0 \to \cI$ satisfying the bullet pointed conditions above. 
\end{proof}

\begin{proposition}\label{prop:def_Sgh}
    Let $X$ be a space with an open cover $\cU = \{U_i \}_{i \in \cI}$. Given a map $s: X \to \cI$ satisfying $x \in U_{s(x)}$, we have a unique, well-defined \emph{snapping} groupoid homomorphism $S: \Fgpd{X} \to \Egpd{\nerve{\cU}}$, where $[\gamma] \in \Fgpd{X}(x,y)$ is sent to a class of edge paths $[\eta] \in \Egpd{\nerve{\cU}}(s(x), s(y))$, represented by some edge path $\eta$ carrying $\gamma$.
\end{proposition}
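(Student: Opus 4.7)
The plan is to verify the three requirements in turn: (1) that the assignment on morphisms is well-defined (existence of a carrier and independence of the choice of representative path and of the choice of carrying edge path), (2) that it respects identities and composition, and (3) that it is uniquely determined by $s$.

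For existence on a single class $[\gamma] \in \Fgpd{X}(x,y)$, I would invoke \Cref{lem:pathcarrier} directly: since $x \in U_{s(x)}$ and $y \in U_{s(y)}$, every representative $\gamma$ admits a carrier $\eta$ on $\nerve{\cU}$ from $s(x)$ to $s(y)$. For independence of the representative, suppose $\gamma \simeq \gamma'$ rel endpoints and let $\eta,\eta'$ be carriers from $s(x)$ to $s(y)$. Then \Cref{lem:hmtpy_implies_ehmtpy} yields $[\eta] = [\eta'] \in \Egpd{\nerve{\cU}}(s(x), s(y))$. In particular this also handles the case $\gamma = \gamma'$, showing that any two carrying edge paths with the prescribed endpoints are edge-homotopic. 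So the assignment $[\gamma] \mapsto [\eta]$ is well-defined.

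Next I would verify functoriality. For the constant path $c_x$ at $x$, the length-zero edge path $s(x)$ obviously carries it (the conditions in \Cref{def:pathcarrier} are vacuous), so $S([c_x]) = \iden_{s(x)}$. For composition, let $[\gamma_1] \in \Fgpd{X}(x,y)$ and $[\gamma_2] \in \Fgpd{X}(y,z)$, and pick carriers $\eta_1: s(x) \to s(y)$ of $\gamma_1$ and $\eta_2: s(y) \to s(z)$ of $\gamma_2$. The concatenation $\eta_1 \cdot \eta_2$ is an edge path from $s(x)$ to $s(z)$, and after the standard reparametrization of $\gamma_1 \ast \gamma_2$ on $[0,\tfrac12]$ and $[\tfrac12,1]$, the carrying conditions of \Cref{def:pathcarrier} hold segment-by-segment on each half (using the endpoint condition $\fun{\gamma_i}{k/m_i} \in U_{\eta_i(k)}$ at the meeting vertex $s(y)$). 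Hence $\eta_1 \cdot \eta_2$ carries $\gamma_1 \ast \gamma_2$, and by well-definedness $S([\gamma_1 \ast \gamma_2]) = [\eta_1 \cdot \eta_2] = S([\gamma_1]) \ast S([\gamma_2])$. For the inverse one simply reverses the edge path, and this also follows from the groupoid structure once identities and composition are established.

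Uniqueness is immediate: any groupoid homomorphism $S'$ whose object map is $s$ and whose morphism map sends $[\gamma]$ to the class of an edge path carrying $\gamma$ must coincide with $S$ on every morphism by definition, and well-definedness forces the value to be independent of the choice of carrier. The only step with real content is the well-definedness under homotopy of paths, which is exactly the force of \Cref{lem:hmtpy_implies_ehmtpy}; everything else is a direct unpacking of definitions. I therefore expect the main obstacle is entirely already handled by that lemma, and the proof itself is essentially bookkeeping built around it.
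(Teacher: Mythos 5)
Your proposal is correct and follows essentially the same route as the paper's proof: existence of a carrying edge path via \Cref{lem:pathcarrier}, well-definedness on homotopy classes via \Cref{lem:hmtpy_implies_ehmtpy}, and then the direct check that constant paths go to constant edge paths and that concatenated edge paths carry concatenated paths, with uniqueness immediate from the definition. The extra remark about reparametrising $\gamma_1 \ast \gamma_2$ is a detail the paper glosses over, but it is the same argument.
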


\begin{proof}
    \Cref{lem:pathcarrier} shows that every path from $x$ to $y$ is carried by some edge path on the nerve from $s(x)$ to $s(y)$. \Cref{lem:hmtpy_implies_ehmtpy} shows that an assignment of paths between $x$ and $y$ to edge paths between $s(x)$ and $s(y)$ that carry them defines a unique map of homotopy classes of paths in $\Fgpd{X}(x,y)$ to homotopy classes of edge paths in $\Egpd{\nerve{\cU}}(s(x), s(y))$. We verify the conditions for this assignment to be a groupoid homomorphism. First, the constant path at any point $x$ is carried by a constant edge path at $s(x)$, thus the identity element of the fundamental group is sent to the edge group. Consider then the concatenation of two paths $\gamma$ and $\gamma'$ from $x$ to $y$ and $y$ to $z$ respectively, where they are carried  by edge paths $\eta$ and $\eta'$ from $r(x)$ to $r(y)$ and $r(y)$ to $r(z)$ respectively. Since the composition of edge paths $\eta$ and $\eta'$ is itself an edge path that carries that concatenation of $\gamma$ and $\gamma'$, the assignment respects the composition of morphisms in the respective groupoids. Thus, $S$ is a groupoid homomorphism uniquely determined by $s$.
\end{proof}

We show that there is a groupoid homomorphism $R: \Egpd{\nerve{\cU}} \to \Fgpd{X}$ -- the realisation of homotopy classes of edge paths as homotopy classes of continuous paths in $X$ -- that is adjoint to the snapping 
$S$, if we assume $\cU$ is a good cover. We recall the carrier lemma as it appears in Vidit Nanda's lecture notes (?).

\begin{lemma}[Carrier]\label{lem:carrier} Let $K$ be a simplicial complex. Suppose $C$ is a carrier of $K$. If $C(\sigma)$ is contractible for each $\sigma \in K$, then:
\begin{enumerate}[label=(\alph*)]
    \item \label{lem:carrier_b} There exists a continuous map $f: |K| \to X$ carried by $C$;
    \item \label{lem:carrier_a} Any two maps $f,g: |K| \to X$ that are both carried by $C$ are homotopic;
    \item We can choose a homotopy $F: |K| \times [0,1] \to X$ between $f$ and $g$ such that $F_t = F(\bullet, t): |K| \to X$ is also carried by $C$ for all $t \in [0,1]$
\end{enumerate}
\end{lemma}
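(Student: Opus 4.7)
The plan is to establish all three parts by induction on the dimension of skeleta of $K$, bootstrapping the same extension technique throughout. The core observation is that if $C(\sigma)$ is contractible, then any continuous map from a sphere into $C(\sigma)$ extends over the bounding disc; I will make this explicit using cone formulas built from a chosen contracting homotopy $h_\sigma : C(\sigma) \times I \to C(\sigma)$ satisfying $h_\sigma(\cdot,0) = \iden$ and $h_\sigma(\cdot, 1) \equiv p_\sigma$ for some basepoint.

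For \ref{lem:carrier_b}, I will first choose for every vertex $v$ a point $f(v) \in C(v)$, which is possible since $C(v)$ is contractible and in particular nonempty. Assuming $f$ has been defined on $|K^{(n-1)}|$ carried by $C$, I take an $n$-simplex $\sigma$ and note that the carrier condition forces $f(|\partial\sigma|) \subseteq \bigcup_{\tau \subsetneq \sigma} C(\tau) \subseteq C(\sigma)$. Parametrising $|\sigma|$ as the cone on $|\partial\sigma|$ with apex at the barycenter $b_\sigma$, I extend by
\[
    f\bigl(t\, b_\sigma + (1-t) v\bigr) := h_\sigma\bigl(f(v), t\bigr), \qquad v \in |\partial\sigma|,\ t \in [0,1],
\]
whose image lies in $C(\sigma)$. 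Continuity on all of $|K|$ then follows from the weak topology of the geometric realisation.

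For \ref{lem:carrier_a}, I will construct $F : |K| \times I \to X$ satisfying the stronger condition $F(|\sigma| \times I) \subseteq C(\sigma)$ for every simplex $\sigma$, which immediately implies the carried-at-each-time statement of (c). On vertices, $f(v)$ and $g(v)$ lie in the path-connected set $C(v)$, so pick a path $\alpha_v : I \to C(v)$ between them and set $F(v, t) = \alpha_v(t)$. Inductively, suppose $F$ has been defined on $\bigl(|K^{(n-1)}| \times I\bigr) \cup \bigl(|K^{(n)}| \times \{0,1\}\bigr)$ compatibly with the carrier. For an $n$-simplex $\sigma$, the boundary
\[
    \partial\bigl(|\sigma| \times I\bigr) = \bigl(|\partial\sigma| \times I\bigr) \cup \bigl(|\sigma| \times \{0,1\}\bigr)
\]
is homeomorphic to $S^n$ and is mapped by $F$ into $C(\sigma)$. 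I then extend over the $(n{+}1)$-ball $|\sigma| \times I$ by an entirely analogous cone formula, using a cone point in $|\sigma| \times I$ (for instance $(b_\sigma, 1/2)$) and the contraction $h_\sigma$ to push the boundary data inward.

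The main technical content of the argument is the boundary-to-interior extension step, and the only potential obstacle is verifying that the resulting cone-formula extensions agree continuously across simplices that share faces. This is, however, automatic from the construction: the extension of (a) on $|\sigma|$ restricts on $|\partial\sigma|$ to the inductively defined map, and the same holds in (b)--(c) by the way the cone is taken. Since the geometric realisation $|K|$ carries the weak topology, continuity on each closed simplex suffices, so no additional argument is required at limits of the induction.
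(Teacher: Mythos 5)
Your argument is correct, and it is the standard proof of this result. Note that the paper itself does not prove \Cref{lem:carrier}: it is recalled from the cited lecture notes, so there is no in-paper argument to compare against; your skeletal induction with cone extensions into the contractible sets $C(\sigma)$ is exactly the canonical argument underlying the quoted statement. The key steps all check out: the carrier condition gives $f(|\partial\sigma|)\subseteq C(\sigma)$ (and likewise maps $\partial(|\sigma|\times I)\subseteq C(\sigma)$ in parts (b)--(c)), the cone formula $h_\sigma(f(v),t)$ is constant at the apex and hence descends continuously to the cone, and restriction to the boundary reproduces the inductively defined data, so the extensions glue across shared faces. The only point you gloss is continuity of the homotopy in (b)--(c): you need that $|K|\times I$ carries the weak topology determined by the subspaces $|\sigma|\times I$, which holds because $I$ is (locally) compact; for the finite complexes used in the paper this is immediate, and in general it is a standard fact, so it is a presentational gap rather than a mathematical one.
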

For the carrier of the nerve $\nerve{\cU}$ given by $C(\sigma) = \bigcup_{i \in \sigma} U_i$, the nerve lemma that is a consequence of the carrier lemma ensures that $C$ is contractible. 

\begin{lemma}[Nerve lemma] If $\cU$ is a good cover of a space $X$, then $|\nerve{\cU}| \simeq X$.\end{lemma}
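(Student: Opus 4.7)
The plan is to apply the carrier lemma to construct maps $\phi : |\nerve{\cU}| \to X$ and $\psi : X \to |\nerve{\cU}|$ in opposite directions, and then to verify that they are mutually homotopy inverse by exhibiting both of their compositions as maps carried by a common contractible carrier and invoking the carrier lemma's uniqueness up to homotopy.

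To construct $\phi$, I would pass to the barycentric subdivision $K = \mathrm{Bsd}(\nerve{\cU})$, whose realisation is canonically homeomorphic to $|\nerve{\cU}|$ and whose simplices are chains $\hat{\sigma}_0 \subsetneq \cdots \subsetneq \hat{\sigma}_\ell$ of simplices in $\nerve{\cU}$. Define a carrier on $K$ by
\[
C\bigl(\hat{\sigma}_0 \subsetneq \cdots \subsetneq \hat{\sigma}_\ell\bigr) \;=\; \bigcap_{i \in \sigma_0} U_i,
\]
that is, the intersection associated to the smallest chain element. Monotonicity holds because removing the minimal vertex replaces $\sigma_0$ by some strictly larger $\sigma_j$, whose intersection is a subset of $\bigcap_{i \in \sigma_0} U_i$, while removing any other vertex leaves $\sigma_0$ unchanged. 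Each carrier value is a non-empty finite intersection of cover elements, hence contractible by the good cover hypothesis. Part (a) of the carrier lemma then produces a continuous $\phi : |\nerve{\cU}| \cong |K| \to X$ carried by $C$.

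For $\psi$, I would assume $X$ is paracompact Hausdorff (the standard hypothesis needed in Leray's nerve theorem to obtain partitions of unity subordinate to open covers) and fix a partition of unity $\{\lambda_i\}_{i \in \cI}$ subordinate to $\cU$. The map $\psi(x) := \sum_i \lambda_i(x)\, e_i$, where $e_i$ is the vertex of $|\nerve{\cU}|$ indexing $U_i$, lands in $|\nerve{\cU}|$ because the support of the sum at each $x$ is contained in $\{i : x \in U_i\}$, which spans a simplex of the nerve.

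The key step is verifying $\psi \circ \phi \simeq \iden_{|\nerve{\cU}|}$ and $\phi \circ \psi \simeq \iden_X$. For the first, both self-maps of $|\nerve{\cU}|$ are carried by the carrier $E$ on $K$ defined by $E(\hat{\sigma}_0 \subsetneq \cdots \subsetneq \hat{\sigma}_\ell) = |\overline{\mathrm{St}}(\hat{\sigma}_\ell, K)|$, the closed star of the largest chain element, which is a contractible subcomplex; tracing the partition-of-unity supports shows $\psi \circ \phi$ indeed lands in each prescribed star, and then part (b) of the carrier lemma delivers the homotopy. The main obstacle is the symmetric statement $\phi \circ \psi \simeq \iden_X$, because the carrier lemma naturally applies to maps \emph{out of} a simplicial complex, whereas $X$ is not itself simplicial. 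I would resolve this by introducing the Mayer--Vietoris blowup $B(\cU) = \bigsqcup_{\sigma \in \nerve{\cU}} \bigl(\bigcap_{i \in \sigma} U_i\bigr) \times |\sigma| / \sim$ with its canonical projections $\pi_X : B(\cU) \to X$ and $\pi_\sN : B(\cU) \to |\nerve{\cU}|$; both projections are shown to be homotopy equivalences by applying the carrier lemma cell-by-cell to the contractible factors $\bigcap_{i \in \sigma} U_i$ and $|\sigma|$ respectively. Since $\phi$ and $\psi$ factor through these projections up to homotopy, the composite $X \simeq B(\cU) \simeq |\nerve{\cU}|$ gives the desired equivalence.
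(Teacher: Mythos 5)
First, a point of comparison: the paper never proves this lemma at all --- it is quoted as Leray's classical nerve theorem and used as a black box (both in the introduction and in \Cref{app:carrier}), so your attempt has to stand on its own. Your map $\phi$ is fine: the minimal-element carrier on the barycentric subdivision is monotone and contractible-valued, and adding paracompactness to obtain the partition-of-unity map $\psi$ is legitimate (some such hypothesis is in fact needed for the statement at this level of generality). The first genuine gap is the carrier you use to compare $\psi\circ\phi$ with the identity. The assignment $E(\hat{\sigma}_0\subsetneq\cdots\subsetneq\hat{\sigma}_\ell)=|\overline{\mathrm{St}}(\hat{\sigma}_\ell,K)|$ is not a carrier in the sense required by the carrier lemma: if $\nerve{\cU}$ consists of two edges $ab$ and $bc$, the vertex $\hat{b}$ of $K$ is a face of the edge $(\hat{b}\subsetneq\widehat{ab})$, yet $\overline{\mathrm{St}}(\hat{b},K)$ contains the half-edge towards $\widehat{bc}$, which is not contained in $\overline{\mathrm{St}}(\widehat{ab},K)=|ab|$, so $E(\text{face})\not\subseteq E(\text{coface})$. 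Worse, $\psi\circ\phi$ is not carried by $E$: for $x\in\bigcap_{i\in\sigma_0}U_i$ the support of $(\lambda_j(x))_j$ is only constrained to lie in $\{j: x\in U_j\}$, and if all the weight sits on some $j\notin\sigma_\ell$ then $\psi\phi$ sends part of the chain-simplex to the vertex $j$, outside the prescribed star. The step can be repaired by taking instead $E(c)=|\overline{\mathrm{St}}(\sigma_0,\nerve{\cU})|$, the closed star in the \emph{nerve} of the \emph{smallest} chain element: this is monotone (subchains have larger minimal elements and closed stars reverse that inclusion), contractible (it is the join of $\overline{\sigma_0}$ with its link), contains $|c|$, and carries $\psi\circ\phi$ because $\{j:x\in U_j\}\supseteq\sigma_0$ spans a simplex of $\nerve{\cU}$ whenever $x\in\bigcap_{i\in\sigma_0}U_i$; the carrier lemma then gives $\psi\circ\phi\simeq\iden$.

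The second gap is the other composite. You correctly observe that the carrier lemma cannot be applied to a map out of $X$, but the fallback is only a gesture: asserting that both projections of the Mayer--Vietoris blowup are homotopy equivalences ``by applying the carrier lemma cell-by-cell'' does not discharge the work. The equivalence $\pi_X:B(\cU)\to X$ holds for any numerable open cover and is proved by using the partition of unity to build a section and a fibrewise linear homotopy --- contractibility of intersections plays no role and the carrier lemma does not apply; the equivalence $\pi_{\sN}:B(\cU)\to|\nerve{\cU}|$ is exactly where contractibility enters, and it requires a skeletal induction with a homotopy-extension/gluing argument (or an equivalent statement about maps that are equivalences over every closed simplex), which is precisely the substance of the nerve theorem you are trying to prove. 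As written, that paragraph assumes the result rather than establishing it; and note that if the blowup comparison were carried out in full, the maps $\phi$, $\psi$ and the first half of your argument would be redundant, so you should commit to one route --- either complete the two carrier homotopies directly (with the corrected carrier above, plus an honest argument for $\phi\circ\psi\simeq\iden_X$, e.g.\ via the section of $\pi_X$), or develop the blowup argument properly --- rather than splicing half of each.
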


Since $\{U_i\}_{i \in \sigma}$ is a good cover of $C(\sigma)$, and simplices are contractible, $C(\sigma)$ is also contractible. Thus the assumptions of the carrier lemma holds if we choose $\cU$ to be a good cover. We first show that any edge path on the nerve has a representative path which it carries on $X$, if $U_i \cap U_j \neq \emptyset$ implies $U_i \cup U_j$ is path-connected.  

\begin{lemma}\label{lem:existence_epath_rep} Let $\cU = \{U_i\}_{i \in \cI}$ be a good cover of $X$. For any given choice of $r: \cI \to X$ satisfying $r(i) \in U_i$, each edge path $\eta: I_m \to \nerve{\cU}$ carries some path $\gamma_\eta : I \to X$, parametrised such that 
\begin{align}
    \textstyle \fun{\gamma_\eta}{\frac{k}{m}} &= r(\eta(k)) & k = 0,\ldots,m \\
    \textstyle  \fun{\gamma_\eta}{\left(\frac{k}{m},\frac{k+1}{m} \right)} &\subset U_{\eta(k)} \cup U_{\eta(k+1)} & k = 0,\ldots,m-1.
\end{align}
\end{lemma}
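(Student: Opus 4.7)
The plan is to construct $\gamma_\eta$ piece by piece: for each $k = 0, \ldots, m-1$, I will choose a path $\sigma_k: [0,1] \to X$ from $r(\eta(k))$ to $r(\eta(k+1))$ whose image lies in $U_{\eta(k)} \cup U_{\eta(k+1)}$, and then concatenate these segments with a reparametrisation sending the $k$-th segment to the subinterval $[k/m, (k+1)/m]$.

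The main content is to justify the existence of each $\sigma_k$, which reduces to showing that $U_{\eta(k)} \cup U_{\eta(k+1)}$ is path-connected and contains both endpoints $r(\eta(k))$ and $r(\eta(k+1))$. Containment is immediate from the hypothesis $r(i) \in U_i$. For path-connectedness, I will split into two cases. If $\eta(k) = \eta(k+1)$, the union collapses to the single contractible set $U_{\eta(k)}$, and any path in $U_{\eta(k)}$ (for instance a null-homotopy to a chosen basepoint and back) works. If $\eta(k) \neq \eta(k+1)$, then $\eta(k)\eta(k+1)$ is a $1$-simplex of $\nerve{\cU}$, so by definition of the nerve $U_{\eta(k)} \cap U_{\eta(k+1)} \neq \emptyset$; picking any point $p$ in this intersection, the good-cover hypothesis makes $U_{\eta(k)}$ and $U_{\eta(k+1)}$ contractible and therefore path-connected, so I obtain a path from $r(\eta(k))$ to $p$ inside $U_{\eta(k)}$ and one from $p$ to $r(\eta(k+1))$ inside $U_{\eta(k+1)}$. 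Concatenating yields $\sigma_k$ with image contained in $U_{\eta(k)} \cup U_{\eta(k+1)}$.

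Given the $\sigma_k$, I define $\gamma_\eta$ piecewise by $\gamma_\eta(t) = \sigma_k(mt - k)$ for $t \in [k/m,(k+1)/m]$. The pieces agree at the gluing times $t = k/m$ because $\sigma_{k-1}(1) = r(\eta(k)) = \sigma_k(0)$, so by the pasting lemma $\gamma_\eta$ is continuous on $I$. At $t = k/m$ we have $\gamma_\eta(k/m) = \sigma_k(0) = r(\eta(k))$, verifying the first condition. On the open interval $(k/m,(k+1)/m)$ the map $\gamma_\eta$ traces $\sigma_k((0,1)) \subseteq U_{\eta(k)} \cup U_{\eta(k+1)}$, verifying the second condition and in particular witnessing that $\eta$ carries $\gamma_\eta$ in the sense of \Cref{def:pathcarrier}.

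There is no serious obstacle here: the only subtlety is permitting degenerate steps $\eta(k) = \eta(k+1)$ in the edge path (allowed by the definition in \Cref{ssec:fungroupoid}), which is handled as noted above. Everything else is a routine concatenation of paths and appeal to the good-cover hypothesis to guarantee the existence of the elementary path segments.
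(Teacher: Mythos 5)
Your proof is correct and follows essentially the same route as the paper: construct, for each step of the edge path, a segment from $r(\eta(k))$ to $r(\eta(k+1))$ lying in $U_{\eta(k)} \cup U_{\eta(k+1)}$, and concatenate with the obvious reparametrisation. The only (harmless) difference is that the paper justifies path-connectedness of the union by citing the nerve lemma, whereas you give the elementary argument via a point of $U_{\eta(k)} \cap U_{\eta(k+1)}$ and contractibility of each cover element, and you also treat the degenerate steps $\eta(k) = \eta(k+1)$ explicitly, which the paper leaves implicit.
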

\begin{proof}
    Consider an edge $ij \in \nerve{\cU}$, and the subset $U_i \cup U_j \subset X$. Due to the nerve lemma, $U_i \cup U_j$ is path-connected and contractible. Thus, there is a path from $r(i)$ to $r(j)$ that stays within $U_i \cup U_j$. Concatenating such paths for each successive edge in the edge path $\eta$ yields the path $\gamma_\eta$ as described.
\end{proof}

We now show that representatives of edge homotopic paths are homotopic. 

\begin{lemma}\label{lem:epathrep_homotopy} Assume the conditions of \Cref{lem:existence_epath_rep}. Let $\eta$ and $\eta'$ be edge homotopic paths on the nerve $\nerve{\cU}$ from $i$ to $j$. Then any representative path $\gamma_\eta$ and $\gamma_{\eta'}$ of $\eta$ and $\eta'$ respectively are homotopic rel end points. 
\end{lemma}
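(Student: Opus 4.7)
The plan is to induct on the length of the edge homotopy between $\eta$ and $\eta'$, reducing everything to the single elementary contraction case. Throughout we exploit that for a good cover $\cU$, the union $\bigcup_{i \in \sigma} U_i$ over any simplex $\sigma$ of $\nerve{\cU}$ is contractible: the subcollection $\{U_i\}_{i \in \sigma}$ is itself a good cover of its union, its nerve is the closed simplex $\sigma$ (which is contractible), and the nerve lemma then gives contractibility of $\bigcup_{i \in \sigma} U_i$.

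First I would dispense with non-uniqueness of representatives. Suppose $\gamma_\eta$ and $\gamma_\eta'$ are two representatives of the \emph{same} edge path $\eta: I_m \to \nerve{\cU}$. On each subinterval $[k/m, (k+1)/m]$ both paths take values in $U_{\eta(k)} \cup U_{\eta(k+1)}$ and share the endpoints $r(\eta(k))$ and $r(\eta(k+1))$. Since this union is contractible by the observation above, the two segments are homotopic rel endpoints in $U_{\eta(k)} \cup U_{\eta(k+1)}$. Concatenating these segment homotopies (with the parameter $t \in [0,1]$ running uniformly across all segments) gives a homotopy $\gamma_\eta \simeq \gamma_\eta'$ rel endpoints. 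Consequently it suffices to prove the proposition for any single pair of chosen representatives.

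For the inductive step, assume $\eta = \eta_0, \eta_1, \ldots, \eta_N = \eta'$ is an edge homotopy. It is enough to treat the case where $\eta_{s+1}$ is an elementary contraction of $\eta_s$ (the reverse direction is symmetric). So write $\eta'_{\mathrm{long}} = v_0 \cdots v_i u v w v_{i+3} \cdots v_{m+1}$ and $\eta_{\mathrm{short}} = v_0 \cdots v_i u w v_{i+3} \cdots v_{m+1}$ with $uvw$ spanning a simplex of $\nerve{\cU}$. Using the freedom from the first step, choose representatives $\gamma_{\mathrm{long}}$ and $\gamma_{\mathrm{short}}$ that, after a harmless piecewise-linear reparametrization, agree outside the contraction window and differ only on the sub-interval that maps to the segments involving the contracted vertex; on that window $\gamma_{\mathrm{long}}$ traces $r(u) \to r(v) \to r(w)$ while $\gamma_{\mathrm{short}}$ traces $r(u) \to r(w)$ directly. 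Both paths lie inside $U_u \cup U_v \cup U_w$, which is contractible by the opening observation (the triple intersection is nonempty precisely because $uvw$ spans a simplex of the nerve). Paths with common endpoints in a contractible space are homotopic rel endpoints, and extending this local homotopy by the constant homotopy outside the contraction window gives $\gamma_{\mathrm{long}} \simeq \gamma_{\mathrm{short}}$ rel endpoints.

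Composing (i) the homotopy from the first paragraph that identifies an arbitrary representative of $\eta$ with a chosen one, (ii) the chain of elementary-contraction homotopies from the second paragraph, and (iii) the analogous homotopy on the $\eta'$ side, yields $\gamma_\eta \simeq \gamma_{\eta'}$ rel endpoints. The only real technical nuisance is the mismatch of step sizes across an elementary contraction ($1/(m+1)$ versus $1/m$), and this is resolved purely by reparametrization, which preserves homotopy class rel endpoints; no genuine obstruction arises. The main conceptual point is simply the identification of the contractibility of $\bigcup_{i \in \sigma} U_i$ over a simplex $\sigma$ of $\nerve{\cU}$ with an application of the nerve lemma, which is what lets the combinatorial elementary contraction be lifted to a continuous homotopy.
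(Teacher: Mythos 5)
Your proposal is correct and follows essentially the same route as the paper: reduce the edge homotopy to single elementary contractions, use the nerve lemma to see that the union of cover elements over the simplex $uvw$ (and over each edge) is contractible, and conclude segmentwise homotopy rel endpoints before concatenating. The only cosmetic difference is that you first normalise the choice of representatives and make them agree outside the contraction window, whereas the paper compares arbitrary representatives segment by segment using the same contractibility facts.
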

\begin{proof} For an edge-homotopic pair of edge paths $\eta$ and $\eta'$, consider the finite sequence of edge paths $\eta = \eta_0, \ldots, \eta_m = \eta'$ such that each successive pair of edge paths $\eta_k$ and $\eta_{k+1}$ by an elementary contraction one way or another. Consider a pair of edge paths $a: I_m \to \nerve{\cU}$ and $b: I_{m+1} \to \nerve{\cU}$ where $a$ is an elementary contraction of $b$; that is, there is some $0 \leq i < m$, such that 
    \begin{itemize}
        \item $a(j) = b(j)$ for $j \leq i$;
        \item $a(j) = b(j+1)$ for $j \geq i+1$; and
        \item $b(i)b(i+1)b(i+2)$ spans a simplex in $K$.
    \end{itemize}
Consider representative paths $\gamma_a$ and $\gamma_b$ of edge paths $a$ and $b$. Let $\gamma_{a|[j,k]}$ denote the restriction of the the representative path to the representative sub-edge path from $j$ to $k$. Consider restrictions to edges $\gamma_{a|[j,j+1]}$. For $0 \leq j < i$, since $\gamma_{a | [j,j+1]}, \gamma_{b | [j,j+1]} \subset U_{a(j)} \cup U_{a(j+1)}$, and $\subset U_{a(j)} \cup U_{a(j+1)}$ is contractible, $[\gamma_{a | [j,j+1]}]= [\gamma_{b | [j,j+1]}]$. For identical reasons, for $j > i$, we have $[\gamma_{a | [j,j+1]}] = [\gamma_{b | [j+1,j+2]}]$. 

Because $\cU$ is a good cover, and $b(i)b(i+1)b(i+2)$ spans a simplex in the nerve, the nerve lemma implies $\bigcup_{j = i}^{i+2}U_{b(i)}$ is contractible. Thus, the restriction of representative paths $\gamma_{a \rvert [i,i+1]}$ and $\gamma_{b | [i,i+2]}$ to those subsequences are homotopic rel end points. Thus $[\gamma_{a \rvert [i,i+1]}]  = [\gamma_{b | [i,i+2]}]$. Concatenating all sub edge paths, we thus deduce that 
$$[\gamma_a] = [\gamma_{a|[0,1]}] \cdots [\gamma_{a|[m-1, m]} = [\gamma_{b|[0,1]}] \cdots [\gamma_{b|[m, m+1]} = [\gamma_b].$$ 
\end{proof}

\begin{lemma}\label{lem:epathrep_good} Assume the conditions of \Cref{lem:existence_epath_rep}.
\begin{itemize}
    \item Any path $\gamma: I\to X$ carried by $\eta$ is homotopic to $\gamma_\eta$;
    \item Any path $\gamma$ from $r(i)$ to $r(j)$ carried by $\eta$ is homotopic to $\gamma_\eta$, \emph{rel end points}.
\end{itemize}
\end{lemma}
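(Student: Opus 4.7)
The plan is to apply the carrier lemma (\Cref{lem:carrier}) twice, with two closely related carriers on the one-dimensional simplicial complex $I_m$, whose geometric realisation is $I$.

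First I would define a carrier $C$ on $I_m$ by assigning each vertex $k$ to $C(k) = U_{\eta(k)}$ and each edge $[k,k+1]$ to $C([k,k+1]) = U_{\eta(k)} \cup U_{\eta(k+1)}$. The carrier axiom $C(\sigma) \subseteq C(\tau)$ for $\sigma \subseteq \tau$ holds by construction. For the contractibility condition required by \Cref{lem:carrier}, each vertex value is contractible since $\cU$ is a good cover; for each edge, the pair $\{U_{\eta(k)}, U_{\eta(k+1)}\}$ is itself a good cover of $U_{\eta(k)} \cup U_{\eta(k+1)}$ (all intersections being inherited from $\cU$ and contractible), and its nerve is a single $1$-simplex, so the nerve lemma yields that $U_{\eta(k)} \cup U_{\eta(k+1)}$ is contractible.

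Next I would verify that both $\gamma$ and $\gamma_\eta$, viewed as maps $|I_m| \to X$, are carried by $C$. For $\gamma_\eta$ this is immediate from \Cref{lem:existence_epath_rep}. For $\gamma$, the hypothesis that $\eta$ carries $\gamma$ (\Cref{def:pathcarrier}) gives $\gamma(k/m) \in U_{\eta(k)} = C(k)$ and $\gamma((k/m, (k+1)/m)) \subset U_{\eta(k)} \cup U_{\eta(k+1)}$; combining with the endpoint values yields $\gamma([k/m,(k+1)/m]) \subset C([k,k+1])$. Applying \Cref{lem:carrier} produces a homotopy $\gamma \simeq \gamma_\eta$, proving the first bullet.

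For the second bullet I would refine the carrier to $C'$ by setting $C'(0) = \{r(i)\}$ and $C'(m) = \{r(j)\}$ and leaving $C'$ identical to $C$ on all other simplices. This remains a carrier since $r(i) \in U_{\eta(0)} \subseteq U_{\eta(0)} \cup U_{\eta(1)} = C'([0,1])$ and analogously at the other endpoint, and the new boundary values are single points, hence trivially contractible. The matching-endpoints hypothesis $\gamma(0) = \gamma_\eta(0) = r(i)$ and $\gamma(1) = \gamma_\eta(1) = r(j)$ ensures both paths are still carried by $C'$. The carrier lemma then produces a homotopy $F \colon |I_m| \times I \to X$ between $\gamma$ and $\gamma_\eta$ that is itself carried by $C'$; in particular, $F(0,t) \in \{r(i)\}$ and $F(1,t) \in \{r(j)\}$ for every $t$, so $F$ is a homotopy rel end points. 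The only genuinely non-routine step is justifying the contractibility of the edge values $U_{\eta(k)} \cup U_{\eta(k+1)}$, where one must invoke the nerve lemma on a two-element good cover rather than appeal directly to the good cover property of $\cU$; the rest is bookkeeping between the definitions of carrying and of a carrier.
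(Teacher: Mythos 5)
Your proof is correct, and it uses the same core tool as the paper (the carrier lemma, \Cref{lem:carrier}, applied to the carrier $C(\sigma)=\bigcup_{k\in\sigma}U_{\eta(k)}$ on $I_m$), but the handling of the rel--end-points condition is genuinely different. The paper keeps the original carrier, takes from \Cref{lem:carrier}(c) a homotopy $F$ between $\gamma$ and $\gamma_\eta$ all of whose time slices are carried by $\eta$, and then observes that the endpoint tracks $\alpha(t)=F(0,t)$ and $\omega(t)=F(1,t)$ are loops lying in the contractible sets $U_{\eta(0)}$ and $U_{\eta(m)}$, so they are null-homotopic and the groupoid identity $[\gamma]=[\alpha][\gamma_\eta][\omega]^{-1}$ forces $[\gamma]=[\gamma_\eta]$. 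You instead pin the endpoints into the carrier itself, replacing $C'(0)=\{r(i)\}$ and $C'(m)=\{r(j)\}$, and then a homotopy carried by $C'$ (again via part (c)) is automatically stationary at the endpoints. Both arguments are sound: yours buys a homotopy that is rel end points on the nose, with no groupoid bookkeeping afterwards, at the cost of re-verifying the carrier axioms and contractibility for the modified assignment (which you do; note also the degenerate case $\eta(k)=\eta(k+1)$, where $C([k,k+1])$ is a single cover element and contractibility is immediate, so the nerve-lemma step is only needed for genuine edges). The paper's version avoids touching the carrier and leans on the same contractibility of $U_{\eta(0)}$ and $U_{\eta(m)}$, but only concludes equality of homotopy classes rel end points rather than exhibiting an endpoint-fixing homotopy directly; for the purposes of defining $R$ in \Cref{prop:def_Rgh} the two conclusions are interchangeable.
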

\begin{proof}
    Since $\gamma_\eta$ is carried by $\eta$, the first statement of the lemma is a consequence of the carrier lemma. 

    For the second statement, we need to show that we can choose a homotopy such that the end points of the paths are fixed. Consider a homotopy $F: I \times I \to X$ between $\gamma$ and $\gamma_\eta$ carried by $\eta$. Consider the paths $\alpha(t) = F(0,t)$ and $\omega(t)= F(1,t)$.   Since both $\gamma$ and $\gamma_\eta$ have end points $r(i)$ and $r(j)$, paths $\alpha(t)$ and $\omega(t)$ loops based at $r(\eta(0))$ and $r(\eta(m))$ respectively. Because $F(\cdot, t): I \to X$ is carried by $\eta$ for all $t$, the loops $\alpha$ and $\omega$ are respectively contained in $U_{\eta(0)}$ and $U_{\eta(m)}$, which are both contractible. Thus $[\alpha] = e$ and $[\omega] = e$ in $\fungroup{X, r\eta(0)}$ and $\fungroup{X, r\eta(m)}$ respectively. Since the homotopy $F$ implies $[\gamma] = [\alpha][\gamma_\eta] \inv{[\omega]} \in \Fgpd{X}(r(i), r(j))$, we thus have $[\gamma] = [\gamma_\eta]$.
    \end{proof}

\begin{proposition} \label{prop:def_Rgh}
    Let $X$ be a space with a good cover $\cU = \{U_i\}_{i \in \cI}$. For any map $r: \cI \to X$ such that $r(i) \in U_i$, we have a unique groupoid homomorphism $R: \Egpd{\nerve{\cU}} \to \Fgpd{X}$, where $R(i) = r(i)$, and a class of edge paths $[\eta] \in \Egpd{\nerve{\cU}}(i,j)$ is assigned to the class of paths $[\gamma] \in \Fgpd{X}(r(i),r(j))$ homotopic rel end points to ones that $\eta$ carries. 
\end{proposition}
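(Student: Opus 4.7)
The plan is to define $R$ explicitly on objects and morphisms, then verify well-definedness, functoriality, and uniqueness using the three preceding lemmas (\Cref{lem:existence_epath_rep,lem:epathrep_homotopy,lem:epathrep_good}), which have already done the technical work.

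On objects, set $R(i) = r(i)$ for each vertex $i \in \cI$ of $\nerve{\cU}$. On morphisms, given a class $[\eta] \in \Egpd{\nerve{\cU}}(i,j)$, pick any representative edge path $\eta$ and any path $\gamma_\eta : I \to X$ from $r(i)$ to $r(j)$ carried by $\eta$; the existence of such a $\gamma_\eta$ is guaranteed by \Cref{lem:existence_epath_rep}. Then declare $R([\eta]) := [\gamma_\eta] \in \Fgpd{X}(r(i),r(j))$.

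Next I would verify well-definedness, which has two layers. First, independence of the choice of carried path $\gamma_\eta$ for a fixed representative $\eta$: this is the second bullet of \Cref{lem:epathrep_good}, which says any two paths from $r(i)$ to $r(j)$ carried by $\eta$ are homotopic rel end points. Second, independence of the choice of edge path representative within the class $[\eta]$: this is \Cref{lem:epathrep_homotopy}, which tells us that edge-homotopic edge paths have homotopic representative paths rel end points. Combining the two shows $R([\eta])$ depends only on the edge-homotopy class.

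To check that $R$ is a groupoid homomorphism, observe that the constant edge path at $i$ carries the constant path at $r(i)$, so $R$ preserves identities. For composition, if $[\eta_1] \in \Egpd{\nerve{\cU}}(i,j)$ and $[\eta_2] \in \Egpd{\nerve{\cU}}(j,k)$, then the concatenation $\gamma_{\eta_1} \ast \gamma_{\eta_2}$ is a path from $r(i)$ to $r(k)$ carried by the concatenated edge path $\eta_1 \cdot \eta_2$, so
\begin{equation*}
R([\eta_1] \cdot [\eta_2]) = [\gamma_{\eta_1} \ast \gamma_{\eta_2}] = [\gamma_{\eta_1}] \ast [\gamma_{\eta_2}] = R([\eta_1]) \cdot R([\eta_2]).
\end{equation*}
For uniqueness, the defining property pins down $R$: any groupoid homomorphism agreeing with $r$ on objects and sending $[\eta]$ to a class containing some path carried by $\eta$ must, by the well-definedness argument, coincide with our $R$.

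The main conceptual care-point (rather than obstacle) is clearly separating the two sources of ambiguity — choice of carried path and choice of edge-path representative — and making sure each is addressed by the correct preceding lemma; once those are in hand, the proof is essentially bookkeeping with concatenation.
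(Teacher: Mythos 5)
Your proposal is correct and follows essentially the same route as the paper's proof: existence of carried representatives via \Cref{lem:existence_epath_rep}, well-definedness by combining \Cref{lem:epathrep_good} (independence of the carried path) with \Cref{lem:epathrep_homotopy} (independence of the edge-path representative), and then the identity and concatenation checks for functoriality. The only cosmetic difference is that the paper handles identities by noting the image loop lies in the contractible $U_i$, whereas you note the constant path at $r(i)$ is itself carried by the constant edge path; both are fine.
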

\begin{proof}
    We first show that the assignment of an edge path $\eta$ from $i$ to $j$ to a path $\gamma$ from $r(i)$ to $r(j)$ which it carries induces a unique well-defined map from classes of edge paths in $\Egpd{\nerve{\cU}}(i,j)$ to classes of paths in $\Fgpd{X}(r(i), r(j))$.

    If we assign $\eta$ to any path $\gamma$ that it carries, then $\gamma$ and any choice of representative path $\gamma_\eta$ as described in \Cref{lem:existence_epath_rep} are homotopic rel end point, due to \Cref{lem:epathrep_good}. Since any two edge paths that are edge homotopic have homotopic representative paths rel end points (\Cref{lem:epathrep_homotopy}, paths between $r(i)$ and $r(j)$ that are carried by edge-homotopic edge paths from $i$ to $j$ are homotopic. Thus, we have a well-defined map from $\Egpd{\nerve{\cU}}(i,j)$ to $\Fgpd{X}(r(i), r(j))$.

    We now check that this assignment defines a groupoid homomorphism. We first check that it sends the identity element of vertex groups to vertex groups. Since the constant edge path at $i\in \nerve{\cU}$ is sent to some loop based at $r(i)$ contained in $U_i$, the good cover condition ensures that this loop is trivial. 
    We then check that the assignment respects composition. Since the concatenation of paths carried by edge paths is carried by the concatenation of edge paths, this assignment thus takes $R([\eta_1])R([\eta_2])  = R([\eta_1\eta_2]) = R([\eta_1][\eta_2])$. Thus $R$ is a groupoid homomorphism. 
\end{proof}

We now show that  $R: \Egpd{\nerve{\cU}} \leftrightarrows \Fgpd{X} : S$ is an equivalence of categories. 

\equivalenceEFG*

\begin{proof}
    Let $R,S$ be the groupoid homomorphisms as given in \Cref{prop:def_Rgh,prop:def_Sgh}.
    
    Consider $RS([\gamma])$; the class of paths $[\gamma]$ is first sent to the unique class of edge paths represented by an edge path $\eta \in \Egpd{\nerve{\cU}}(s(x), s(y))$ that carries $\gamma$. Applying $R$ then sends that edge class to the class of paths in $\Fgpd{X}(rs(x), rs(y))$ carried by $\eta$. Since $\gamma$ and a representative path $\gamma_\eta$ of $RS([\gamma])$ are both carried by $\eta$, \Cref{lem:carrier} implies there is a homotopy $F: I \times I \to X$ from $F(\cdot, 0)= \gamma$ to $F(\cdot, 1) = \gamma_\eta$, where each $F(\cdot, t)$ is also carried by $\eta$. Thus, we have two paths $\alpha =  F(0, \cdot) \subset U_{\eta(0)}$ and $\omega =  F(1, \cdot) \subset U_{\eta(m)}$. Because $\cU$ is a good cover, $\alpha$ belongs to the unique class of paths carried by $U_{\eta(0)}$ from $x$ to $rs(x)$. Similarly, $\omega$ belongs to the unique class of paths carried by $U_{\eta(m)}$ from $y$ to $rs(y)$.
    Defining $\varsigma_\cdot \in \Fgpd{X}(\cdot, rs(\cdot))$ to be the unique homotopy class of paths carried by $U_{s(\cdot)}$, because $F$ is a homotopy, we have   
    \begin{equation*}
        [\gamma] = [\alpha][\gamma_\eta]\inv{[\omega]}\implies [\gamma] = \varsigma_x RS([\gamma]) \inv{\varsigma_y}.
    \end{equation*}

    Consider then $SR([\eta])$. In the first step, $R$ sends $[\eta] \in \Egpd{\nerve{\cU}}(\eta(0),\eta(m))$ to the class of paths $[\gamma] \in \Fgpd{X}(r(\eta(0)), r(\eta(m)))$, represented by path $\gamma$ which $\eta$ carries; and in the second step, $S$ sends $[\gamma]$ to a class of edge paths from $sr(\eta(0))$ to $sr(\eta(m))$ that carries $\gamma$. Since $r(\cdot)
    \in U_\cdot$ and $r(\cdot) \in U_{sr(\cdot)}$, we have a pair of edges $sr(\eta(0))\eta(0)$ and $sr(\eta(m))\eta(m)$ in the nerve $\nerve{\cU}$. Moreover, since $\gamma(0) \in U_{sr(\eta(0))} \cap U_{sr(\eta(0))}$ and  $\gamma(1) \in U_{sr(\eta(m))} \cap U_{sr(\eta(m))}$, the edge path $sr(\eta(0))\eta(0)\cdots \eta(m) sr(\eta(m))$ carries a reparametrisation of $\gamma$, while $SR([\eta])$ carries $\gamma$. Since homotopy is invariant w.r.t. reparametrisation, \Cref{lem:hmtpy_implies_ehmtpy} implies $SR[\eta] = [sr(\eta(0)) \eta(0)][\eta][\eta(m)sr(\eta(m))]$. Thus we have a natural isomorphism $\varrho: SR \Rightarrow \iden_{\Egpd{\nerve{\cU}}}$, where $\varrho(i) = [sr(i)i]$.
\end{proof}

\section{Proof of \Cref{prop:grp_homology}}
\label{app:grouphomology}
\grouphomology*
\begin{proof}
    For the zeroth homology group, since all morphisms of a groupoid with one element has the same source and target, with image of $\partial_1$ is zero; thus $ \HGrpd{0}{G; A} = (\ChGrpd{0}{G} \otimes A)/(\imag (\partial_1 \otimes \iden_A) \cong \Z \otimes A \cong A$.

    For the first homology group, let us consider $A = \Z$ first. Recall $\ChGrpd{1}{G} = \Z[G]$ is the free abelian group on elements of $G$ as a basis set. We use the notation $\sum_i n_i \langle g_i \rangle$ to denote elements of $G$ as a basis in a formal sum.  Denote $\partial_2 \ChGrpd{2}{G} =\mathcal{B}_1(G)$. Since $\partial_1$ is the zero homomorphism, we have an exact sequence 
    \begin{equation*}
         0 \to \mathcal{B}_1(G) \hookrightarrow \ChGrpd{1}{G} \twoheadrightarrow \HGrpd{1}{G} \to 0.
    \end{equation*}
    We now note that $\mathcal{B}_1(G)$ is precisely the relations on symbols $\langle g \rangle $ such that the quotient $\HGrpd{1}{G}:= \ChGrpd{1}{G}/\mathcal{B}_1(G) $ is isomorphic to $\abel{G}$.

    To show this, this let us consider the epimorphism $\eta: \ChGrpd{1}{G} \twoheadrightarrow \abel{G}$, where $\sum_i n_i \langle g_i \rangle  \mapsto \sum_i n_i g_i $. If $\ker \eta = \mathcal{B}_1(G)$, then we have an isomorphism $\HGrpd{1}{G}:= \ChGrpd{1}{G}/\mathcal{B}_1(G) = \ChGrpd{1}{G}/\ker \eta  \cong \abel{G}$. The boundary subgroup $\mathcal{B}_1(G)$ is generated by formal sums over elements of $G$, of the form $\langle f \rangle  + \langle g \rangle - \langle f \cdot g \rangle$.  Direct evaluation shows that $\mathcal{B}_1(G) \subseteq \ker \eta$:
    \begin{align*}
        \eta( \langle f \rangle + \langle g \rangle - \langle f \cdot g \rangle ) &= \eta(\langle  f \rangle + \langle g \rangle ) - \eta(\langle f \cdot g \rangle) = f + g - (f+g) = 0.
    \end{align*}
    To show $\mathcal{B}_1(G) \supseteq \ker \eta$, we note that elements $\sum \langle g \rangle $ of $\ker \eta$ are those that can be indexed such that $g_1 \cdots g_m= e$. If we rewrite $\sum \langle g \rangle $ accordingly as thus,
    \begin{align*}
        \langle g_1 \rangle  + \cdots + \langle g_m \rangle
        &= \left(\langle g_1 \rangle + \langle g_2 \rangle  - \langle g_1 \cdot g_2 \rangle \right) + \left(\langle g_1 \cdot g_2 \rangle +  \langle g_3 \rangle - \langle (g_1 \cdot g_2)  \cdot g_3 \rangle \right) + \\
        &\cdots + \left(\langle  g_1\cdots g_{m-1}\rangle + \langle g_m \rangle - \langle (g_1 \cdots g_{m-1})\cdot  g_m \rangle \right) + \cancelto{\langle e \rangle}{ \langle  g_1 \cdots g_m \rangle },
    \end{align*}
    we have written $\sum \langle g \rangle $ as an element of $\partial_2$  (since $\langle e \rangle = \langle e \rangle  + \langle e \rangle - \langle e \cdot e\rangle)$. we thus have $\ker \eta \subseteq \mathcal{B}_1(G)$. Hence $\eta$ induces an isomorphism $\HGrpd{1}{G} := \ChGrpd{1}{G} \diagup \mathcal{B}_1(G) \xrightarrow{\cong} \abel{G}$.

    For homology with $A$ coefficients, because $(-) \otimes A$ is a right exact functor, we obtain the following exact sequence:
     \begin{equation*}
         \mathcal{B}_1(G) \otimes A \hookrightarrow \ChGrpd{1}{G;A} \overset{\eta \otimes \iden_A}{\twoheadrightarrow} \abel{G} \otimes A \to 0.
    \end{equation*}
   Furthermore, as $ \mathcal{B}_1(G) \otimes A =  \imag \partial_2 \otimes A = \imag (\partial_2 \otimes 1_A)$, we have an isomorphism  $\HGrpd{1}{G;A} := \ChGrpd{1}{G;A}/ \imag (\partial_2 \otimes 1_A) \xrightarrow{\cong} \abel{G}\otimes A$, where $[\sum_i \langle g_i \rangle \otimes a_i] \mapsto \sum_i g_i \otimes a_i.$
\end{proof}

\end{document}